\numberwithin{equation}{section}
\newtheorem{conj}{Conjecture}
\newtheorem{thm}[conj]{Theorem}
\newtheorem{cor}[conj]{Corollary}
\newtheorem{prop}[conj]{Proposition}
\newtheorem{lemma}[conj]{Lemma}
\providecommand{\customgenericname}{}
\newcommand{\newcustomtheorem}[2]{%
	\newenvironment{#1}[1] 
	{%
		\renewcommand\customgenericname{#2}%
		\renewcommand\theinnercustomgeneric{##1}%
		\innercustomgeneric
	}
	{\endinnercustomgeneric}
}
\theoremstyle{remark}\newtheorem{remark}{Remark}
\def\Cov{{\rm Cov}}   
\def\PP{\mathbb{P}}
\def\EE{\mathbb{E}}
\def\RR{\mathbb{R}}
\def\wh{\widehat}
\def\eps{\varepsilon}
\def\bI{\bm{I}}
\def\wt{\widetilde}
\def\a{\alpha}
\def\T{\top}
\def\i{\infty}
\def\sz{\Sigma_Z}
\def\szy{\Sigma_{Z|Y}}
\def\C{\szy}
\def\sw{\Sigma_W}
\def\errw{\delta_W}
\def\tr{{\rm tr}}
\def\op{{\rm op}}
\def\diag{{\rm diag}}
\def\KL{{\rm KL}}
\def\rI{\textrm{I}}
\def\rII{\textrm{II}}
\def\rIII{\textrm{III}}
\def\Dt{\Delta} 
\def\ba{\bar \alpha}
\def\X{{\bm X}}
\def\Y{{\bm Y}}
\def\Z{{\bm Z}}
\def\W{{\bm W}}
\def\G{{\bm G}}
\def\U{{\bm U}}
\def\be{\bm e}
\def\R{\bm R}
\def\D{{\bm D}}
\def\V{{\bm V}}
\def\E{\mathcal{E}}
\def\cO{\mathcal O}
\def\cS{\mathcal {S}}
\def\cN{\mathcal {N}}
\def\cL{\mathcal{L}}
\def\0{\bm 0}
\def\1{\mathbbm 1}
\def\b1{\bm 1}
\def\mut{\mu_{\theta}}
\def\mutp{\mu_{\theta'}}
\def\Sigt{\Sigma_{\theta}}
\def\Sigtp{\Sigma_{\theta'}}
\def\tX{\tilde \X}
\def\e{\bm  e}
\DeclareMathOperator*{\argmin}{arg\,min}
\def\vec{{\rm vec}}
\DeclareMathOperator*{\argmax}{arg\,max}
\begin{document}
	
	\begin{frontmatter}
		\title{Optimal Discriminant Analysis in High-Dimensional Latent Factor Models}
		\runtitle{Optimal Discriminant Analysis}
		
		\begin{aug}
			\author[A]{\fnms{Xin} \snm{Bing}  \ead[label=e1]{xin.bing@utoronto.ca}}
			\and
			\author[B]{\fnms{Marten} \snm{Wegkamp} %\thanksref{t1} 
   \ead[label=e2]{mhw73@cornell.edu}}
			
			\runauthor{Xin Bing and Marten Wegkamp}
			
		%%	\thankstext{t1}{Corresponding author. }  
			
			\address[A]{
				Department of Statistical Sciences,
				University of Toronto,
				\printead{e1}
			}
			\address[B]{
				Department of Mathematics \& Department of Statistics and Data Science,
				Cornell University,
				\printead{e2}}
		\end{aug}
		
		\begin{abstract}
			In high-dimensional classification problems, a commonly used approach is to first project the high-dimensional features into a lower dimensional space, and base the classification on the resulting lower dimensional projections. In this paper, we formulate a latent-variable model with a hidden low-dimensional structure to justify this two-step procedure and to guide which projection to choose. We propose a computationally efficient classifier that takes certain principal components (PCs) of the observed features as projections, with the number of retained PCs selected in a data-driven way. A general theory is established for analyzing such two-step classifiers based on any  
			projections. We derive explicit rates of convergence of the excess risk of the proposed PC-based classifier. The obtained rates are further shown to be optimal up to logarithmic factors in the minimax sense. Our theory allows  
			the lower dimension to grow with the sample size and is also valid even when the feature dimension (greatly) exceeds the sample size. Extensive simulations corroborate our theoretical findings. The proposed method also performs favorably  relative to  other existing discriminant methods on three real data examples. 
		\end{abstract}
		
		\begin{keyword}[class=MSC]
			\kwd[Primary ]{62H12}
			\kwd{62J07}
		\end{keyword}
		
		\begin{keyword}
			\kwd{High-dimensional classification}
			\kwd{latent factor model} 
			\kwd{principal component regression} 
			\kwd{dimension reduction}
			\kwd{discriminant analysis}
			\kwd{optimal rate of convergence}
		\end{keyword}
		
	\end{frontmatter}

	\section{Introduction}
	
	In high-dimensional classification problems, a widely used technique is to first project the high-dimensional features    into a lower dimensional space, and base the classification on the resulting lower dimensional projections \cite{antoniadis2003effective,BBW2003,boulesteix2004pls,chiaromonte2002dimension,dai2006dimension,ghosh2001singular,hadef2019proposal,jin2021classification,li2016accurate,ma2020unsupervised,mallary2022acoustic,Nguyen2002}.  Despite having been widely used for years, theoretical understanding of this approach is scarce, and what kind of low-dimensional projection to choose remains unknown. In this paper we formulate a latent-variable model with
	a hidden low-dimensional structure to justify the two-step procedure that takes leading principal components of the observed features as projections. 
	
	Concretely,  suppose our data consists of independent copies of the pair $(X,Y)$ with features $X\in \RR^p$ according to 
	\begin{equation}\label{model_X}
		X = AZ + W
	\end{equation}
	and labels $Y\in\{0,1\}$. 
	Here $A$ is a  deterministic, unknown $p\times K$  loading matrix,
	$Z\in \RR^K$ are unobserved, latent factors and $W$ is random noise.
	We assume that
	\begin{itemize}
		\item [(i)] $W$ is independent of both $Z$ and $Y$,
		\item[(ii)] $\EE[W]=\0_p$,
		\item[(iii)] $A$ has rank $K$. 
	\end{itemize}
	This  mathematical framework allows for a substantial dimension reduction in classification for $K\ll p$. Indeed, in terms of the Bayes' misclassification errors,
	we prove in Lemma \ref{lem:RxvsRz} of Section \ref{sec_benchmark} the inequality 
	\begin{align}\label{eq_RxRz}
		R_x^*  := \inf_{g} \PP\{ g(X) \ne Y \} & ~\ge~  R_z^*:=\inf_{h} \PP\{ h(Z) \ne Y \} ,
	\end{align}  
	that is,
	it is easier to classify in the latent space $\RR^K$ than in 
	the observed feature space $\RR^p$.  
	In this work, we further assume that
	\begin{itemize}
		\item[(iv)] 
		$Z$ is a mixture of two Gaussians 
		\begin{equation}\label{model_YZ}
			Z \mid  Y = k \sim N_K(\a_k, \szy),\qquad \PP(Y=k) = \pi_k, \qquad k\in \{0,1\}
		\end{equation}
		with different means $\a_0:= \EE [Z \mid Y=0] $ and $\a_1:= \EE[Z \mid Y=1] $, but with the same covariance matrix 
		\begin{equation}\label{def_szy}
			\szy:= \Cov{(Z \mid Y=0)}= \Cov{(Z \mid  Y=1)},
		\end{equation}	
		assumed to be strictly positive definite. 
	\end{itemize}
	We emphasize that the distributions of $X$ given $Y$ are not necessarily Gaussian as the distribution of $W$ could be arbitrary.

	Within the above modelling framework, parameters related with the moments of $X$ and $Y$, such as 
	$\pi_k$, $\EE[X|Y]$ and $\Cov(X|Y)$, are identifiable, while
	$A$, $\szy$, $\a_k$, and $\sw := \Cov(W)$ are not. For instance, we can always replace $Z$ by $Z' = QZ$ for any invertible $K\times K$ matrix $Q$ and write $\a_k' = Q\a_k$, $\szy'= Q\szy Q^\T$ and $A' = AQ^{-1}$.  
	Since we focus on classification, there is no need to impose any  conditions on the latter group of parameters that render them identifiable.  
	Although our discussion throughout this paper is based on a fixed notation of $A$, $\szy$, $\sw$ and $\a_k$, it should be understood that our results are valid for all possible choices of these parameters such that model (\ref{model_X}) and (\ref{model_YZ}) holds, including sub-models under which such parameters are (partially) identifiable.\\

	Our goal is to construct a classification rule $\wh g_x:\RR^p \to\{0,1\}$
	based on the training data $\D := \{\X, \Y\}$  that consists of independent pairs $(X_1,Y_1),\ldots,  (X_n,Y_n)$ from model (\ref{model_X}) and (\ref{model_YZ}) such that  the resulting rule has small   missclassification error $\PP\{ \wh g_x(X)\ne Y\} $
	for a new pair of $(X,Y)$ from the same model that is independent of $\D$.  
	In this paper, we are particularly interested in  $\wh g_x$ that is linear in $X$, motivated by the fact that the restriction of equal covariance in (\ref{def_szy}) leads to a Bayes rule that is linear in $Z$ when we observe $Z$ (see display (\ref{rule_Z_0}) below). 
	
	Linear classifiers have been popular for decades, especially in high-dimensional classification problems, due to their interpretability and  computational simplicity.  
	One strand of the existing literature imposes sparsity on the   coefficients $\beta\in\RR^p$ in linear classifiers $g(x)= \1\{ \beta ^\T x +\beta_0\ge 0\}$ for large $p$ ($p\ge n$), see, for instance,  \cite{CaiLiu2011,caizhang2019, FanFan2008,mai2012,Shao2011,Tibshirani2002,Witten2011} for sparse linear discriminant analysis (LDA) and  \cite{TG,WY}  for sparse support vector machines.
	For instance, in the classical LDA-setting, when $X$ itself is a mixture of Gaussians
	\begin{equation}\label{model_Gaussian}
		X \mid Y=k \sim N_p(\mu_k, \Sigma),\qquad \PP(Y=k) = \pi_k,\qquad   k\in \{0,1\}
	\end{equation}
	with $\Sigma$  strictly positive definite, the Bayes classifier is linear with  $p$-dimensional vector $\beta=\Sigma^{-1}(\mu_1 - \mu_0)$. 
	Sparsity of $\beta$ is then a reasonable assumption 
	when $\Sigma$ is close to diagonal, so that   sparsity of $\beta$ gets translated to that of the difference between the mean vectors $\mu_1 - \mu_0$. However, in the high-dimensional regime, many features are highly correlated and any sparsity assumption on $\beta$ is no longer intuitive and becomes in fact questionable. This serves as a main motivation for this work, in which we study a class of linear classifiers that no longer requires the sparsity assumption on $\beta$, for neither construction of the classifier, nor its analysis.
	
	\subsection{Contributions} We summarize our contributions below. 
	
	\subsubsection{Minimax lower bounds of rate of convergence of the excess risk}
	
	Our first contribution in this paper is to establish minimax lower bounds of rate of convergence of the excess risk for any classifier under model (\ref{model_X}) and (\ref{model_YZ}). The excess risk is defined relative to $R_z^*$ in (\ref{eq_RxRz}) which we view 
	as 
	a more natural benchmark than $R_x^*$ because our proposed classifier is designed to 
		adapt to the underlying low-dimensional structure in (\ref{model_X}). The relation in (\ref{eq_RxRz}) suggests $R_z^*$ is also a more ambitious benchmark than $R_x^*$. 
	
	Since the gap between $R_x^*$ and $R_z^*$ quantifies the irreducible error for not observing $Z$, we start in Lemma \ref{lem_risk} of Section \ref{sec_benchmark} by
	characterizing how $R_x^*-R_z^*$  depends on $\xi^* = \lambda_K(A\szy A^\T)/\lambda_1(\sw)$, the signal-to-noise ratio for predicting $Z$ from $X$ (conditioned on $Y$), and $\Dt^2 = (\a_1-\a_0)^\T\szy^{-1}(\a_1-\a_0)$, the Mahalanobis distance between random vectors $Z \mid Y = 1$ and $Z \mid Y=0$. 
	Interestingly, it turns out that $R_x^*- R_z^*$ is small when either $\xi^*$ or $\Dt$ is large, a phenomenon that is different from the setting when $Y$  is linear in $Z$. Indeed, for the latter case, the excess risk of predicting $Y$ by using the best linear predictor of $X$ relative to the risk of predicting $Y$ from $\EE[Y|Z]$ is small only when $\xi^*$ is large \citep{bing2020prediction}. 
	
	In Theorem \ref{thm_lowerbound} of Section \ref{sec_lower_bound}, we derive the minimax lower bounds of the excess risk for any classifier with explicit dependency on the signal-to-noise ratio $\xi^*$, the separation distance $\Dt$, the dimensions $K$ and $p$ and the sample size $n$. Our results also fully capture the phase transition of the excess risk as the magnitude of $\Dt$ varies.  Specifically, when $\Dt$ is of constant order, the established lower bounds  are \[
	(\omega_n^*)^2  ~ = ~ {K\over n} + {\Dt^2 \over \xi^*} + {\Dt^2 \over \xi^*}{p\over \xi^* n}.
	\]
	The first term is  the optimal rate of the excess risk even when $Z$ were observable; the second term corresponds to the irreducible error of not observing $Z$ in $R_x^* - R_z^*$ and 
	the last term reflects the minimal price to pay for estimating the column space of $A$. When $\Dt \to \infty$ as $n\to\infty$, the lower bounds  become $(\omega_n^*)^2 \exp(-\Dt^2/8)$ and get exponentially faster in $\Dt^2$. When $\Dt \to 0$ as $n\to \infty$, the lower bounds  get slower as $\omega_n^* \min\{\omega_n^*/\Dt, 1\} $, implying a more difficult scenario for classification. In Section \ref{sec_optimality_LDA}, the lower bounds are further shown to be tight in the sense that the excess risk of the proposed PC-based classifiers have a matching upper bound, up to some  logarithmic factors.

	To the best of our knowledge, our minimax lower bounds are both new in the literature of factor models and the classical LDA. In the factor model literature, even in linear factor regression models, there is no known minimax lower bound of the prediction risk with respect to the quadratic loss function. In the LDA literature, our results cover the minimax lower bound of the excess risk in the classical LDA as a special case and are the first to fully characterize the phase transition in $\Dt$ (see Remark \ref{rem_comp_lowerbounds} for details). 
	The analysis of  establishing Theorem \ref{thm_lowerbound} is highly non-trivial and encounters several challenges. Specifically, since the excess risk is not a semi-distance, as required by the standard techniques of proving minimax lower bounds, the first challenge is to develop a reduction scheme based on a surrogate loss function that satisfies a local triangle inequality-type bound. The second challenge of our analysis is to allow a fully non-diagonal structure of $\Cov(X | Y)$ under model (\ref{model_X}), as opposed to the existing literature on the classical LDA that assumes $\Cov(X | Y)$ to be diagonal or even proportional to the identity matrix. To characterize the effect of estimating the column space of $A$ on the excess risk in deriving the third term of the lower bounds, our proof is based on constructing a suitable subset of the parameter space via the hypercube construction that is used for proving the optimal rates of the sparse PCA \citep{vu2013minimax} (see the paragraph after Theorem \ref{thm_lowerbound} for a full discussion). Since the statistical distance (such as the KL-divergence) between thus constructed hypotheses could diverge as $p/n\to\i$, this leads to the third challenge of providing a meaningful and sharp lower bound that is valid for both $p<n$ and $p>n$.
	
	\subsubsection{A general two-step classification approach and the PC-based classifier}
	
	Our second contribution in this paper is to propose a computationally efficient linear classifier in Section \ref{sec_PCR_method} that uses leading principal components (PCs) of the high-dimensional feature, with the number of retained PCs selected in a data-driven way. This PC-based classifier is one instance of a general two-step classification approach proposed in Section \ref{sec_general_method}. To be clear, it differs from naively applying standard  LDA, using plug-in estimates of the Bayes rule, on the leading PCs. 
	
	To motivate our approach, suppose that the factors $Z$ were observable. Then the optimal Bayes rule is to classify a new point $z\in \RR^K$ as
	\begin{equation}\label{rule_Z_0}
		g_z^*(z) = \1 \{z^\T \eta + \eta_0 \ge 0\}
	\end{equation}
	where
	\begin{equation}\label{rule_Z}
		\eta = \szy^{-1}(\a_1 - \a_0) ,\qquad \eta_0 = - {1\over 2}(\a _0 + \a _1 )^\T \eta + \log {\pi_1 \over \pi_0}.
	\end{equation}
	This rule is optimal in the sense that it has the smallest possible misclassification error. 
	Our approach  in Section \ref{sec_general_method} utilizes 
	an intimate connection	between the linear discriminant analysis and regression to reformulate the Bayes rule $g_z^*(z)$ as $\1\{z^\T\beta+\beta_0\ge 0\}$ with $\beta= \sz^{-1} \Cov(Z, Y)$ (and  $\beta_0$ is given in (\ref{eq_beta}) of Section \ref{sec_method}). The key difference is  the use of the {\em unconditional} covariance matrix $\sz$, as opposed to the {\em conditional} one $\szy$ in (\ref{rule_Z}). As a result, $\beta$  can be interpreted as the coefficient of regressing $Y$ on $Z$, suggesting to estimate  $z^\T \beta$ by 
	$z^\T (\Z^\T \Pi_n\Z)^+\Z^\T \Pi_n\Y$ via the method of least squares, again, in case   $\Z=(Z_1,\ldots,Z_n)^\T\in \RR^{n\times K}$ and $z\in\RR^K$ had been observed. Here 	$\Y=(Y_1,\ldots,Y_n)^\T\in \{0,1\}^n$, $\Pi_n = \bI_n - n^{-1}\b1_n\b1_n^\T$ is the centering projection matrix and $M^+$ denotes the Moore-Penrose inverse of any matrix $M$ throughout of this paper. 
	
	Since we only have access to $x\in\RR^p$, a realization of $X$,  $\X =  [ X_1 \cdots X_n ]^\T\in \RR^{n\times p}$, and $\Y\in\{0,1\}^n$,  it is natural to estimate the span of $z$ by   $B^\T x$ and to predict the span of  $\Pi_n \Z$ by $\Pi_n  \X B$, for some appropriate matrix $B$. This motivates us
	to estimate the inner-product $z^\T \beta$ by 
	\begin{align}
		(B^\T x)^\T (B^\T \X^\T  \Pi_n \X B)^+B^\T \X^\T \Pi_n \Y   ~ := ~  x^\T \wh\theta.
		\label{general B}  
	\end{align}
	By using a plug-in estimator $\wh \beta_0$ of $\beta_0$, the resulting rule $\wh g_x(x) = \1\{x^\T\wh\theta+\wh \beta_0\ge 0\}$ is a general two-step, regression-based classifier and the choice of $B$ is up to the practitioner.

	In this paper, we advocate the choice $B = \U_r\in\RR^{p\times r}$ where $\U_r$ contains the first $r$ right-singular vectors of $\Pi_n \X$, such that the projections $\Pi_n \X B$ become the first $r$ principal components of $\X$. Intuitively, this method has promise as 
	\cite{SW2002_JASA} proves that when $r$ is chosen as $K$, the projection $\Pi_n \X\U_K$ accurately predicts the span of $\Pi_n \Z$ under model (\ref{model_X}).  
	Since in practice $K$ is oftentimes unknown, we further use a data-driven selection of $K$ in Section \ref{sec_K} to construct our final PC-based classifier.
	The proposed procedure is computationally efficient. 
	Its only computational burden is that of computing the singular value decomposition (SVD) of $\X$.
	Guided by our theory, we also discuss  a cross-fitting strategy in Section \ref{sec_PCR_method} that improves the PC-based classifier by removing the dependence from using the data twice (one for constructing $\U_r$ and one for computing $\wh\theta$ in (\ref{general B})) when $p > n$ and the signal-to-noise ratio $\xi^*$ is weak.

	Retaining only a few principal components of the observed features and using them in subsequent regressions is known as principal component regression (PCR) \citep{SW2002_JASA}. It is a popular method for predicting $Y\in\RR$ from a high-dimensional feature vector $X\in\RR^p$ when both $X$ and $Y$ are generated via a low-dimensional latent factor $Z$. Most of the existing literature analyzes the performance of PCR when both $Y$ and $X$ are linear in $Z$, for instance, \cite{Bai-Ng-forecast, Bair_JASA, bing2020prediction, Partial_Factor_Modeling, SW2002_JASA,SW2002_JB}, just to name a few. When $Y$ is not linear in $Z$, little is known. An exception is  \cite{fan2017}, which studies the model $Y = h(\xi_1 Z, \cdots, \xi_q Z; \eps)$ and $X = AZ+W$ for some unknown general link function $h(\cdot)$. Their focus is only on estimation of $\xi_1, \ldots, \xi_q$, the sufficient predictive indices of $Y$, rather than analysis of the risk of predicting $Y$. As  $\EE[Y|Z]$ is not linear in $Z$ under our model (\ref{model_X}) and  (\ref{model_YZ}),  to the best of our knowledge,   analysis of the misclassifcation error under model  (\ref{model_X})  and  (\ref{model_YZ})  for a general linear classifier has not been studied elsewhere.

	\subsubsection{A general strategy of analyzing the  excess risk of $\wh g_x$ based on any matrix $B$}
	
	Our third contribution in this paper is  to provide a general theory for analyzing the  excess risk of the type of classifiers $\wh g_x$ that uses a generic matrix $B$ in (\ref{general B}). 	In Section \ref{sec_theory_general} we state our result in Theorem \ref{thm_general_risk}, a general bound for the excess risk of the classifier $\wh g_x$ based on a generic matrix $B$. 
	It depends on (i) how well we estimate $z^\T \beta + \beta_0$ and (ii) a margin condition on the conditional distributions $Z\mid Y=k$, $k\in \{0,1\}$, nearby the hyperplane $\{z\mid z^\T \beta + \beta_0=0\}$. This  is a different 
	approach than the usual one in the literature \cite{DGL} that provides  bounds on the excess risk $\PP\{\wh g(X)\ne Y \mid \D \}- R_z^*$ of a classifier $\wh g:\RR^p\to\{0,1\}$ by the expression $2\EE[| \eta(Z)- 1/2|\1\{ \wh g(X) \ne g_z^*(Z)\}\mid \D~]$, with
	$\eta(z) = \PP(Y = 1|Z=z)$,
	and
	involves analyzing the behavior of $\eta(Z) $ near $1/2$ (see our detailed discussion in Remark \ref{rem_margin}).
	The analysis of Theorem \ref{thm_general_risk} is powerful in that it can easily be generalized to any distribution of $Z\mid Y$, as explained in Remark \ref{rem_extension}.  
Our second main result in  Theorem \ref{thm_general_risk_explicit} of Section \ref{sec_theory_general} provides explicit rates of convergence of the excess risk of $\wh g_x$ for a generic $B$ and clearly delineates three key quantities that need to be controlled as introduced therein. The established rates of convergence reveal the same phase transition in $\Dt$ from the lower bounds. It is worth mentioning that the analysis of Theorem \ref{thm_general_risk_explicit} is more challenging under model (\ref{model_X}) and (\ref{model_YZ}) than the classical LDA setting (\ref{model_Gaussian}) in which the excess risk of any linear classifier in $X$ has a closed-form expression.

\subsubsection{Optimal rates of convergence of the PC-based classifier}

Our fourth contribution is to apply
the general theory in Section \ref{sec_theory_general}
to analyze the PC-based classifiers. Consistency of our proposed estimator of $K$ is established in Theorem \ref{thm_PCR_K_hat} of Section \ref{sec_PCLDA_K_hat}.  In Theorem \ref{thm_PCR} of Section \ref{sec_PCLDA_K}, we
derive  explicit  rates of convergence of the excess risk of the PC-based classifier that uses $B = \U_K$. 
The obtained rate of convergence exhibits an interesting interplay between the sample size $n$ and the dimensions $K$ and $p$ through the quantities $K/n$,  $\xi^*$ and $\Dt$. Our analysis also covers the low signal setting  $\Dt = o(1)$, a regime that has not been analyzed even in the existing literature of  classical LDA. Our theoretical results are valid for both fixed and growing $K$ and are also valid even when $p$ is much lager than $n$.
In Theorem \ref{thm_PCR_indep} of Section \ref{sec_PCLDA_K}, we also show that a PC-based LDA that uses either auxiliary data or sample splitting could surprisingly yield faster rates of convergence of the excess risk by removing the dependence between $\U_K$ and $\X$. These faster rates are further shown to be minimax optimal, up to a logarithmic factor, in Corollary \ref{cor_PCR} of Section \ref{sec_optimality_LDA}. The benefit of using auxiliary data or sample splitting has also been recognized in other problems, such as the problem of estimating the optimal instrument in sparse high-dimensional instrumental variable model \citep{belloni2012sparse} and the problem of inference on a low-dimensional parameter in the presence of high-dimensional nuisance parameters \citep{DDML}.

\subsubsection{Extension to multi-class classification}

Our fifth contribution is to extend the general two-step classification procedure in Section \ref{sec_method} to handle multi-class classification problems in Section \ref{sec_multi_level}. Rates of convergence of the excess risk of the proposed multi-class classifier 
are derived in Theorem \ref{thm_risk_multi}. PC-based classifiers are analyzed subsequently in Corollary \ref{cor_risk_multi}. Our theory is the first to explicitly characterize dependence of the excess risk on the number of classes, and to cover the weak separation case when $\Dt \to 0$.\\

The paper is organized as follows. 
In Section \ref{sec_benchmark}, we provide 
an oracle benchmark that quantifies the excess risk of the optimal classifier based on $X$. We state the minimax lower bounds of the excess risk for any classifier in Section \ref{sec_lower_bound}. 
In Section \ref{sec_method},
we present a connection between the linear discriminant classifier by using $Z$ and regression of $Y$ onto $Z$. This key observation leads to our proposed PC-based classifier. Furthermore, we propose
a data-driven selection of the number of retained principal components. 
A general theory is stated in Section \ref{sec_theory_general} for analyzing the excess risk of the classifier $\wh g_x$ that uses any $B$ for the estimate $\wh \theta$ in (\ref{general B}). In Section \ref{sec_theory_application} we apply the general result to analyze the PC-based classifiers. 
Main simulation results are presented in Section \ref{sec_sim} and a  real data analysis is given  in Section \ref{sec_real_data}. Extension to multi-class classification is studied in Section \ref{sec_multi_level}. All the proofs and additional simulation results are deferred to the Appendix.\\

\noindent{\bf Notation:} 
We use the common notation  $\varphi(x)=\exp(-x^2/2) / \sqrt{2\pi}$ for the standard normal density, and denote by $\Phi(x)=\int  \varphi(t)\1\{ t\le x\} \, {\rm d} t$  its c.d.f.. For any positive integer $d$, we write $[d] := \{1,\ldots, d\}$.
For any vector $v$, we use $\|v\|_q$ to denote its $\ell_q$ norm for $0\le q\le \i$.  We also write $\|v\|_Q^2 = v^\T Q^{-1}v$ for any commensurate, invertible square matrix $Q$. For any real-valued matrix $M\in \RR^{r\times q}$, we use $M^+$ to denote the Moore-Penrose inverse of $M$, and $\sigma_1(M)\ge \sigma_2(M)\ge \cdots \ge \sigma_{\min(r,q)}(M)$ to denote the singular values of $M$ in non-increasing order. We define the operator norm $\|M\|_{\op}=\sigma_1(M)$. 
For a symmetric positive semi-definite matrix $Q\in \RR^{p\times p}$, we use $\lambda_1(Q)\ge \lambda_2(Q)\ge \cdots \ge \lambda_p(Q)$ to denote the eigenvalues of $Q$ in non-increasing order.  We write $Q \succ 0$ if $Q$ is strictly positive definite. 
For any two sequences $a_n$ and $b_n$, we write $a_n\lesssim b_n$ if there exists some constant $C$ such that $a_n \le Cb_n$. The notation $a_n\asymp b_n$ stands for $a_n \lesssim b_n$ and $b_n \lesssim a_n$. For two numbers $a$ and $b$, we write $a\wedge b = \min\{a, b\}$ and $a\vee b =\max\{a,b\}$. 
We use $\bI_d$ to denote the $d\times d$ identity matrix and use $\b1_d$ ($\0_d$) to denote the vector with all ones (zeroes). For $d_1\ge d_2$, we use $\cO_{d_1\times d_2}$ to denote the set of all $d_1\times d_2$ matrices with orthonormal columns.
Lastly, we use $c,c',C,C'$ to denote positive and finite absolute constants that unless otherwise indicated can change from line to line.

\section{Excess risk and its minimax optimal rates of convergence}\label{sec_excess_risk_lower_bound}

We start in Section \ref{sec_benchmark} by introducing the oracle benchmark relative to which the excess risk is defined. Minimax optimal rates of convergence of the excess risk are derived in Section \ref{sec_lower_bound}.

\subsection{Oracle benchmark}\label{sec_benchmark}

Since our goal is to predict the Bayes rule $\1\{z^\T \eta +\eta_0\ge 0\}$ under model (\ref{model_YZ}), it is natural to choose the oracle risk $R_z^*$ in (\ref{eq_RxRz}) as our benchmark, as opposed to $R_x^*$. Furthermore, we always have the explicit expression
\begin{equation}\label{eqn_Bayes_risk}
	R_z^* =  
	1 - \pi_1\Phi \left({\Dt\over 2} + {\log {\pi_1 \over \pi_0} \over \Dt}\right) - \pi_0 \Phi \left({\Dt\over 2} - {\log {\pi_1 \over \pi_0} \over \Dt}\right),
\end{equation}
see, for instance, \cite[Section 8.3, pp 241--244]{Izenman-book}. 
Here,
\begin{equation}\label{def_Dt}
	\Dt^2 := (\a_0-\a_1)^\T\C^{-1}(\a_0-\a_1)
\end{equation}
is the Mahalanobis distance between the conditional distributions $Z \mid Y=1\sim N_K(\a_1, \szy)$ and $Z \mid Y=0\sim N_K(\a_0, \C)$. In particular, when $\pi_0 = \pi_1$, the expression in (\ref{eqn_Bayes_risk})  simplifies to
$
R_z^* = 1 - \Phi\left(\Dt/2 \right).
$

\begin{remark}
	\label{rem_cases}    
	It is immediate from (\ref{eqn_Bayes_risk}) that $\Dt\to\i$ implies  $R_z^*\to0$. The case of zero Bayes error $R_z^*$ represents the easiest classification problem and 
	we can expect fast rates of the excess risk.
	If $\Dt\to0$,  the Bayes risk $R_z^*$ converges to $\min\{\pi_0, \pi_1\}$. 
	When $\pi_0=\pi_1=  1/2$, the limit reduces to random guessing, which represents  the hardest classification problem and slow rates are to be expected. When $\pi_0\ne \pi_1$, we can expect fast rates, too, since the asymptotic  Bayes rule always votes for the same label, to wit, the one  with the largest unconditional probability.
	Thus,  in a way, $\Dt\asymp1$ is the most interesting case to investigate.
\end{remark}

The lemma  below shows that $R_x^* \ge R_z^*$, implying that $R_z^*$ is also an ambitious benchmark.

\begin{lemma}\label{lem:RxvsRz}
	Under model (\ref{model_X}) and {\rm (i) -- (iii)}, we have
	$$  R_x^*   = \inf_{g:\ \RR^p\to\{0,1\}} \PP\{ g(AZ+W) \ne Y \} ~ \ge ~  R_z^* =\inf_{h:\ \RR^K\to\{0,1\}} \PP\{ h(Z) \ne Y \} .$$
\end{lemma}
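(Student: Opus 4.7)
The plan is to exploit the fact that under assumption (i), $W$ is independent of $(Y,Z)$, which together with $X=AZ+W$ forces the Markov chain structure $Y \to Z \to X$. In particular, $Y$ and $X$ are conditionally independent given $Z$, so $\PP(Y=1\mid X) = \EE[\,\PP(Y=1\mid Z)\,\mid X\,]$. This should let me derive the inequality as a Jensen-type consequence of the concavity of $t\mapsto \min\{t,1-t\}$ on $[0,1]$.

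Concretely, I would first rewrite both Bayes risks in closed form. Standard Bayes-classifier theory gives
\[
R_x^* \;=\; \EE\!\left[\min\{\eta(X),\, 1-\eta(X)\}\right], \qquad R_z^* \;=\; \EE\!\left[\min\{p(Z),\, 1-p(Z)\}\right],
\]
where $\eta(x)=\PP(Y=1\mid X=x)$ and $p(z)=\PP(Y=1\mid Z=z)$. Next I would verify the Markov identity $\eta(X)=\EE[p(Z)\mid X]$: by the tower property and $Y\perp X\mid Z$,
\[
\eta(X) \;=\; \EE[\,\1\{Y=1\}\mid X\,] \;=\; \EE\bigl[\,\EE[\,\1\{Y=1\}\mid Z,X\,]\mid X\,\bigr] \;=\; \EE[p(Z)\mid X].
\]
The conditional independence $Y\perp X\mid Z$ is the only non-cosmetic step; it follows directly from (\ref{model_X}) together with (i), since given $Z$ the random variable $X$ is a measurable function of $Z$ and the $Y$-independent noise $W$.

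With that identity in hand, applying conditional Jensen's inequality to the concave function $\varphi(t)=\min\{t,1-t\}$ yields, almost surely,
\[
\min\{\eta(X),\,1-\eta(X)\} \;=\; \varphi\!\bigl(\EE[p(Z)\mid X]\bigr) \;\ge\; \EE\!\left[\varphi(p(Z))\mid X\right] \;=\; \EE\!\left[\min\{p(Z),\,1-p(Z)\}\,\big|\,X\right].
\]
Taking expectations and using the tower property gives $R_x^*\ge R_z^*$, which is the desired conclusion. Note that assumptions (ii) and (iii) play no role here—the lemma really only uses the independence of $W$ from $(Y,Z)$.

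The proof is short and I do not anticipate genuine obstacles; the only point needing care is articulating cleanly why $Y\perp X\mid Z$, and I would phrase it via the representation of $X$ as a deterministic function of $(Z,W)$ with $W$ independent of $(Y,Z)$, so that standard measure-theoretic arguments give the conditional independence. An alternative (and equally short) route, which I would mention in passing, is the derandomization argument: given any $g:\RR^p\to\{0,1\}$, drawing an independent copy $\tilde W\stackrel{d}{=}W$ and setting $h(Z)=g(AZ+\tilde W)$ yields a randomized classifier of $Z$ with exactly the same risk as $g$, and since $R_z^*$ bounds the risk of every deterministic classifier of $Z$ it also bounds the average over any randomization. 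I prefer the Jensen version because it makes the concavity of $t\mapsto \min\{t,1-t\}$—which is the true source of the inequality—visible.
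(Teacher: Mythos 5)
Your proof is correct, but your preferred route is genuinely different from the one the paper uses. The paper's argument is exactly the ``derandomization'' alternative you mention in passing: it writes $R_x^* \ge \EE_W \inf_g \PP\{g(AZ+W)\ne Y \mid W\}$, observes that for each fixed $w$ the map $z \mapsto g(Az+w)$ is itself a candidate $h:\RR^K\to\{0,1\}$ so the conditional infimum is at least $R_z^*$, and then uses the independence $W\perp(Y,Z)$ to drop the conditioning. That argument is shorter and avoids invoking the posterior-probability representation of the Bayes risk; it also visibly works for an arbitrary noise distribution without any appeal to $\sigma$-algebra gymnastics. Your Jensen route is a nice complement: by passing through $R_x^* = \EE[\varphi(\eta(X))]$ with $\varphi(t)=\min\{t,1-t\}$ and the Markov identity $\eta(X)=\EE[p(Z)\mid X]$, it exposes the structural reason the inequality holds --- the Bayes risk is a concave functional of the posterior, and passing from $Z$ to the coarser observation $X$ is an averaging operation --- and it would generalize immediately to any bounded concave loss on the posterior (e.g., entropy) rather than just $0$--$1$ loss. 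The one step to be careful about is the conditional-independence claim $Y\perp X\mid Z$; you articulate the right justification (given $Z$, $X$ is a measurable function of $Z$ and the $Y$-independent noise $W$), and your observation that only assumption (i), not (ii) or (iii), is actually used in the lemma is also correct and consistent with the paper's proof.
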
 
\begin{proof}
	See Appendix \ref{app_proof_RxvsRz}.
\end{proof}

If $W=\0_p$, the inequality in Lemma \ref{lem:RxvsRz} obviously becomes an equality. More generally, if
the signal for predicting $Z$ from $X$ under model (\ref{model_X}) 
is large, we expect  the gap between $R_x^*$ and $R_z^*$ to be small. To characterize such dependence, we introduce the following parameter space of $\theta := (A, \szy, \sw, \a_1, \a_0, \pi_1, \pi_0)$, 
\begin{align}\label{def_Theta}
	\Theta(\lambda, \sigma, \Dt) &= \left\{\theta:  \lambda_j(\sw)  \asymp \sigma^2, \forall j\in [p],~  \lambda_k(A\szy A^\T) \asymp \lambda,\forall k\in [K],~  \pi_0 = \pi_1\right\}
\end{align}
and  recall $\Dt$ from  (\ref{def_Dt}).
For any $\theta \in \Theta(\lambda,\sigma,\Dt)$, the quantity $\lambda/\sigma^2$ can be treated as the signal-to-noise ratio for predicting $Z$ from $X$ given $Y$ under model (\ref{model_X}). The following lemma shows how the gap between $R_x^*$ and $R_z^*$ depends on $\lambda/\sigma^2$ and $\Dt$ in the special  case   $W\sim N_p(\0_p,\sw)$.

\begin{lemma}\label{lem_risk}
	Under model (\ref{model_X}) and {\rm (i) -- (iv)}, suppose $W\sim N_p(\0_p,\sw)$ with $\sw \succ 0 $. For any $\theta \in \Theta(\lambda,\sigma,\Dt)$, we have
	\[
	{\Dt \over 1+(\lambda/\sigma^2)}\exp\left\{-{\Dt^2 \over 8}\right\} \lesssim  ~ R_x^* - R_z^* ~ \lesssim  {\Dt \over 1+(\lambda/\sigma^2)}\exp\left\{-{\Dt^2 \over 8} + {\Dt^2 \over 8(1+\lambda/\sigma^2)}\right\}.
	\]
\end{lemma}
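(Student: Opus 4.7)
Under the Gaussian assumption on $W$ together with the Gaussian mixture structure on $Z\mid Y$, the conditional law of the feature vector becomes
\[
X \mid Y = k ~\sim~ N_p\bigl(A\a_k,\ A\C A^\T + \sw\bigr),\qquad k\in\{0,1\},
\]
with a common conditional covariance. Since $\pi_0 = \pi_1$ for every $\theta\in\Theta(\lambda,\sigma,\Dt)$, the closed-form expression for the Bayes error in (\ref{eqn_Bayes_risk}) simplifies for both the ``$X$-problem'' and the ``$Z$-problem'', yielding
\[
R_x^* = 1-\Phi(\Dt_X/2),\qquad R_z^* = 1-\Phi(\Dt/2),
\]
where $\Dt_X^2 := (\a_1-\a_0)^\T A^\T (A\C A^\T+\sw)^{-1} A(\a_1-\a_0)$. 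The task therefore reduces to comparing $\Dt_X$ and $\Dt$ and then to propagating the comparison through $\Phi$.

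The first technical step is an algebraic reduction. Writing $P := A^\T \sw^{-1} A$ and applying the Woodbury identity to $(A\C A^\T+\sw)^{-1}$, a short calculation gives the clean expression
\[
A^\T(A\C A^\T+\sw)^{-1}A ~=~ \C^{-1} - \C^{-1}(\C^{-1}+P)^{-1}\C^{-1}.
\]
Substituting $u := \C^{-1/2}(\a_1-\a_0)$ and $T := \C^{1/2} P\, \C^{1/2}$ then yields the identity
$\Dt^2 - \Dt_X^2 = u^\T (\bI_K + T)^{-1} u$, while $\|u\|_2^2 = \Dt^2$. Because $T$ has the same nonzero eigenvalues as $\sw^{-1/2} A\C A^\T \sw^{-1/2}$, Ostrowski's theorem combined with the parameter space assumption $\lambda_j(\sw)\asymp\sigma^2$ and $\lambda_k(A\C A^\T)\asymp\lambda$ gives $\lambda_k(T) \asymp \lambda/\sigma^2$ for all $k\in[K]$. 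Consequently,
\[
\Dt^2 - \Dt_X^2 ~\asymp~ {\Dt^2 \over 1 + \lambda/\sigma^2},\qquad \text{so}\qquad \Dt - \Dt_X ~=~ {\Dt^2-\Dt_X^2 \over \Dt+\Dt_X} ~\asymp~ {\Dt \over 1 + \lambda/\sigma^2},
\]
using that $\Dt_X\le \Dt$ and $\Dt+\Dt_X\ge \Dt$.

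The final step transports this comparison to the risks via the mean value theorem: for some $t\in[\Dt_X/2,\Dt/2]$,
\[
R_x^* - R_z^* ~=~ \Phi(\Dt/2)-\Phi(\Dt_X/2) ~=~ \varphi(t)\,{\Dt-\Dt_X \over 2}.
\]
The lower bound of the lemma follows immediately from $\varphi(t)\ge \varphi(\Dt/2) = (2\pi)^{-1/2}\exp(-\Dt^2/8)$ and the lower bound on $\Dt-\Dt_X$. For the upper bound I would use $\varphi(t)\le \varphi(\Dt_X/2)=(2\pi)^{-1/2}\exp(-\Dt_X^2/8)$ together with
$\Dt_X^2/8 \ge \Dt^2/8 - c\,\Dt^2/[8(1+\lambda/\sigma^2)]$, which comes from the upper bound on $\Dt^2-\Dt_X^2$ and produces exactly the extra exponential factor $\exp\{\Dt^2/[8(1+\lambda/\sigma^2)]\}$ appearing in the statement.

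\paragraph{Main obstacle.} The delicate point is the eigenvalue bookkeeping in the middle step. The parameter space only controls eigenvalues of $A\C A^\T$ and $\sw$ \emph{separately}, but the quantity that actually governs $\Dt^2-\Dt_X^2$ is the spectrum of $\C^{1/2} A^\T \sw^{-1} A\C^{1/2}$, a ``sandwiched'' matrix. Rewriting it through its nonzero-spectrum twin $\sw^{-1/2}(A\C A^\T)\sw^{-1/2}$ and invoking Ostrowski is what bridges the two, and care must be taken that the same two-sided eigenvalue control of $\Dt^2-\Dt_X^2$ survives the passage $\Dt-\Dt_X = (\Dt^2-\Dt_X^2)/(\Dt+\Dt_X)$ so that the constants in $\asymp$ can be absorbed into the $\lesssim$ in the final inequality.
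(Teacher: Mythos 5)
Your proposal is correct and follows essentially the same route as the paper's proof: both reduce to comparing $\Dt$ and $\Dt_X$ via the identity $\Dt^2-\Dt_X^2 = (\a_1-\a_0)^\T\C^{-1/2}(\bI_K+H)^{-1}\C^{-1/2}(\a_1-\a_0)$ with $H=\C^{1/2}A^\T\sw^{-1}A\C^{1/2}$ (your $T$), bound the eigenvalues of $H$ via $\lambda\,/\,\sigma^2$, and conclude by the mean-value theorem applied to $\Phi$. Your invocation of Ostrowski's theorem is just a slightly more explicit version of the paper's step $\lambda_K(H)\ge\lambda_K(A\C A^\T)/\lambda_1(\sw)$ and its counterpart for $\lambda_1(H)$, so there is no substantive difference.
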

\begin{proof}
	See Appendix \ref{app_proof_benchmark}.
\end{proof}

\begin{remark}\label{rem_risk}
	The upper bound of 
	Lemma \ref{lem_risk} reveals that $\lambda/\sigma^2 \to\infty$ implies $R_x^*-R_z^* \to0$ irrespective of the magnitude of $\Dt$. 
	Regarding to $\Dt$, we also find that  $R_x^*-R_z^* \to 0$ in the following scenarios: (1) if $\Dt\to0$, irrespective of  $\lambda/\sigma^2$,  (2) if $\Dt\to\infty$  and  $\lambda/\sigma^2\not\to0$, (3) if  $\Dt\asymp1 $ and $\lambda/\sigma^2\to \infty$.

	The lower bound of Lemma \ref{lem_risk}, on the other hand, establishes the irreducible error for not observing $Z$. This term will naturally appear in the minimax lower bounds of the excess risk derived in the next section.

\end{remark}

\subsection{Minimax lower bounds of the excess risk}\label{sec_lower_bound}

In this section, we establish minimax lower bounds of the excess risk $R_x(\wh g) - R_z^*$ under model (\ref{model_X})  and  (\ref{model_YZ}) for any classifier $\wh g$. Here,
\begin{align}
	R_x(\wh g) &:= \PP\left\{ \wh g(X) \ne Y  \mid \D \right\}
\end{align}
is the (conditional) misclassification error, given the training data 
$$\D:= (\X, \Y)=\left\{ (X_1,Y_1),\ldots(X_n,Y_n)\right\}.$$
The results are established over the parameter space $\Theta(\lambda, \sigma, \Dt)$ in (\ref{def_Theta}) which is characterized by three quantities: $\lambda$, $\sigma^2$ and $\Dt$, all of which are allowed to grow with the sample size $n$. Our minimax lower bounds of the excess risk fully characterize the dependence on these quantities, in addition to the dimensions $K$ and $p$ and the sample size $n$.

We use $\PP^{\D}_{\theta}$ to denote the set of all distributions of $\D$ parametrized by $\theta\in \Theta(\lambda, \sigma, \Dt)$ under model (\ref{model_X}) and (\ref{model_YZ}). For simplicity, we drop the dependence on $\theta$ for both $R_x(\wh g)$ and $R_z^*$.
Define 
\begin{equation}\label{def_omega_star}
	\omega_n^* = \sqrt{{K\over n}+ {\sigma^2\over \lambda} \Dt^2 + {\sigma^2\over \lambda} {\sigma^2 p \over \lambda n}\Dt^2}.
\end{equation}
The following theorem states the minimax lower bounds of the excess risk for any classifier over the parameter space $\Theta(\lambda, \sigma, \Dt)$. 

\begin{thm}\label{thm_lowerbound}
	Under model (\ref{model_X}), assume {\rm (i) -- (iv)}, $K\ge 2$, 
	$K /(n \wedge p) \le c_1 $, $\sigma^2/\lambda \le c_2$ and $\sigma^2 p/(\lambda n) \le c_3$ for some sufficiently small constants $c_1, c_2, c_3>0$.  There exists some constants $c_0\in (0, 1)$ and $C>0$ such that
	\begin{enumerate}
		\item If $\Dt \asymp 1$, then 
		\[
		\inf_{\wh g}\sup_{\theta \in \Theta(\lambda, \sigma, \Dt) } \PP^{\D}_{\theta}\left\{
		R_x(\wh g) - R^*_z \ge C \left(\omega_n^* \right)^2
		\right\}\ge c_0.
		\]
		\item If $\Dt \to \i$ and $\sigma^2/\lambda = o(1)$ as $n\to \i$, then
		\[
		\inf_{\wh g}\sup_{\theta \in \Theta(\lambda,\sigma, \Dt)} \PP^{\D}_{\theta}\left\{
		R_x(\wh g) - R^*_z \ge C \left(\omega_n^* \right)^2 \exp\left\{- \left[{1\over 8} + o(1)\right]\Dt^2\right\}
		\right\}\ge c_0.
		\]
				\item If $\Dt \to 0$ as $n\to \i$, then 
		\[
		\inf_{\wh g}\sup_{\theta \in \Theta(\lambda,\sigma, \Dt)} \PP^{\D}_{\theta}\left\{
		R_x(\wh g) - R^*_z \ge C \min\left\{{\omega_n^*\over \Dt},1\right\} \omega_n^*
		\right\}\ge c_0.
		\]
	\end{enumerate} 
	The infima in all statements are taken over all classifiers. 
\end{thm}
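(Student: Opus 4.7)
The plan is to build three families of hypotheses inside $\Theta(\lambda,\sigma,\Dt)$ tailored to the three summands of $(\omega_n^*)^2$ in (\ref{def_omega_star}), and then account for the $\Dt$-dependent factors in the three cases by tracking how the excess risk degrades with perturbations of the Bayes decision boundary in $Z$-space. The very first step is to handle the fact that $R_x(\wh g) - R_z^*$ is not a semi-distance: I would identify every competing classifier with an affine hyperplane in $\RR^p$ (which is without loss, since on the Gaussian sub-class $X \mid Y$ is Gaussian and the Bayes rule is linear), and show by a direct computation that $R_x(\wh g) - R_x^*$ is bounded below by a functional of the angle and offset between $\wh g$'s hyperplane and the true pulled-back Bayes hyperplane, carrying a factor of $\varphi(\Dt/2)$. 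This surrogate loss satisfies a local triangle-type inequality and unlocks the standard Fano/Assouad machinery.

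For the $K/n$ term I would restrict to $W \sim N_p(\0_p, \sigma^2 \bI_p)$ with $A$ fixed and vary $\a_1-\a_0$ along the vertices of a scaled hypercube in $\RR^K$; this reduces to a classical LDA sub-problem where a Varshamov-Gilbert / Fano argument yields the rate. For the $(\sigma^2/\lambda)\Dt^2$ term I would simply invoke Lemma \ref{lem_risk}: because $R_x(\wh g)-R_z^* \ge R_x^*-R_z^*$ for every $\wh g$, any single Gaussian $\theta\in\Theta(\lambda,\sigma,\Dt)$ already furnishes the irreducible-error lower bound and no testing is required. The third term is the interesting one; I would adopt the hypercube construction on the Grassmannian $\mathcal{G}(K,p)$ from \cite{vu2013minimax} to produce $M$ loading matrices $A^{(1)},\ldots,A^{(M)}$ with $A^{(m)\T}A^{(m)} = \lambda \bI_K$ and $\szy = \bI_K$ fixed, $\log M \asymp K(p-K)$, and pairwise principal-angle distance $\asymp \delta$. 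Each induces a distinct Bayes discriminant; using the wrong one incurs excess risk $\gtrsim \delta^2 (\lambda/\sigma^2) \Dt^2 \varphi(\Dt/2)$ via the surrogate-loss bound. A Woodbury/Weyl computation of the KL-divergence between the induced $X|Y$ Gaussians scales like $n\delta^2 (\lambda/\sigma^2)^2$, so matching to $\log M$ gives $\delta^2 \asymp (\sigma^2/\lambda)^2 p /n$, hence the desired $(\sigma^2/\lambda)^2 (p/n)\Dt^2$.

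The phase transition in $\Dt$ is absorbed into the surrogate factor $\varphi(\Dt/2)$ from the first step: for $\Dt \asymp 1$ it is a constant, giving the clean $(\omega_n^*)^2$ rate in case 1; for $\Dt \to \infty$ it supplies the extra $\exp\{-[1/8+o(1)]\Dt^2\}$ factor of case 2; for $\Dt \to 0$ a more careful expansion shows that when the relevant hyperplane perturbation exceeds $\Dt$, the map from perturbation to excess risk becomes essentially linear rather than quadratic, producing the $\min\{\omega_n^*/\Dt,1\}\omega_n^*$ behaviour of case 3 rather than $(\omega_n^*)^2$.

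The principal obstacles I anticipate are: (a) carrying the surrogate-loss lower bound uniformly through all three $\Dt$-regimes while preserving the $\exp(-\Dt^2/8)$ factor with explicit constants; (b) completing the KL-divergence computation for the Gaussian hypercube over $A$ in the regime $p \gg n$, where the naive pairwise KL diverges and one must either shrink $\delta$ further or recast the bound via an Assouad-style coupling that controls only neighbour-wise divergences; and (c) verifying that all constructed $A^{(m)}$ actually lie in $\Theta(\lambda,\sigma,\Dt)$, i.e.\ that their eigen-gaps and the induced Mahalanobis separation match the parameter-space definition up to absolute constants uniformly over the hypercube.
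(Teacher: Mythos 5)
Your high-level plan tracks the paper's proof closely: decompose $R_x(\wh g)-R_z^*$ into $\bigl(R_x(\wh g)-R_x^*\bigr)+\bigl(R_x^*-R_z^*\bigr)$, dispense with the second term via Lemma \ref{lem_risk}, and get the other two summands of $(\omega_n^*)^2$ from two hypercube families (one perturbing $\a$, one perturbing the column space of $A$) run through a Fano-type argument, with the $e^{-\Dt^2/8}$ factor carried by the Gaussian density at the margin. That is exactly the paper's architecture, so the difficulties you list in (a)--(c) are the right ones to anticipate. However there are two concrete gaps.

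First, the opening move of ``identify every competing classifier with an affine hyperplane, which is without loss'' is not valid. The infimum in the minimax bound is over \emph{all} measurable $\wh g:\RR^p\to\{0,1\}$, and restricting to half-spaces only shrinks that class, which can only \emph{raise} the infimum and hence gives a weaker statement, not an equivalent one. The Bayes rule being linear does not let you project an arbitrary $\wh g$ onto a hyperplane without changing its risk in a uniformly controlled way across the hypothesis set. The paper avoids this by defining the surrogate loss $L_\theta(\wh g)=\PP_\theta\{\wh g(X)\ne g_\theta^*(X)\}$ for arbitrary $\wh g$, bounding the excess risk from below by a function of $L_\theta(\wh g)$ (display (\ref{lb_key})), and then proving the local triangle-type bound $L_{\theta^{(i)}}(\wh g)+L_{\theta^{(j)}}(\wh g)\ge 2\gamma$ for all measurable $\wh g$ via the inclusion $\{g^*_{\theta^{(i)}}\ne g^*_{\theta^{(j)}}\}\subseteq \{\wh g\ne g^*_{\theta^{(i)}}\}\cup\{\wh g\ne g^*_{\theta^{(j)}}\}$, followed by a fairly delicate reduction of the resulting $p$-dimensional Gaussian integral to a planar one (Lemmas \ref{lem_lb_L} and \ref{lem_lb_L_prime}). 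This is the technical heart of the proof and is where most of the effort is spent; your sketch replaces it with an assertion that does not hold.

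Second, your Grassmannian packing with $\log M\asymp K(p-K)$ does not combine with the rest of your numbers to yield the target. With KL scaling $n\delta^2(\lambda/\sigma^2)^2$ and $\log M\asymp Kp$ you would get $\delta^2\asymp K p\,\sigma^4/(\lambda^2 n)$, not the $(\sigma^2/\lambda)^2 p/n$ you claim, and plugging your claimed per-pair excess-risk gain $\delta^2(\lambda/\sigma^2)\Dt^2$ into the claimed $\delta^2$ gives $(\sigma^2/\lambda)(p/n)\Dt^2$, which misses the required $(\sigma^2/\lambda)^2(p/n)\Dt^2$ from (\ref{def_omega_star}) by a factor of $\sigma^2/\lambda$. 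The paper instead perturbs a single column of $B$ via the Vu--Lei hypercube in $\RR^{p-K}$ (so $\log M\gtrsim p-K$, not $K(p-K)$), computes the KL for this restricted family exactly (Lemma \ref{lem_KL}), chooses $\eps^2\asymp (p-K)/(n\lambda\Dt^2)$ to match, and only then shows the surrogate separation is $\gtrsim e^{-\Dt_x^2/8}\sqrt{\eps^2/\lambda}$; the $p>n$ divergence you worry about in (b) is handled within that accounting, not by a separate Assouad coupling. Your point (c) about keeping the hypotheses inside $\Theta(\lambda,\sigma,\Dt)$ is correct and is addressed in the paper by enforcing $B^{(j)}\in\cO_{p\times K}$ and $\|\a^{(j)}\|_2^2=\Dt^2/4$ identically across the family.
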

\begin{proof}	
	The proof of Theorem \ref{thm_lowerbound} is deferred to Appendix \ref{app_proof_thm_lowerbound}.
\end{proof}

The lower bounds in Theorem \ref{thm_lowerbound} consist of three terms: the one related with $K/n$ is  the optimal rate of the excess risk
even when $Z$ were observable; the second one related with $\sigma^2 / \lambda$ is the irreducible error for not observing $Z$ (see, Lemma \ref{lem:RxvsRz}); the last one involving $\sigma^2 p / (\lambda n)$ is the price to pay for estimating the column space of $A$.
Although the third term could get absorbed by the second term as $\sigma^2p/(\lambda n) \le c_3$, we incorporate it here for transparent interpretation. The lower bounds in Theorem \ref{thm_lowerbound} are tight as we show in Section \ref{sec_optimality_LDA} that there exists a classifier whose excess risk has a matching upper bound.

\begin{remark}[Phase transition in $\Dt$]\label{rem_Dt}
	Recall from (\ref{def_Dt}) that $\Dt$ quantifies the separation between $N(\a_0, \szy)$ and $N(\a_1, \szy)$.
	We see in Theorem \ref{thm_lowerbound} a phase transition of the rates of convergence of the excess risk as $\Dt$ varies.  When $\Dt$ is of constant order, the  excess risk has minimax convergence rate 
	\[
	{K\over n}+ {\sigma^2\over \lambda} + {\sigma^2\over \lambda} {\sigma^2 p \over \lambda n}.
	\]
	When $\Dt\to\infty$, we see that the minimax rate of convergence of the excess risk gets faster exponentially in $\Dt^2$. For instance, if $\Dt^2 \ge  C_0\log n$ for some constant $C_0>0$, then the minimax rate already becomes {\em polynomially faster in $n$} as
	\[
	\left[{K\over n}+ {\sigma^2\over \lambda} + {\sigma^2\over \lambda} {\sigma^2 p \over \lambda n}\right]{1\over n^{C_1}}
	\]
	for some $C_1>0$ depending on $C_0$. 
	 The condition $\sigma^2/\lambda=o(1)$ for $\Dt\to \i$ can be removed, and the lower bound remains the same except the factor $(1/8)$ gets replaced by $(1/8)(1/(1+\lambda/\sigma^2))$.
	Finally, when $\Dt \to 0$, a more challenging, yet important case, the minimax convergence rate of the excess risk gets slower. It is worth noting that although the oracle Bayes risk $R_z^* \to 1/2$ when $\Dt\to 0$, the minimax excess risk still converges to zero at least in $\omega_n^*$-rate.
	If $\omega_n^* \lesssim \Dt$, the convergence gets faster as
	\[
	{K\over n}{1\over \Dt}+ {\sigma^2\over \lambda}\Dt + {\sigma^2\over \lambda} {\sigma^2 p \over \lambda n}\Dt.
	\]
\end{remark}

\begin{remark}[Proof technique]
	To prove Theorem \ref{thm_lowerbound}, the three terms in the lower bound are derived separately in the setting where $X\mid Y$ is Gaussian. Since, for any  classifier $\wh g$, 
	\[
	R_x(\wh g) - R_z^* = \left(R_x(\wh g) - R_x^*\right) + \left(R_x^* - R_z^*\right),
	\]
	in view of Lemma \ref{lem:RxvsRz}, it suffices to prove
	the two terms related with $K/n$ and $\sigma^2 p  / (\lambda n)$ constitute the lower bounds of $R_x(\wh g) - R_x^*$. In fact, as a byproduct of our result, we also derive  minimax lower bounds of the excess risk relative to $R_x^*$. This derivation is based on constructing subsets of $\Theta(\lambda, \sigma, \Dt)$ by fixing either $A$ or $\a_0$ and $\a_1$ separately. The choice of $A$ is based on the hypercube construction for matrices with orthonormal columns \citep[Lemma A.5]{vu2013minimax}.  
	The analyses of both terms are non-standard as the 
	excess risk is not a semi-distance, as required by standard techniques of proving minimax lower bounds. Based on a reduction scheme established in Appendix \ref{app_proof_thm_lowerbound}, we show that proving Theorem \ref{thm_lowerbound} suffices to establish a minimax lower bound of the following loss function 
	\[
	L_\theta(\wh g) := \PP_\theta \left\{
	\wh g(X) \ne g_\theta^*(X) \mid \D  
	\right\}.
	\]
	Here $\PP_{\theta}$ is taken with respect to $X$ and $g_\theta^*(X)$ is the Bayes rule based on  $X$ that minimizes $R_x(g)$ over $g:\RR^p\to \{0,1\}$. Since $L_\theta(\wh g)$ is shown to satisfy a local triangle inequality-type bound such that a variant of Fano's lemma can be applied \citep[Proposition 2]{Martin_2013}, we proved a crucial result, in Lemmas \ref{lem_lb_L} and \ref{lem_lb_L_prime} of Appendix \ref{app_proof_thm_lowerbound}, that
	\begin{equation}\label{display_lb_L}
		\inf_{\wh g}\sup_{\theta \in \Theta(\lambda,\sigma,\Dt)} \PP^{\D}_{\theta}\left\{
		L_\theta(\wh g) \ge C
		\left(\sqrt{K\over n}  {1\over \Dt}+ \sqrt{{\sigma^2 \over \lambda}{\sigma^2 p \over \lambda n}}
		\right) e^{-{\Dt^2 \over 8}}
		\right\}\ge c_0
	\end{equation}
	for some constant $c_0\in (0,1)$ and $C>0$.
\end{remark}

\begin{remark}[Comparison with the existing literature]\label{rem_comp_lowerbounds}
	As mentioned above, a byproduct of our proof of Theorem \ref{thm_lowerbound} is the minimax lower bounds of $R_x(\wh g) - R_x^*$ in the setting where $X\mid Y$ is Gaussian, which have exactly the same form as Theorem \ref{thm_lowerbound} but without the second term related with $\sigma^2/\lambda$.
		It is informative to put this lower bound of $R_x(\wh g) - R_x^*$ in comparison to the existing literature in this special setting. 
	
	Under the classical LDA model (\ref{model_Gaussian}), \cite{cai2019convex} derives the minimax lower bounds of $R_x(\wh g) - R_x^*$ over a suitable parameter space for $\Dt \gtrsim 1$, which have the same form as ours  with 
	$K/n + \sigma^4 p \Dt^2 / (\lambda^2 n)$
	replaced by $s / n$ for $s := \|\Sigma^{-1}(\mu_1 - \mu_0)\|_0$. 
	In contrast, our lower bounds reflect the benefit of considering an approximate lower-dimensional structure of $X \mid Y$ under (\ref{model_X})  and (\ref{model_Gaussian}) instead of directly assuming sparsity on $\Sigma^{-1}(\mu_1 - \mu_0)$.  These two lower bounds coincide in the low-dimensional setting $(p < n)$ when there is no sparsity in $\Sigma^{-1}(\mu_1 - \mu_0)$, that  is $s = p$, and
	when there is no low-dimensional hidden factor model (that is, $X=Z$ with $K=p$, $A=\bI_p$ and $W=\0_p$). 
	On the other hand, \cite{caizhang2019} only established the phase transition between $\Dt\asymp 1$ and $\Dt \to \i$ whereas we are able to derive the minimax lower bound for $\Dt \to 0$,  a case that has not even been analyzed in the classical LDA literature.

Technically, it is also worth mentioning that the latent model structure on $X$ via (\ref{model_X}) brings considerable additional difficulties for establishing the lower bounds of $R_x(\wh g) - R_x^*$. Indeed, for any $\theta\in \Theta(\lambda, \sigma, \Dt)$, the covariance matrix of $X \mid Y$ is $\Sigma(\theta) =  A\szy A^\T + \sw$ which cannot be chosen as a diagonal matrix to simplify the analysis as done by \cite{cai2019convex}. Furthermore, to derive the term $\sigma^4  p \Dt^2 /(\lambda^2n)$ in the lower bound for quantifying the error of estimating the column space of $A$, we need to carefully choose the subset of  $\Theta(\lambda, \sigma, \Dt)$ via the hypercube construction \citep[Lemma A.5]{vu2013minimax} that has been used for proving the optimal rates of the sparse PCA. Since the statistical distance (such as KL-divergence) between any two of thus constructed hypotheses of $\Theta(\lambda, \sigma, \Dt)$  is diverging whenever $p/n
\to \i$ (see, Lemma \ref{lem_KL} in Appendix \ref{app_proof_thm_lowerbound}), a different analysis than the standard one (for instance, in \cite{Martin_2013}) has to be used to allow $p > n$ and a  large amount of work is devoted to provide a meaningful and sharp lower bound that is valid for both $p<n$ and $p>n$ (see Lemma \ref{lem_lb_L} for details).
\end{remark}

\section{Methodology}\label{sec_method}

In this section, we describe our classification method based on $n$ i.i.d. observations from model (\ref{model_X}) and (\ref{model_YZ}). 
We first state a general method in Section \ref{sec_general_method} which is motivated by the optimal oracle rule $g_z^*$ in (\ref{rule_Z_0}) and (\ref{rule_Z}), and is based on prediction of the unobserved factors $Z_1,\ldots,Z_n,Z$ in the features $X_1,\ldots,X_n,X$ by projections. In Section \ref{sec_PCR_method} we state our proposed methods via principal component projections as well as a cross-fitting strategy for high-dimensional scenarios. Selection of the number of principal components is further discussed in Section \ref{sec_K}.

\subsection{General approach}\label{sec_general_method}

The first idea is to change the classification problem into a regression problem, at the population level. The close relationship between LDA and regression has been observed before, see, for instance, Section 8.3.3 in \cite{Izenman-book},
\cite{hastie_09_esl} and \cite{mai2012}.
Let   $\sz = \Cov(Z)$ be the unconditional covariance matrix of $Z$. Define
\begin{align}\label{eq_beta}
\beta &= \pi_0\pi_1  \sz^{-1}(\a_1 - \a_0),\\
\beta_0 &= -{1\over 2}(\a_0+\a_1)^\T \beta + \pi_0\pi_1\left[
1 - (\a_1-\a_0)^\T \beta
\right]\log{\pi_1 \over \pi_0}.\nonumber
\end{align} 
\begin{prop}\label{prop_ls_rule}
Let $\eta,\eta_0$ and $\beta,\beta_0$ be defined in (\ref{rule_Z}) and (\ref{eq_beta}), respectively. Under model (\ref{model_YZ}) and assumption {\rm (iv)}, we have
\[
z^\T \eta+\eta_0 \ge 0 \quad \iff\quad z^\T \beta +\beta_0\ge0.
\]
Furthermore, 
\[
\beta = \sz^{-1}\Cov(Z,Y).
\]
\end{prop}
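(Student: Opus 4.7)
The plan is to prove the two assertions in reverse order, since the identity $\beta = \sz^{-1}\Cov(Z,Y)$ is a quick population computation and it is the relationship $\sz = \szy + \pi_0\pi_1(\a_1-\a_0)(\a_1-\a_0)^\T$ (via the law of total covariance) that unlocks the comparison with $\eta$.

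First I would compute $\Cov(Z,Y)$ directly from the mixture structure in (\ref{model_YZ}). Since $Y \in \{0,1\}$, one has $\EE[ZY] = \pi_1 \a_1$ and $\EE[Z] = \pi_0 \a_0 + \pi_1 \a_1$, so
\[
\Cov(Z,Y) = \pi_1\a_1 - \pi_1(\pi_0\a_0+\pi_1\a_1) = \pi_0\pi_1(\a_1-\a_0).
\]
Matching this with the definition (\ref{eq_beta}) of $\beta$ gives $\beta = \sz^{-1}\Cov(Z,Y)$, proving the second claim.

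For the equivalence of the two rules, the key observation is that $\beta$ and $\eta$ are proportional with a positive scalar, and $\beta_0$ equals the same scalar times $\eta_0$. By the law of total covariance applied to $Z$ conditional on $Y$, together with (\ref{def_szy}),
\[
\sz = \szy + \pi_0\pi_1(\a_1-\a_0)(\a_1-\a_0)^\T.
\]
Applying the Sherman--Morrison formula to $\sz^{-1}$ and multiplying on the right by $(\a_1-\a_0)$ yields, with $v=\a_1-\a_0$ and $\Dt^2 = v^\T\szy^{-1}v$,
\[
\sz^{-1}v \;=\; \frac{1}{1+\pi_0\pi_1\Dt^2}\,\szy^{-1}v.
\]
Setting $c := \pi_0\pi_1/(1+\pi_0\pi_1\Dt^2) > 0$, this shows $\beta = c\,\eta$.

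It then remains to verify $\beta_0 = c\,\eta_0$, after which the positivity of $c$ gives the required equivalence of the half-space rules. Using $\beta = c\eta$, one sees that the linear term in $\beta_0$ from (\ref{eq_beta}) already equals $-\tfrac{c}{2}(\a_0+\a_1)^\T\eta$. The main algebraic check is thus that $\pi_0\pi_1[1 - v^\T\beta] = c$; a short computation gives
\[
v^\T\beta = \pi_0\pi_1\,v^\T\sz^{-1}v = \frac{\pi_0\pi_1\Dt^2}{1+\pi_0\pi_1\Dt^2},\qquad \text{so } 1 - v^\T\beta = \frac{1}{1+\pi_0\pi_1\Dt^2} = \frac{c}{\pi_0\pi_1},
\]
which matches the $\log(\pi_1/\pi_0)$-coefficient exactly. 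The only step with any subtlety is this last identity — everything else is bookkeeping — and Sherman--Morrison trivializes it, so I do not anticipate a genuine obstacle.
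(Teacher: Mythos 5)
Your proof is correct and follows essentially the same route as the paper's: both rely on the decomposition $\sz = \szy + \pi_0\pi_1(\a_1-\a_0)(\a_1-\a_0)^\T$ and the Sherman--Morrison (rank-one Woodbury) identity to show that $\beta$ and $\eta$ are positively proportional, with the same algebra to match the intercepts. The only difference is cosmetic: the paper packages the argument as a lemma for general scaling $a>0$ (with $a=\pi_0\pi_1$ as the special case used), whereas you argue directly for $a=\pi_0\pi_1$ by exhibiting the proportionality constant $c=\pi_0\pi_1/(1+\pi_0\pi_1\Dt^2)$ and verifying $\beta=c\eta$, $\beta_0=c\eta_0$.
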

\begin{proof}
The proof of Proposition \ref{prop_ls_rule} can be found in Appendix \ref{app_proof_sec_method}. 
\end{proof}

\begin{remark}
In fact, our proof  shows that the first statement of Proposition \ref{prop_ls_rule} still holds if we replace $\pi_0\pi_1$ in the definition of $\beta$ by any positive value coupled with corresponding modification of $\beta_0$ (see Lemma \ref{lem_rule} in Appendix \ref{app_proof_sec_method} for the precise statement). The advantage of using $\pi_0\pi_1$ in (\ref{eq_beta}) is that $\beta$ can be obtained by simply regressing $Y$ on $Z$. For this choice of $\beta$, our proof also reveals 	
\begin{equation}\label{eq_Gz_Gzstar}
	z^\T \eta+\eta_0 = {1\over \pi_0\pi_1[1 - (\a_1 - \a_0)^\T \beta]} \left( z^\T \beta +\beta_0\right) =  {1 + \pi_0\pi_1 \Dt^2 \over \pi_0\pi_1} \left( z^\T \beta +\beta_0\right),
\end{equation}
a key identity that will used later in Section \ref{sec_multi_level} to extend our approach for handling multi-class classification problems.
\end{remark}

Proposition \ref{prop_ls_rule} implies the equivalence between the linear  rules 
$
g_z^*(z) 
$
in (\ref{rule_Z})
and
\begin{align}\label{LDAa}
g_z(z)
&:= \1 \{ z^\T \beta+ \beta_0 \ge 0 \}
\end{align}
based on, respectively, the halfspaces $\{z\mid z^\T \eta +\eta_0\ge 0\}$ and  $\{z\mid  z^\T \beta + \beta_0\ge 0\}$. 
According to Proposition \ref{prop_ls_rule},
if $\Z = (Z_1^\T, \ldots, Z_n^\T)^\T \in \RR^{n\times K}$  were observed,
it is natural to use 
the least squares estimator $(\Z^\T \Pi_n \Z)^{+}\Z^\T  \Pi_n \Y$ to estimate $\beta$.  Recall that $\Pi_n = \bI_n  - n^{-1}\b1_n\b1_n^\T$ is the centering matrix and $M^+$ is the Moore-Penrose inverse of any matrix $M$.
Since in practice  only $\X = (X_1^\T, \ldots, X_n^\T)^\T \in \RR^{n\times p}$ is observed, we propose to estimate $z^\T \beta$ by
\begin{equation}\label{def_theta_hat}
x^\T \wh\theta ~ := ~ x^\T B (\Pi_n \X B)^+\Y  = x^\T B(B^\T \X^\T \Pi_n X B)^+ B^\T \X^\T \Pi_n \Y
\end{equation}
with $x\in \RR^p$ being one realization of $X$ from model (\ref{model_X}). Here in principal $B\in\RR^{p\times q}$ could be any matrix  with any $q\in \{1,\ldots, p\}$.  Furthermore, we  estimate $\beta_0$
by
\begin{equation}\label{def_theta_0_hat}
\wh\beta_0  := -{1\over 2}(\wh \mu_0+\wh \mu_1)^\T \wh\theta+  \wh \pi_0\wh \pi_1 \left[
1   -(\wh \mu_1-\wh \mu_0 )^\T\wh\theta \ 
\right]\log{\wh \pi_1 \over \wh \pi_0}
\end{equation}
based on	standard non-parametric estimates
\begin{align}\label{def_pi_hat}
n_k = \sum_{i=1}^n \1{\{Y_i = k\}},\quad \wh\pi_k = {n_k \over n},\quad 
\wh \mu_k = {1\over n_k}\sum_{i=1}^n X_i\1\{ Y_i = k\}, 
\quad k\in \{0,1\}. 
\end{align}
Our final classifier is
\begin{equation}\label{eq_g_hat}
\wh g_x(x) :=  \1\{ x^\T \wh \theta + \wh \beta_0\ge 0\}.
\end{equation}
Notice that $\wh\theta$, $\wh \beta_0$ and $\wh g_x(x)$ all depend on $B$ implicitly. 

\subsection{Principal component (PC) based classifiers}\label{sec_PCR_method}

Though the classifier in (\ref{eq_g_hat}) can use any matrix $B$, in this paper we mainly consider the choice $B=\U_r\in \RR^{p\times r}$, for some $r\in \{1,\ldots, p\}$, where the matrix $\U_r$ consists of  the first $r$ right-singular vectors of $\Pi_n\X$, the centered $\X$. In this case, $x^\T \wh\theta$ is the famous principal component regression (PCR) predictor by using $r$ principal components \citep{Hotelling}. The optimal choice of $r$ would be $K$, the number of latent factors, when it is known in advance. 
We analyze the classifier with $B = \U_K$ in  Theorem \ref{thm_PCR} of Section \ref{sec_PCLDA_K}.

Suggested by our theory, in the high-dimensional setting $p>n$,  performance of the PC-based classifiers can be improved  either by using an additional dataset or via data-splitting.  

In several applications, such as semi-supervised learning, researchers also have access to an additional set of unlabelled data. Given an additional data matrix $\tX\in \RR^{n'\times p}$ with i.i.d. (unlabelled) observations from model (\ref{model_X}) with $n'\asymp n$ and independent of $\X$ in  (\ref{def_theta_hat}), it is often beneficial to use $B=\wt\U_K$ based on  the first $K$ right singular vectors of $\Pi_{n'}\tX$.  This classifier is analyzed in Theorem \ref{thm_PCR_indep} of Section \ref{sec_PCLDA_K}.

When additional data is not available, we advocate to use a sample splitting technique called $k$-fold cross-fitting \citep{DDML}. First, we randomly split the data into $k$ folds, and for each fold, we use it as $\tX$ to construct $\wt \U_r$ and use the remaining data as $\X$ and $\Y$ to obtain $\wh\theta$ and $\wh\beta_0$ from (\ref{def_theta_hat}) and (\ref{def_theta_0_hat}), respectively. In the end, the final classifier is constructed via (\ref{eq_g_hat}) based on the averaged $k$ pairs of $\wh\theta$ and $\wh\beta_0$. Theoretically, it is straightforward to show that the resulting classifiers share the same conclusions as Theorem \ref{thm_PCR_indep} for $k = \cO(1)$. Empirically, since this cross-fitting strategy ultimately uses all data points, it might mitigate the efficiency loss due to sample splitting. Standard choices of $k$ include $k = 2$ and $k = 5$ while the latter is reported to have smaller standard errors \citep{DDML}.

\subsection{Estimation of the number of retained PCs}\label{sec_K}
When $K$ is unknown, we propose to estimate it by 
\begin{equation}\label{def_K_hat}
\wh K := 
\argmin_{k\in\{0,1,\ldots,\bar K\}} {\sum_{j>k} \sigma_j^2  \over np - c_0 (n+p)  k}
,\qquad \textrm{with }\quad \bar K:= \left\lfloor {\nu \over 2c_0 ( 1+\nu)} (n \wedge p)  \right\rfloor ,
\end{equation}
for absolute constants $c_0$ and   $\nu >1$.  The latter is  introduced to avoid division by zero and can be set arbitrarily large.  The choice of $c_0 = 2.1$  is used in all of our simulations and has overall good performance.
The  sum  $\sum_{j}\sigma_j u_j v_j^\T$, with non-increasing $\sigma_j$, is the singular-value-decomposition (SVD) of $\Pi_n\X$ or $\Pi_n \tilde \X$.

Criterion (\ref{def_K_hat}) was originally proposed in \cite{rank19} for selecting  the rank of the coefficient of a multivariate response regression model and is further adopted by \cite{bing2020prediction} for selecting the number of retained principal components under the framework of factor regression models. It also has close connection to the well-known elbow method, but is more practical in terms of parameter tuning. 
The main computation of solving (\ref{def_K_hat}) is to compute the SVD of $\Pi_n\X$ once. In Section \ref{sec_PCLDA_K_hat} we show the consistentcy of $\wh K$, ensuring that the classifier with $B = \U_{\wh K}$ shares the same theoretical properties as the one with $B = \U_K$.

\section{A general 
strategy of bounding the excess classification error}

\label{sec_theory_general}

In this section, we establish a general theory for analyzing the excess risk of the classifier $\wh g_x$ in (\ref{eq_g_hat}) that uses {\em any} matrix $B$ for the estimate $\wh\theta$ in (\ref{def_theta_hat}). 
The main purpose is to establish high-level conditions that yield a consistent classifier constructed in Section \ref{sec_method} 
in the sense  
\[ 
R_x(\wh g_x) := \PP\{ \wh g_x(X)\ne Y   \mid \D \} \to R_z^*,\quad \text{in probability, as $n\to\i$}
\]
and further to  provide its rate of convergence. We recall that $\PP$ is taken with respect to $(X, Y)$.

For convenience, we introduce the notation
\begin{align}\label{eq_G_hat}
\wh G_x(x) &:= x^\T\wh \theta + \wh \beta_0,\qquad 
G_z(z) := z^\T \beta+ \beta_0
\end{align}
such that  
$
\wh g_x(x) = \1\{ \wh G_x(x)\ge0\}
$ 
from (\ref{eq_g_hat}) and, using the equivalence in Proposition \ref{prop_ls_rule},
\begin{align}\label{def_gz_star}
g_z^*(z)=  \1\{ G_z(z)\ge0\}.
\end{align}
Recall that $\wh g_x$ depends on the choice of $B$ via $\wh\theta$ and $\wh\beta_0$.

The following theorem provides a general bound for the excess risk of $\wh g_x$ that uses any $B$ in (\ref{def_theta_hat}). Its proof can be found in Appendix \ref{app_proof_thm_general_risk}.

\begin{thm}\label{thm_general_risk}
Under model (\ref{model_X}), assume {\rm (i) -- (iv)}.	For all $t>0$, we have
\begin{align}\label{disp_margin}
R_x(\wh g_x) -
R_z^*
&~ \le  ~ \PP \{| \wh G_x(X) - G_z(Z) | > t   ~ \mid \D  \} +  c_*  t  ~ P(t) 
\end{align}
where $c_* = \Dt^2 + (  \pi_0\pi_1)^{-1}$ and
\begin{align}\label{disp_margin_prob}
P(t) &=  \pi_0  \Bigl[\Phi\left(R\right) - \Phi\left(R - {t~ c_*/ \Dt}\right)\Bigr] +  \pi_1  \Bigl[\Phi\left(L+{t~c_*/ \Dt}\right) - \Phi\left(L\right)\Bigr]
\end{align}
with 
$$
L = -{\Dt\over 2} - {\log{\pi_1\over \pi_0} \over \Dt},\qquad R = {\Dt\over 2} - {\log{\pi_1\over \pi_0} \over \Dt}.
$$
\end{thm}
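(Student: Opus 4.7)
The plan is to bound the excess risk by combining how close $\wh G_x(X)$ is to $G_z(Z)$ with a margin-type analysis of $G_z(Z)$ near the decision boundary $\{G_z = 0\}$. Since $\wh g_x$ and $g_z^*$ are sign classifiers of $\wh G_x(X)$ and $G_z(Z)$ respectively, a disagreement between them forces either $|\wh G_x(X) - G_z(Z)| > t$ or $G_z(Z)$ to be close to $0$, and this dichotomy is the backbone of the argument.

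First I would decompose the excess risk by conditioning on $Y$,
\[
R_x(\wh g_x) - R_z^* \;=\; \pi_0 T_0 + \pi_1 T_1, \qquad T_y := \PP\{\wh g_x(X) = 1-y \mid Y = y\} - \PP\{g_z^*(Z) = 1-y \mid Y = y\},
\]
and apply the elementary inequality $\PP(A)-\PP(B) \le \PP(A \cap B^c)$ conditionally on $Y$. This bounds each $T_y$ by the probability of a sign flip, namely $\PP\{\wh G_x(X) \ge 0 > G_z(Z) \mid Y = 0\}$ and $\PP\{\wh G_x(X) < 0 \le G_z(Z) \mid Y = 1\}$. Splitting through the threshold $t$, on the event $\{|\wh G_x(X) - G_z(Z)| \le t\}$ a sign flip forces $G_z(Z) \in [-t, 0)$ when $Y = 0$ and $G_z(Z) \in [0, t]$ when $Y = 1$, while the complement is absorbed into $\PP\{|\wh G_x(X) - G_z(Z)| > t\}$.

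The resulting small-margin probabilities are then evaluated in closed form by exploiting the identity $z^\T \eta + \eta_0 = c_* G_z(z)$ from (\ref{eq_Gz_Gzstar}) together with assumption~(iv). A short computation gives $z^\T \eta + \eta_0 \mid Y = y \sim N(\pm \Dt^2/2 + \log(\pi_1/\pi_0),\, \Dt^2)$, which after standardization becomes $G_z(Z)\,c_*/\Dt \mid Y = 0 \sim N(-R, 1)$ and $G_z(Z)\,c_*/\Dt \mid Y = 1 \sim N(-L, 1)$. Evaluating $\PP\{-t \le G_z(Z) < 0 \mid Y = 0\}$ and $\PP\{0 \le G_z(Z) \le t \mid Y = 1\}$ in terms of $\Phi$ yields precisely the two $\Phi$-differences that comprise $P(t)$.

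The main obstacle, in my view, is recovering the multiplicative factor $c_* t$ in front of $P(t)$. The clean geometric argument above yields a bound of the form $\PP\{|\wh G_x(X) - G_z(Z)| > t\} + P(t)$; to produce the stated $c_* t$ factor I would layer in the margin identity $R_x(\wh g_x) - R_z^* = \EE[|1 - 2\eta(Z)|\,\1\{\wh g_x(X) \ne g_z^*(Z)\}]$ together with the sharp pointwise bound $|1 - 2\eta(z)| = |\tanh(c_* G_z(z)/2)| \le c_*|G_z(z)|/2$. On the small-margin event $\{|\wh G_x - G_z| \le t,\, \wh g_x \ne g_z^*\}$ this yields $|1 - 2\eta(Z)| \le c_* t/2$, and combining this uniform factor with the class-conditional one-sided geometry (so that only the intervals contributing to $P(t)$ are retained, rather than the coarser two-sided $\PP\{|G_z(Z)| \le t\}$) is, I expect, the most delicate part of the proof.
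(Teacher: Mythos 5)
Your blueprint identifies all the right ingredients — the disagreement decomposition by $Y$, the split on $\{|\wh G_x(X)-G_z(Z)|\le t\}$, the identity $G_z^*(z)=c_*G_z(z)$ with the conditional normal laws for $G_z^*(Z)\mid Y$, and a margin-weight bound — and it aligns closely with the paper's argument. But as written it has a real gap at exactly the step you flag: Phase~1 bounds $T_y$ by the \emph{unweighted} small-margin probability (it discards the cost of a sign flip entirely), which on its own only gives $R_x(\wh g_x)-R_z^*\le\PP\{|\wh G_x(X)-G_z(Z)|>t\}+P(t)$. Phase~2 establishes the pointwise weight bound $|1-2\eta(Z)|\le c_*t/2$ on the small-margin disagreement set. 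You cannot simply multiply these two conclusions together: the $P(t)$ in Phase~1 was obtained only after the margin weight had been thrown away, and the margin identity taken at face value is an expectation against the \emph{marginal} law of $Z$, which would produce the two-sided $\PP\{|G_z(Z)|\le t\}$ rather than the one-sided, class-conditional $P(t)$. The $c_*t$ factor and the one-sided conditional probability have to emerge from one and the same expression.

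The resolution is to factor the density difference with the class-conditional density as base. On the sign-flip region $\{\wh G_x\ge 0,\,G_z<0\}$ one has $G_z^*(z)<0$, hence
\[
\pi_0 f_{Z|0}(z)-\pi_1 f_{Z|1}(z)=\pi_0 f_{Z|0}(z)\bigl(1-e^{G_z^*(z)}\bigr)\ \ge\ 0,
\]
so that this piece of the excess risk equals $\pi_0\,\EE\bigl[\bigl(1-e^{G_z^*(Z)}\bigr)\1\{\wh G_x(X)\ge0,\,G_z(Z)<0\}\mid Y=0\bigr]$. Restricting to $\{|\wh G_x(X)-G_z(Z)|\le t\}$ forces $-t\le G_z(Z)<0$, and the elementary inequality $1+u\le e^{u}$ gives $1-e^{G_z^*(z)}\le -G_z^*(z)=c_*|G_z(z)|\le c_*t$ there. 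In one stroke this produces $c_*t\,\pi_0\PP\{-t\le G_z(Z)<0\mid Y=0\}$ plus the complement term $\pi_0\PP\{|\wh G_x(X)-G_z(Z)|>t\mid Y=0\}$; the $\{\wh G_x<0,\,G_z\ge0\}$ region is handled symmetrically with $f_{Z|1}$ as the base density, and summing the two yields exactly $c_*t\,P(t)+\PP\{|\wh G_x(X)-G_z(Z)|>t\}$. Your $\tanh$ bound $|1-2\eta(z)|\le c_*|G_z(z)|/2$ is correct and, paired with the identity $|1-2\eta(z)|\,f_Z(z)=\pi_0 f_{Z|0}(z)(1-e^{G_z^*(z)})$ on $\{G_z<0\}$, would give an equivalent (in fact numerically slightly sharper) bound, but the paper's $1+u\le e^{u}$ route is cleaner because it never needs $\tanh$ or the marginal law of $Z$ at all.
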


\begin{remark}\label{rem_margin}
The quantity $P(t)$ in (\ref{disp_margin_prob}) is in fact 
\[
\pi_0  \PP\{ - t< G_z(Z) <0 \mid Y=0\} + \pi_1   \PP\{ 0< G_z(Z) < t\mid Y=1\}
\]
which describes the probabilistic behavior of  the margin of the hyperplane $\{z: G_z(z) = 0\}$ that separates the distributions $Z\mid Y = 0$ and $Z\mid Y =1$.
Conditions that control the margin between  $Z\mid Y = 0$ and $Z\mid Y =1$ are more suitable in our current setting and have a different perspective than the usual margin condition in \cite{tsybakov2004optimal} that controls the probability $\PP\{|\eta(Z) - 1/2|<\delta \}$ for any $0\le \delta \le 1/2$, with $\eta(z) := \PP(Y=1 \mid Z=z)$.
\end{remark}

\begin{remark}[Extension to non-linear classifiers]\label{rem_extension}
The proof of Theorem \ref{thm_general_risk} also allows us to analyze more complex classifiers.
Indeed, let $\Lambda_z(z)$ be the logarithm of the ratio between $\PP(Z=z, Y = 1)$ and $\PP(Z=z, Y = 0)$, and let $\wh \Lambda_x(x)$ be an arbitrary estimate of $\Lambda_z(z)$. We can easily derive from our proof of Theorem \ref{thm_general_risk} the following excess risk bound for the classifier $\wh g_x(x) = \1\{\wh\Lambda_x(x)\ge 0\}$,
\begin{align} \label{bd_excess_risk}
R_x(\wh g_x) - R_z^*  & ~ \le ~  \PP \{| \wh \Lambda_x(X) - \Lambda_z(Z) | > t   ~ \mid \D \}\\\nonumber
& + t ~ 
\pi_0  \PP\{ -  t<  \Lambda_z(Z) <0 \mid Y=0\} + t ~
\pi_1 \PP\{ 0< \Lambda_z(Z) < t\mid Y=1\},
\end{align}
for any $t>0$.
Therefore, bound in (\ref{bd_excess_risk}) can be 
used as an initial step for analyzing any classification problems, particularly suitable for situations where conditional distributions $Z \mid  Y$ are specified. The remaining difficulty is to find a good estimator $\wh \Lambda_x(x)$ and to control $|\wh \Lambda_x(X) - \Lambda_z(Z)|$. For instance,  when $Z\mid Y =k$, for $k\in \{0,1\}$, have Gaussian distributions with different means and different covariances, the Bayes rule of using $Z$ (equivalently, $\Lambda_z(Z)$) becomes quadratic, leading to an estimator $\wh \Lambda_x(x)$ that is quadratic in $x$ as well. Since both the procedure and the analysis are different, we will study this setting in a separate paper.
\end{remark}

From (\ref{eq_G_hat}), we find the identity 
\begin{align}\label{decomp_G_x}
\wh G_x(X) - G_z(Z) &= Z ^\T(A^\T \wh \theta - \beta) + W^\T \wh \theta + \wh \beta_0- \beta_0.
\end{align}
To establish its deviation inequalities, our analysis uses the following distributional assumption on $W$ from (\ref{model_X}). 
We assume that
\begin{itemize}
\item[(v)]   $W = \sw^{1/2}\wt W$ and  $\wt W$ is a mean-zero $\gamma$-subGaussian random vector with $\EE[\wt W\wt W^\T] = \bI_p$ and
$\EE[\exp(u^\T \wt W)] \le \exp(\gamma^2/2)$, for all $\|u\|_2=1$.
\end{itemize}
We stress that the distributions of $X \mid Y$ need not be  Gaussian.
In addition, we require that
\begin{itemize}
\item[(vi)] $\pi_0$ and $\pi_1$ are fixed and bounded from below by some constant $c\in (0,1/2]$. 
\end{itemize}

The following proposition states a deviation inequality of $|\wh G_x(X) - G_z(Z)|$ which holds with high probability under the law $\PP^{\D}$. It depends on three quantities:
\begin{equation}\label{def_r_hat}
\wh r_1 := \|\sz^{1/2}(A^\T \wh \theta - \beta)\|_2,\quad  \wh r_2 := \|\wh\theta\|_2,  \quad \wh r_3 := {1\over \sqrt n}\|\W(P_B-P_A)\|_{\op}.
\end{equation} 
For any matrix $M$, let $P_M$ denote the projection onto its column space.
From (\ref{decomp_G_x}), appearance of the first two quantities in (\ref{def_r_hat}) is natural since  $Z$ and $W$ are independent of $\wh\theta$ and $\wh\beta_0$, and $Z$ and $W$  are subGaussian random vectors under the distributional  assumptions (iv) and (v). 
The third quantity $\|\W(P_B-P_A)\|_{\op}$ in (\ref{def_r_hat}) originates from $\wh\beta_0 - \beta_0$ and reflects the benefit of using a matrix $B$ that estimates the column space of $A$ well.

\begin{prop}\label{prop_risk}
Under model (\ref{model_X}), assume {\rm (i) -- (vi)} and $K\log n \le c n$ for some constant $c>0$. For any $a\ge 1$,  we have 
\begin{align}\label{bd_risk_deviation}
\PP^{\D} \left\{\PP\left\{ \left |\wh G_x(X) - G_z(Z) \right|\ge \wh \omega_n(a)   ~  \mid \D  \right\}     \lesssim  ~  n^{-a} \right\} = 1-\cO(n^{-1}).
\end{align}
Here, for some constant $C>0$ depending on $\gamma$ only,  
\begin{align}\label{def_hat_omega_n}
\wh\omega_n(a)  &= C  \left\{ \sqrt{a \log n} \left( \wh r_1 +\|\sw\|_\op^{1/2} ~ \wh r_2 \right)  +  \wh r_2 \wh r_3+ \sqrt{\log n\over n}\right\}.
\end{align}
\end{prop}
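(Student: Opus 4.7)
The outer probability is over the training data $\D$ while the inner one is over a fresh $(X,Y)$ independent of $\D$, so my strategy is to condition on $\D$, thereby freezing $\wh\theta,\wh\beta_0,\wh\mu_k,\wh\pi_k$, and to separate $\wh G_x(X)-G_z(Z)$ into two pieces that are random under the inner law $\PP$ plus one piece that is $\D$-measurable. Starting from (\ref{decomp_G_x}) and using $\mu_k=A\alpha_k$ together with the definitions of $\beta_0,\wh\beta_0$, I would rewrite
\begin{equation*}
\wh G_x(X)-G_z(Z) \;=\; (Z-\bar\alpha)^\T(A^\T\wh\theta-\beta) \;+\; W^\T\wh\theta \;+\; \Delta_n(\D),
\end{equation*}
where $\bar\alpha=(\alpha_0+\alpha_1)/2$ and $\Delta_n(\D)$ collects $-\tfrac12(\wh\mu_0+\wh\mu_1-\mu_0-\mu_1)^\T\wh\theta$ plus the difference of the two log-ratio terms in $\wh\beta_0-\beta_0$. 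The point of this rearrangement is to push the potentially unbounded mean $\bar\alpha^\T(A^\T\wh\theta-\beta)$ out of the $Z$-summand and into $\Delta_n$.

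For the two inner-random summands, conditionally on $Y$ the quantity $(Z-\bar\alpha)^\T(A^\T\wh\theta-\beta)$ is Gaussian with variance at most $\wh r_1^2$ (using $\sz\succeq\szy$) and with mean whose absolute value is at most $\tfrac12|(\alpha_1-\alpha_0)^\T(A^\T\wh\theta-\beta)|\le\wh r_1/(2\sqrt{\pi_0\pi_1})$, obtained from the rank-one identity $\sz=\szy+\pi_0\pi_1(\alpha_1-\alpha_0)(\alpha_1-\alpha_0)^\T$ together with (vi); a standard Gaussian tail then gives a bound $\lesssim\sqrt{a\log n}\,\wh r_1$ with inner probability $1-n^{-a}$. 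For the $W$-summand, (v) shows $W^\T\wh\theta$ is $\gamma\|\sw^{1/2}\wh\theta\|_2$-subGaussian with proxy at most $\gamma\|\sw\|_{\op}^{1/2}\wh r_2$, yielding $|W^\T\wh\theta|\lesssim\sqrt{a\log n}\|\sw\|_{\op}^{1/2}\wh r_2$ with inner probability $1-n^{-a}$.

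The heart of the proof is the outer-event control $|\Delta_n(\D)|\lesssim\wh r_2\wh r_3+\sqrt{\log n/n}$ with probability $1-\cO(n^{-1})$ over $\D$. Hoeffding applied to the class indicators provides an event on which $n_k\ge cn$ and $|\wh\pi_k-\pi_k|\lesssim\sqrt{\log n/n}$. Using $X_i=AZ_i+W_i$ and $P_B\wh\theta=\wh\theta$ (because $\wh\theta\in\mathrm{col}(B)$), decompose for each $k$
\begin{equation*}
(\wh\mu_k-\mu_k)^\T\wh\theta \;=\; (\bar Z_k-\alpha_k)^\T A^\T\wh\theta \;+\; \bar W_k^\T P_A\wh\theta \;+\; \bar W_k^\T(P_B-P_A)\wh\theta.
\end{equation*}
The third piece produces the $\wh r_2\wh r_3$ contribution: writing $\bar W_k=n_k^{-1}e_k^\T\W$ with $e_k\in\{0,1\}^n$ the class indicator, Cauchy-Schwarz gives $|\bar W_k^\T(P_B-P_A)\wh\theta|\le n_k^{-1/2}\|\W(P_B-P_A)\|_{\op}\|\wh\theta\|_2\lesssim\wh r_2\wh r_3$. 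For the first piece I further split $A^\T\wh\theta=\beta+(A^\T\wh\theta-\beta)$: the Gaussian variable $(\bar Z_k-\alpha_k)^\T\beta$ has variance $\beta^\T\szy\beta/n_k=\cO(1/n)$ (using Sherman-Morrison to show $\beta^\T\szy\beta=\cO(1)$ under (vi)), giving an $\cO(\sqrt{\log n/n})$ contribution, while Cauchy-Schwarz in the $\szy$-metric together with $\chi^2_K$ concentration bounds the residual by $\sqrt{(K+\log n)/n}\,\wh r_1$, which under $K\log n\le cn$ is dominated by $\sqrt{a\log n}\,\wh r_1$. For the second piece $|\bar W_k^\T P_A\wh\theta|\le\|P_A\bar W_k\|_2\wh r_2$, combined with the subGaussian rank-$K$ projection bound $\|P_A\bar W_k\|_2\lesssim\sqrt{(K+\log n)\|\sw\|_{\op}/n}$, is absorbed into $\sqrt{a\log n}\|\sw\|_{\op}^{1/2}\wh r_2$ under the same hypothesis. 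The log-ratio residual is dispatched by a Lipschitz expansion of $x\mapsto x\log x$ on an interval bounded away from $\{0,1\}$ (ensured by (vi)), reducing it to multiples of $|\wh\pi_k-\pi_k|$ and $|(\wh\mu_1-\wh\mu_0)^\T\wh\theta-(\alpha_1-\alpha_0)^\T\beta|$, both already controlled.

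The main obstacle is precisely this bias analysis: because $\wh\theta$ is built from the same data as $\bar Z_k,\bar W_k$, classical independence-based arguments do not apply. The key decoupling is the projection split $\bar W_k^\T\wh\theta=\bar W_k^\T P_A\wh\theta+\bar W_k^\T(P_B-P_A)\wh\theta$, which trades genuine independence for the deterministic rank-$K$ structure of $P_A$ together with the a-priori bounds $\|\szy^{1/2}\beta\|_2=\cO(1)$ and $\wh\theta\in\mathrm{col}(B)$, and this is what forces the technical hypothesis $K\log n\le cn$.
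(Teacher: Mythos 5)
Your proposal is correct and follows essentially the same route as the paper: the decomposition into $(Z-\bar\alpha)^\T(A^\T\wh\theta-\beta) + W^\T\wh\theta$ plus a $\D$-measurable remainder, conditional Gaussian/subGaussian tail bounds producing the $\sqrt{a\log n}(\wh r_1+\|\sw\|_\op^{1/2}\wh r_2)$ term, and the projection split $P_B=P_A+(P_B-P_A)$ applied to $\bar W_k^\T\wh\theta$ to isolate $\wh r_2\wh r_3$ are exactly the paper's moves (the latter appearing in Lemma \ref{lem_beta_0}). The absorption of the $\sqrt{K\log n/n}$ factors under the hypothesis $K\log n\le cn$ also matches the paper's bookkeeping.
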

\begin{proof}
See Appendix \ref{app_proof_prop_risk}.
\end{proof}

Proposition \ref{prop_risk} implies that we need to control $\wh\omega_n(a)$ whose randomness solely depends on $\D$. 
In view of Theorem \ref{thm_general_risk} and Proposition \ref{prop_risk}, 
we have the following result.

\begin{thm}\label{thm_general_risk_explicit}
Under model (\ref{model_X}), assume {\rm (i) -- (vi)} and $K\log n \le c n$ for some constant $c>0$. For any $a\ge 1$ and any sequence $\omega_n>0$, on the event
$ \{ \wh \omega_n(a) \le \omega_n\}$, the following holds with probability $1-\cO(n^{-1})$ under the law $\PP^\D$, 
\begin{eqnarray*}
R_x(\wh g_x) - R_z^* ~\lesssim~ n^{-a} + 
\begin{cases} 
	\omega_n^2  & \text{ if $\Dt\asymp 1$}\\
	\omega_n^2 \exp\left\{-[c_\pi+o(1)]\Dt^2\right\} & \text{ if $\Dt\to\i$ and $\omega_n = o(1)$}\\
	\omega_n^2 \exp\left\{-[c' + o(1)] / \Dt^2\right\} & \text{ if $\Dt\to0$, $\pi_0\ne\pi_1$ and $\omega_n = o(1)$}\\
	\omega_n  \min\{1,\omega_n / \Dt\} & \text{ if $\Dt\to0$ and $\pi_0=\pi_1$}
\end{cases}
\end{eqnarray*}
Here $c_\pi$ and $c'$ are some absolute positive constants  and $c_\pi = 1/8$ if $\pi_0 = \pi_1$. 

\end{thm}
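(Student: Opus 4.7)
The plan is to feed $t = \omega_n$ into the oracle decomposition of Theorem~\ref{thm_general_risk} and handle its two pieces separately. On the $\PP^\D$-event of probability $1-\cO(n^{-1})$ supplied by Proposition~\ref{prop_risk}, the tail bound
$\PP\{|\wh G_x(X)-G_z(Z)|\ge \wh\omega_n(a)\}\lesssim n^{-a}$ holds; intersecting with the deterministic event $\{\wh\omega_n(a)\le\omega_n\}$ upgrades it to $\PP\{|\wh G_x(X)-G_z(Z)|>\omega_n\}\lesssim n^{-a}$. Substituting into Theorem~\ref{thm_general_risk} reduces the theorem to the deterministic task of controlling $c_*\omega_n P(\omega_n)$ in each of the four regimes and showing the bound matches the stated right-hand side.

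For the margin term I would apply the mean value theorem to the two Gaussian c.d.f.\ differences in (\ref{disp_margin_prob}). With the shorthand $s:=\omega_n c_*/\Dt$,
\[
\Phi(R)-\Phi(R-s) ~\le~ s\,\sup_{u\in[R-s,R]}\varphi(u),\qquad \Phi(L+s)-\Phi(L) ~\le~ s\,\sup_{u\in[L,L+s]}\varphi(u).
\]
Since $\varphi$ is decreasing in $|u|$, the supremum is attained at the endpoint closest to zero; when $s$ overwhelms $|R|$ or $|L|$, one instead uses the trivial bound $P(\omega_n)\le \pi_0+\pi_1=1$. The four cases are then distinguished by how $s$, $|R|$, and $|L|$ scale with $\Dt$.

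In Case~1 ($\Dt\asymp 1$) everything is of constant order, $\varphi$ is uniformly bounded, $P(\omega_n)\lesssim \omega_n$, and thus $c_*\omega_n P(\omega_n)\lesssim\omega_n^2$. In Case~4 ($\Dt\to 0$, $\pi_0=\pi_1$) one has $L=-R=-\Dt/2$ and $c_*\asymp 1$, so symmetry together with $\varphi(\Dt/2)\approx\varphi(0)$ gives $P(\omega_n)\lesssim\min\{\omega_n/\Dt,1\}$ and hence $c_*\omega_n P(\omega_n)\lesssim \omega_n\min\{\omega_n/\Dt,1\}$. In Case~2 ($\Dt\to\infty$, $\omega_n=o(1)$), $c_*\asymp\Dt^2$, $s\asymp\omega_n\Dt$, and $R,|L|\approx\Dt/2$; since $\omega_n=o(1)$ the shift $s$ is eventually a small fraction of $R$, so the supremum of $\varphi$ is attained at $R-s$ and expands to $\exp\{-\Dt^2/8+\cO(\omega_n\Dt^2)+\cO(1)\}$. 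Combining the prefactors yields $c_*\omega_n P(\omega_n)\lesssim \omega_n^2\Dt^3\exp\{-[1/8+o(1)]\Dt^2\}$, with a different but positive constant in place of $1/8$ when $\pi_0\ne\pi_1$, reflecting the asymmetric offset $\log(\pi_1/\pi_0)/\Dt$ in $R$ and $L$. Case~3 ($\Dt\to 0$, $\pi_0\ne\pi_1$) is analogous: $|L|,|R|\asymp|\log(\pi_1/\pi_0)|/\Dt\to\infty$ while $c_*\asymp 1$ and $s\asymp\omega_n/\Dt$, producing a $\varphi$-factor of order $\exp\{-[\log(\pi_1/\pi_0)]^2/(2\Dt^2)+\cO(1/\Dt)\}$.

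The hard part will be the bookkeeping in Cases~2 and~3: the naive estimates produce polynomial prefactors (powers of $\Dt$ from $c_*$ and $s$ in Case~2, powers of $1/\Dt$ in Case~3) that need to be absorbed into the $o(1)$ correction to the exponent. This absorption rests on the elementary observations $\Dt^p=\exp(o(\Dt^2))$ as $\Dt\to\infty$ and $\Dt^{-p}=\exp(o(1/\Dt^2))$ as $\Dt\to 0$, uniform in any fixed polynomial degree $p$. A secondary subtlety is verifying in each regime that $s$ is smaller than $|R|$ (respectively $|L|$) so that the supremum of $\varphi$ is evaluated at an endpoint with a favorable Gaussian tail rather than at zero; the hypothesis $\omega_n=o(1)$ combined with the regime conditions on $\Dt$ makes this straightforward but deserves explicit verification before the exponents can be read off.
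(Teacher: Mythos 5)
Your proof is correct and follows essentially the same route as the paper: you substitute $t=\omega_n$ into the decomposition of Theorem~\ref{thm_general_risk}, invoke Proposition~\ref{prop_risk} on the event $\{\wh\omega_n(a)\le\omega_n\}$ to produce the $n^{-a}$ tail term, and then bound the margin term $c_*\,\omega_n\, P(\omega_n)$ via the mean-value theorem and a regime-by-regime case analysis in $\Dt$, absorbing the polynomial prefactors (powers of $\Dt$ or $1/\Dt$) into the $o(1)$ corrections in the Gaussian exponent. Your explicit check that the shift $s=c_*\omega_n/\Dt$ remains a small fraction of $|R|$ and $|L|$ in Cases~2 and~3 is exactly what the paper's hypothesis $\omega_n=o(1)$ is used for.
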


Hence, it remains to find a deterministic sequence $\omega_n\to0$
such that 
$ \PP^{\D}\{ \wh \omega_n(a) \le \omega_n\} \to1$  as $n\to\i$.  Further,  in view of (\ref{def_hat_omega_n}), all we need is to find deterministic upper bounds of $\wh r_1, \wh r_2$ and $\wh r_3$. In such way Theorem \ref{thm_general_risk_explicit} serves as a general tool for analyzing the excess risk of the classifier constructed via (\ref{def_theta_hat}) -- (\ref{eq_g_hat}) by using any matrix $B$. 

Later in Section \ref{sec_theory_application} we apply Theorem \ref{thm_general_risk_explicit} to analyze several classifiers, including the principal components based classifier by choosing $B = \U_K$ and  $B = \wt \U_K$ as well as their counterparts based on the  data-dependent choice $\wh K$.  For theses PC-based classifiers, we will find a sequence $\omega_n$ that closely matches the   sequence $\omega_n^*$ in (\ref{def_omega_star}) under suitable conditions, up to $\log(n)$,
for our procedure. 
In view of Theorem \ref{thm_lowerbound}, this rate turns out to be minimax-optimal over a subset of the parameter space considered in Theorem \ref{thm_lowerbound}, up to $\log(n)$ factors. 

Although not pursued in this paper, it is worth mentioning
some other reasonable choices of $B$ including, for instance, the identity matrix $\bI_p$ which leads to  the generalized least squares based classifier {\citep{BW22}}, the estimator of $A$ in \cite{LOVE}, the projection matrix from supervised PCA \citep{Bair_JASA,barshan2011supervised} and the projection matrix obtained via partial least squares regression \citep{barker2003partial,Nguyen2002}. 

\begin{remark}
We observe the same phase transition in Theorem \ref{thm_general_risk_explicit} for $\Dt \asymp 1$ amd $\Dt \to \i$ as discussed in Remark \ref{rem_Dt}. 
To the best of our knowledge, upper bounds of the excess risk in the regime $\Dt = o(1)$ are not known in the existing literature. Our result in this regime relies on a careful analysis which does not require any condition on $\Dt$, in contrast to the existing analysis of the classical high-dimensional LDA problems. For instance, under model (\ref{model_Gaussian}), \cite{caizhang2019} assumes $\Dt_x^2:=(\mu_1-\mu_0)^\T \Sigma^{-1}(\mu_1-\mu_0) \gtrsim 1$ and 
$
\Dt_x^2 (s\log n / n) = o(1)
$
to derive the convergence rate of their estimator of 
$
\Sigma^{-1}(\mu_1 - \mu_0)
$
with $s = \|\Sigma^{-1}(\mu_1 - \mu_0)\|_0$. As a result, their results of excess misclassification risk only hold for $\Dt_x \gtrsim 1$.
\end{remark}

\section{Rates of convergence of the PC-based classifier}
\label{sec_theory_application}

We apply our general theory in Section \ref{sec_theory_general} to several classifiers corresponding to different choices of $B=\U_K$, $B=\U_{\wh K}$, 
$B=\wt\U_K$ and $B=\wt\U_{\wh K}$ in (\ref{def_theta_hat}).
Since our  analysis is beyond the parameter space $\Theta(\lambda, \sigma, \Dt)$ in (\ref{def_Theta}), we first generalize  the  signal-to-noise ratio $\lambda/\sigma^2$  of predicting $Z$ from $X$ given $Y$ by introducing
\begin{equation}\label{def_xi_star}
\xi^* := 	{\lambda_K(A\szy A^\T) \over \lambda_1(\sw)}.
\end{equation}
We also need the related quantity
\begin{equation}\label{def_xi}
\xi  := {\lambda_K(A\szy A^\T) \over  \errw},
\end{equation}
that 
characterizes the signal-to-noise ratio of predicting $\Z$ from $\X=\Z A^\T + \W$.
Indeed, note that we
replaced $ \lambda_1(\sw)$ in \eqref{def_xi_star} by 
\begin{equation}\label{def_errw}
\errw=  \lambda_1(\sw) + {\tr(\sw) \over n} 
\end{equation} 
and  the largest eigenvalue of the random matrix $\W^\T \W/n$ is of order $\cO_\PP(\errw)$  
under assumption (v) (see, for instance, \cite[Lemma 22]{bing2020prediction}).

\subsection{Consistent estimation of the latent dimension $K$}\label{sec_PCLDA_K_hat}

Since in practice the true $K$ is often  unknown,  we analyze the estimated rank $\wh K$ selected from (\ref{def_K_hat}). 

Consistency of $\wh K$ under the factor model (\ref{model_X}) when $Z$ is a zero-mean subGaussian random vector
has been established in   \cite[Proposition 8]{bing2020prediction}.  Here we establish such property of $\wh K$ under  (\ref{model_X})  where $Z$ follows  a mixture of two Gaussian distributions. Let $r_e(\sw) = \tr(\sw) / \lambda_1(\sw) $ denote the effective rank of $\sw$.

\begin{thm}
\label{thm_PCR_K_hat}
Let $\wh K$ be defined in (\ref{def_K_hat})   for some absolute constant $c_0 > 0$. 
Under model (\ref{model_X}), assume {\rm (i) -- (vi)}, and, in addition,
\[
K\le \bar K,\ \xi \ge C\
\text{and}  \ r_e(\sw) \ge C'(n\wedge p)
\]
for some constants $C, C'>0$. Then, 
\[
\PP^{\D}\{\wh K=K\}=1 - \cO(n^{-1}).
\]
\end{thm}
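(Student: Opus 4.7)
The plan is to show, with probability $1-\cO(n^{-1})$, that the penalized criterion $f(k) := \sum_{j>k}\sigma_j^2(\Pi_n\X)/(np - c_0(n+p)k)$ in (\ref{def_K_hat}) is strictly minimized at $k=K$ over $\{0,1,\ldots,\bar K\}$. Write $\Pi_n\X = \Pi_n\Z A^\T + \Pi_n\W$, where the signal part has rank at most $K$; two applications of Weyl's inequality then give $\sigma_j(\Pi_n\X) \le \sigma_{j-K}(\Pi_n\W)$ for $j>K$ and $\sigma_K(\Pi_n\X) \ge \sigma_K(\Pi_n\Z A^\T) - \|\Pi_n\W\|_\op$. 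Three concentration inequalities drive the argument: an operator-norm bound $\|\Pi_n\W\|_\op^2 \le C_1 n\errw$ via subGaussian matrix concentration together with $\log p \le Cn$; a trace lower bound $\tr(\W^\T\Pi_n\W) \ge c_2(n-1)\tr(\sw)$ via Hanson--Wright; and a signal bound $\sigma_K^2(\Pi_n\Z A^\T) \ge c_3 n\lambda_K(A\szy A^\T)$.

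The signal bound is the step that must depart from \citet{bing2020prediction}, since $Z$ is now a two-component Gaussian mixture rather than zero-mean subGaussian. Decomposing $\Z = M + \tilde\Z$ with $\tilde\Z$ having i.i.d.\ $N(\0_K,\szy)$ rows and $M$ having rows $\alpha_{Y_i}^\T$, the centered mean matrix $\Pi_n M$ has rank exactly one; let $P_u$ be the projection in $\RR^n$ onto its column span. Since $u$ lies in the range of $\Pi_n$, one has $\Pi_n P_u\Pi_n = P_u$ and
\[
(\Pi_n\Z)^\T(\Pi_n\Z) ~\succeq~ ((I-P_u)\Pi_n\Z)^\T((I-P_u)\Pi_n\Z) ~=~ \tilde\Z^\T(\Pi_n - P_u)\tilde\Z,
\]
whose right-hand side, conditional on $\Y$, is distributed as $\szy^{1/2}W_K(n-2,\bI_K)\szy^{1/2}$ and hence has smallest eigenvalue $\gtrsim n\lambda_K(\szy)$ by Davidson--Szarek for $n\gtrsim K$. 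Combining this with the inequality $\sigma_K^2(\Pi_n\Z A^\T) \ge \sigma_K^2(\Pi_n\Z\szy^{-1/2})\cdot \lambda_K(A\szy A^\T)$ yields the desired bound.

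Given these inputs, I would verify $f(k)>f(K)$ by direct algebra through the equivalent inequality $(S_k - S_K)D_K > S_K(D_K - D_k)$, with $S_k:=\sum_{j>k}\sigma_j^2$ and $D_k := np - c_0(n+p)k$. For $k<K$, the monotonicity $\sigma_j \ge \sigma_K$ for $j \le K$ reduces this to $\sigma_K^2(\Pi_n\X)\cdot D_K > S_K \cdot c_0(n+p)$; the left side is $\gtrsim n\xi\errw \cdot np$, and using $r_e(\sw) \ge C''(n\wedge p)$ (hence $r_e(\sw) \asymp p$ when $p \le n$), the comparison succeeds for $\xi \ge C'$ with $C'$ large enough relative to $C''$. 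For $k>K$, one bounds the left side by $(k-K)\|\Pi_n\W\|_\op^2 \cdot D_K \lesssim (k-K)n\errw \cdot np$ and the right side from below by $c_2(n-1)\tr(\sw)\cdot c_0(n+p)(k-K)$; cancelling $(k-K)$ reduces the requirement to $np\lambda_1(\sw) + p\tr(\sw) \lesssim (n+p)\tr(\sw)$, which is ensured by $r_e(\sw) \ge C''(n\wedge p)$ with $C''$ large enough, in both regimes $p\le n$ and $p>n$.

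The main obstacle is the $k>K$ direction: the margin between the leading noise singular values and the penalty schedule is narrow, and the effective-rank condition enters sharply with different terms dominating in the two regimes $p\le n$ and $p>n$. A secondary but nontrivial technical point is the rank-one mean shift of $Z$, which would spoil a naive Weyl-type bound on $\sigma_K^2(\Pi_n\Z A^\T)$ whenever $\Dt$ is large; the rank-one projection trick above is what rescues the desired signal concentration. Throughout, tail bounds are first stated conditional on $\Y$ and then integrated using standard concentration for $(n_0,n_1)$ under condition (vi).
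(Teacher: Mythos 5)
Your proposal is correct, and it genuinely departs from the paper's proof in two places. First, where the paper delegates the algebraic comparison $f(k)>f(K)$ to external results -- Proposition~8 of \cite{bing2020prediction} for the direction $\wh K\le K$ and Corollary~10 of \cite{rank19} for $\wh K\ge K$ -- you carry out the comparison $S_k D_K \gtrless S_K D_k$ directly from Weyl's inequality, the trace/operator-norm concentration of $\Pi_n\W$, and a lower bound on $\sigma_K(\Pi_n\Z A^\T)$. This makes the proof self-contained at the cost of some bookkeeping; in particular the $k>K$ direction, which you correctly flag as the delicate one, requires the lower bound $S_K\gtrsim n\tr(\sw)$ after subtracting the $\cO(K)$ leading noise singular values, and this is exactly where $r_e(\sw)\ge C''(n\wedge p)$ together with $K\le\bar K$ enters sharply. (Minor typo: the equivalent inequality should read $(S_k-S_K)D_K > S_K(D_k-D_K)$, not $S_K(D_K-D_k)$; the signs in your subsequent case analysis are consistent with the corrected version.) Second, and more substantively, your treatment of the signal term $\sigma_K^2(\Pi_n\Z A^\T)$ is different from the paper's. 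The paper works with $\wt\Z=\Z\sz^{-1/2}$ where $\sz=\szy+\pi_0\pi_1(\a_1-\a_0)(\a_1-\a_0)^\T$ is the \emph{unconditional} covariance, and proves $\lambda_K(\frac1n\wt\Z^\T\Pi_n\wt\Z)\gtrsim 1$ via sub-Gaussian concentration in Lemma~\ref{lem_deviation}(v)--(vii), explicitly decomposing $Z_i=\a_{Y_i}+(Z_i-\a_{Y_i})$ and controlling the cross terms and the mean contribution term-by-term. You instead center by $\szy$, observe that the centered mean matrix $\Pi_n M$ has rank one, and project it out: since $u\in\mathrm{range}(\Pi_n)$ gives $P_u\Pi_n=P_u$, the quantity $\tilde\Z^\T(\Pi_n-P_u)\tilde\Z$ is, conditional on $\Y$, an exact $W_K(n-2,\szy)$ Wishart matrix, so Davidson--Szarek applies cleanly and the mixture mean never needs to be bounded. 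Your route is tighter conceptually and fully exploits Gaussianity of $Z\mid Y$; the paper's sub-Gaussian argument is looser but more portable, which is presumably why it is packaged as a general-purpose lemma reused elsewhere. Both yield the same $\sigma_K^2(\Pi_n\Z A^\T)\gtrsim n\lambda_K(A\szy A^\T)$ (the paper's bound is through $\lambda_K(A\sz A^\T)\ge\lambda_K(A\szy A^\T)$).
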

\begin{proof}
The proof is deferred to Appendix \ref{app_proof_thm_PCR_K_hat}
\end{proof}

Theorem \ref{thm_PCR_K_hat} implies that the classifier that uses $B = \U_{\wh K}$ ($B = \wt\U_{\wh K}$) has the same excess risk bound as that uses $B = \U_{K}$ ($B = \wt\U_{K}$).  For this reason, we restrict our analysis in the remaining of this section to $B$ based on the first $K$ principal components of $\U$ and $\wt\U$.

The  condition $K\le \bar K$ holds, for instance, if $K\le c'(n\wedge p)$ with $c' \le \nu/(2c_0(1+\nu))$. 
Condition $r_e(\sw) \ge C'(n\wedge p)$ holds, for instance, in the commonly considered setting
\begin{equation*}
0 < c \le \lambda_p(\sw) \le \lambda_1(\sw) \le C< \i
\end{equation*}
while being more general.

 The condition that  $\xi \ge C$  
is also needed in our subsequent derivation of the rates  of the excess risks for the classifiers using $B = \U_K$ and $B = \wt\U_K$. This essentially requires $\xi^* \ge C$ in the low-dimensional settings, and $\xi^* \ge C(p/n)$ in the high-dimensional settings (see, Remark \ref{rem_cond_PCR}  below for details). 
Since the minimax lower bounds for the excess risk in Theorem \ref{thm_lowerbound} above contain  the term $\min(1, \Dt) / \xi^*$, it is imperative that the signal-to-noise ratio $\xi^*$ is large to guarantee good performance of the classifier, irrespective of the estimation of the latent dimension $K$.

We investigate in Appendix \ref{app_sim_K} the consequences of inconsistent estimates $\wh K$ and found that our proposed classifiers are robust against both under-estimation and over-estimation. This is corroborated  in our follow-up work \cite{BW22}, that proves that the classifier using $\wh \theta= (\Pi_n \X)^{+} \Y$ based on $B=\bI_p$ (in other words, $\wh K=p$), often is minimax optimal and performing slightly inferior to $B=\U_K$ in finite sample simulations.

\subsection{PC-based LDA by using the true dimension $K$}\label{sec_PCLDA_K}

The following theorem states the excess risk bounds of $\wh g_x$ that uses $B = \U_K$. Its proof can be found in Appendix \ref{app_proof_PCR}.  Denote by  $\kappa$ the condition number $\lambda_1(A\sz A^\T) / \lambda_K(A\sz A^\T)$
of the matrix $A\sz A^\T$.

\begin{thm}\label{thm_PCR}
Under model (\ref{model_X}), assume {\rm (i) -- (vi)}. If
$
K\log n \le  c n
$
and 
$\xi  \ge C\kappa^2$ for some constants $c, C>0$, 
then for any $a\ge 1$ and 
\begin{equation}\label{def_omega_n_PCR}
\omega_n(a) =  \left(\sqrt{K\log n\over n}+ \min\{1,\Dt\}\sqrt{1\over \xi^*} + \sqrt{\kappa\over \xi^2}\right) \sqrt{a\log n},
\end{equation}
we have 
$
\PP^{\D}\left\{
\wh\omega_n(a) \lesssim \omega_n(a)
\right\} = 1-\cO(n^{-1}).
$
Hence, with this probability,
the conclusion of Theorem \ref{thm_general_risk_explicit} holds for the classifier that uses $B = \U_K$ for $\omega_n(a)$ in (\ref{def_omega_n_PCR}).
\end{thm}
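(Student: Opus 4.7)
The plan is to apply Theorem~\ref{thm_general_risk_explicit} with $B = \U_K$. It thus suffices to bound the three random quantities $\wh r_1, \wh r_2, \wh r_3$ defined in (\ref{def_r_hat}) and verify $\wh\omega_n(a) \lesssim \omega_n(a)$ on an event of $\PP^{\D}$-probability $1 - \cO(n^{-1})$. I will match the three terms in $\omega_n(a)$ to distinct error sources: $\sqrt{K\log n/n}$ is the parametric error of a $K$-dimensional OLS regression of $\Y$ on the PC basis, $\min\{1,\Dt\}/\sqrt{\xi^*}$ is the irreducible noise floor from observing $X = AZ + W$ rather than $Z$ (entering through $\|\sw\|_\op^{1/2}\wh r_2$), and $\sqrt{\kappa/\xi^2}$ is the cost of estimating the column space of $A$ by that of $\U_K$ (entering through $\wh r_2 \wh r_3$). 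The signal-to-noise assumption $\xi \ge C\kappa^2$ will provide the spectral gap needed for the subspace-perturbation step.

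First I control $\|P_{\U_K} - P_A\|_\op$. Writing $\Pi_n \X = \Pi_n \Z A^\T + \Pi_n \W$, assumption (v) together with a standard subGaussian matrix tail bound gives $\|\Pi_n\W\|_\op^2 \lesssim n\,\errw$ with probability $1 - \cO(n^{-1})$, while assumption (iv) and concentration of $\Pi_n \Z$ yield $\sigma_K(\Pi_n\Z A^\T)^2 \gtrsim n\,\lambda_K(A\sz A^\T)$. Wedin's $\sin\Theta$ theorem then produces
\[
\|P_{\U_K} - P_A\|_\op^2 \;\lesssim\; \frac{\errw}{\lambda_K(A\sz A^\T)} \;\asymp\; \frac{\kappa}{\xi},
\]
where the last comparison uses $\lambda_K(A\sz A^\T) \asymp \lambda_1(A\sz A^\T)/\kappa$ together with the relation $\sz = \szy + \pi_0\pi_1(\a_1-\a_0)(\a_1-\a_0)^\T$. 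Combining with $\|\W\|_\op \lesssim \sqrt{n\,\errw}$ then gives $\wh r_3 \lesssim \sqrt{\errw\,\kappa/\xi}$.

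Next I analyze $\wh\theta = \U_K\wh\alpha$ with $\wh\alpha = (\U_K^\T\X^\T\Pi_n\X\U_K/n)^{-1}\U_K^\T\X^\T\Pi_n\Y/n$. I split
\[
A^\T\wh\theta - \beta \;=\; A^\T(P_{\U_K}-P_A)\wh\theta + (A^\T P_A\wh\theta - \beta),
\]
and compare the second piece to the oracle OLS $\beta_Z := (\Z^\T\Pi_n\Z/n)^{-1}\Z^\T\Pi_n\Y/n$, which deviates from $\beta$ by $O_\PP(\sqrt{K\log n/n})$ under (iv)--(vi) by a standard subGaussian concentration of the $K\times K$ sample Gram matrix and the score $\Z^\T\Pi_n\Y/n$. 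Using $\sigma_K(\X\U_K)^2/n \gtrsim \lambda_K(A\sz A^\T)$, $\|\Pi_n\Y\|_2/\sqrt n = O(1)$, and the explicit form of $\beta$ in (\ref{eq_beta}) whose $\pi_0\pi_1$ damping yields $\|\sz^{1/2}\beta\|_2 \asymp \min\{1,\Dt\}$, I obtain $\wh r_2 \lesssim \min\{1,\Dt\}/\sqrt{\lambda_K(A\szy A^\T)}$ and therefore $\|\sw\|_\op^{1/2}\wh r_2 \lesssim \min\{1,\Dt\}/\sqrt{\xi^*}$. The subspace-error piece of $\wh r_1$ is controlled by $\|A\|_\op \cdot \|P_{\U_K}-P_A\|_\op \cdot \wh r_2$, which together with $\wh r_3$ feeds the $\sqrt{\kappa/\xi^2}$ contribution.

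The main obstacle is that $P_{\U_K}$ and $\Pi_n\Y$ both depend on the training data, so a direct subGaussian concentration of the cross-term $A^\T(P_{\U_K}-P_A)\wh\theta$ is not immediately available. I would handle this by conditioning on the high-probability event from the second step, thereby making $\|P_{\U_K}-P_A\|_\op \lesssim \sqrt{\kappa/\xi}$ a deterministic bound, and then reducing the OLS piece to the oracle $\Z$-regression via the nearly orthonormal change of basis $A^\T\U_K$ — whose near-unitarity is exactly what $\xi\ge C\kappa^2$ guarantees. A second subtlety is keeping $\wh r_2$ bounded when $\Dt \to \infty$; this follows because the $\pi_0\pi_1$ factor in (\ref{eq_beta}) caps $\|\sz^{1/2}\beta\|_2$ at $O(1)$. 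Inserting the three bounds into (\ref{def_hat_omega_n}) yields $\wh\omega_n(a) \lesssim \omega_n(a)$, and Theorem~\ref{thm_general_risk_explicit} then delivers the excess-risk conclusion.
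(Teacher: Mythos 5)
Your high-level plan — bound $\wh r_1,\wh r_2,\wh r_3$ and invoke Theorem \ref{thm_general_risk_explicit} through Proposition \ref{prop_risk} — agrees with the paper's. However, the decomposition you propose for $\wh r_1$ has a genuine gap. Splitting $A^\T\wh\theta - \beta = A^\T(P_{\U_K}-P_A)\wh\theta + (A^\T P_A\wh\theta - \beta)$ and bounding the first piece by $\|\sz^{1/2}A^\T\|_\op\,\|P_{\U_K}-P_A\|_\op\,\wh r_2$ contributes, after inserting $\|\sz^{1/2}A^\T\|_\op = \sqrt{\lambda_1(A\sz A^\T)} \gtrsim \sqrt{\kappa\lambda_K}$, $\|P_{\U_K}-P_A\|_\op\lesssim\sqrt{\kappa/\xi}$ and $\wh r_2\lesssim\min\{1,\Dt\}/\sqrt{\lambda_K}$, a term of order $(\kappa/\sqrt\xi)\min\{1,\Dt\}$. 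The paper's analysis of the same $\wh r_1$ yields the strictly smaller $(\kappa/\xi)\min\{1,\Dt\}$ — your version overshoots by $\sqrt\xi$. That overshoot is not absorbed by $\omega_n(a)$: matching $(\kappa/\sqrt\xi)\min\{1,\Dt\}$ against $\min\{1,\Dt\}/\sqrt{\xi^*}$ would require $\kappa^2\xi^*\lesssim\xi$, which does not follow from $\xi\gtrsim\kappa^2$ because $\xi^*\ge\xi$ (take $\xi\asymp\kappa^2$ and $\xi^*\gg\xi$).

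The paper avoids the loss by never multiplying $\|P_{\U_K}-P_A\|_\op$ against the large factor $\|\sz^{1/2}A^\T\|_\op$. Its proof (Theorem \ref{thm_theta_general} together with Lemmas \ref{lem_ZXA_comp} and \ref{lem_ZXA_comp_indep}) starts from the exact identity, obtained by inserting $(\Pi_n\Z)^+\Z=\bI_K$ and using $\X=\Z A^\T+\W$,
\[
A^\T\wh\theta = (\Pi_n\Z)^+\Y - (\Pi_n\Z)^+P_{\Pi_n\X\U_K}^{\perp}\Y - (\Pi_n\Z)^+\W\U_K(\Pi_n\X\U_K)^+\Y ,
\]
whose first term is the oracle $\Z$-regression, within $\cO(\sqrt{K\log n/n})$ of $\sz^{1/2}\beta$ by Lemma \ref{lem_LS}. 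The remaining two terms are bounded on the event $\E_A=\{\sqrt\kappa\|P_{\U_K}-P_A\|_\op\le c''\}$ (implied by $\xi\gtrsim\kappa^2$), where the algebraic step $A^+P_{\U_K}^\perp = (A^\T A)^{-1}A^\T(P_A^\perp - P_{\U_K}^\perp)$ pairs $P_{\U_K}-P_A$ with $\sz^{-1/2}A^+$, whose operator norm is $1/\sqrt{\lambda_K(A\sz A^\T)}$, not with $\sz^{1/2}A^\T$. Your split discards exactly this cancellation. Two smaller points: your assertion that the Wedin bound $\errw/\lambda_K(A\sz A^\T)$ is ``$\asymp\kappa/\xi$'' is not supported by what you write — the definition of $\kappa$ introduces no $\kappa$ factor, the left side is $\le 1/\xi$ so the relation is only one-sided, and the paper's $\kappa$ in Lemma \ref{lem_error_op} actually comes from the cross term $\|A\Z^\T\Pi_n\W\|_\op$ in a Davis--Kahan bound on the Gram matrices; and $\wh r_2\lesssim\min\{1,\Dt\}/\sqrt{\lambda_K}$ does not follow from $\|\Pi_n\Y\|_2/\sqrt n=\cO(1)$ (which gives only $1/\sqrt{\lambda_K}$) but needs the signal/noise decomposition of $\U_K^\T\X^\T\Pi_n\Y$ and the score bound in display (\ref{bd_first_thetahat_1}).
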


Theorem \ref{thm_PCR} requires $\xi \ge C\kappa^2$, which can be relaxed to $\xi \ge  C$, as shown in the proof (see, Remark 1 in Appendix A.4). However, the stronger condition can  lead to a faster rate when one has additional data set to construct $B=\wt \U_K$, as stated in the theorem below. Its proof can be found in Appendix \ref{app_proof_thm_PCR_indep}.

\begin{thm}\label{thm_PCR_indep}
Under the same conditions of Theorem \ref{thm_PCR}, 
for any $a>0$ and 
\begin{equation}\label{def_omega_n_PCR_indep}
\omega_n(a) =   \left(\sqrt{K\log n\over n} +  \min\{1,\Dt\}\sqrt{1\over \xi^*}\right)\sqrt{a\log n},
\end{equation} 
we have 
$
\PP^{\D}\left\{
\wh\omega_n(a) \lesssim \omega_n(a)
\right\} = 1-\cO(n^{-1}).
$
Hence, with this probability,
the conclusion of Theorem \ref{thm_general_risk_explicit} holds for the classifier that uses $B = \wt\U_K$ for $\omega_n(a)$ in (\ref{def_omega_n_PCR_indep}).
\end{thm}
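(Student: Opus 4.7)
The proof proceeds by applying Theorem \ref{thm_general_risk_explicit}, which via Proposition \ref{prop_risk} reduces everything to a deterministic high-probability upper bound on the three quantities $\wh r_1, \wh r_2, \wh r_3$ in (\ref{def_r_hat}) with the choice $B = \wt\U_K$. The structural advantage over Theorem \ref{thm_PCR} is that $\wt\U_K$ is constructed from the auxiliary matrix $\Pi_{n'}\tX$, which by assumption is independent of $(\X, \Y)$ and in particular of the noise matrix $\W$ in (\ref{model_X}). I would condition on $\wt\U_K$ throughout, treat it as deterministic, and collect the bounds at the end.

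For $\wh r_1$ and $\wh r_2$, I would follow the same decomposition used in the proof of Theorem \ref{thm_PCR}: expand $\wh\theta$ from (\ref{def_theta_hat}) with $B = \wt\U_K$ as a bias part driven by $\|P_{\wt\U_K} - P_A\|_{\op}$ plus a fluctuation part coming from the least-squares noise $(\wt\U_K^\T \X^\T \Pi_n \X \wt\U_K)^+ \wt\U_K^\T \X^\T \Pi_n \br$, where $\br = \Y - \EE[\Y\mid \Z]$. A Davis--Kahan bound applied to the independent sample $\Pi_{n'}\tX$, using the same population eigengap $\lambda_K(A\szy A^\T)$ as in the proof of Theorem \ref{thm_PCR} and the noise scale $\errw$, yields $\|P_{\wt\U_K} - P_A\|_{\op} \lesssim 1/\sqrt{\xi^*}$ on an event of probability $1-\cO(n^{-1})$; the condition $\xi \ge C\kappa^2$ guarantees that this eigengap is not spoiled by lower-order perturbations. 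Combined with the standard fluctuation analysis of PCR, these yield the two summands $\sqrt{K\log n/n}\,\sqrt{a\log n}$ and $\min\{1,\Dt\}\sqrt{1/\xi^*}\,\sqrt{a\log n}$ in $\omega_n(a)$ through the $\sqrt{a\log n}(\wh r_1 + \|\sw\|_\op^{1/2}\wh r_2)$ part of (\ref{def_hat_omega_n}).

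The main new step, and the reason the extra $\sqrt{\kappa/\xi^2}\sqrt{a\log n}$ term of (\ref{def_omega_n_PCR}) disappears, is the sharp bound on $\wh r_3 = n^{-1/2}\|\W(P_{\wt\U_K}-P_A)\|_{\op}$. Conditional on $\wt\U_K$, the matrix $\W$ keeps the same distribution by independence, so its rows are i.i.d. $\gamma$-subGaussian with covariance $\sw$, and the conditional covariance of the rows of $\W(P_{\wt\U_K}-P_A)$ is the rank-at-most-$2K$ matrix $(P_{\wt\U_K}-P_A)\sw(P_{\wt\U_K}-P_A)$. A standard operator-norm concentration bound for subGaussian random matrices with low-rank covariance then gives, with probability $1-\cO(n^{-1})$,
\[
\wh r_3 \;\lesssim\; \|\sw\|_{\op}^{1/2}\,\|P_{\wt\U_K}-P_A\|_{\op}\Bigl(1 + \sqrt{K/n}\Bigr) + \|\sw\|_{\op}^{1/2}\sqrt{\log n / n},
\]
so that plugging in the Davis--Kahan bound above produces $\wh r_3 \lesssim \|\sw\|_{\op}^{1/2}(1/\sqrt{\xi^*} + \sqrt{K\log n/n})$. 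In the proof of Theorem \ref{thm_PCR} this step must instead be done via the cruder inequality $\|\W(P_{\U_K}-P_A)\|_{\op} \le \|\W\|_{\op}\|P_{\U_K}-P_A\|_{\op}$ because $\U_K$ and $\W$ share the same randomness, and it is exactly that operation that creates the residual $\sqrt{\kappa/\xi^2}$. After multiplying by $\wh r_2$, which one shows is of constant order, the $\wh r_2\wh r_3$ contribution is absorbed into the $\min\{1,\Dt\}\sqrt{1/\xi^*}\sqrt{a\log n}$ term of (\ref{def_omega_n_PCR_indep}), completing the bound $\wh\omega_n(a) \lesssim \omega_n(a)$. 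The main obstacle is carrying out the conditional concentration sharply enough that the rank-$2K$ structure of $P_{\wt\U_K}-P_A$ is exploited to avoid an inflated $\sqrt{\tr(\sw)/n}$ contribution, and checking that none of the other terms in (\ref{def_hat_omega_n}) secretly introduce dependence on $\kappa$ once the auxiliary sample is used.
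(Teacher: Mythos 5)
Your overall route is the one the paper takes: condition on $\wt\U_K$, exploit the independence of $\wt\U_K$ from $\W$ to apply a subGaussian operator-norm bound to $\W(P_{\wt\U_K}-P_A)$ with the rank-at-most-$2K$ covariance $\sw^{1/2}(P_{\wt\U_K}-P_A)^2\sw^{1/2}$ (this is precisely how the paper obtains $\wh r_3^2 \lesssim \|\sw\|_{\op}\|P_{\wt\U_K}-P_A\|_{\op}^2$ via Lemma \ref{lem_op_norm}), and then plug into the general machinery of Theorems \ref{thm_theta_general}/\ref{thm_theta_indep} through Proposition \ref{prop_risk}. Two minor imprecisions: the Davis--Kahan bound on the auxiliary sample gives $\|P_{\wt\U_K}-P_A\|_{\op}\lesssim\sqrt{\kappa/\xi}$ (Lemma \ref{lem_error_op}), not $1/\sqrt{\xi^*}$; this only reduces to something of size $1/\sqrt{\xi^*}$ after using $\xi\gtrsim\kappa^2$, which is exactly why that hypothesis is carried. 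Similarly, the $\|\sw\|_{\op}^{1/2}\sqrt{\log n / n}$ term you add to $\wh r_3$ is harmless but not what Lemma \ref{lem_op_norm} produces.

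The genuine gap is the step ``after multiplying by $\wh r_2$, which one shows is of constant order.'' This is false, and if taken literally it breaks the absorption of $\wh r_2\wh r_3$ into $\omega_n(a)$. Dimensionally, $\wh r_2=\|\wh\theta\|_2$ scales like $1/\sqrt{\lambda_K}$ (a coefficient vector mapping a feature of size $\sqrt{\lambda_K}$ to a signal of order $\min\{1,\Dt\}$), and the model allows $\lambda_K$ to grow with $p$; the paper's Lemma \ref{lem_ZXA_comp} gives the refined bound $\wh r_2\lesssim \lambda_K^{-1/2}\bigl(\sqrt{K\log n/n}+\min\{1,\Dt\}\bigr)$. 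Both factors matter: the $1/\sqrt{\lambda_K}$ is what makes $\wh r_2\wh r_3\lesssim \sqrt{\kappa/(\xi\xi^*)}\,(\sqrt{K\log n/n}+\min\{1,\Dt\})$ dimensionless, and the multiplicative $\min\{1,\Dt\}$ is what lets the $\wh r_2\wh r_3$ term (and likewise $\|\sw\|_{\op}^{1/2}\wh r_2$) be dominated by $\min\{1,\Dt\}\sqrt{1/\xi^*}$ when $\Dt\to 0$, which is the third regime of Theorem \ref{thm_general_risk_explicit}. With only ``$\wh r_2$ constant'' you would produce a term of order $\|\sw\|_{\op}^{1/2}\sqrt{\kappa/\xi}$, which is not bounded by $\omega_n(a)$ at all, and even the correct $1/\sqrt{\lambda_K}$ without the $\min\{1,\Dt\}$ factor would fail to match the claimed $\omega_n(a)$ in the weak-separation case. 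You also pass silently over the quantity $\wh\zeta$ in (\ref{def_zeta_hat}), which enters the bound for $\wh r_1$ in Theorem \ref{thm_theta_indep} and must be controlled by the same independence argument used for $\wh r_3$.
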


\begin{remark}[Polynomially fast rates]\label{rem_fast_rates}
In view of Theorems \ref{thm_PCR} \& \ref{thm_PCR_indep},  fast rates (of the order $\cO(n^{-a})$ for arbitrary $a\ge 1$) are obtained for
both PC-based procedures, provided that (a) $\Dt^2\gg \log n$ or (b)  $1/\Dt^2 \gg \log n$ and $\pi_0\ne \pi_1$. 
\end{remark}

\begin{remark}[Advantage of using an independent dataset or data splitting]

Compared to (\ref{def_omega_n_PCR}) in Theorem \ref{thm_PCR}, the convergence rate of the excess risk of the classifier that uses $B=\wt \U_K$ does 
not have the third term $\sqrt{\kappa/\xi^2}$. This advantage only becomes evident when $p > n$ and $\xi^*$ is not sufficiently large. We refer to Remark \ref{rem_cond_PCR} below for detailed explanation.

To understand why using $\wt \U_K$, that is independent of $\X$, yields a smaller excess risk, recall that the third term in (\ref{def_omega_n_PCR}) originates from predicting $\Z$ from $\X$ and its derivation involves controlling $\|\W(P_{\U_K}-P_A)\|_\op$. Since $\U_K$ is constructed from $\X$, hence also depends on $\W$,  the dependence between $\W$ and $\U_K$ renders a slow rate for  $\|\W(P_{\U_K}-P_A)\|_\op$. 
The fact that auxiliary data can bring improvements (in terms of either smaller prediction / estimation error or weaker conditions) is a phenomenon that has been observed in other problems, such as the problem of estimating the optimal instrument in sparse high-dimensional instrumental variable model \citep{belloni2012sparse} and the problem of making inference on a low-dimensional parameter in the presence of high-dimensional nuisance parameters \citep{DDML}.
\end{remark}

\begin{remark}[Simplified rates within $\Theta(\lambda, \sigma, \Dt)$]\label{rem_cond_PCR}
To obtain more insight from the results of Theorems \ref{thm_PCR} \& \ref{thm_PCR_indep}, consider $\theta \in \Theta(\lambda, \sigma, \Dt)$  in (\ref{def_Theta}) with $\Dt\asymp 1$ such that 
$\pi_0 = \pi_1$, $1 / \xi^* \asymp \sigma^2 / \lambda$, $1/\xi \asymp (\sigma^2/\lambda)(1 + p/n)$ and $\kappa\asymp 1$.
In this case, combining Theorems \ref{thm_general_risk_explicit}, \ref{thm_PCR} and \ref{thm_PCR_indep} reveals that, with probability $1-\cO(n^{-1})$, 
\begin{alignat}{2}\label{bd_simp_PCR}
&R_x(\wh g_x) - R_z^* ~\lesssim~ \left[{K\log n\over n} + {\sigma^2 \over \lambda} + \left(
{p\over n}{\sigma^2 \over \lambda}
\right)^2\right]\log n, \quad  && \text{if $B = \U_K$;}\smallskip\\\label{bd_simp_PCR_indep}
&R_x(\wh g_x) - R_z^* ~\lesssim~	\left[{K\log n\over n} + {\sigma^2 \over \lambda}\right]\log n,  && \text{if $B = \wt\U_K$}.
\end{alignat}
We have the following conclusions. 
\begin{enumerate}
\item[(1)] If $p < n$, the  two rates above coincide and equal (\ref{bd_simp_PCR_indep}), whence consistency of both PC-based classifiers requires that $K\log^2 n / n \to 0$ and $\sigma^2 \log n  /\lambda \to 0$.

\item[(2)] If $p>n$,  it  depends on the signal-to-noise ratio  (SNR) $\lambda/\sigma^2$ whether or not consistency of the classifier with $B = \U_K$  requires an additional condition. 
\begin{enumerate}
\item  If  the SNR is large such that 
\begin{equation}\label{cond_SNR}
	{\lambda \over \sigma^2} ~ \gtrsim ~  \min\left\{
	\left(p\over n\right)^2, ~ {p\over \sqrt{nK\log n}}
	\right\},
\end{equation}
the two rates in (\ref{bd_simp_PCR}) and (\ref{bd_simp_PCR_indep}) also coincide and equal (\ref{bd_simp_PCR_indep}). In this case,  there is no apparent benefit of using an auxiliary data set.

\item For relatively smaller values of SNR that fail (\ref{cond_SNR}), the effect of using $B = \wt \U_K$ based on an independent data set $\tilde \X$  is real as evidenced in Figure \ref{fig_split} below where we keep $\lambda/\sigma^2$, $n$ and $K$ fixed but let $p$ grow.

\begin{figure}[ht]
	\centering
	\includegraphics[width=0.45\textwidth]{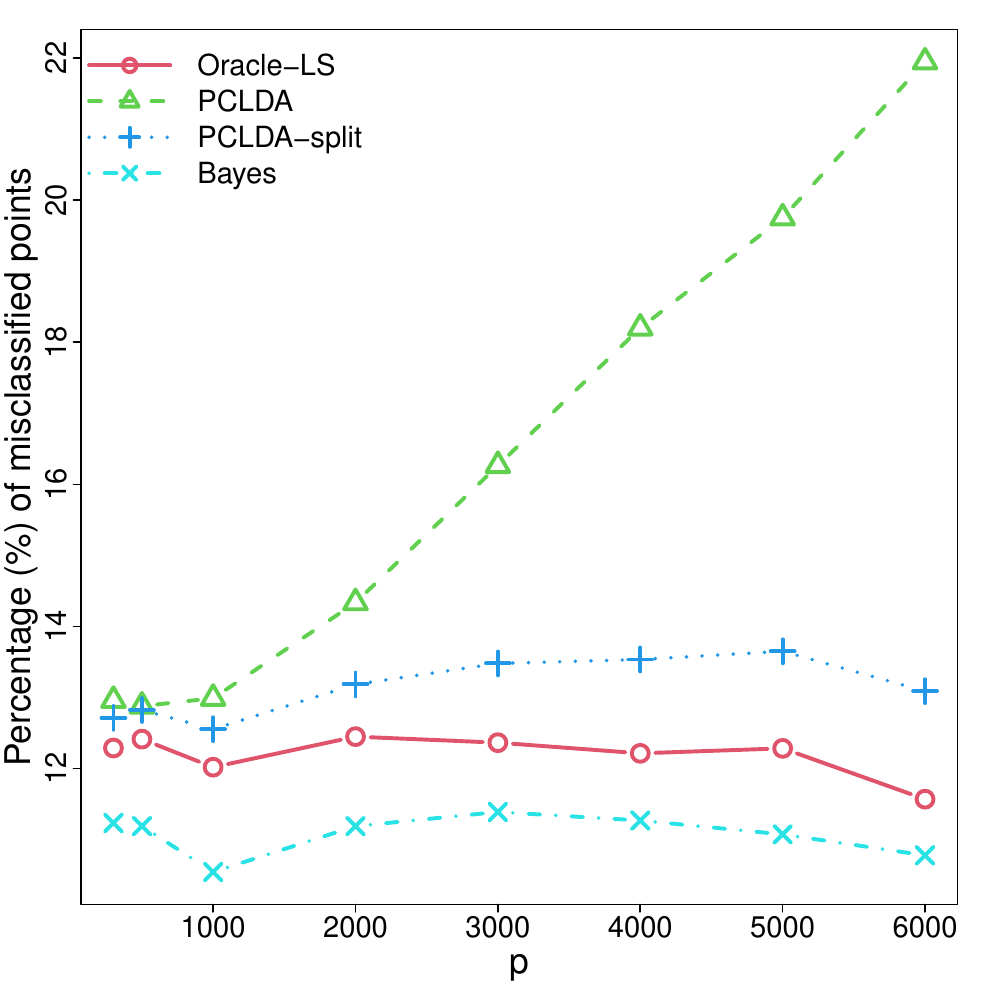}
	\caption{Illustration of the advantage of constructing $\wt\U_K$ from an independent dataset: PCLDA represents the PC-based classifier based on $B=\U_K$ while PCLDA-split uses $B=\wt\U_K$ that is constructed from an independent $\tX$. Oracle-LS is the oracle benchmark that uses both $Z$ and $\Z$ while Bayes represents the risk of using the oracle Bayes rule.  We fix $n = 100$ and $K = 5$ and keep $\lambda/\sigma^2$ fixed, while we let $p$ grow. We refer to Section \ref{sec_sim} for detailed data generating mechanism.}
	\label{fig_split}
\end{figure}

\item It is worth mentioning that if the SNR is sufficiently large such that 
\[
{\lambda \over \sigma^2} ~ \gtrsim ~  \max\left\{
\left(p\over n\right)^2, ~ {p\over \sqrt{nK\log n}}
\right\},
\]
both errors due to not observing $Z$ and estimation of the column space of the matrix $A$ are negligible compared to the parametric rate $K/n$, to wit,  both rates  in (\ref{bd_simp_PCR}) and (\ref{bd_simp_PCR_indep}) reduce to $K\log^2 n / n$.
\end{enumerate}
\end{enumerate}
Conditions $\lambda \gtrsim p$ and $\sigma^2 = \cO(1)$ are common  in the analysis of factor models with a diverging number of features \citep{bai2012,fan2013large,SW2002_JASA}. For instance,  $\lambda \gtrsim p$ holds when eigenvalues of $\szy$ are bounded and a fixed proportion of rows of $A$ are i.i.d. realizations of a sub-Gaussian random vector with covariance matrix having bounded eigenvalues as well. {In this case, the bounds in \eqref{bd_simp_PCR} and \eqref{bd_simp_PCR_indep} reduce to 
\[
{K\log^2 n \over n} + {\log n\over p},
\]
which decreases as $p$ increases. }
Nevertheless, consistency of the PC-based classifiers only requires $\lambda / \{\sigma^2 \log n(1 + p/n)\} \to \i$ for $B = \U_K$ and $\lambda / (\sigma^2\log n)\to \i$ for $B = \wt \U_K$,  which are
both 
much milder conditions.

\end{remark}

\subsection{Optimality of the PC-based LDA by sample splitting}\label{sec_optimality_LDA}

We now show that the PC-based LDA by sample splitting achieves the minimax lower bounds in Theorem \ref{thm_lowerbound}, up to multiplicative logarithmic factors of $n$. 
Recalling that (\ref{def_Theta}), for any $\theta \in \Theta(\lambda, \sigma, \Dt)$, one has $\pi_0 = \pi_1$, $1 / \xi^* \asymp \sigma^2 / \lambda$, $1/\xi \asymp (\sigma^2/\lambda)(1 + p/n)$ and  $1\lesssim \kappa\lesssim 1+\Dt^2$. Based on Theorem \ref{thm_PCR_indep},  we have the following corollary for the classifier that uses $B =\wt\U_K$. Its proof can be found in Appendix \ref{app_proof_cor_PCR}. We use the notation $\lessapprox$ for inequalities that hold up to a multiplicative logarithmic factor of $n$. Recall $\omega_n^*$ from (\ref{def_omega_star}).

\begin{cor}\label{cor_PCR}
Under model (\ref{model_X}), assume {\rm (i) -- (v)},
$
K\log n\le  cn,
$
$\kappa^2\sigma^2 / \lambda \le c'$ and  $\kappa^2 \sigma^2p/(\lambda n)\le c''$ for some constants $c, c',c''>0$.  For any $\theta \in \Theta(\lambda, \sigma, \Dt)$, with probability $1-\cO(n^{-1})$, the classifier that uses $B = \wt\U_K$ satisfies the following statements.
\begin{enumerate}				
\item[(1)] If 
$\Dt \asymp 1$,  then
\[
R_x(\wh g_x) - R_z^* ~\lessapprox~   (\omega_n^*)^2 .
\] 
\item[(2)] If $\Dt \to \i$,
and additionally, 
$(\log n + \Dt^2) K\log n /n \to 0$ and $(\log n + \Dt^2) \sigma^2/\lambda \to 0$ as $n\to\i$, 
then
\[
R_x(\wh g_x) - R_z^* ~\lessapprox~ (\omega_n^*)^2 \exp\left\{-\left[{1\over 8} + o(1)\right]\Dt^2\right\}.
\] 
\item[(3)] If $\Dt \to 0$ as $n\to\i$, then
\[R_x(\wh g_x) - R_z^* ~\lessapprox~ \min\left\{
{\omega_n^* \over \Dt}, 1 \right\} \omega_n^*.
\] 
\end{enumerate}
\end{cor}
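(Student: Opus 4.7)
The plan is to obtain Corollary \ref{cor_PCR} as a direct consequence of Theorem \ref{thm_PCR_indep} combined with Theorem \ref{thm_general_risk_explicit}, after simplifying the data-driven quantities $\xi, \xi^*, \kappa$ under the structured parameter space $\Theta(\lambda,\sigma,\Dt)$.

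First I would translate the ingredients. For any $\theta \in \Theta(\lambda,\sigma,\Dt)$, the specification $\lambda_j(\sw) \asymp \sigma^2$ gives $\lambda_1(\sw)\asymp\sigma^2$ and $\tr(\sw)\asymp p\sigma^2$, so by (\ref{def_errw}) one has $\errw \asymp \sigma^2(1+p/n)$ and thus $\xi^* \asymp \lambda/\sigma^2$ and $\xi \asymp (\lambda/\sigma^2)/(1+p/n)$. Since $A\sz A^\T = A\szy A^\T + \pi_0\pi_1\, A(\a_1-\a_0)(\a_1-\a_0)^\T A^\T$ and $\|A(\a_1-\a_0)\|_2^2 \le \lambda_1(A\szy A^\T)\Dt^2 \asymp \lambda \Dt^2$, one obtains $\kappa \lesssim 1+\Dt^2$. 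Next I would verify the signal-to-noise requirement $\xi \ge C\kappa^2$ in Theorem \ref{thm_PCR_indep}: summing the two hypotheses gives $\kappa^2/\xi \asymp \kappa^2(\sigma^2/\lambda)(1+p/n) \le c'+c''$, which is small enough by choice of $c',c''$.

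With these in hand, Theorem \ref{thm_PCR_indep} yields, with probability $1-\cO(n^{-1})$, the deterministic bound $\wh\omega_n(a) \lesssim \omega_n(a) := \bigl(\sqrt{K\log n/n} + \min(1,\Dt)\sqrt{\sigma^2/\lambda}\bigr)\sqrt{a\log n}$, and I would plug this into each regime of Theorem \ref{thm_general_risk_explicit}. In case (1) ($\Dt\asymp 1$), the third term $(\sigma^2/\lambda)^2 p\Dt^2/n$ in $(\omega_n^*)^2$ is absorbed by the second term using $\kappa^2\sigma^2 p/(\lambda n)\le c''$, giving $(\omega_n^*)^2 \asymp K/n+\sigma^2/\lambda$, and hence $\omega_n(a)^2 \asymp (\omega_n^*)^2 \log^2 n \lessapprox (\omega_n^*)^2$; choosing $a$ large handles the residual $n^{-a}$. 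In case (2) ($\Dt\to\infty$), use $(\omega_n^*)^2 \ge K/n$ and $(\omega_n^*)^2 \ge (\sigma^2/\lambda)\Dt^2$ term-by-term to get $\omega_n(a)^2 \lesssim (\omega_n^*)^2 \log^2 n\bigl(1+1/\Dt^2\bigr) \lessapprox (\omega_n^*)^2$; the extra conditions $(\log n+\Dt^2)K\log n/n\to 0$ and $(\log n+\Dt^2)\sigma^2/\lambda\to 0$ ensure $\omega_n(a)=o(1)$, after which Theorem \ref{thm_general_risk_explicit} supplies the exponential factor $\exp\{-[1/8+o(1)]\Dt^2\}$ (using $\pi_0=\pi_1$, hence $c_\pi=1/8$). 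In case (3) ($\Dt\to 0$, $\pi_0=\pi_1$), $\min(1,\Dt)=\Dt$ gives $\omega_n(a)^2 \asymp (K/n+\Dt^2\sigma^2/\lambda)\log^2 n$, and again the third term of $(\omega_n^*)^2$ is dominated by the second, so $\omega_n(a) \lessapprox \omega_n^*$; the fourth bullet of Theorem \ref{thm_general_risk_explicit} gives the upper bound $\omega_n(a)\min\{1,\omega_n(a)/\Dt\}$, which transfers to $\omega_n^*\min\{1,\omega_n^*/\Dt\}$ by monotonicity of $x\mapsto x\min(1,x/\Dt)$.

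The main obstacle is the bookkeeping in case (2): one must show that the polylogarithmic blow-up $\log^2 n$ can be absorbed into $\lessapprox$ without spoiling the exponential factor $\exp\{-[1/8+o(1)]\Dt^2\}$. The key step is the term-by-term comparison $\omega_n(a)^2 \lesssim (\omega_n^*)^2\log^2 n (1+1/\Dt^2)$, which is clean precisely because $(\omega_n^*)^2$ contains the $\Dt^2$-inflated second term $(\sigma^2/\lambda)\Dt^2$. A secondary subtlety is the monotonicity argument in case (3): one must check the four sub-cases corresponding to whether $\omega_n(a)/\Dt$ and $\omega_n^*/\Dt$ exceed $1$, but in each sub-case the ratio of $\omega_n(a)\min\{1,\omega_n(a)/\Dt\}$ to $\omega_n^*\min\{1,\omega_n^*/\Dt\}$ is bounded by a $\text{polylog}(n)$ factor coming from $\omega_n(a)/\omega_n^* \lessapprox 1$.
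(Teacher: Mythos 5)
Your overall route is the same as the paper's: translate the parameter space $\Theta(\lambda,\sigma,\Dt)$ into bounds on $\xi,\xi^*,\kappa$, verify $\xi\ge C\kappa^2$, invoke Theorem~\ref{thm_PCR_indep} to bound $\wh\omega_n(a)$ by $\omega_n(a)$, and then feed the result into each regime of Theorem~\ref{thm_general_risk_explicit}. The simplifications $\xi^*\asymp\lambda/\sigma^2$, $\xi\asymp(\lambda/\sigma^2)/(1+p/n)$, $\kappa\lesssim1+\Dt^2$, and the absorption of the third term of $(\omega_n^*)^2$ are all correct and mirror the paper.

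The genuine gap is in case~(2), and it concerns the choice of $a$ and the residual $n^{-a}$ in Theorem~\ref{thm_general_risk_explicit}. Your bound $\omega_n(a)^2\lesssim(\omega_n^*)^2\log^2 n(1+1/\Dt^2)$ only holds if $a=\cO(1)$. But then the bound from Theorem~\ref{thm_general_risk_explicit} contains the additive term $n^{-a}$, and you need $n^{-a}\lesssim(\omega_n^*)^2\exp\{-[1/8+o(1)]\Dt^2\}$, which for constant $a$ forces $\Dt^2\lesssim\log n$. However the corollary's hypothesis $(\log n+\Dt^2)K\log n/n\to0$ permits $\Dt^2$ as large as $o(n/(K\log n))$, well beyond $\log n$, so the residual is \emph{not} dominated for a fixed $a$. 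The paper's fix is to take $a=\Dt^2/\log n+1$, so that $n^{-a}=n^{-1}e^{-\Dt^2}\lesssim(K/n)\exp\{-\Dt^2\}\lesssim(\omega_n^*)^2\exp\{-[1/8+o(1)]\Dt^2\}$. This same choice explains why the extra conditions in case~(2) are exactly what is needed: $a\log n=\Dt^2+\log n$, so $\omega_n(a)=o(1)$ iff $(\log n+\Dt^2)K\log n/n\to0$ and $(\log n+\Dt^2)\sigma^2/\lambda\to0$. One extra piece of bookkeeping you then owe: with this $a$ the ratio $\omega_n(a)^2/(\omega_n^*)^2$ now carries a factor of order $\log n\,(\Dt^2+\log n)$ rather than $\log^2 n$, and the $\Dt^2$ part must be pushed into the $o(1)$ in the exponent via $\Dt^2\exp\{-\eps\Dt^2\}\to0$; it works, but it should be said. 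Cases~(1) and~(3) are fine (there $a=\Dt^2/\log n+1\asymp1$, so your constant-$a$ argument and the paper's coincide, and $n^{-a}\le n^{-1}\le K/n\lesssim(\omega_n^*)^2$ suffices). Your verification of $\xi\ge C\kappa^2$ from the hypotheses and the monotonicity argument in case~(3) are both sound.
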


In view of Theorem \ref{thm_lowerbound} and Corollary \ref{cor_PCR}, we conclude the optimality of PC-based procedure that uses $B = \wt\U_K$ over $\Theta(\lambda,\sigma, \Dt)$.  For $\Dt \to \i$, if conditions in (2)  are not met such as $\Dt^2 \gtrsim n / K$ or $\Dt^2 \gtrsim \lambda / \sigma^2$, the PC-based procedure still has $n^{-a}$ convergence rate of its excess risk, for arbitrary large $a\ge 1$, as commented in Remark \ref{rem_fast_rates}.

Regarding the PC-based classifier that does not resort to sample splitting, according to Theorems \ref{thm_lowerbound} \&  \ref{thm_PCR}, its excess risk also achieves optimal rates of convergence when $\lambda /\sigma^2$ is large  in the precise sense that 
\[
{\lambda \over \sigma^2} ~ \gtrsim ~  \min\left\{
{1\over \min\{1,\Dt\}}\left(p\over n\right)^2, ~ {p\over \sqrt{nK\log n}}
\right\}.
\]

\section{Simulation study}\label{sec_sim}

We conduct various simulation studies in this section to compare the performance of our proposed algorithm with other competitors. For our proposed algorithm, we call it PCLDA standing for the Principal Components based LDA. The name PCLDA-$K$ is reserved when the true $K$ is used as input. When $K$ is estimated by $\wh K$, we use PCLDA-$\wh K$ instead.  
We call PCLDA-CF-$k$ the PCLDA with $k$-fold cross-fitting. We consider $k = 5$ in our simulation as suggested by \cite{DDML}.  To set a benchmark for PCLDA-CF-$k$, we use PCLDA-split that uses an independent copy of $\X$ to compute $\wt \U_K$. 
On the other hand, we compare with the nearest shrunken centroids classifier (PAMR) \citep{Tibshirani2002}, the $\ell_1$-penalized linear discriminant (PenalizedLDA) \citep{Witten2011} and the direct sparse discriminant analysis (DSDA) \citep{mai2012}\footnote{PAMR, PenalizedLDA and DSDA are implemented in the R packages \textsf{pamr}, \textsf{penalizedLDA} and \textsf{TULIP}, respectively.}. Finally, we choose the performance of the oracle procedure (Oracle-LS) as benchmark in which Oracle-LS uses both $\Z$ and $Z$ to estimate $\beta$, $\beta_0$ and the classification rule $g_z$ in (\ref{LDAa}). 

We generate the data as follows. First, we set $\pi_0 = \pi_1 = 0.5$, $\alpha_0 = \0_K$ and $\alpha_1  = \b1_K\sqrt{\eta/K}$. The parameter $\eta$ controls the signal strength $\Delta$ in  (\ref{def_Dt}). We generate $\C$ by independently sampling its diagonal elements $[\C]_{ii}$ from Unif(1,3) and set its off-diagonal elements as 
$$[\C]_{ij} = \sqrt{[\C]_{ii}[\C]_{jj}}  (-1)^{i+j}(0.5)^{|i-j|},\quad  \text{ for each $i\ne j$}.$$ The covariance matrix $\sw$ is generated in the same way, except we set $\diag(\sw) = \b1_p$. The rows of  $\W\in \RR^{n\times p}$ are generated independently from $N_p(0, \sw)$. Entries of $A$ are generated independently from $N(0, 0.3^2)$. The training data $\Z$, $\X$ and $\Y$ are generated according to model  (\ref{model_X}) and (\ref{model_YZ}). In the same way, we generate 100 data points that serve as test data for calculating the (out-of-sample) misclassification error for each algorithm.

In the sequel, we vary the dimensions $n$ and $p$ as well as the signal strength $\Delta$ in (\ref{def_Dt}), one at a time. 
For each setting, we repeat the entire procedure 100 times and  averaged misclassification errors for each algorithm are reported. 

\subsection{Vary the sample size $n$}
We set $\eta = 5$, $K = 10$, $p = 300$ and vary $n$ within  $\{50, 100, 300, 500, 700\}$. The left-panel in Figure \ref{fig_error} shows the averaged  misclassification error (in percentage) of each algorithm on the test data sets. Since $\wh K$ consistently estimates $K$, we only report the performance of PCLDA-$K$. We also exclude the performance of PCLDA-split and PCLDA-CF-$5$ since they all have similar performance as PCLDA-$K$\footnote{This is as expected since our data generating mechanism ensures $\xi^* \asymp p$ in which case PCLDA-split has no clear advantage comparing to PCLDA-$K$ (see, discussions after Theorem \ref{thm_PCR_indep}).}. The blue line represents the optimal Bayes error. All algorithms perform better as the sample size $n$ increases. As expected, Oracle-LS is the best because it uses the true $\Z$ and $Z$. Among the other algorithms, PCLDA-$K$ has the closest performance to Oracle-LS in all settings. The gap between PCLDA-$K$ and Oracle-LS does not close as $n$ increases. According to Theorem \ref{thm_PCR}, this is because such a gap mainly depends on $1/\xi$ which does not vary with $n$.

\begin{figure}[ht]
\centering 
\includegraphics[width = .45\textwidth]{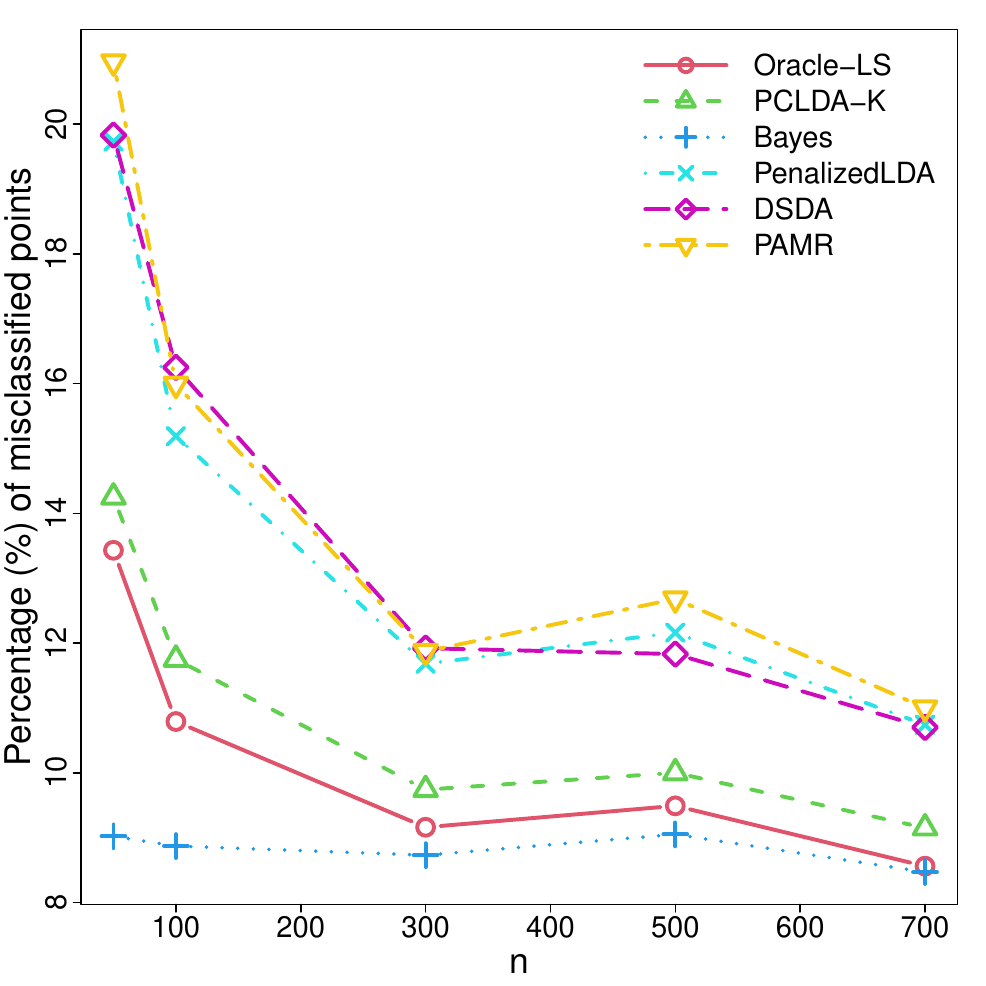} 
\includegraphics[width = .45\textwidth]{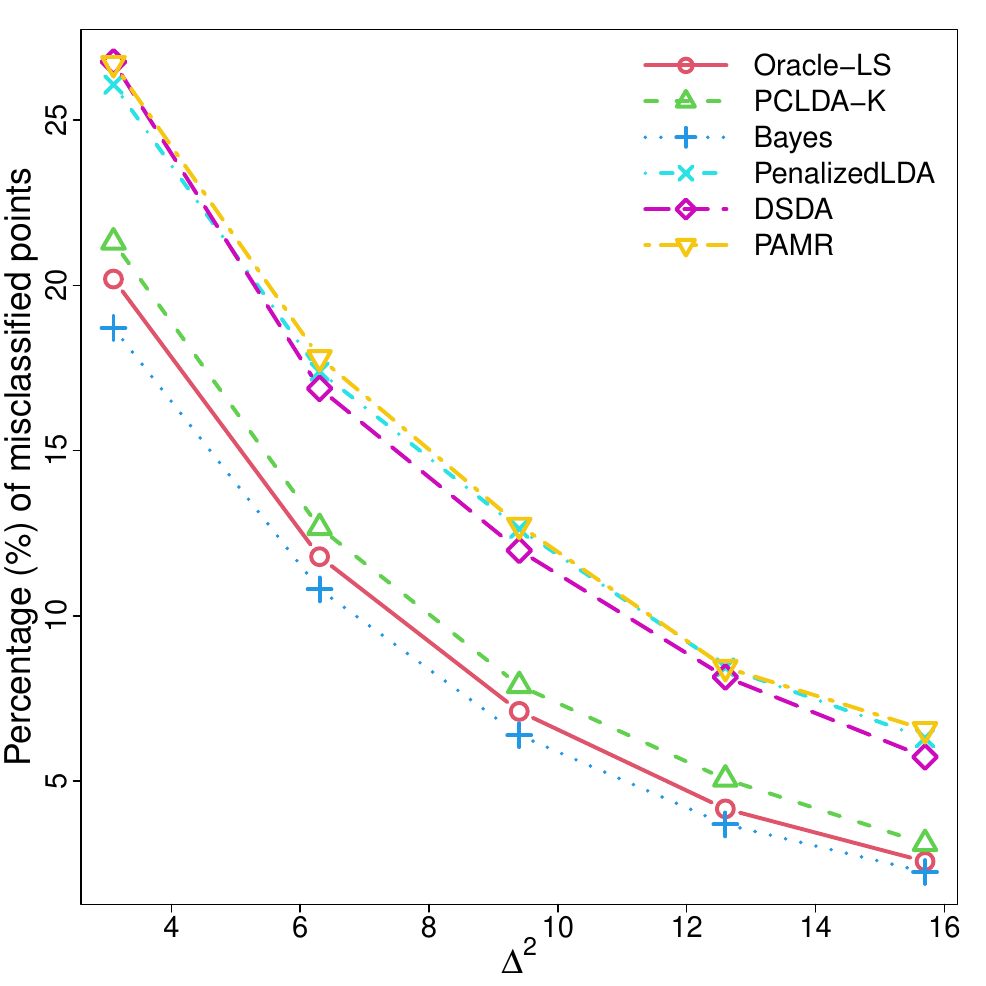}
\caption{The averaged misclassification errors of each algorithm. We vary $n$ in the left panel while vary $\Delta$ in the right one.  }
\label{fig_error} 
\vspace{-3mm} 
\end{figure}

\subsection{Vary the signal strength $\Delta^2$}

We fix $K = 5$, $n = 100$, $p = 300$ and vary $\eta$ within $\{2,4,6,8,10\}$. As a consequence, the signal strength $\Delta^2$ varies within $\{3.1, 6.3, 9.4, 12.6, 15.7\}$. The right-panel of Figure \ref{fig_error} depicts the averaged misclassification errors of each algorithm. For the same reasoning as before, we exclude PCLDA-$\wh K$, PCLDA-CF-$5$ and PCLDA-split. It is evident that all algorithms have better performance as the signal strength $\Delta$ increases. Among them, PCLDA-$K$ has the closest performance to Oracle-LS and Bayes in all settings.

\subsection{Vary the feature dimension $p$}

We examine the performance of each algorithm when the feature dimension $p$ varies across a wide range. Specifically, we fix $K=5$, $\eta = 5$, $n = 100$ and vary $p$ within $\{100, 300, 500, 700, 900\}$. Figure \ref{fig_error_p} shows the misclassification errors of each algorithm. The performance of PCLDA-$K$ improves  and gets closer to that of Oracle-LS as $p$ increases, in line with Theorem \ref{thm_PCR}. The gap between Oracle-LS and Bayes is due to the fact that  both $n$ and $\Dt$ are held fixed. 
\begin{figure}[ht]
\centering 
\includegraphics[width = .45\textwidth]{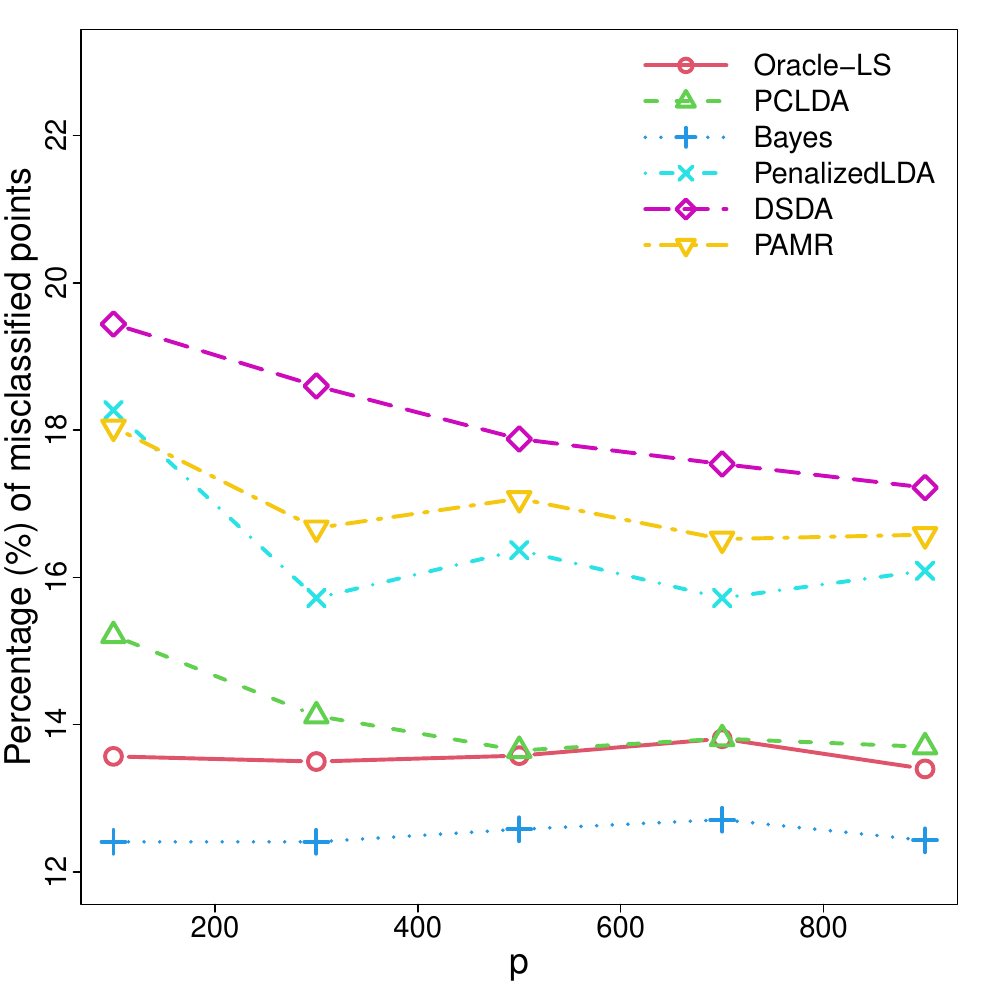}
\caption{The averaged misclassification errors of each algorithm for various choices of  $p$.}
\label{fig_error_p} 
\vspace{-5mm}
\end{figure}

\section{Real data analysis}\label{sec_real_data}

To further illustrate the effectiveness of our proposed method, we analyze three popular gene expression datasets (leukemia data, colon data  and lung cancer data)\footnote{Leukemia data is available at \url{www.broad.mit.edu/cgi-bin/cancer/datasets.cgi}. Colon data is available from the R package \textsf{plsgenomics}. Lung cancer data is available at \url{www.chestsurg.org}.}, which have been widely used to test classification methods, see, for instance,  \cite{Alon1999,Dettling2004,Nguyen2002,Singh2002} and also, the more recent literature, \cite{caizhang2019,FanFan2008, mai2012}. These datasets   contain thousands or even over ten-thousand features with around one hundred samples (see, Table \ref{tab_data}). In such challenging settings, LDA-based classifiers that are designed for high-dimensional data  not only are easy to interpret but also have competing and even superior performance than other, highly complex classifiers such as classifiers based on kernel support vector machines, random forests and boosting \citep{Dettling2004,mai2012}. 

\begin{table}[H]
\centering
\caption{Summary of three data sets.}
\label{tab_data}
{\renewcommand{\arraystretch}{1.2}{
\resizebox{\textwidth}{!}{
	\begin{tabular}{l|c|c|l|l}
		Data name  &  $p$ & $n$ & $n_0$ (category) & $n_1$ (category)\\
		\hline
		Leukemia   & 7129 &  72  & 47 (acute lymphoblastic leukemia) & 25 (acute myeloid leukemia)\\
		Colon   & 2000 & 62 & 22 (normal) & 40 (tumor)\\
		Lung cancer  & 12533 & 181 & 150 (adenocarcinoma) & 31 (malignant pleural mesothelioma)\\
		\hline
	\end{tabular}
}}}
\end{table}

Since the goal is to predict a dichotomous response, for instance, whether one sample is a tumor or normal tissue, we compare the classification performance of each algorithm. For all three data sets, the features are standardized to zero mean and unit standard deviation. For each dataset, we randomly split the data, within each category, into 70\% training set and 30\% test set. Different classifiers are fitted on the training set and their misclassification errors are computed on the test set. This whole procedure is repeated 100 times. The averaged misclassification errors (in percentage) as well as their standard deviations of each algorithm are reported in Table \ref{tab_data_error}. Our proposed PC-based LDA classifiers have the smallest misclassification errors over all datasets. Although PCLDA-CF-5 only has the second best performance in Colon and Lung cancer data sets, its performance is very close to that of  PCLDA-$\wh K$.

\begin{table}[H]
\centering
\caption{The averaged misclassification errors (in percentage). The numbers in parentheses are the standard deviations over 100 repetitions.}
\label{tab_data_error}
{\renewcommand{\arraystretch}{1.2}{
\begin{tabular}{l|c|c|c|c|c}
	& PCLDA-$\wh K$ & PCLDA-CF-5  & DSDA  & PenalizedLDA & PAMR \\
	\hline
	Leukemia  & 3.57 (0.036) & {\bf 3.04} (0.032)  & 5.52  (0.044)  & 3.91 (0.043) &  4.61     (0.039) \\
	Colon   & {\bf 16.37} (0.077) &   18.11 (0.082)  &  18.11 (0.07) &  33.95 (0.086) & 19.00  (0.089)\\
	Lung cancer & {\bf 0.55} (0.008)  &   0.60 (0.009)  &     1.69 ( 0.017)  &     1.80 (0.026)   &   0.91 (0.011)      \\
	\hline
\end{tabular}    
}}
\end{table}

\section{Extension to multi-class classification}\label{sec_multi_level}

In this section, we discuss how to extend the previously discussed procedure to multi-class classification problems in which $Y$ has $L$ classes, $\cL := \{0, 1, \ldots, L-1\}$, for some positive integer $L\ge 2$, and 
\begin{equation}\label{model_YZ_multi}
Z \mid  Y = k \sim N_K(\a_k, \szy),\qquad \PP(Y=k) = \pi_k, \qquad k\in \cL.
\end{equation}
In particular, the covariance matrices for the $L$ classes are the same.

For a new point $z\in \RR^K$,  the oracle Bayes rule assigns  it to $k \in \cL$ if and only if 
\begin{align}\label{oracle_rule}\nonumber
k 
= \argmax_{\ell\in \cL}  \PP(Y = \ell \mid Z = z)
&= \argmax_{\ell\in \cL}  \log{\PP(Z = z, Y = \ell) \over  \PP(Z = z, Y = 0) }\\
&= \argmax_{\ell\in \cL}  \left( z^\T \eta^{(\ell)} + \eta^{(\ell)}_0\right) := \argmax_{\ell\in \cL} ~ G_z^{(\ell | 0)}(z)
\end{align}
where 
\begin{equation}\label{def_etas_ell}
\eta^{(\ell)} =  \szy^{-1}(\a_\ell - \a_0),\quad \eta^{(\ell)}_0 = -{1\over 2}(\a _0 + \a _\ell )^\T \eta^{(\ell)} + \log {\pi_\ell \over \pi_0},\quad \forall ~ \ell \in \cL.
\end{equation}
Notice that $G_z^{(0 | 0)}(z) = 0$ and, for any $\ell \in \cL\setminus \{0\}$, the proof of (\ref{eq_Gz_Gzstar}) reveals that, 
\begin{equation}\label{eq_eta_beta_multi}
G_z^{(\ell | 0)}(z) = z^\T \eta^{(\ell)} + \eta^{(\ell)}_0 =  {1\over \bar \pi_0\bar \pi_\ell[1 - (\a_\ell - \a_0)^\T \beta^{(\ell)}]} \left( z^\T \beta^{(\ell)} +\beta_0^{(\ell)}\right) 
\end{equation}
with $\bar \pi_0 = \pi_0 / (\pi_0 + \pi_\ell)$, $\bar \pi_\ell = \pi_\ell / (\pi_0 + \pi_\ell)$,
\begin{align}\label{def_betas_ell}
&\beta^{(\ell)}  = \left[
\Cov(Z \mid  Y\in \{0,\ell\})
\right]^{-1}\Cov(Z, \1\{Y = \ell\} \mid Y \in \{0,\ell\}),\\\nonumber
&\beta^{(\ell)}_0 = -{1\over 2}(\a_0 + \a_\ell)^\T \beta^{(\ell)} + \bar \pi_0 \bar \pi_\ell \left(
1 - (\a_\ell - \a_0)^\T \beta^{(\ell)}
\right)\log{\bar\pi_\ell \over \bar\pi_0}.
\end{align}
In view of (\ref{oracle_rule}) and (\ref{eq_eta_beta_multi}), for a new point $x\in \RR^p$ and any matrix $B\in \RR^{p\times q}$ with $q\in [p]$, we propose the following  multi-class classifier 
\begin{equation}\label{def_gx_star_hat_multi}
\wh g_x^*(x) = \argmax_{\ell\in \cL} ~ \wh G_x^{(\ell | 0)}(x)
\end{equation}
where  $\wh G_x^{(0 | 0)}(x) = 0$ and, for any $\ell \in \cL\setminus \{0\}$,
\begin{align}\label{def_G_hat_ell_0}
\wh G_x^{(\ell | 0)}(x) = {1\over \wt\pi_0\wt\pi_\ell[1 - (\wh\mu_\ell - \wh \mu_0)^\T \wh \theta^{(\ell)}]}\left(x^\T \wh\theta^{(\ell)} + \wh\beta_0^{(\ell)}\right)
\end{align}
with
\begin{align*}
&\wt \pi_\ell = {n_\ell \over n_0 + n_\ell},\\
&\wh\theta^{(\ell)}  = B \left(\Pi_{(n_0+n_\ell)} \X^{(\ell)} B\right)^+ \Y^{(\ell)} ,\\
&\wh \beta^{(\ell)}_0 =   -{1\over 2}(\wh \mu_0+\wh \mu_\ell)^\T \wh\theta^{(\ell)} +  \wt \pi_0\wt \pi_\ell \left(
1   -(\wh \mu_\ell-\wh \mu_0 )^\T\wh\theta^{(\ell)}  \ 
\right)\log{\wt \pi_\ell \over \wt \pi_0}.
\end{align*}
Here $n_\ell$ and $\wh\mu_\ell$ are the  non-parametric estimates as (\ref{def_pi_hat}) and both the submatrix $\X^{(\ell)}\in \RR^{(n_0+n_\ell)\times p}$ of $\X$ and  the response vector $\Y^{(\ell)}  =  \{0,1\}^{(n_0+n_\ell)}$ correspond to  samples with label in $\{0,\ell\}$. Note that $\Y^{(\ell)}$ is encoded as $1$ for observations 
with label $\ell$ and $0$ otherwise.

To analyze the classifier $\wh g_x^*$ in (\ref{def_gx_star_hat_multi}),  its excess risk depends on 
\begin{equation}\label{def_r_hat_multi}
\wh r_1 =\max_{\ell \in \cL \setminus \{0\}} \left\|\bigl[\sz^{(\ell)}\bigr]^{1/2}\bigl(A^\T \wh \theta^{(\ell)} - \beta^{(\ell)}\bigr)\right\|_2, \qquad \wh r_2 = \max_{\ell \in \cL \setminus \{0\}}\bigl\|\wh\theta^{(\ell)}\bigr\|_2
\end{equation}
as well as $\wh r_3$ 
as defined in (\ref{def_r_hat}).  Here $\sz^{(\ell)} := \Cov(Z \mid  Y\in \{0,\ell\})$.  Analogous to 
(\ref{def_hat_omega_n}), for some constant $C=C(\gamma)>0$, define 
\begin{align}\label{def_wh_omega_n}
\wh \omega_n   = 	  C\sqrt{\log n}\left(\wh r_1 + \|\sw\|_\op^{1/2}\wh r_2 + \wh r_2 \wh r_3 +  \sqrt{L \over n} \right).
\end{align}
For ease of presentation, we also assume there exists some sequence $\Dt>0$  and some absolute constants $C>c>0$ such that 
\begin{equation}\label{cond_Dt_multi}
c~  \Dt  \le   \min_{k,\ell \in \cL, ~ k \ne \ell} \|\a_\ell - \a_k\|_{\szy} \le   \max_{k,\ell \in \cL, ~ k \ne \ell}  \|\a_\ell - \a_k\|_{\szy} \le C\Dt.
\end{equation}

The following theorem extends Theorem \ref{thm_general_risk_explicit} to multi-class classification by establishing rates of convergence of the excess risk of $\wh g_x^*$ in (\ref{def_gx_star_hat_multi})  for a general $B\in\RR^{p\times q}$.

\begin{thm}\label{thm_risk_multi}
Under model (\ref{model_X}) and (\ref{model_YZ_multi}), assume {\rm (i) -- (iii)} and (\ref{cond_Dt_multi}). Further assume 
$c/L \le \min_{k\in \cL} \pi_k \le \max_{k\in \cL} \pi_k \le C/L$ and $LK\log n \le c' n$
for some constants $c,c',C>0$.  Then, for any sequence $\omega_n >0$ satisfying $(1 + \Dt^2) \omega_n = o(1)$ as $n\to \infty$, on the event
$\{\wh\omega_n \le \omega_n\}$,  the following holds with probability at least $1-\cO(n^{-1})$ under the law $\PP^{\D}$.
\begin{enumerate}				
\item[(1)] If $\Dt \asymp 1$,  then
\[
R_x(\wh g_x^*) - R_z^* ~\lesssim~  L ~ \omega_n^2.
\] 
\item[(2)] If $\Dt \to \i$, 
then, for some constant $c''>0$,
\[
R_x(\wh g_x^*) - R_z^* ~\lesssim~ L ~ \omega_n^2~  \exp\left\{-\left[c'' + o(1)\right]\Dt^2 \right\}
\]
\item[(3)] If $\Dt = o(1)$, then, 
\[
R_x(\wh g_x^*) - R_z^* ~\lesssim~ L ~ \omega_n \min\left\{
{\omega_n \over \Dt}, 1
\right\} .
\]
\end{enumerate}
\end{thm}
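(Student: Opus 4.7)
The plan is to reduce the multi-class excess risk analysis to a collection of pairwise binary comparisons, each handled by the machinery of Theorem \ref{thm_general_risk} and Proposition \ref{prop_risk}, and then combine them via a union bound that produces the overall factor $L$ in each of the three stated regimes. I would begin by writing the excess risk as
\begin{align*}
R_x(\wh g_x^*) - R_z^* = \EE\Bigl[\bigl(\eta_{g_z^*(Z)}(Z) - \eta_{\wh g_x^*(X)}(Z)\bigr)\1\{\wh g_x^*(X) \ne g_z^*(Z)\}\Bigr],
\end{align*}
where $\eta_k(z) := \PP(Y = k \mid Z = z)$, and then decomposing the indicator over the $L(L-1)$ pairs $(k,\ell)$, $k\ne \ell$, of (oracle, predicted) labels. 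On the event $\{g_z^*(Z) = k,\ \wh g_x^*(X) = \ell\}$, the $\argmax$ definitions (\ref{oracle_rule}) and (\ref{def_gx_star_hat_multi}) force
\begin{align*}
0 < G_z^{(k|0)}(Z) - G_z^{(\ell|0)}(Z) \le \bigl|\wh G_x^{(\ell|0)}(X) - G_z^{(\ell|0)}(Z)\bigr| + \bigl|\wh G_x^{(k|0)}(X) - G_z^{(k|0)}(Z)\bigr|.
\end{align*}
Crucially, the oracle margin $G_z^{(k|0)}(Z) - G_z^{(\ell|0)}(Z)$ is, up to a $\log\pi$ shift, the binary log-likelihood ratio between classes $k$ and $\ell$, and under $Y = k$ is Gaussian with mean of order $\Dt^2$ and standard deviation of order $\Dt$ by (\ref{cond_Dt_multi}).

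For any threshold $t>0$, I would split by whether the estimation errors $|\wh G_x^{(\ell|0)}(X) - G_z^{(\ell|0)}(Z)|$ exceed $t$. For the large-deviation part, an extension of Proposition \ref{prop_risk} applied to each $\ell \in \cL\setminus\{0\}$ (using the estimands $\beta^{(\ell)},\beta^{(\ell)}_0$ from (\ref{def_betas_ell}) and the $(n_0 + n_\ell)$-sample regression in (\ref{def_G_hat_ell_0})) yields, on $\{\wh\omega_n \le \omega_n\}$ and with probability $1-\cO(n^{-1})$ under $\PP^{\D}$,
\begin{align*}
\max_{\ell \in \cL\setminus\{0\}}\PP\bigl\{\bigl|\wh G_x^{(\ell|0)}(X) - G_z^{(\ell|0)}(Z)\bigr| > t\bigr\} \lesssim L\, n^{-a},
\end{align*}
after choosing $t \asymp (1+\Dt^2)\omega_n$; the $\sqrt{L/n}$ term in $\wh\omega_n$ from (\ref{def_wh_omega_n}) absorbs the union bound across classes and the intercept estimation. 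For the small-error part, the error on $(k,\ell)$ requires $G_z^{(k|0)}(Z) - G_z^{(\ell|0)}(Z) \in (0, 2t]$, whose probability I would bound by the Gaussian/Mills-ratio argument that already drives the proof of Theorem \ref{thm_general_risk}, producing (i) $\lesssim t/\Dt$ when $\Dt\asymp1$, (ii) $\lesssim (t/\Dt)\,e^{-\Dt^2/8}$ when $\Dt\to\infty$, and (iii) $\lesssim \min\{t/\Dt,1\}$ when $\Dt\to0$.

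Summing the weighted pairwise bounds over the $L-1$ competing labels $\ell$ with weight $\pi_k\asymp1/L$ yields the overall factor $L$ in each displayed rate. The factor $(1+\Dt^2)$ in the chosen $t$ originates from the scaling multiplier $[\bar\pi_0\bar\pi_\ell(1 - (\a_\ell-\a_0)^\T\beta^{(\ell)})]^{-1}\asymp 1+\Dt^2$ seen in (\ref{eq_eta_beta_multi}), and the assumption $(1+\Dt^2)\omega_n = o(1)$ ensures that $t$ stays well below the typical oracle margin, so the anti-concentration bounds remain informative. Optimizing $t$ against $\omega_n$ in each regime reproduces the three stated rates.

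The main obstacle I anticipate is the uniform control, over $\ell \in \cL\setminus\{0\}$, of the data-dependent multiplier $[\wt\pi_0\wt\pi_\ell(1 - (\wh\mu_\ell - \wh\mu_0)^\T\wh\theta^{(\ell)})]^{-1}$ appearing in $\wh G_x^{(\ell|0)}$ in (\ref{def_G_hat_ell_0}). Its population counterpart is of order $1+\Dt^2$ and is benign when $\Dt\asymp1$ or $\Dt\to\infty$, but when $\Dt\to0$ the multiplier tends to $1$ and small multiplicative errors are no longer negligible; showing that the empirical multiplier concentrates around its target then requires sharp uniform control of $\wh\mu_\ell - \wh\mu_0 - (\a_\ell-\a_0)$ and $\wh\theta^{(\ell)}-\beta^{(\ell)}$ across all $\ell$, while tracking the per-class sample sizes $n_\ell \asymp n/L$. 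A case split at $\Dt \le 1$ versus $\Dt > 1$, together with a union bound across $\ell$ whose $\log L$ cost is absorbed into $\log n$ under the standing condition $LK\log n \le c'n$, should handle this.
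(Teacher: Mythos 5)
Your reduction --- decomposing the multi-class excess risk over the $L(L-1)$ ordered pairs $(k,\ell)$ of (oracle, predicted) labels, observing that on $\{g_z^*(Z)=k,\ \wh g_x^*(X)=\ell\}\cap\E_t$ the oracle margin $G_z^{(k|0)}(Z)-G_z^{(\ell|0)}(Z)$ must lie in $(0,2t]$, and then splitting by whether the per-pair estimation errors exceed $t$ --- is exactly the paper's proof strategy. The paper phrases the decomposition via $\pi_k f_{Z|k}(z)-\pi_\ell f_{Z|\ell}(z)=\pi_k f_{Z|k}(z)\bigl(1-\exp\{G_z^{(\ell|k)}(z)\}\bigr)$ rather than via $\eta_k(Z)-\eta_\ell(Z)$, but integrating both against the mixture density gives the identical quantity, and your key inequality $0<G_z^{(k|0)}(Z)-G_z^{(\ell|0)}(Z)\le 2t$ on $\E_t$ is precisely what the paper uses via (\ref{eq_G_inter}). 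The mean-value-theorem bound on the margin probability and the final multiplication by $L-1$ also match.

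The genuine gap is in the threshold and in your diagnosis of where the multiplier concentration bites. You take $t\asymp(1+\Dt^2)\omega_n$, attributing the factor solely to the scaling constant in (\ref{eq_eta_beta_multi}); but the deviation $\wh G_x^{(\ell|0)}(X)-G_z^{(\ell|0)}(Z)$ also contains a term (term $\rIII$ in the paper's Lemma \ref{lem_G_star_dev}) equal to the relative error in the \emph{estimated} multiplier times $|Z^\T\eta^{(\ell)}+\eta^{(\ell)}_0|$. Under $Y=k$ the second factor is of order $\Dt(\sqrt{\log n}+\Dt)$, and the relative multiplier error carries its own $(1+\Dt^2)$ factor, so this term grows like $\Dt^3\omega_n$ or worse as $\Dt\to\infty$ and strictly dominates your $(1+\Dt^2)\omega_n$. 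The paper therefore uses $t^*=(1+\Dt^4)\omega_n$ (Lemma \ref{lem_td_omega}); the extra polynomial in $\Dt$ is harmless since it is absorbed into the $o(\Dt^2)$ of the final exponent. Your write-up asserts the opposite --- that the multiplier concentration is ``benign when $\Dt\to\infty$'' and delicate ``when $\Dt\to0$'' --- but for $\Dt\to0$ the multiplier tends to $[\bar\pi_0\bar\pi_\ell]^{-1}\asymp1$ and presents no degeneracy, while it is exactly the $\Dt\to\infty$ regime that produces the polynomial-in-$\Dt$ blowup in the threshold. With your $t$, the deviation bound $\PP(\E_t^c)$ does not go to zero in case (2), so you need to enlarge the threshold and re-verify that regime.
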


\begin{proof}
The proof can be found in Appendix \ref{app_sec_multi_level}.
\end{proof}

Condition (\ref{cond_Dt_multi}) is only assumed to simplify the presentation. It is straightforward to derive results based on our analysis   when the separation $\|\a_\ell - \a_k\|_{\szy}$ is not of the same order for all $\ell, k \in \cL$. For the third case, $\Dt = o(1)$, our proof also allows to establish different convergence rates depending on whether or not $\pi_k$ and $\pi_\ell$ are distinct for each $k\ne \ell$,  analogous to the last two cases  of Theorem \ref{thm_general_risk_explicit}. However, we opt for the current  presentation for succinctness.

Theorem \ref{thm_risk_multi} immediately leads to the following corollary for the PC-based classifiers that use $B = \U_K$ and $B = \wt \U_K$.  Furthermore, Theorem \ref{thm_PCR_K_hat} also ensures that similar guarantees can be obtained for  the classifiers in (\ref{def_gx_star_hat_multi}) that use $B = \U_{\wh K}$ and $B = \wt \U_{\wt K}$.

\begin{cor}\label{cor_risk_multi}
Assume the conditions in Theorem \ref{thm_risk_multi} and $\xi \ge C\kappa^2$ for some constant $C>0$. Then, the conclusion of Theorem \ref{thm_risk_multi} holds for the classifier in (\ref{def_gx_star_hat_multi}) that uses
\begin{enumerate}
\item[(1)] $B = \U_K$
with  
\[
\omega_n=  \left(\sqrt{LK\log n\over n}+ \min\{1,\Dt\}\sqrt{1\over \xi^*} + \sqrt{\kappa\over \xi^2}\right)\sqrt{\log n},
\]
\item[(2)]  $B = \wt \U_K$ with 
\[
\omega_n =\left(\sqrt{LK\log n\over n}+ \min\{1,\Dt\}\sqrt{1\over \xi^*} \right)\sqrt{\log n}.
\]
\end{enumerate}
\end{cor}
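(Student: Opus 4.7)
The plan is to invoke Theorem \ref{thm_risk_multi} directly: on the event $\{\wh\omega_n \le \omega_n\}$ the stated excess risk bounds follow automatically, so the whole task reduces to controlling $\wh\omega_n$ in (\ref{def_wh_omega_n}) for $B=\U_K$ and $B=\wt\U_K$. In view of (\ref{def_r_hat_multi}) and (\ref{def_r_hat}), this amounts to producing uniform-in-$\ell$ bounds on $\wh r_1^{(\ell)}$ and $\wh r_2^{(\ell)}$, together with a bound on $\wh r_3$ that is independent of $\ell$.

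The key observation is that for each $\ell \in \cL \setminus \{0\}$, the estimator $\wh\theta^{(\ell)}$ defined in (\ref{def_G_hat_ell_0}) has exactly the same form as $\wh\theta$ in (\ref{def_theta_hat}), applied to the binary sub-problem with features $\X^{(\ell)}$, responses $\Y^{(\ell)}$, and projection matrix $B$. Since $\pi_k \asymp 1/L$ uniformly, a Bernstein bound for multinomial counts yields $n_0+n_\ell \asymp n/L$ simultaneously for all $\ell$ with probability $1-\cO(n^{-1})$, while condition (\ref{cond_Dt_multi}) ensures $\|\a_\ell-\a_0\|_{\szy}\asymp \Dt$. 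Consequently each binary sub-problem satisfies the hypotheses of Theorem \ref{thm_PCR} (resp.\ Theorem \ref{thm_PCR_indep}) with effective sample size $n/L$ in place of $n$ and the same $\xi^*$, $\xi$, $\kappa$ as in the full model. Reading off the corresponding deterministic upper bounds yields
\[
\wh r_1^{(\ell)} + \|\sw\|_\op^{1/2}\wh r_2^{(\ell)} + \wh r_2^{(\ell)}\wh r_3 \;\lesssim\; \sqrt{LK\log n \over n} + \min\{1,\Dt\}\sqrt{1\over \xi^*} + \sqrt{\kappa \over \xi^2}
\]
for $B=\U_K$, and the same bound without the last summand for $B=\wt\U_K$, each holding with probability $1-\cO(n^{-1})$. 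A union bound over $\ell \in \cL\setminus\{0\}$, with $L-1 \le n$, preserves the $1-\cO(n^{-1})$ probability at the cost of constants absorbed by $\sqrt{\log n}$. The quantity $\wh r_3$ depends only on $B$ and $\W$, so no union bound is needed; its bound is a direct by-product of the same proofs. Finally, the auxiliary term $\sqrt{L/n}$ in (\ref{def_wh_omega_n}) is dominated by $\sqrt{LK\log n/n}$, so combining everything yields $\wh\omega_n\lesssim \omega_n$ as required.

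The main obstacle is verifying that the reduction to a binary sub-problem does not degrade the signal-to-noise structure on which Theorems \ref{thm_PCR} and \ref{thm_PCR_indep} rely. Specifically, in the definition of $\errw$ in (\ref{def_errw}), replacing $n$ by $n/L$ changes $\tr(\sw)/n$ to $L\tr(\sw)/n$, so the relevant effective signal-to-noise ratio becomes $\lambda_K(A\szy A^\T)/[\lambda_1(\sw)+L\tr(\sw)/n]$. Since $\xi\ge C\kappa^2$ is assumed globally, one must check that this quantity remains of order $\xi$ after the reduction; this holds under the assumption $LK\log n\le c' n$ together with the regularity built into the hypothesis. A small amount of additional bookkeeping is needed because $\U_K$ (respectively $\wt\U_K$) is computed from the full sample $\X$ (respectively $\tX$) rather than from $\X^{(\ell)}$, but this only strengthens the bound on $\wh r_3$ and makes the argument for $\wh r_1^{(\ell)}$ and $\wh r_2^{(\ell)}$ slightly cleaner, since $B$ can be treated as effectively fixed when conditioning appropriately. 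Plugging the resulting bound on $\wh\omega_n$ into Theorem \ref{thm_risk_multi} produces the three regimes of convergence in $\Dt$ stated in Corollary \ref{cor_risk_multi}.
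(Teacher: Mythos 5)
Your argument follows the paper's own route: reduce via Theorem \ref{thm_risk_multi} to bounding $\wh r_1$, $\wh r_2$, $\wh r_3$, then re-run the proofs of Theorems \ref{thm_PCR} and \ref{thm_PCR_indep} after observing that each pairwise regression has effective sample size $n_0+n_\ell \asymp n/L$ (which the paper compresses into the remark that Theorems \ref{thm_theta_general} and \ref{thm_theta_indep} hold with $K$ replaced by $KL$). Your observation that $\errw$ may degrade by a factor of $L$ on the sub-sample is a genuine subtlety that the paper's one-paragraph proof also leaves implicit, and your hand-waved resolution is consistent with the paper's level of detail.
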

\begin{proof}
See Appendix \ref{app_proof_cor_risk_multi}. 
\end{proof}

\begin{remark}\label{rem_multi}
Multi-class classification problems based on discriminant analysis have been studied, for instance, by \cite{cai2019convex,clemmensen2011sparse,mai2019multiclass,Witten2011}. Theoretical guarantees are only provided in \cite{mai2019multiclass} and \cite{cai2019convex} under the classical LDA setting for moderate / large separation scenarios, $\Dt \gtrsim 1$, and for fixed $L$, the number of classes. See also the work \cite{AP19} that derives bounds for the misclassification error (rather than excess risk) in a set-up similar to LDA, and reports a similar  phase transition phenomenon between $\Dt\asymp1$ and $\Dt\to\i$. 
Our results fully characterize dependence of the excess risk on $L$ and also cover the weak separation case, $\Dt \to 0$. \end{remark}

\begin{remark}
The classifier in (\ref{def_gx_star_hat_multi}) chooses $Y = 0$ as the baseline. In practice, we recommend taking each class as the baseline one at the time and averaging the predicted probabilities. Specifically, it is easy to see that, for any baseline choice $k\in \cL$ and for any $\ell\in \cL$,
\[
\PP\left(
Y = \ell \mid Z = z
\right) = {\PP\left(
Z = z, Y = \ell 
\right) \over  \sum_{k'\in \cL} \PP\left(
Z = z, Y = k'
\right)} = { \exp\left\{G_z ^{(\ell | k)}(z)\right\} \over  \sum_{k'\in \cL}\exp\left\{ G_z ^{(k' | k)}(z)\right\}}
\]
where $G_z^{(\ell | k)}(z)$ is defined analogous to (\ref{oracle_rule}) with $k$ in lieu of $0$.
Therefore, for any new data point $x\in \RR^p$, the averaged version of the classifier in  (\ref{def_gx_star_hat_multi}) is
\[
\argmax_{\ell\in \cL}{1\over L}\sum_{k \in \cL}  {\exp\left\{\wh G_x^{(\ell | k)}(x)\right\} \over \sum_{k' \in \cL}\exp\left\{\wh G_x^{(k' | k)}(x)\right\}}
\]
with $\wh G_x^{(\ell | k)}(x)$ defined analogous to (\ref{def_G_hat_ell_0}).
This classifier tends to have better 
finite sample performance, as revealed by the simulation study in Appendix \ref{app_sim_multi}.
\end{remark}

\begin{acks}[Acknowledgments]
The authors would like to thank the Editor, Associate Editor and two referees for their careful reading and very constructive suggestions.
\end{acks}

\begin{funding}
Wegkamp is supported in part by the National Science Foundation grants DMS 2015195 and 
  DMS 2210557. Bing is partially supported by a discovery grant from the Natural Sciences and Engineering Research Council of Canada.
\end{funding}

\begin{supplement}
\stitle{Supplement to  “OPTIMAL DISCRIMINANT ANALYSIS IN
HIGH-DIMENSIONAL LATENT FACTOR MODELS” }
\sdescription{Appendices A and B contain the main proofs for the results in Sections 2 – 5 and 8.
Technical lemmas and auxiliary lemmas are collected in Appendices
C and D. Appendix E contains additional simulation results. }
\end{supplement}

\bibliographystyle{imsart-nameyear.bst}

\bibliography{ref}

\appendix

We first provide in Appendix \ref{app_sec_main_proofs}, section-by-section, the main proofs for the results in Sections  \ref{sec_excess_risk_lower_bound} --  \ref{sec_theory_application} and \ref{sec_multi_level}, except Theorem \ref{thm_lowerbound}.
The proof of our minimax lower bounds in Theorem \ref{thm_lowerbound} is stated separately in Appendix \ref{app_proof_thm_lowerbound}. 
Technical lemmas and auxiliary lemmas are collected in Appendices \ref{app_tech} and  \ref{app_aux}, respectively. 
Appendix E contains additional simulation results.

\section{Main proofs}\label{app_sec_main_proofs}

\subsection{Proofs of Section \ref{sec_excess_risk_lower_bound}}

\subsubsection{Proof of Lemma \ref{lem:RxvsRz}}\label{app_proof_RxvsRz}
We observe that 
\begin{align}\label{RxvsRz}
	R_x^* & := \inf_{g} \PP\{ g(AZ+W) \ne Y \} \nonumber\\
	&\ge \EE_W \inf_{g} \PP\{ g(AZ+W) \ne Y \mid  W\} \nonumber\\
	&\ge \EE_W  \inf_{h} \PP\{ h(Z) \ne Y \}  \\
	&= \inf_{h} \PP\{ h(Z) \ne Y \}\nonumber \\
	&:= R_z^*.\nonumber
\end{align}  
In the derivation (\ref{RxvsRz}) above, the infima are taken over all measurable functions $g:\RR^p\to\{0,1\}$ and $h:\RR^K\to\{0,1\}$, and note that the second inequality uses the independence between $W$ and $(Y,Z)$. \qed

\subsubsection{Proof of Lemma \ref{lem_risk}}\label{app_proof_benchmark} 
We define
\begin{equation}\label{def_Dt_x}
	\Dt_x^2 := (\a_1 - \a_0)^\T A^\T (A\szy A^\T + \sw)^{-1}A (\a_1 - \a_0).
\end{equation}
From standard LDA theory \citep[pp 241-244]{Izenman-book},
\[
R_x^* = 1 - \pi_1\Phi\left({\Dt_x\over 2}+ {\log {\pi_1 \over \pi_0} \over \Dt_x}\right) - \pi_0\Phi\left({\Dt_x\over 2}- {\log {\pi_1 \over \pi_0} \over \Dt_x}\right)
\]
which simplifies 
for $\pi_0 = \pi_1$ to
$
R_x^*= 1 - \Phi\left(\Dt_x/2\right)
$. 
Hence, we have 
\begin{align*}
	R_x^* - R_z^* &=  \Phi\left({\Dt\over 2} \right) -  \Phi\left({\Dt_x\over 2}\right).
\end{align*}
Since, by an application of the Woodbury identity,
\begin{align}\label{eq_diff_Dt_Dtx}\nonumber
	\Dt^2 - \Dt_x^2 &= (\a_1 -\a_0)^\T\left[
	\szy^{-1} - A^\T (A\szy A^\T + \sw)^{-1}A
	\right](\a_1 - \a_0)\\
	&= (\a_1 -\a_0)^\T \szy^{-1/2}\left(
	\bI_K + \szy^{1/2}A^\T \sw^{-1}A \szy^{1/2}
	\right)^{-1}\szy^{-1/2}(\a_1 - \a_0)
\end{align}	
we have
\begin{equation}\label{equiv_Dt_Dtx}
	\Dt \ge \Dt_x,\qquad \Dt^2-\Dt_x^2 \le {\Dt^2 \over 1 + \lambda_K(H)}
\end{equation}
with $H = \szy^{1/2}A^\T \sw^{-1}A\szy^{1/2}$.
Since $$\lambda_K(H) \ge { \lambda_K(A\szy A^\T) \over \lambda_1(\sw)} \overset{(\ref{def_xi_star})}{=}\xi^*, $$
and the function $x\mapsto x/(1+x)$ is increasing for $x>0$, display (\ref{equiv_Dt_Dtx}) further implies that 
\begin{align}\label{eq_diff_Dt_Dtx2}
	\Dt^2 \ge \Dt_x^2 \ge \Dt^2 {\lambda_K(H) \over 1+\lambda_K(H)} \ge \Dt^2 {\xi^* \over 1+\xi^*}.\end{align}
Finally,
using the mean value theorem, we find
\begin{align*} 
	R_x^* - R_z^*
	& \le {1\over 2}\left(\Dt - \Dt_x\right)\varphi\left({\Dt_x\over 2}\right)
	= {1\over 2}{\Dt^2 - \Dt_x^2 \over \Dt + \Dt_x}\varphi\left({\Dt_x\over 2}\right)\\
	&\le {1\over 2\sqrt{2\pi}}\cdot {\Dt \over 1 + \lambda_{K}(H)}\exp\left\{-{\Dt_x^2/ 8}\right\}\\
	&\le {1\over 2\sqrt{2\pi}}\cdot {\Dt \over 1 + \xi^*}\exp\left\{-{\xi^* \over 8(1+\xi^*)}\Dt^2\right\}.
\end{align*}
Our claim of the upper bound thus follows from 
$\xi^* \asymp \lambda / \sigma^2$ for any $\theta \in \Theta(\lambda, \sigma, \lambda)$. 

To prove the lower bound of $R_x^*-  R_z^*$,  note that, by display (\ref{eq_diff_Dt_Dtx}),
\[
\Dt^2 - \Dt_x^2 \ge {\|\a_1-\a_0\|_{\szy}^2 \over 1 + \lambda_1(H)} = {\Dt^2 \over 1 + \lambda_1(H)}.
\]
This implies 
\[
\Dt_x^2 \le {\lambda_1(H) \over 1 + \lambda_1(H)}\Dt^2.
\] 
Similarly, by the mean value theorem and $\Dt \ge \Dt_x$ from (\ref{equiv_Dt_Dtx}), 
\begin{align*}
	R_x^* - R_z^* & = \Phi\left({\Dt\over 2} \right) -  \Phi\left({\Dt_x\over 2}\right)\\
	& \ge {1\over 2}\left(\Dt - \Dt_x\right)\varphi\left({\Dt\over 2}\right)
	= {1\over 2}{\Dt^2 - \Dt_x^2 \over \Dt + \Dt_x}\varphi\left({\Dt\over 2}\right)\\
	&\ge {1\over 2\sqrt{2\pi}}\cdot {\Dt^2 \over \Dt+\Dt_x}{1 \over 1 + \lambda_{1}(H)}\exp\left\{-{\Dt^2/ 8}\right\}\\
	&\ge {1\over 4\sqrt{2\pi}}\cdot  {\Dt \over 1 + \lambda_1(H)}\exp\left\{-\Dt^2/8\right\}.
\end{align*}
The result  follows from this inequality and $\lambda_1(H)\asymp \lambda/\sigma^2$ for any $\theta \in \Theta(\lambda, \sigma, \Dt)$. \qed

\subsection{Proof of Proposition \ref{prop_ls_rule}}\label{app_proof_sec_method}

We prove Proposition \ref{prop_ls_rule} by proving the following more general result. Define, for  any scalar $a> 0$, 	
\begin{align}\label{eq_beta_a}
	\beta^a &= a ~  \sz^{-1}(\a_1 - \a_0),\\ 
	\beta_0^a &= -{1\over 2}(\a_0+\a_1)^\T \beta^a + \left[
	a - \pi_0\pi_1(\a_1-\a_0)^\T \beta^a
	\right]\log{\pi_1 \over \pi_0}.\nonumber
\end{align} 
\begin{lemma}\label{lem_rule}
	Let $\eta,\eta_0$ and $\beta^a,\beta_0^a$ be defined in (\ref{rule_Z}) and (\ref{eq_beta_a}), respectively. Under model (\ref{model_X}) and (\ref{model_YZ}) and Assumption {\rm (iv)},
	for any $a > 0$,  we have
	\[
	z^\T \eta+\eta_0 \ge 0 \quad \iff\quad z^\T \beta^a +\beta_0^a\ge0.
	\]
	Furthermore, the parameters $\beta:=\beta^a$  and $\beta_0:= \beta^a_0$ defined in (\ref{eq_beta_a}) with $a = \pi_0\pi_1$ satisfies 
	\[
	\beta = \sz^{-1}\Cov(Z, Y)
	\]
	and
	\[
	z^\T \eta+\eta_0 = {1\over \pi_0\pi_1[1 - (\a_1-\a_0)^\T \beta]} (z^\T \beta + \beta_0).
	\]
\end{lemma}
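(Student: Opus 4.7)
The plan is to exploit the explicit relationship between the unconditional covariance $\sz$ and the conditional covariance $\szy$ coming from the law of total covariance, then use the Sherman--Morrison formula to see that $\beta^a$ is simply a positive scalar multiple of $\eta$, and similarly $\beta_0^a$ is the same scalar multiple of $\eta_0$. Both claims in the lemma then reduce to one scalar identity.

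Concretely, first I would apply the decomposition $\sz = \EE[\Cov(Z\mid Y)] + \Cov(\EE[Z\mid Y])$. Under assumption (iv), the first term equals $\szy$ and the second term is a direct two-point computation: with $\EE[Z] = \pi_0\a_0 + \pi_1\a_1$, one gets $\a_k - \EE[Z] = \pi_{1-k}(\a_k - \a_{1-k})$ for $k\in\{0,1\}$, so
\[
\sz \;=\; \szy + \pi_0\pi_1\,(\a_1-\a_0)(\a_1-\a_0)^\T.
\]
Applying Sherman--Morrison to invert this rank-one update and multiplying by $\a_1-\a_0$ yields
\[
\sz^{-1}(\a_1 - \a_0) \;=\; \frac{1}{1 + \pi_0\pi_1\Dt^2}\,\szy^{-1}(\a_1 - \a_0) \;=\; \frac{\eta}{1 + \pi_0\pi_1\Dt^2},
\]
using $(\a_1-\a_0)^\T \szy^{-1}(\a_1-\a_0) = \Dt^2$. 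Hence $\beta^a = c\,\eta$ with $c := a/(1 + \pi_0\pi_1\Dt^2) > 0$.

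Next I would verify $\beta_0^a = c\,\eta_0$. Computing $(\a_1-\a_0)^\T \beta^a = c\Dt^2$ and substituting into (\ref{eq_beta_a}), the coefficient of $\log(\pi_1/\pi_0)$ becomes
\[
a - \pi_0\pi_1\,c\Dt^2 \;=\; \frac{a(1+\pi_0\pi_1\Dt^2) - a\,\pi_0\pi_1\Dt^2}{1+\pi_0\pi_1\Dt^2} \;=\; c,
\]
so $\beta_0^a = c\bigl[-\tfrac{1}{2}(\a_0+\a_1)^\T\eta + \log(\pi_1/\pi_0)\bigr] = c\,\eta_0$. Therefore $z^\T\beta^a + \beta_0^a = c(z^\T\eta + \eta_0)$, and since $c>0$ the sign equivalence in the first claim follows immediately.

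Finally, for the identifications with $a = \pi_0\pi_1$: a direct computation gives $\Cov(Z,Y) = \EE[ZY] - \EE[Z]\EE[Y] = \pi_0\pi_1(\a_1 - \a_0)$, so $\sz^{-1}\Cov(Z,Y) = \pi_0\pi_1\,\sz^{-1}(\a_1-\a_0) = \beta$. For the scalar identity, I would show $1/c = \{\pi_0\pi_1[1 - (\a_1-\a_0)^\T\beta]\}^{-1}$: with $a=\pi_0\pi_1$,
\[
(\a_1-\a_0)^\T\beta \;=\; c\Dt^2 \;=\; \frac{\pi_0\pi_1\Dt^2}{1+\pi_0\pi_1\Dt^2},
\qquad
1 - (\a_1-\a_0)^\T\beta \;=\; \frac{1}{1+\pi_0\pi_1\Dt^2},
\]
so $\pi_0\pi_1[1 - (\a_1-\a_0)^\T\beta] = c$ exactly. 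There is no real obstacle here; the only thing to be careful about is doing the two-point variance computation for $\Cov(\EE[Z\mid Y])$ correctly so that the factor $\pi_0\pi_1$ in front of the rank-one term comes out right, since every subsequent formula depends on it.
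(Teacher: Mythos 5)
Your proof is correct and follows essentially the same route as the paper's: both hinge on the identity $\sz = \szy + \pi_0\pi_1(\a_1-\a_0)(\a_1-\a_0)^\T$ (which you derive from the law of total covariance; the paper cites it as Fact \ref{fact}) followed by a rank-one matrix inversion to obtain the scalar $c = a/(1+\pi_0\pi_1\Dt^2)$. The only difference is organizational: you directly verify $\beta^a = c\,\eta$ and $\beta_0^a = c\,\eta_0$, whereas the paper phrases the same computation as $\eta = \beta^a/(a - \pi_0\pi_1(\a_1-\a_0)^\T\beta^a)$ together with a positivity check on the denominator; the two are algebraically equivalent.
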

\begin{proof}
	To prove the first statement, write
	\begin{equation}\label{def_Gz_star}
		G_z^*(z) := z^\T \eta + \eta_0 = z^\T \eta - {1\over 2}(\a_0+\a_1)^\T \eta + \log {\pi_1 \over \pi_0}.
	\end{equation}
	It suffices to show that, for any $a > 0$,
	\begin{equation}\label{eq_eta_beta}
		\eta = {\beta^a \over a - \pi_0\pi_1 (\a_1-\a_0)^\T \beta^a}
	\end{equation}
	and 
	\begin{equation}\label{eq_pos}
		a - \pi_0\pi_1 (\a_1-\a_0)^\T \beta^a > 0.
	\end{equation}
	
	To show (\ref{eq_pos}), we observe that from Lemma \ref{fact}
	\begin{equation}\label{eq_sz_C}
		\sz =  \szy + \pi_0\pi_1 (\a_1 - \a_0)(\a_1 - \a_0)^\T.
	\end{equation}
	By the Woodbury  
	formula, 
	\begin{align*}
		\sz^{-1}(\a_1 -\a_0)
		&~ =  ~ \szy^{-1}(\a_1 - \a_0) - {\pi_0\pi_1 \|\a_1 -\a_0\|_{\szy}^2\over 1 + \pi_0\pi_1 \|\a_1 -\a_0\|_ {\szy}^2}\szy^{-1}(\a_1 - \a_0)\\
		&\overset{(\ref{def_Dt})}{=}{1\over 1 + \pi_0\pi_1 \Dt^2}\szy^{-1}(\a_1 - \a_0).
	\end{align*}
	This gives 
	\begin{equation}\label{eq_Deltas}
		\|\a_1 - \a_0\|_{\sz}^2 = {\Dt^2 \over 1 + \pi_0\pi_1 \Dt^2}
	\end{equation}
	which implies 
	\begin{equation}\label{eq_Deltas_prime}
		1  - \pi_0\pi_1 \|\a_1 - \a_0\|_{\sz}^2 =  {1\over 1 + \pi_0\pi_1 \Dt^2} > 0.
	\end{equation}
	Hence (\ref{eq_pos}) follows as 
	\[
	a - \pi_0\pi_1 (\a_1-\a_0)^\T \beta^a  = a\left(
	1 -  \pi_0\pi_1 \|\a_1-\a_0\|_{\sz}^2
	\right) = {a \over 1 + \pi_0\pi_1 \Dt^2}.
	\]
	We proceed to show (\ref{eq_eta_beta}). By using (\ref{eq_sz_C}) and the Woodbury 
	formula again, 
	\begin{align*}
		\eta &= \szy^{-1}(\a_1 -\a_0)\\
		&= \sz^{-1}(\a_1 - \a_0) + {\pi_0\pi_1 \|\a_1 -\a_0\|_{\sz}^2\over 1 - \pi_0\pi_1 \|\a_1 -\a_0\|_{\sz}^2}\sz^{-1}(\a_1 - \a_0)\\
		&=\left[1  + {\pi_0\pi_1 \|\a_1 -\a_0\|_{\sz}^2\over 1 - \pi_0\pi_1 \|\a_1 -\a_0\|_{\sz}^2}\right] {\beta^a \over a}\\
		&= {1 \over 1 - \pi_0\pi_1 \|\a_1 -\a_0\|_{\sz}^2}{\beta^a \over a}.
	\end{align*}
	This proves (\ref{eq_eta_beta}) and completes the proof of the first statement. 
	
	To prove the second statement, by the definition of $\beta$ and the choice of $a=\pi_0\pi_1$, we have
	\begin{align*}
		\beta = a~ \sz^{-1}(\a_1 - \a_0) = \sz^{-1}(\a_1 -\a_0) \pi_0\pi_1.
	\end{align*}
	On the other hand,  
	\begin{align*}
		[\Cov(Z)]^{-1}\Cov(Z, Y) 
		&= \sz^{-1} (\EE[ZY] - \EE[Z]\EE[Y])\\
		&=  \sz^{-1} \pi_1(\alpha_1  -  \pi_0 \alpha_0 - \pi_1\a_1 )\\
		&=  \sz^{-1} \pi_0\pi_1(\alpha_1 -\alpha_0),
	\end{align*}
	proving our claim.
	
	The last statement follows immediately from (\ref{eq_eta_beta}) with $a = \pi_0\pi_1$. 
\end{proof}

\subsection{Proofs of Section \ref{sec_theory_general}}

\subsubsection{Proof of Theorem \ref{thm_general_risk}}\label{app_proof_thm_general_risk}
Since $\D = \{\X, \Y\}$ is independent of $(X, Z, W, Y)$, we treat quantities that are only related with $\D$ fixed throughout the proof.
Recall the definitions of $\wh G_x$ and $G_z$ in (\ref{eq_G_hat}).
By definition, 
\begin{align*}
	R_x(\wh g_x) 
	&= \pi_0\PP\left\{
	\wh G_x(X) \ge 0 \mid Y = 0
	\right\} + \pi_1\PP\left\{
	\wh  G_x(X) < 0 \mid Y = 1
	\right\}
\end{align*}
and 
\[
R_z^* = \pi_0\PP\left\{
G_z(Z) \ge 0 \mid Y = 0
\right\} + \pi_1\PP\left\{
G_z(Z) < 0 \mid Y = 1
\right\}.
\]
Recall that $X = AZ+W$ and write $f_{Z|k}(z)$ for the p.d.f. of $N_K(\alpha_k, \szy)$ at the point $z\in \RR^K$ for $k\in \{0,1\}$. We have
\begin{align*}
	&R_x(\wh g_x) - R_z^*\\
	& = \pi_0\EE_W\EE_Z\left[
	\1\{\wh G_x(AZ+w) \ge 0\} - \1\{ G_z(Z) \ge 0\} \mid Y = 0,W=w
	\right]\\
	&\quad + \pi_1\EE_W\EE_Z\left[
	\1\{\wh G_x(AZ+w) < 0\} - \1\{ G_z(Z) < 0\} \mid  Y = 1,W=w
	\right]\\
	&=  \EE_W\int 
	\left(\1\{\wh G_x(Az+w) \ge 0\} - \1\{ G_z(z) \ge 0\} \right)\left(
	\pi_0 f_{Z|0}(z) - \pi_1 f_{Z|1}(z)\right) d z\\
	&= \underbrace{\EE_W\int_{\wh G_x\ge 0,  G_z< 0}\left(
		\pi_0 f_{Z|0}(z) - \pi_1 f_{Z|1}(z)\right) d z}_{(I)}+ \underbrace{\EE_W\int_{\wh G_x <  0,  G_z \ge 0}\left(\pi_1 f_{Z|1}(z) -
		\pi_0 f_{Z|0}(z) \right)d z}_{(II)}.
\end{align*}
The penultimate step uses the assumption that $W$ is independent of both $Z$ and $Y$. Notice that 
\begin{align*}
	\pi_0 f_{Z|0}(z) - \pi_1 f_{Z|1}(z) &=  \pi_0 f_{Z|0}(z)\left[
	1  - {\pi_1 f_{Z|1}(z) \over  \pi_0 f_{Z|0}(z)}\right] = \pi_0 f_{Z|0}(z)\left(1 - \exp\{G_z^*(z)\}\right)
\end{align*}
with 
$$
G_z^*(z) = \log {\pi_1 f_{Z|1}(z) \over \pi_0 f_{Z|0}(z)}  = z^\T \eta + \eta_0 =  {1+\pi_0\pi_1\Dt^2 \over a}G_z(z) := c_* G_z(z)
$$
from Lemma \ref{lem_rule} and (\ref{def_Gz_star}).
This implies the identity
\begin{align*}
	(I) & = \pi_0 \EE_W\EE_Z\left[\1\left\{\wh G_x(AZ+w)\ge 0,  G_z(Z)< 0\right\}\left(1 - \exp\{G_z^*(Z)\}\right) \mid Y=0, W = w\right].
\end{align*}
Define, for any $t\ge 0$,  the event 
\begin{equation}\label{E_t}
	\E_t := \left\{
	|\wh G_x(AZ+W) - G_z(Z) |\le t
	\right\}.
\end{equation}
We obtain 
\begin{align*}
	(I) & = \pi_0\EE_W\EE_Z\left[\1\left\{\wh G_x(AZ+w)\ge 0,  G_z(Z)< 0\right\}\left(1 - \exp\{G_z^*(Z)\}\right) \1\{\E_t\} \mid Y=0, W = w\right]\\
	&\quad +\pi_0\EE_W\EE_Z\left[\1\left\{\wh G_x(AZ+w)\ge 0,  G_z(Z)< 0\right\}\left(1 - \exp\{G_z^*(Z)\}\right) \1\{\E_t^c\} \mid Y=0, W = w\right]\\
	&\le \pi_0 c_*t ~ \EE_Z\left[\1\left\{-t \le G_z(Z)< 0\right\} \mid Y=0\right] +\pi_0\PP(\E_t^c \mid Y = 0).
\end{align*}
In the last step we use the basic inequality $1+x \le \exp(x)$ for all $x\in \RR$ 
and the inequalities  $-t\le G_z(Z) < 0$ and $-G_z^*(Z) \le c_*t$ on the event $\{\wh G_x\ge 0, G_z<0\} \cap \E_t$.

We can bound $(II)$ by analogous arguments using the identity
\[
\pi_1 f_{Z|1}(z) - \pi_0 f_{Z|0}(z) =  \pi_1 f_{Z|1}(z)\left(1 - \exp\{-G_z^*(z)\}\right),
\]
and find that
\begin{align*}
	(II)	& = \pi_1\EE_W\EE_Z\left[\1\left\{\wh G_x(AZ+w)< 0,  G_z(Z)\ge 0\right\}\left(1 - \exp\{-G_z^*(Z)\}\right) \1\{\E_t\} \mid Y=1, W = w\right]\\
	&\quad +\pi_1\EE_W\EE_Z\left[\1\left\{\wh G_x(AZ+w)< 0,  G_z(Z)\ge 0\right\}\left(1 - \exp\{-G_z^*(Z)\}\right) \1\{\E_t^c\} \mid Y=1, W = w\right]\\
	&\le \pi_1 c_*t ~ \EE_Z\left[\1\left\{-t \le G_z(Z)< 0\right\} | Y=0\right] +\pi_1\PP(\E_t^c \mid Y = 1)
\end{align*}
Combining the bounds for $(I)$ and $(II)$ and using $G_z^*(z) = c_*G_z(z)$, we conclude that
\begin{align*}
	R_x(\wh g_x) - R_z^* &\le  \PP \{\E_t^c\} +\pi_0 c_* t \PP\{ -c_* t< G_z^*(Z) <0 \mid Y=0\}\\
	&\quad + \pi_1  c_* t \PP\{ 0< G_z^*(Z) < c_*t\mid Y=1\}. 
\end{align*}
Using the fact that
\begin{align*}
	&G_z^*(Z) \mid Y = 1 \sim N\left({1\over 2}\Dt^2 + \log{\pi_1\over \pi_0}, \Dt^2\right),\\
	& G_z^*(Z) \mid Y = 0 \sim N\left(-{1\over 2}\Dt^2 + \log{\pi_1\over \pi_0}, \Dt^2\right),
\end{align*}
the proof easily follows. \qed

\subsubsection{Proof of Proposition \ref{prop_risk}}\label{app_proof_prop_risk}
For any $a\ge 1$ with some $C=C(a)$, recall that 
\[
\wh \omega_n(a) = C\left\{
\sqrt{a\log n}\left(
\wh r_1 + \|\sw\|_\op^{1/2} \wh r_2
\right) + \wh r_2 \wh r_3 + \sqrt{\log n\over n}  
\right\}
\]
where 
\[
\wh r_1 := \|\sz^{1/2}(A^\T \wh \theta - \beta)\|_2,\quad  \wh r_2 := \|\wh\theta\|_2,  \quad \wh r_3 := {1\over \sqrt n}\|\W(P_B-P_A)\|_{\op}.
\]
The proof of Proposition \ref{prop_risk} consists of two parts:

(i) We first show that, for any $a\ge 1$, there exists $C=C(a)$ such that,  with probability at
least $1-2n^{-a}$,
\begin{align}\label{target_1}
	\begin{split}
		&	|\wh G_x(X) - G_z(Z)| \le \\
		&        C\sqrt{a\log n}\left(\wh r_1 +  \|\sw\|_\op^{1/2}  ~ \wh r_2  \right) +                 \left|\wh \beta_0- \beta_0 + {1\over 2}(\a_1 + \a_0)^\T(A^\T \wh \theta - \beta)\right|.
	\end{split}
\end{align}
Notice that  randomness of the right-hand side depends on the training data $\D$ only. \\

(ii)
We then prove in Lemma \ref{lem_beta_0} that the inequality 
\begin{equation}\label{target_2}
	\left|
	\wh \beta_0- \beta_0 + {1\over 2}(\a_1 + \a_0)^\T(A^\T \wh \theta - \beta)
	\right|   \le C\left(\wh r_1 +  \|\sw\|_\op^{1/2} \wh r_2  + \wh r_2 \wh r_3 + \sqrt{\log n\over n}\right)
\end{equation}
holds with probability $1-\cO(n^{-1})$.   Combination of steps (i) and (ii) 	  yields the claim.\\

To prove (\ref{target_1}), starting with
\begin{align*}
	\wh G_x(X) - G_z(Z) &= \left(Z - {\a_1 + \a_0 \over 2}\right)^\T(A^\T \wh \theta - \beta) + W^\T \wh \theta\\
	&\quad  + \wh \beta_0- \beta_0 + {1\over 2}(\a_1 + \a_0)^\T(A^\T \wh \theta - \beta),
\end{align*}
we observe that $\wh\theta$ and $\wh\beta_0$ are independent of $W$ and $Z$. 
Since $W^\T \wh\theta$  given $\wh\theta$ is subGaussian with parameter  
\begin{align*}
	\gamma\sqrt{\wh\theta^\T \sw \wh\theta} &\le \gamma \|\sw\|_{\op}^{1/2} ~ \wh r_2,
\end{align*}
we find that, for any $\alpha>0$,
\begin{equation}\label{bd_tail_Wtheta}
	\PP\left\{
	|W^\T \wh \theta| \ge \gamma\sqrt{2\alpha \log n} ~  \|\sw\|_{\op}^{1/2} ~ \wh r_2
	\right\}	\le  2n^{-\alpha}.
\end{equation}
We prove our bound for $(Z - (\a_1+\a_0)/2)^\T(A^\T \wh \theta - \beta)$ by a  conditioning argument. Given $Y=0$ and $\wh\theta$, we use that $Z$ and $\wh \theta$ are independent and  derive
\begin{align*}
	\PP\left\{
	\left|\left(Z -{\a_1+\a_0\over 2}\right)^\T \left(A^\T \wh \theta - \beta\right)\right| \ge 
	M +  t\sqrt{V}  ~ \Big | ~ Y = 0, ~\wh\theta ~\right\}
	&\le  2e^{-t^2/2}
\end{align*}
from $Z \mid Y=0 \sim N_K(\a_0, \szy)$,
for all $t\ge 0$, where
\[
M = {1\over 2}|(\a_1-\a_0)^\T(A^\T \wh \theta - \beta)| ,\quad 
V = (A^\T \wh \theta - \beta)^\T \szy (A^\T \wh \theta - \beta).
\]
Here, by (\ref{eq_Deltas}), we have
\begin{align*}
	M & \le  {1\over 2}\|\a_1-\a_0\|_{\sz}   \|\sz^{1/2}(A^\T \wh \theta - \beta)\|_2  \lesssim   \|\sz^{1/2}(A^\T \wh \theta - \beta)\|_2 = \wh r_1
\end{align*}
while 
by the Cauchy-Schwarz inequality and (\ref{eq_sz_C}), we obtain
\begin{align*}
	V &\le \|\sz^{-1/2}\szy\sz^{-1/2}\|_{\op} \|\sz^{1/2}(A^\T \wh \theta - \beta)\|_2^2 \le  \|\sz^{1/2}(A^\T \wh \theta - \beta)\|_2^2 = \wh r_1^2.
\end{align*}
These bounds on $M$ and $V$ yield that, for any $\alpha > 0$,
\[
\PP\left\{\left|\left(Z-{\a_1+\a_0\over 2}\right)^\T(A^\T\wh\theta - \beta)\right| \gtrsim \left(\sqrt{\alpha \log n}  + 1\right) \wh r_1 ~ \Big | ~ Y = 0 \right\} \le 2n^{-\alpha}.
\]
By the same arguments, the above also holds by conditioning on $Y = 1$ and $\wh \theta$. After we take expectations, we  obtain the same bounds for the   unconditionial versions.
Together with (\ref{bd_tail_Wtheta}), the proof of (\ref{target_1}) is complete by taking $\alpha \ge 1$. This concludes the proof of Proposition \ref{prop_risk}. \qed 

\medskip 

\begin{lemma}\label{lem_beta_0}
	Under conditions of Proposition \ref{prop_risk}, with probability $1-\cO(n^{-1})$, 
	\[
	\left|
	\wh \beta_0- \beta_0 + {1\over 2}(\a_1 + \a_0)^\T(A^\T \wh \theta - \beta)
	\right|   \le C\left(\wh r_1 +  \|\sw\|_\op^{1/2} ~ \wh r_2  + \wh r_2 \wh r_3 + \sqrt{\log n\over n}~ \right)
	\]
	for some constant $C = C(\gamma)>0$.
\end{lemma}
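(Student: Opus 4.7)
The plan is to start from the algebraic identity obtained by substituting the explicit formulas for $\wh\beta_0$ in (\ref{def_theta_0_hat}) and $\beta_0$ in (\ref{eq_beta}). After cancellation, the quantity on the left decomposes as
\[
 -\tfrac12 \sum_{k=0}^{1}(\wh\mu_k - A\a_k)^\T\wh\theta \;+\; R_\pi,
\]
where $R_\pi$ collects all contributions in which the only deviation is of the form $\wh\pi_k - \pi_k$. By Hoeffding's inequality, $|\wh\pi_k-\pi_k| \lesssim \sqrt{\log n /n}$ with probability $1-\cO(n^{-1})$; since $\pi_k$ is bounded away from $0$ and $1$ by assumption {\rm (vi)}, the map $(p_0,p_1)\mapsto p_0p_1\log(p_1/p_0)$ is locally Lipschitz, so $R_\pi$ inherits the rate $\sqrt{\log n/n}$, possibly multiplied by factors of the form $(\wh \mu_1-\wh \mu_0)^\T\wh\theta$ that are themselves treated by the estimates below. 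On the same event $n_k/n \ge c/2$, which I use freely from now on.

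The natural decomposition is $\wh\mu_k - A\a_k = A(\wh\nu_k - \a_k) + \bar W_k$, with $\wh\nu_k := n_k^{-1}\sum_{i:Y_i=k} Z_i$ and $\bar W_k := n_k^{-1}\sum_{i:Y_i=k} W_i$. For the latent part, I split $A^\T\wh\theta = \beta + (A^\T\wh\theta - \beta)$. Conditionally on $\{Y_i\}$, the scalar $(\wh\nu_k-\a_k)^\T\beta$ is centered Gaussian with variance $\beta^\T\szy\beta/n_k \le 1/n_k$ (using $\beta^\T\sz\beta \le \pi_0\pi_1$, which follows from identity (\ref{eq_Deltas}), together with $\sz \succeq \szy$), so it is $\cO(\sqrt{\log n /n})$ with high probability. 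For the residual, apply Cauchy--Schwarz in the $\sz$-norm,
\[
|(\wh\nu_k-\a_k)^\T(A^\T\wh\theta-\beta)| \le \|\sz^{-1/2}(\wh\nu_k-\a_k)\|_2 \, \wh r_1,
\]
and bound the first factor by $\|\szy^{-1/2}(\wh\nu_k-\a_k)\|_2$ (again using $\sz\succeq\szy$), which is the norm of a $K$-dimensional standard Gaussian scaled by $1/\sqrt{n_k}$; a chi-square tail gives $\cO(\sqrt{(K+\log n)/n}) = \cO(1)$ under the hypothesis $K\log n \le cn$, producing the $\wh r_1$ term.

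The main obstacle is the noise part $\bar W_k^\T\wh\theta$: because $\wh\theta$ is constructed from $\X = \Z A^\T + \W$, it depends on $\W$ and hence on $\bar W_k$, so a direct conditioning argument is not available. The key observation is that $\wh\theta \in \mathrm{range}(B)$, so
\[
\bar W_k^\T \wh\theta = (P_B \bar W_k)^\T \wh\theta = (P_A \bar W_k)^\T \wh\theta + ((P_B - P_A)\bar W_k)^\T \wh\theta.
\]
For the deterministic projection $P_A$, conditionally on $\{Y_i\}$ the vector $P_A\bar W_k$ is subGaussian and supported in a $K$-dimensional subspace with covariance $P_A\sw P_A/n_k$, so an $\varepsilon$-net argument gives $\|P_A\bar W_k\|_2 \lesssim \sqrt{(K+\log n)/n} \, \|\sw\|_\op^{1/2}$, which is $\cO(\|\sw\|_\op^{1/2})$ under $K\log n \le cn$; Cauchy--Schwarz with $\|\wh\theta\|_2=\wh r_2$ then produces the $\|\sw\|_\op^{1/2}\wh r_2$ term. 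For the random correction $(P_B - P_A)\bar W_k$, writing $\bar W_k = n_k^{-1}\W^\T e_k$ with $e_k\in\{0,1\}^n$ the indicator of class $k$, the deterministic bound
\[
\|(P_B - P_A)\bar W_k\|_2 \le n_k^{-1}\|\W(P_B - P_A)\|_\op \, \|e_k\|_2 = \sqrt{n/n_k}\,\wh r_3 \lesssim \wh r_3
\]
absorbs all the awkward dependence into $\wh r_3$, and Cauchy--Schwarz with $\wh r_2$ yields the $\wh r_2 \wh r_3$ contribution. Collecting these bounds completes the proof.
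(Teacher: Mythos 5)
Your proposal is correct and follows essentially the same route as the paper: both start from the explicit formulas for $\wh\beta_0$ and $\beta_0$, split the difference into the main term $-\frac12\sum_k(\wh\mu_k - A\a_k)^\T\wh\theta$ and a residual carrying the $\log$-ratios, decompose $\wh\mu_k - A\a_k = A(\wh\nu_k-\a_k) + \bar W_k$, control the latent part via Cauchy--Schwarz in the $\sz$-norm and a $\chi^2$ tail, and handle $\bar W_k^\T\wh\theta$ by the split $P_B = P_A + (P_B-P_A)$ so that $\|\W(P_B-P_A)\|_\op$ absorbs the $\W$-dependence of $\wh\theta$. The only minor divergence is that you absorb $\sqrt{(K+\log n)/n}$ into an $\cO(1)$ constant using $K\log n\le cn$ where the paper keeps the explicit $\sqrt{K\log n/n}$ factors, and your treatment of the $\log$-ratio residual is stated more compactly (your $R_\pi$ includes the term $(\wh\mu_1-\wh\mu_0)^\T\wh\theta - (\a_1-\a_0)^\T\beta$, which is controlled by the same estimates as the main term, exactly as in the paper's $R_{21}$), but neither affects the final bound.
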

\begin{proof}
	By definition, 
	\begin{align*}
		\left|\wh\beta_0  - \beta_0 +{1\over 2}(\a_1 + \a_0)^\T(A^\T \wh \theta - \beta)\right| & \le  ~{1\over 2}\underbrace{\left|(A\a_0+A\a_1 - \wh \mu_0 - \wh \mu_1)^\T  \wh\theta\right|}_{R_1}\\
		& \hspace{-3cm} +\underbrace{\left| \wh \pi_0\wh \pi_1\left[
			1- (\wh \mu_1-\wh \mu_0)^\T\wh\theta
			\right]\log{\wh \pi_1 \over \wh \pi_0}  -  \pi_0\pi_1\left[
			1 -(\a_1-\a_0)^\T \beta
			\right]\log{\pi_1 \over \pi_0}\right|}_{R_2}.
	\end{align*}
	We proceed to bound $R_1$ and $R_2$ separately.\\
	
	\noindent{\bf Bounding $R_1$.} 
	By recalling that, for any $k\in \{0,1\}$,
	\begin{equation}\label{def_W_bar}
		\begin{split}
			\wh \mu_k &= {1\over n_k}\sum_{i=1}^n X_i \1\{ Y_i = k\}\\
			&= A  \underbrace{{1\over n_k}\sum_{i=1}^n Z_i \1\{Y_i = k\}}_{\wh \a_k} + \underbrace{{1\over n_k}\sum_{i=1}^n W_i\1\{Y_i = k\}}_{\bar W_{(k)}},
		\end{split}
	\end{equation}
	we have
	\begin{align*}
		\left|\a_k^\T A^\T \wh \theta-\wh \mu_k^\T \wh\theta\right| &\le 
		\left|(\a_k - \wh \a_k)^\T A^\T \wh \theta \right| + \left| \bar W_{(k)}^\T \wh\theta\right| \\
		&\le 	\left|(\a_k - \wh \a_k)^\T \beta \right| + \left|(\a_k-\wh \a_k)^\T (\beta - A^\T\wh\theta)\right|+ \left|\bar W_{(k)}^\T \wh\theta\right|\\
		&\le  	\left|(\a_k - \wh \a_k)^\T \beta \right|  + 	 \|\sz^{-1/2}(\a_k-\wh \a_k)\|_2  \|\sz^{1/2}(\beta - A^\T \wh \theta)\|_2\\
		&\quad  + \|P_A\bar W_{(k)}\|_2  \|\wh\theta\|_2+ \|(P_B-P_A)\bar W_{(k)}\|_2 \|\wh\theta\|_2.
	\end{align*}
	The last step uses the identity
	\[
	\bar W_{(k)}^\T \wh\theta = \bar W_{(k)} P_B B(\Pi_n\X B)^+\Y = \bar W_{(k)}(P_A + P_B - P_A) \wh\theta
	\]
	and the Cauchy-Schwarz inequality. 
	By invoking Lemma \ref{lem_deviation} and using 
	\begin{equation}\label{quad_beta_sz}
		\|\sz^{1/2}\beta\|_2 = \pi_0\pi_1 \|\a_1-\a_0\|_{\sz}  \overset{(\ref{eq_Deltas})}{=} \pi_0\pi_1 \sqrt{\Dt^2 \over 1+\pi_0\pi_1\Dt^2} \lesssim 1,
	\end{equation}
	from (vi),
	we further have 
	\begin{align*}
		\left|(\a_k - \wh \a_k)^\T \beta \right|  + 	 \|\sz^{-1/2}(\a_k-\wh \a_k)\|_2  \|\sz^{1/2}(\beta - A^\T \wh \theta)\|_2   \lesssim \sqrt{\log n\over n} +  \sqrt{K\log n \over n_k} 	\wh r_1
	\end{align*}
	with probability $1- \cO(1/n)$.  Lemma \ref{lem_pi_hat} yields
	\begin{equation}\label{lb_n_min}
		\PP^{\D}\left\{
		{n_1 \wedge n_2 \over n}\ge c (\pi_0 \wedge \pi_1) \ge c\pi_0\pi_1
		\right\}\ge 1-2n^{-1}.
	\end{equation}
	After collecting the above terms and using Lemma \ref{fact} 
	and $K\log n \lesssim n$, we obtain
	\[
	\left|\a_k^\T A^\T \wh \theta -\wh \mu_k^\T \wh\theta\right|  \lesssim 	\wh r_1 \sqrt{K\log n\over n}+ \sqrt{\log n \over n} +\wh r_2 \left(\left\|P_A\bar W_{(k)}\right\|_2  + \left\|(P_B-P_A)\bar W_{(k)}\right\|_2\right) 
	\]
	with probability $1- \cO(1/n)$. Notice that 
	\begin{align*}
		\|(P_B-P_A)\bar W_{(1)}\|_2 &= {1\over n_1}\|(P_B-P_A)\W^\T \Y\|_2\\
		&\le {1\over \sqrt{n}}\|\W(P_B-P_A)\|_{\op} {\|\Y\|_2 \sqrt{n}\over n_1}\\
		&\lesssim ~ \wh r_3 &&\textrm{by (\ref{lb_n_min})}
	\end{align*}
	and, similarly, 
	\begin{align*}
		\|(P_B-P_A)\bar W_{(0)}\|_2 &
		\lesssim ~ \wh r_3.
	\end{align*}
	Then use Lemma \ref{lem_W} to obtain
	\[
	\wh r_2 \left(\left\|P_A\bar W_{(k)}\right\|_2  + \left\|(P_B-P_A)\bar W_{(k)}\right\|_2\right)  \lesssim 	\wh r_2 \sqrt{\|\sw\|_{\op}} \sqrt{K\log n \over n}+	\wh r_2\wh r_3
	\]
	which further implies 
	\begin{align*}
		R_1 &\lesssim 	\wh r_1 \sqrt{K\log n\over n}+ \sqrt{\log n \over n} +\wh r_2\left(\sqrt{\|\sw\|_{\op}} \sqrt{K\log n \over n} + 	\wh r_3\right),
	\end{align*}
	with probability $1- \cO(1/n)$. Therefore, with the same probability,  we have 
	\begin{align}\label{bd_R1}\nonumber
		&|(\alpha_0-\alpha_1)^\T \beta  - (\wh\mu_0 - \wh\mu_1)^\T \wh\theta|\\ \nonumber
		& \le |(\alpha_0 - \a_1)^\T (\beta -A^\T \wh\theta)|  + | (\alpha_0-\alpha_1)^\T A^\T \wh\theta - (\wh\mu_0 - \wh\mu_1)^\T \wh\theta|\\\nonumber
		&\le   \|\a_1-\a_0\|_{\sz} \|\sz^{1/2}(\beta-A^\T\wh\theta)\|_2 + \sum_{k\in\{0,1\}}   \left|\alpha_k^\T A^\T \wh\theta - \wh\mu_k^\T \wh\theta\right| \\
		&\lesssim 	\wh r_1+ \sqrt{\log n \over n} + 	\wh r_2\left(\sqrt{\|\sw\|_{\op}} \sqrt{K\log n \over n} +	\wh r_3\right).
	\end{align}
	In the last step, we also use $\|\a_1-\a_0\|_{\sz} \lesssim 1$ from Lemma \ref{fact} and $K\log n \lesssim n$ to collect terms.\\
	
	\noindent{\bf Bounding  $R_2$.}
	We bound from above the following two terms separately:
	\begin{align*}
		R_{21} &:= \left|\wh \pi_0\wh \pi_1(\wh \mu_1-\wh \mu_0)^\T\wh\theta - \pi_0\pi_1(\a_1-\a_0)^\T \beta+ \pi_0\pi_1 - \wh \pi_0\wh \pi_1\right|\cdot \left|\log{\wh \pi_1 \over \wh \pi_0}\right|,\\
		R_{22} &:= \left| \pi_0\pi_1
		- \pi_0\pi_1(\a_1-\a_0)^\T \beta
		\right|\cdot \left|\log{\wh \pi_1 \over \wh \pi_0} - \log{\pi_1 \over \pi_0}\right|.
	\end{align*}
	We start with 
	\begin{align*}
		R_{21}  &\le  \wh \pi_0\wh \pi_1\left|(\wh \mu_1-\wh \mu_0)^\T\wh\theta -(\a_1-\a_0)^\T \beta\right|\cdot \left|\log{\wh \pi_1 \over \wh \pi_0}\right|\\
		&\quad +\left|\wh \pi_0\wh \pi_1 - \pi_0\pi_1\right|   \pi_0\pi_1 \|\a_1 -\a_0\|_{\sz}^2\cdot  \left|\log{\wh \pi_1 \over \wh \pi_0}\right| + |\wh \pi_0\wh \pi_1 - \pi_0\pi_1|\cdot  \left|\log{\wh \pi_1 \over \wh \pi_0}\right|\\
		&\le \wh \pi_0\wh \pi_1\left|(\wh \mu_1-\wh \mu_0)^\T\wh\theta -(\a_1-\a_0)^\T \beta\right|\cdot \left|\log{\wh \pi_1 \over \wh \pi_0}\right|\\
		&\quad+   |\wh \pi_0 - \pi_0|\cdot  \left|\log{\wh \pi_1 \over \wh \pi_0}\right|  \pi_0\pi_1\|\a_1 -\a_0\|_{\sz}^2+   |\wh \pi_0 - \pi_0|\cdot  \left|\log{\wh \pi_1 \over \wh \pi_0}\right|
	\end{align*}
	by using 
	\begin{equation}\label{bd_a_hat_a}
		\left|\wh \pi_0\wh \pi_1 - \pi_0\pi_1\right| = 
		\left| (\wh \pi_0- \pi_0)\wh \pi_1 + (\wh \pi_1 - \pi_1)\pi_0\right| =
		\left|(\wh\pi_0 - \pi_0)(\wh \pi_1 - \pi_0)\right| \le |\wh\pi_0 - \pi_0 |
	\end{equation}
	in the last line. The concavity of $x\mapsto \log(x)$ implies 
	\[
	\left|\log{\wh \pi_1 \over \wh \pi_0}\right|   \le {|\wh \pi_1 - \wh \pi_0|\over \wh \pi_1 \wedge \wh \pi_0}
	\]
	and $\pi_0\pi_1\|\a_1-\a_0\|_{\sz}^2 \le 1 $ follows from (\ref{eq_Deltas}). We
	invoke the bound (\ref{bd_R1})  on $R_1$,  use Lemma \ref{lem_pi_hat}, inequality  (\ref{eq_Delta}) and condition  {(vi)} to obtain
	\begin{align*}
		\PP^{\D}\left\{ R_{21} 
		\lesssim   	\wh r_1 +    \sqrt{\log n \over n}  +	\wh r_2 \|\sw\|_{\op}^{1/2} \sqrt{K\log n \over n} + \wh r_2	\wh r_3\right\} \ge 1-cn^{-1}.
	\end{align*}
	
	To bound $R_{22}$, notice from  (\ref{eq_Deltas_prime}) that 
	\[
	\pi_0\pi_1 - \pi_0\pi_1(\a_1-\a_0)^\T \beta =  \pi_0\pi_1 \left[
	1 - \pi_0\pi_1 \|\a_1-\a_0\|_{\sz}^2
	\right] = {\pi_0\pi_1 \over 1 + \pi_0\pi_1 \Dt^2}.
	\]
	Use 
	\begin{align*}
		\left|\log{\wh \pi_1 \over \wh \pi_0} - \log{\pi_1 \over \pi_0}\right| &\le \left|
		{\wh \pi_1 \over \wh \pi_0} - {\pi_1 \over \pi_0}
		\right| \cdot \left({\pi_0 \over \pi_1} \vee  {\wh\pi_0 \over \wh\pi_1}\right)\\
		&\le \max \left\{
		{|\wh \pi_1 \pi_0 - \pi_1 \wh \pi_0|\over \wh \pi_0\pi_1}, ~ {|\wh \pi_1 \pi_0 - \pi_1 \wh \pi_0|\over \pi_0\wh \pi_1}
		\right\}
	\end{align*}
	and 
	\[
	|\wh \pi_1 \pi_0 - \pi_1 \wh \pi_0| \le |\wh \pi_1-\pi_1| \pi_0 + \pi_1 |\wh \pi_0-\wh \pi_0|
	\]
	together with Lemma \ref{lem_pi_hat} to conclude 
	\[
	R_{22} \lesssim {\pi_0\pi_1 \over 1 + \pi_0\pi_1 \Dt^2}\left(\sqrt{\pi_0\over \pi_1} + \sqrt{\pi_1 \over \pi_0}\right) \sqrt{\log n \over n} \lesssim  \sqrt{\log n \over n}
	\]
	with probability $1-\cO(1/n)$.  Combining the bounds of $R_1$, $R_{21}$ and $R_{22}$ yields the desired result.
\end{proof}

\subsubsection{Proof of Theorem \ref{thm_general_risk_explicit}}\label{app_proof_thm_general_risk_explicit} 
We take 
$\wh\omega_n(a)$ as given in (\ref{def_hat_omega_n}) of Proposition \ref{prop_risk}. After we
apply Theorem \ref{thm_general_risk} with $t=\omega_n$, we obtain, on the event $\{\wh\omega_n(a) \le \omega_n\}$,
\begin{align*} R_x(\wh g_x) - R_z^* 
	&= \PP\{ \wh g_x(X)\ne Y\mid \D\} - R_z^*\\
	&\le \PP\{ |\wh G_x(X)- G_z(Z)| > \omega_n\mid \D \} + c_*
	\omega_n P(\omega_n)\\
	&\le \PP\{ |\wh G_x(X)- G_z(Z)| > \wh \omega_n(a) \mid \D \}  + c_*
	\omega_n P(\omega_n)\\
	&\lesssim n^{-a} + c_*\omega_n P( \omega_n),
\end{align*}
with probability $1-\cO(1/n)$, by Proposition \ref{prop_risk}.
The second 
term $c_*\omega_n P( \omega_n)$ can be written as
\begin{equation}\label{def_T_original}
	T :=  \pi_0 c_* \omega_n\left[\Phi\left(R\right) - \Phi\left(R-c_*\omega_n/ \Dt\right)\right] + \pi_1 c_* \omega_n \left[\Phi\left(L+{c_* \omega_n / \Dt}\right) - \Phi\left(L\right)\right]
\end{equation}
with 
$$
c_* = {1 \over \pi_0\pi_1} + \Dt^2,\qquad L = -{1\over 2}\Dt - {\log{\pi_1\over \pi_0} \over \Dt},\qquad R = {1\over 2}\Dt - {\log{\pi_1\over \pi_0} \over \Dt}.
$$
By the mean-value theorem, we obtain the bound
\begin{align*}
	T \le \frac{ c_*^2 \omega_n^2 } {\Delta} \exp(-m^2/2) \qquad
	\text{ with }  
	m\in
	\left[L, L+ \frac{c_*\omega_n}{\Dt}\right]
	\cup \left[ R- \frac{c_*\omega_n}{\Dt}, R \right]  .
\end{align*}
We consider three scenarios:

(1) $\Delta \asymp 1$. In this case, $c_* \asymp 1$ and $m\asymp1$, so that
$
T \lesssim \omega_n^2.
$

(2) $\Delta \to \i$. In this case, $c_*\asymp\Dt^2$, $c_* \omega_n / \Dt \asymp \omega_n \Delta = o(\Dt)$, whence $m^2 = c_\pi  \Dt^2 + o(\Dt^2)$  with $c_\pi = 1/8$ if $\pi_0 = \pi_1$, and
\begin{align*}
	T &\lesssim \omega_n^2   \Dt^3 \exp\left[-c_\pi  \Dt^2 + o(\Dt^2)\right] = \omega_n^2   \exp\left[-c_\pi  \Dt^2 + o(\Dt^2)\right] .
\end{align*}

(3a)  $\Dt \to 0$ and $\pi_1$ and $\pi_0$ are distinct.  
In this case $c_*\asymp1$, $L =  -\log(\pi_1/\pi_0) / \Dt + o(1)$,
$R= -\log(\pi_1/\pi_0) / \Dt + o(1)$,
$c_*\omega_n /\Dt \asymp \omega_n / \Dt = o(1/\Dt)$, whence $m = - \log(\pi_1/\pi_0) / \Dt + o(1/\Dt)$ 
and
\begin{align*}
	T &\lesssim {\omega_n^2  \over \Dt} \exp\left[-{\log(\pi_1/\pi_0) \over \Dt^2 }+ o\left(1\over \Dt^2\right)\right]=  \omega_n^2   \exp\left[-{\log(\pi_1/\pi_0) \over \Dt^2 }+ o\left(1\over \Dt^2\right)\right].
\end{align*}

(3b)  $\Dt \to 0$ and $\pi_0=\pi_1$.
In this case, $c_*\asymp1$, $L=-\Dt/2=-R$. 
Thus 
$
T\lesssim  {\omega_n^2}/{\Dt}.$
The second bound $T\lesssim \omega_n$ follows directly from   (\ref{def_T_original}).

In view of the above three cases,   the proof is complete. 
\qed

\subsection{Proofs of Section \ref{sec_theory_application}}

We define $\wt \Z = \Z \sz^{-1/2}$ (the so-called whitened $\Z$).
Most of the proofs work on the following events
\begin{align}\label{def_event_z}
	\E_z &:= \left\{
	{n\over 2}  \le \lambda_K(\wt \Z^\T \Pi_n \wt \Z) \le  \lambda_1(\wt \Z^\T \Pi_n  \wt \Z) \le 2 n
	\right\}\\
	\E_w &:= \E_w^1 \cap \E_w^2\\
	\E_w^1 &:= \left\{
	\| \W^\T \W \|_\op \le 12 \gamma^2 n\errw
	\right\} \\
	\E_w^2 &:=  \left\{
	\|\W\|_F^2 \le 6\gamma^2 n~\tr(\sw)
	\right\}
\end{align}
Here
\begin{align}
	\errw &:= \| \sw\|_\op \left( 1+ \frac{r_e(\sw)}{n} \right).
\end{align}
Part (vi) of Lemma \ref{lem_deviation} states that  $\PP(\E_z)\ge 1-\cO(1/n)$, while Lemma \ref{lem_W} and Lemma \ref{lem_op_norm} state that
$\PP\{\E_w\} \ge 1 - 2\exp(-n)$. 

For notational simplicity, we write 
\begin{equation*} 
	\lambda_k := \lambda_k(A\szy A^\T),\qquad \text{for all } k = 1,\ldots, K.
\end{equation*}
For future reference, by \eqref{eq_sz_C}, we also have 
\begin{equation}\label{ineq_lambdas}
	\lambda_K(A\sz  A^\T) \ge \lambda_K, \qquad  \lambda_1(A\sz  A^\T) \le  \pi_0\pi_1 \Dt^2 \lambda_1.
\end{equation}
Finally, we write the singular value decomposition of $\Pi_n \X$ as 
\[
\Pi_n \X =  \V_K \D_K \U_K^\T + (\Pi_n \X)_{(-K)}
\]
with $\D_K = \diag(\sigma_1, \ldots, \sigma_K)$.

\subsubsection{Proof of Theorem \ref{thm_PCR_K_hat}}\label{app_proof_thm_PCR_K_hat}
We show $\wh K = K$ with probability $1-\cO(1/n)$.
Let 
\begin{align*}
	\mu_n = c_0(n+p).
\end{align*} Under the conditions of Theorem \ref{thm_PCR_K_hat}, Proposition 8 in \cite{bing2020prediction} shows that 
\begin{align*}
	\PP\{ \wh K \le K \} \ge \PP\{ \E_w \} \ge 1-2\exp(-n)
\end{align*}
We will prove the theorem by showing that
\[ \PP\{\wh K \ge K\}\ge \PP\{ \E_z\cap \E_w\}=1-\cO(1/n)\]
From Corollary 10 of \cite{rank19},  we need to verify
\[
\sigma_K^2(\Pi_n\Z A^\T) \ge \mu_n {\|\Pi_n\W\|_F^2 \over np} \left[
{\sqrt 2 \over 2} + \sqrt{np \over np - \mu_n K}
\right]^2.
\]
For the left-hand-side, invoking $\E_z$ in (\ref{def_event_z}) gives 
\[
\sigma_K^2(\Pi_n \Z A^\T)  \ge  ~  {n \over 2}  \lambda_K(A\sz A^\T) \overset{\eqref{ineq_lambdas}}{\ge} ~  {n \over 2} \lambda_K.
\]
The last inequality follows from (\ref{eq_sz_C}). 
Regarding the right-hand-side, by invoking the inequalities in $\E_w^2$ and using $$
K\le \bar K \le {\nu\over 1+\nu}{np \over \mu_n}
$$ 
from (\ref{def_K_hat}),
it can be bounded from above by
\[
\mu_n {\|\W\|_F^2 \over np} \left[
{\sqrt 2 \over 2} +\sqrt{1+\nu}
\right]^2 \le  C \tr(\sw){n+p \over p}
\]
for some $C = C(c_0, \nu)$.
The proof is then completed by observing that 
$n\lambda_K \ge 2C \tr(\sw)(n+p)/p$ as
\[
{\tr(\sw) \over \lambda_K}{n+p \over np} \le {\tr(\sw)\over n\lambda_K} + {\lambda_1(\sw) \over \lambda_K} = {\errw \over \lambda_K}  = {1\over \xi} \le {1\over 2C}.
\]
\qed

\subsubsection{Proof of Theorem \ref{thm_PCR}}\label{app_proof_PCR} 
According to Theorem \ref{thm_general_risk_explicit}, we need to bound the quantities
$\wh r_1$, $\wh r_2$ and $\wh r_3$. A combination of  the bounds (\ref{bound:r3}), (\ref{bound:r2}) and (\ref{bound:r1})
below yields that, with probability $1-\cO(n^{-1})$,  
\begin{align*}
	\wh r_1 &\lesssim \sqrt{K\log n\over n}  +  {\min\{1, \Dt\}\over \xi^*}  
	+   \sqrt{\kappa\over \xi^2}\\
	\wh r_2 &\lesssim   {1\over \sqrt{\lambda_K}} \left(
	\min\{1, \Dt\} + \sqrt{K\log n\over n} + \sqrt{\kappa\over \xi^2}
	\right)\\  
	\wh r_3 &\lesssim   \sqrt{\kappa { \errw  \over\xi }}
\end{align*}
Hence, for any $a\ge 1$, 
\begin{align*}
	\wh \omega_n(a) &= C\left\{
	\sqrt{a\log n}\left(
	\wh  r_1 + \|\sw\|_\op^{1/2} \wh r_2
	\right) + \wh r_2 \wh r_3 + \sqrt{\log n\over n}  
	\right\}\\
	&\lesssim  
	\sqrt{a\log n}\left(
	\sqrt{K\log n\over n}  +    \sqrt{\kappa \over \xi^2} + \min(1,\Dt) \sqrt{1\over \xi^*}
	\right)\\
	&\qquad +  \sqrt{\kappa\over \xi^2} \left(
	\min\{1, \Dt\} + \sqrt{K\log n\over n} +\sqrt{\kappa\over \xi^2}
	\right)    \\
	&\lesssim  \sqrt{a\log n}\left(
	\sqrt{K\log n\over n} + \min(1,\Dt) \sqrt{1\over \xi^*} + \sqrt{\kappa \over \xi^2}
	\right).
\end{align*} 
The theorem follows now from Theorem \ref{thm_general_risk_explicit}.  
\qed\\

\begin{lemma}\label{lem:r3} 
	Assume $\xi \gtrsim 1$. On the event $\E_z \cap \E_w^1$, we have 
	\begin{align}\label{bound:r3}
		\wh r_3 \lesssim  \sqrt{\errw} \left( 1 \wedge 
		\sqrt{\kappa \over\xi }  ~ \right).
	\end{align}     
\end{lemma}
\begin{proof}
	We have, on the event $\E_w^1$,
	\begin{align*}
		\wh r_3 &=n^{-1/2} \| \W (P_A-P_{\U_K}) \|_\op \\
		&\le n^{-1/2} \| \W \|_\op \| P_A-P_{\U_K} \|_\op\\
		&\le 2\sqrt{3}~ \delta_W^{1/2} \| P_A-P_{\U_K} \|_\op.
	\end{align*}
	The first bound follows trivially by $\| P_A-P_{\U_K} \|_\op \le 1$. To prove the other bound, on the event $\E_z$, the left-singular vectors $\U_A\in \cO_{p\times K}$ of the matrix $A$ equal the first $K$ left-singular vectors of the matrix $A\Z^\T \Pi_n \Z A^T$. By a variant of Davis-Kahan theorem \citep[Theorem 2]{yu2014}, we have, for some orthogonal matrix $Q\in \cO_{K\times K}$, 
	\begin{align*}
		\left\| \U_K  - \U_A Q\right\|_{\op} & \le 2^{3/2}{\|\X^\T \Pi_n \X - A\Z^\T \Pi_n \Z A^\T\|_{\op} \over \lambda_K(A\Z^\T \Pi_n \Z A^\T)} \\
		&\le 2^{3/2}{\|\W^\T \Pi_n \W\|_{\op} + 2 \|A\Z^\T \Pi_n\|_\op \| \Pi_n \W\|_{\op} \over \lambda_K(A\Z^\T\Pi_n  \Z A^\T)}.
	\end{align*} 
	On the event $\E_w^1$,
	\begin{align*}
		\| \W^\T \Pi_n \W\|_\op \le \| \W\|^2_\op \le 12\gamma^2 n \errw
	\end{align*}
	while, on the event $\E_z$, both
	\begin{align*}
		\lambda_K(A\Z^\T \Pi_n \Z A^\T) &\ge \lambda_K  (\wt\Z^\T \Pi_n\wt\Z) \lambda_K(A \sz A^\T) 
		\ge  {n\over 2}\lambda_K( A\sz A^\T)  
	\end{align*}
	and
	\begin{align*}
		\lambda_1(A\Z^\T \Pi_n \Z A^\T) &\le  2n\lambda_1( A\sz A^\T)
	\end{align*} hold.
	Hence,  
	\begin{align}\label{ineq:PUPA}\nonumber
		\left\| \U_K  - \U_A Q\right\|_{\op}  &  \lesssim    {\errw \over \lambda_K(A\sz A^\T)} + \sqrt{\errw\over \lambda_K(A\sz A^\T)}\sqrt{\lambda_1(A\sz A^\T) \over \lambda_K(A\sz A^\T)}\\\nonumber
		&\le {1\over \xi} +  \sqrt{\kappa\over\xi} &&\text{by \eqref{ineq_lambdas}}\\
		&\lesssim \sqrt{\kappa \over \xi} && \text{by }\xi \gtrsim 1.
	\end{align}
	After observing that  
	\begin{align*}
		\| P_A-P_{\U_K} \|_\op &\le  \|\U_AQ (\U_A Q - \U_K)^\T\|_\op + \|(\U_AQ   - \U_K)Q^\T \U_A^\T \|_\op\\
		&\le 2\|\U_K - \U_A Q\|_\op,
	\end{align*} 
	the proof is complete.   \end{proof}

\begin{lemma} \label{lem:r2} 
	Assume $K\log n \lesssim n$ and $\xi \gtrsim 1$. With probability at least $1-\cO(1/n)$ as $n\to\infty$, we have
	\begin{align}\label{bound:r2}
		\wh r_2 \lesssim   {1\over \sqrt{\lambda_K}} \left(
		\min\{1, \Dt\} + \sqrt{K\log n\over n} + {\wh r_3 \over \sqrt{\lambda_K}}
		\right) \lesssim {1\over \sqrt{\lambda_K}}.
	\end{align} 
\end{lemma}
\begin{proof}
	First, recall that $\X=\Z A^\T +\W$ and $\Pi_n \X \U_K = \V_K\D_K$. We write
	\begin{align*}
		\wh r_2 &= \| \U_K (\Pi_n \X \U_K)^+ \Y\|_2\\ 
		&= \| \U_K \D_K^{-2} \U_K^\T  \X^\T \Pi_n \Y\|_2\\
		&= \| \U_K \D_K^{-2} \U_K^\T (\Z A^\T + \W)^\T\Pi_n \Y\|_2\\
		&\le \| \U_K \D_K^{-2} \U_K^\T \W^\T\Pi_n \Y\|_2+ \| \U_K \D_K^{-2} \U_K^\T A \Z ^\T\Pi_n \Y\|_2
	\end{align*}
	We will bound the two terms on the right-hand side separately.\\
	Bound for $\rI:=\| \U_K \D_K^{-2} \U_K^\T \W^\T\Pi_n \Y\|_2$.   
	We first recall  that $\D_K = \diag(\sigma_1,\ldots, \sigma_K)$   so that 
	\begin{align*}
		\rI &\le {1\over \sigma_K^2} \|P_{\U_K} \W^\T\Pi_n \Y\|_2\\
		&\le {1\over \sigma_K^2} \left(\|P_{A} \W^\T\Pi_n \Y\|_2 + \|(P_{\U_K} -P_{A}) \W^\T\Pi_n \Y\|_2\right)\\
		&\le {1\over \sigma_K^2} \left(\|P_{A} \W^\T\Pi_n \Y\|_2 + \|\W(P_{\U_K} -P_{A})\|_\op \|\Y\|_2\right).
	\end{align*}
	Since $\|\Y\|_2=\sqrt{n_1}\le \sqrt{n}$, invoking Lemmas \ref{lem_sigma_K} and \ref{lem_W} yields 
	\begin{equation}\label{bd_I_r2}
		\rI \lesssim {1\over \lambda_K}\left(\sqrt{\|\sw\|_\op} \sqrt{K\log n\over n}+ \wh r_3\right) 
	\end{equation}
	with probability $1-\cO(n^{-K})$.\\
	
	\noindent     Bound for  $\rII:=\| \U_K \D_K^{-2} \U_K^\T A \Z ^\T\Pi_n \Y\|_2$. This is the most challenging part, in that we successfully avoid an unwanted multiplicative  factor of the condition number $\kappa$ of the matrix $A\sz A^\T$ to appear in our bound.   We have
	\begin{align*}
		\rII & \le n\| \U_K \D_K^{-2} \U_K^\T A\sz^{1/2}\|_\op ~  {1\over n}\|\wt\Z ^\T\Pi_n \Y\|_2\\
		&\le n\|\D_K^{-2} \U_K^\T A\sz^{1/2}\|_\op ~   2 \|(\wt \Z^\T \Pi_n \wt Z)^{-1}\wt\Z ^\T\Pi_n \Y\|_2 & \text{on $\E_z$}\\
		&\le 2 n\|\D_K^{-2} \U_K^\T A\sz^{1/2}\|_\op \left( \|(\Pi_n \wt \Z)^+ \Y - \sz^{1/2}\beta\|_2 + \|\sz^{1/2}\beta\|_2\right).
	\end{align*}
	On the one hand, we easily verify that 
	\begin{align}\label{bd_sz_beta}\nonumber
		\|  \sz^{1/2} \beta \|_2^2 &=  \| \pi_0\pi_1 \sz^{-1/2}(\a_1-\a_0)\|_2^2\\\nonumber
		&= \pi_0\pi_1 \frac{ \pi_0\pi_1\Delta^2}{ 1+ \pi_0\pi_1 \Dt^2} &&\text{from (\ref{eq_Deltas}})\\
		& \le \pi_0\pi_1 \min\{1, \pi_0\pi_1\Dt^2\},
	\end{align}
	and $\|(\Pi_n \wt \Z)^+ \Y - \sz^{1/2}\beta\|_2$ is controlled by Lemma \ref{lemma:zybeta} stated below. 
	On the other hand, again on the event $\E_z$, 
	\begin{align*}
		n^2 \|\D_K^{-2} \U_K^\T A\sz^{1/2}\|_\op^2 & =   n^2  \|\D_K^{-2} \U_K^\T A\sz A^\T \U_K \D_K^{-2} \|_\op  \\
		&\le {n\over 2}\|\D_K^{-2} \U_K^\T A \Z^\T \Pi_n \Z A^\T \U_K \D_K^{-2}  \|_\op.
	\end{align*}
	By the identity $\X = \Z A^\T + \W$ and the triangle inequality, we find 
	\begin{align*}
		& n^2 \| \D_K^{-2} \U_K^\T A\sz^{1/2}\|_\op^2 \\
		&\le {n\over 2}\|  \D_K^{-2} \U_K^\T \X^\T \Pi_n \X \U_K \D_K^{-2}  \|_\op + {n\over 2}\| \D_K^{-2} \U_K^\T \W^\T \Pi_n \W \U_K \D_K^{-2}  \|_\op \\ 
		&\qquad + {n}\| \D_K^{-2} \U_K^\T A \Z^\T \Pi_n \W \U_K \D_K^{-2} \|_\op \\
		&\le {n\over 2\sigma_K^2} + {n\over 2\sigma_K^4}\|\Pi_n \W  P_{\U_K}\|_\op^2 +  \left(n\|\D_K^{-2} \U_K^\T A \sz^{1/2}\|_\op \right) {1 \over \sigma_K^2} \|\wt \Z^\T \Pi_n \W P_{\U_K}\|_\op.
	\end{align*}
	Using the basic inequalities $x^2 \le a + bx \le a + b^2/2 + x^2/2$, for all $x$ and any $a,b>0$, we conclude 
	\begin{align*}
		n^2 \| \D_K^{-2} \U_K^\T A\sz^{1/2}\|_\op^2 &\le {n\over  \sigma_K^2} + {n\over  \sigma_K^4}\|\Pi_n \W  P_{\U_K}\|_\op^2 +    {1 \over \sigma_K^4} \|\wt \Z^\T \Pi_n \W P_{\U_K}\|_\op^2.
	\end{align*}
	Lemma \ref{lem_W} ensures that, with probability $1-e^{-n}$,
	\begin{align*}
		{1\over \sqrt n}\|\Pi_n \W  P_{\U_K}\|_\op &\le {1\over \sqrt n}\|\W  P_A\|_\op + {1\over \sqrt n}\|\W (P_{\U_K} - P_A)\|_\op\\
		&\le 12\gamma^2 \sqrt{\|\sw\|_{\op}} + \wh r_3
	\end{align*} 
	and, with probability $1-\cO(n^{-1})$, 
	\begin{align}\label{bd_ZWP_UK}\nonumber
		{1\over n}\|\wt \Z^\T \Pi_n \W P_{\U_K}\|_\op &\le   {1\over n}\|\wt \Z^\T \Pi_n \W  P_A\|_\op + {1\over \sqrt n}\|\Pi_n\wt \Z\|_\op {1\over \sqrt n}\|\W(P_{\U_K} - P_A)\|_\op \\
		&\lesssim  \sqrt{\|\sw\|_{\op}}\sqrt{K\log n\over n} + \wh r_3.
	\end{align}
	Next, we  use the inequalities  $\sigma_K^2 \ge n\lambda_K / 4$ and $\wh r_3^2 \le \errw $ 
	stated in Lemma \ref{lem_sigma_K} and Lemma \ref{bound:r3}, respectively,
	together with $K\log n \lesssim n$ and $\xi^*\ge \xi \ge C$ to  conclude that 
	\begin{align}\label{bd_key}\nonumber
		n^2 \| \D_K^{-2} \U_K^\T A\sz^{1/2}\|_\op^2 &\lesssim  ~ {1\over \lambda_K} + {\|\sw\|_\op + \wh r_3^2\over \lambda_K^2}  + {1 \over \lambda_K^2} \left(
		\|\sw\|_{\op}  {K\log n\over n} + \wh r_3^2
		\right)\\
		&\lesssim {1\over \lambda_K} + {1 \over \lambda_K \xi} ~ \lesssim ~ {1\over \lambda_K}
	\end{align} 
	with probability $1-\cO(n^{-1})$.
	Finally, we 
	combine the bounds \eqref{bd_sz_beta} and \eqref{bd_key} and invoke Lemma \ref{lemma:zybeta} to obtain the bound
	\begin{align} \label{bd_II}
		\rII \lesssim {1\over \sqrt{\lambda_K}} \left(
		\min\{1, \Dt\} + \sqrt{K\log n\over n}
		\right).
	\end{align} that holds 
	with probability $1-\cO(n^{-1})$. \eqref{bd_II}  in conjunction with \eqref{bd_I_r2} completes our proof. 
\end{proof}

\begin{lemma}\label{lem:r1} 
	Assume $\xi \ge C\kappa^2$ for some sufficiently large constant $C>0$. On the event $\E_z\cap \E_w^1$, with probability $1-\cO(n^{-1})$ as $n\to \infty$, we have
	\begin{align}\label{bound:r1}
		\wh r_1 \lesssim \sqrt{K\log n\over n}  +  {\min\{1, \Dt\}\over \xi^*}  
		+   {\wh r_3\over \sqrt{\lambda_K}} 
	\end{align}
	
\end{lemma}
\begin{proof}
	
	We first observe that
	\begin{align*}
		A^\T \wh \theta &=
		(\Pi_n\Z)^+ \Pi_n\Z A^\T \wh \theta
		&&\text{since } (\Pi_n\Z)^+\Pi_n \Z =\bI_K\\
		&= (\Pi_n\Z)^+ \Pi_n\X \U_K(\Pi_n\X \U_K)^+\Y  - (\Pi_n\Z)^+ \Pi_n\W \wh \theta
		&&\text{since } \X=\Z A^\T +\W\\
		&=  (\Pi_n\Z)^+\Y - (\Pi_n\Z)^+P_{\Pi_n\X\U_K}^{\perp}\Y  - (\Pi_n\Z)^+\Pi_n \W \wh \theta.
	\end{align*}
	Next, since $\wt \Z = \Z \sz^{-1/2}$, it is easily seen that
	$\sz^{1/2}(\Pi_n\Z)^+ = (\Pi_n\wt \Z)^+$ and
	hence,
	\begin{align}\label{rhat1}
		\begin{split}
			\wh r_1 &= \left\| \sz^{1/2} (A^\T \wh \theta -\beta) \right\|_2 \\
			&\le 
			\left\|  (\Pi_n\wt\Z)^+\Y - \sz^{1/2} \beta
			\right\|_2 + \left\|   (\Pi_n\wt\Z)^+\Pi_n\W \wh \theta \right\|_2 + \left\|
			(\Pi_n\wt\Z)^+P_{\Pi_n\X\U_K}^{\perp}\Y \right\|_2.
		\end{split} 
	\end{align}
	We will bound the three terms on the right separately.\\

	\noindent (i) We refer to Lemma \ref{lemma:zybeta} for the first term, $\|  (\Pi_n\wt\Z)^+\Y - \sz^{1/2} \beta
	\|_2$.\\
	
	\noindent
	{(ii) Bound for the second term $\|(\Pi_n\wt \Z)^+ \Pi_n\W \wh \theta\|$}. 
	We have, with probability $1-\cO(n^{-1})$,
	\begin{align*}
		\|(\Pi_n\wt \Z)^+ \Pi_n\W \wh\theta\|_2 
		&  \le {2\over n}\|\wt \Z^\T \Pi_n\W \wh\theta\|_2&& \text{on the event } \E_z\\
		&
		\le  {2\over n} \left\|\wt \Z ^\T \Pi_n \W P_{\U_K} \right\|_\op \|\wh \theta\|_2
		&& \text{since } \wh \theta= P_{\U_K} \wh \theta\\
		&\lesssim  \wh r_2~ \sqrt{ \|\sw\|_\op ~ K\log n\over n} + \wh r_2\wh r_3 &&\text{by \eqref{bd_ZWP_UK}}.
	\end{align*}  
	
	\noindent (iii) Third term: Bound for $\|(\Pi_n\wt\Z)^+P_{\Pi_n\X\U_K}^{\perp}\Y \|_2$. 
	This is the most challenging part.
	We first write
	\begin{align*}
		\left\|  (	\Pi_n\wt\Z)^+P_{\Pi_n\X\U_K}^{\perp}\Y \right\|_2 &\le 
		\frac{2}{n}  \left\|  \wt \Z^\T \Pi_n P_{\Pi_n\X\U_K}^{\perp}\Y  \right\|_2 
		&&\text{ on the event } \E_z  
	\end{align*} 
	and, using the identity $  \wt\Z^\T= \sz^{-1/2} \Z^\T = \sz^{-1/2} A^+ (\X^\T-\W^\T)$, we obtain
	\begin{align*}
		\frac{1}{n}  \left\|   \wt \Z^\T \Pi_n P_{\Pi_n\X\U_K}^{\perp}\Y  \right\|_2
		& ={1\over n}  \left\|       \sz^{-1/2}  A^+ (\X^\T-\W^\T) \Pi_n P_{\Pi_n\X\U_K}^{\perp}\Y  \right\|_2\\
		& = {1\over n}  \left\|    \sz^{-1/2}  A^+ (P_{\U_K} ^\perp \X^\T-\W^\T) \Pi_n P_{\Pi_n\X\U_K}^{\perp}\Y  \right\|_2
		\\
		&\le {1\over n}  \left\|      \sz^{-1/2}  A^+  (P_A- P_{\U_K} )  A(\Pi_n\Z)^\T P_{\Pi_n\X\U_K}^{\perp}\Y\right\|_2  \\ &\quad +
		{1\over n}  \left\|   \sz^{-1/2}  A^+  P_{\U_K}  (\Pi_n\W)^\T P_{\Pi_n\X\U_K}^{\perp}\Y \right\|_2
	\end{align*}
	The last line uses $  A^+ P_{\U_K}^\perp = A^+  P_{\U_K}^\perp -A^+ P_{A}^\perp$.
	Notice the subtle occurrence of the terms $P_A-P_{\U_K}$ and $P_{\U_K}$ which are crucial.
	The idea of the proof is to first show that the first term on the right is less than the left-hand side, and then to give a bound for the second term on the right. Indeed, we have
	\begin{align*}
		& {1\over n} \left\|   \sz^{-1/2}  A^+  (P_A- P_{\U_K} )  A(\Pi_n\Z)^\T P_{\Pi_n\X\U_K}^{\perp}\Y \right\|_2 \\
		&\le   \|  \sz^{-1/2}  A^+\|_\op \|  P_{\U_K}  - P_A   \|_\op \| A\sz^{1/2} \|_\op ~ {1\over n} \left\|  (\Pi_n\wt \Z)^\T P_{\Pi_n\X\U_K}^{\perp}\Y \right\|_2 
	\end{align*}
	and the factor $ \|  \sz^{-1/2}  A^+\|_\op \|  P_{\U_K}  - P_A   \|_\op \| A\sz^{1/2} \|_\op$ can be made less than 1/2 for $\xi\ge C\cdot \kappa^2$ by taking $C$ large enough on the event $\E_w$. This follows directly from the inequalities
	(\ref{ineq:PUPA}) and
	\begin{align*} 
		\|\sz^{-1/2}A^+\|_{\op}^2 &=
		\| \sz^{-1/2} (A^\T A)^{-1} A^\T \|_\op^2 
		=\|(\sz^{1/2}A^\T A\sz^{1/2})^{-1}\|_{\op} = {1\over \lambda_K(A\sz A^\T)}.
	\end{align*}
	Hence, on the event $\E_z\cap \E_w$, using the assumption $\xi\ge C \cdot\kappa^2$ and \eqref{ineq_lambdas}, we proved that
	\begin{align*}
		\frac{1}{n}  \left\|   \wt \Z^\T \Pi_n P_{\Pi_n\X\U_K}^{\perp}\Y  \right\|_2 &\le 
		{2\over n}  \left\|   \sz^{-1/2}  A^+  P_{\U_K}  (\Pi_n\W)^\T P_{\Pi_n\X\U_K}^{\perp}\Y \right\|_2\\
		&\le {2\over n\sqrt{\lambda_K}} \left( \left\|    P_{\U_K}  \W ^\T \Pi_n \Y \right\|_2 +\left\|P_{\U_K}  \W ^\T \Pi_n P_{\Pi_n\X\U_K} \Y \right\|_2\right).
	\end{align*}
	It remains to bound the two terms in the right-hand side. Recall that the first term has already been studied in \eqref{bd_I_r2}. For the second term, we find
	\begin{align*}
		\left\|P_{\U_K}  \W ^\T \Pi_n P_{\Pi_n\X\U_K} \Y \right\|_2 
		&= \left\|    P_{\U_K}   \W^\T \Pi_n  \X\U_K (\Pi_n\X\U_K)^{+} \Y \right\|_2\\
		& = \left\|    P_{\U_K}   \W^\T \Pi_n (\Z A^\T +\W) \wh\theta~ \right\|_2\\
		&\le \left\|    P_{\U_K}   \W^\T \Pi_n \wt \Z\right\|_\op \left\|\sz^{1/2} A^\T\wh \theta ~\right\|_2 + \left\|    P_{\U_K}   \W^\T \Pi_n  \W \wh\theta~ \right\|_2.
	\end{align*} 
	Notice that, by the definition of $\wh r_1$ and \eqref{bd_sz_beta},
	\[
	\left\|\sz^{1/2} A^\T\wh \theta ~\right\|_2  \le \left\|\sz^{1/2} (A^\T\wh \theta -\beta) ~\right\|_2  + \|\sz^{1/2}\beta\|_2 \le \wh r_1 + \min\{1,\Dt\}.
	\]
	Invoking \eqref{bd_ZWP_UK} thus yields 
	\[
	\left\|    P_{\U_K}   \W^\T \Pi_n \wt \Z\right\|_\op \left\|\sz^{1/2} A^\T\wh \theta ~\right\|_2 \lesssim n\left( \sqrt{\|\sw\|_{\op}}\sqrt{K\log n\over n} + \wh r_3\right) \left(\wh r_1 + \min\{1,\Dt\}\right)
	\]
	with probability $1-\cO(n^{-1})$. Next, we use Lemma \ref{lem_W} to find
	\begin{align*}
		\left\|    P_{\U_K}   \W^\T \Pi_n  \W \wh\theta~ \right\|_2 &= \|    P_{\U_K}   \W^\T \Pi_n  \W P_{\U_K}~ \|_\op \|\wh\theta\|_2\\
		&\le 2\wh r_2\left( \|\W P_A\|_\op^2 + \|\W(P_A-P_{\U_K})\|_\op^2\right)\\
		&\le 2n \wh r_2 \left(\|\sw\|_\op + \wh r_3^2\right)
	\end{align*}
	with probability $1-e^{-n}$. Combining the last two displays gives 
	\begin{align*}
		&{1\over n\sqrt\lambda_K} \left\|P_{\U_K}  \W ^\T \Pi_n P_{\Pi_n\X\U_K} \Y \right\|_2\\ 
		&\lesssim \left( \sqrt{1\over \xi^*}\sqrt{K\log n\over n} + {\wh r_3 \over \sqrt{\lambda_K}}\right) \left(\wh r_1 + \min\{1,\Dt\}\right) + {\wh r_2 \left(\|\sw\|_\op + \wh r_3^2\right)\over \sqrt{\lambda_K}}.
	\end{align*}
	Observe that the coefficient of $\wh r_1$ is sufficiently small as $\wh r_3 / \sqrt{\lambda_K} \le \sqrt{\errw/\lambda_K} \le \sqrt{1/\xi}$. Together with \eqref{bd_I_r2} and the bounds for the first two terms in \eqref{rhat1}, we obtain the following bound
	\begin{align*}
		\wh r_1 &\lesssim \sqrt{K\log n\over n} + \wh r_2~ \sqrt{ \|\sw\|_\op ~ K\log n\over n} + \wh r_2\wh r_3 +  {1\over \sqrt{\lambda_K}}\left(\sqrt{\|\sw\|_\op} \sqrt{K\log n\over n}+ \wh r_3\right) \\
		&\quad + \left( \sqrt{1\over \xi^*}\sqrt{K\log n\over n} + {\wh r_3 \over \sqrt{\lambda_K}}\right)   \min\{1,\Dt\} + {\wh r_2 \left(\|\sw\|_\op + \wh r_3^2\right)\over \sqrt{\lambda_K}}\\
		&\lesssim \sqrt{K\log n\over n}  +   \wh r_2 \sqrt{\|\sw\|_\op\over \xi^*}+   {\wh r_3\over \sqrt{\lambda_K}},
	\end{align*}
	with probability $1-\cO(n^{-1})$. In the second step we have used $\wh r_2 \le \sqrt{2/\lambda_K}$ and $\xi^*\ge \xi \ge C$ to reduce terms. 
	Finally, we complete the proof by invoking Lemma \ref{bound:r2} and further collecting terms. 
\end{proof}

\begin{remark}\label{rem:xiconstant}
	We provide an alternative proof to bound  $\|
	(\Pi_n\wt\Z)^+P_{\Pi_n\X\U_K}^{\perp}\Y \|_2$ in the third term of \eqref{rhat1} under the assumption that $\xi \ge C$ for some large enough $C$.  We will then provide a similar, sometimes slightly slower rate, albeit under a weaker assumption on the signal to noise $\xi$.
	
	As before, we 
	observe that, on the event $\E_z$,  
	\begin{align}\label{ineq:2ndterm2}
		\begin{split}
			&	\left\| (	\Pi_n\wt\Z)^+P_{\Pi_n\X\U_K}^{\perp}\Y \right\|_2\\
			& \lesssim   \frac1n \left\| 	  \sz^{-1/2} A^+ P_{\U_K}^\perp \X^\T \Pi_n P_{\Pi_n\X\U_K}^{\perp}\Y \right\|_2+  \frac1n \left\| 	   \sz^{-1/2} A^+\W^\T \Pi_n P_{\Pi_n\X\U_K}^{\perp}\Y \right\|_2.
		\end{split} 
	\end{align}
	For the second term on the right of (\ref{ineq:2ndterm2}), notice that 
	\begin{align*}
		\frac1n \left\| 	  \sz^{-1/2} A^+\W^\T \Pi_n P_{\Pi_n\X\U_K}^{\perp}\Y \right\|_2 & \le  {1\over n\sqrt{\lambda_K}}\left( \left\|P_A\W^\T \Pi_n \Y \right\|_2 + \left\|P_A\W^\T \Pi_n P_{\Pi_n\X\U_K} \Y \right\|_2\right).
	\end{align*} 
	Following the exact same arguments of bounding $\|P_{\U_K}  \W ^\T \Pi_n \Y \|_2$ and $ \|P_{\U_K}  \W ^\T \Pi_n P_{\Pi_n\X\U_K} \Y \|_2$ except by replacing $P_{\U_K}$ with $P_A$,
	we have, with probability $1-\cO(n^{-1})$,
	\begin{align*}
		&    \frac1n \left\| 	  \sz^{-1/2} A^+\W^\T \Pi_n P_{\Pi_n\X\U_K}^{\perp}\Y \right\|_2  \lesssim  \sqrt{1\over \xi^*}\sqrt{K\log n\over n}  +    {\wh r_2  \|\sw\|_\op \over \sqrt{\lambda_K}}.
	\end{align*}
	For the first term on the right of (\ref{ineq:2ndterm2}), as argued before,
	\begin{align*}
		\frac{1}{n} \left\|  \sz^{-1/2} A^+ P_{\U_K}^\perp \X  ^\T \Pi_n P_{\Pi_n \X \U_K}^\perp \Y \right\| &\le {1\over \sqrt{\lambda_K}} \|P_{\U_K}-P_A\|_\op {1\over \sqrt n} \| \Pi_n\X P_{\U_K}^\perp\|_\op\\ &\lesssim  {\sqrt{\kappa} \over \xi}.
	\end{align*} with probability $1-\cO(1/n)$. Here we also used 
	\[
	{1\over \sqrt n} \| \Pi_n\X P_{\U_K}^\perp\|_\op \le {1\over\sqrt{n}}\| \W\|_\op \lesssim \sqrt{\errw}
	\]
	by Weyl's inequality.  After we combine the bounds for the first two terms in \eqref{rhat1} with the bounds \eqref{bound:r3} and \eqref{bound:r2}, and the inequalities  $\wh r_3 \lesssim \sqrt{\errw}$ and $\wh r_2\lesssim 1/\sqrt{\lambda_K}$, we conclude that, with probability $1-\cO(n^{-1})$,
	\begin{align*}
		\wh r_1 &\lesssim \sqrt{K\log n\over n} + {\wh r_3\over \sqrt{\lambda_K}} 
		\min\{1, \Dt\}    +    \sqrt{\kappa\over \xi^2}\\
		&\lesssim \sqrt{K\log n\over n}    +    \sqrt{\kappa\over \xi^2}.
	\end{align*}  
	This bound only requires $\xi \ge C$, but is sub-optimal  compared to \eqref{bound:r1} when $\wh r_3$ is of smaller order than $\sqrt{\kappa\errw / \xi}$, for instance, when we have independent data to construct the estimate $\wt\U_K$.
	Combining the bound above with the bounds \eqref{bound:r3} and \eqref{bound:r2} leads to the same $\omega_n(a)$ in \eqref{def_omega_n_PCR}.
\end{remark}

\subsubsection{Technical lemmas used in the proof of Theorem \ref{thm_PCR}}

The following lemma provides lower bounds of the $K^{\rm th}$ singular value $\sigma_K$ of the matrix $\Pi_n\X$. 
\begin{lemma}\label{lem_sigma_K}
	Assume $\xi \ge 48\gamma^2$. On the event $\E_z\cap \E_w^1$, we have 
	\[
	\sigma_K^2 \ge  {n \over 4} \lambda_K(A\sz A^\T) \ge {n\over 4}\lambda_K.
	\]
\end{lemma}
\begin{proof}
	Recall 
	$$\Pi_n\X = \Pi_n\Z A^\T +\Pi_n\W = \Pi_n\wt \Z \sz^{1/2}A^\T + \Pi_n\W.$$ 
	By Weyl's inequality,
	\begin{align*}
		\sigma_K &\ge \sigma_K(\Pi_n \wt\Z \sz^{1/2}A^\T)- \sigma_1(\Pi_n\W) \\
		&\ge  \sigma_K(\sz^{1/2}A^\T)\sigma_K(\Pi_n\wt \Z) - \sigma_1(\Pi_n\W)  \\
		&=   \lambda_K^{1/2} (A \sz A^\T)\lambda_K^{1/2}(\wt\Z^\T \Pi_n\wt \Z) - \lambda_1^{1/2}(\W^\T\Pi_n\W)  \\   
		&\ge  \sqrt{n  \lambda_K(A \sz A^\T) / 2} - \sqrt{12\gamma^2 n\errw} &&\text{by $\E_z\cap \E_w^1$}.
	\end{align*}
	From \eqref{ineq_lambdas}, the result follows for $ \xi = \lambda_K / \errw  \ge 48\gamma^2$.
\end{proof}

\begin{lemma}\label{lemma:zybeta} 
	Under the conditions of Theorem \ref{thm_PCR}, the inequality
	\begin{align}\label{ineq:zybeta}
		\left\|  (\Pi_n\wt\Z)^+\Y - \sz^{1/2} \beta
		\right\|_2
		&\lesssim ~     \sqrt{K\log n \over n}  
	\end{align}
	holds with probability $1- \cO(1/n)$, as $n\to\i$. 
\end{lemma}
\begin{proof}     
	We can argue that on the event $\E_z$ in (\ref{def_event_z}),
	\begin{align*}
		\left\|  (\Pi_n\wt\Z)^+\Y - \sz^{1/2} \beta
		\right\|_2& =  \left\|  \left(\frac1n \wt \Z^\T \Pi_n \wt \Z \right)^{+} \frac1n \wt \Z^\T \Pi_n \Y  - \sz^{1/2} \beta
		\right\|_2\\
		&\lesssim \left\|   \frac1n \wt \Z^\T \Pi_n \Y  - \sz^{1/2} \beta  
		\right\|_2+ \left\| \left (  \frac1n \wt \Z^\T \Pi_n \wt \Z \right )^+ - \bI_K\right\|_\op \left\|   \sz^{1/2} \beta
		\right\|_2
	\end{align*}
	We use identity (\ref{eq_Deltas}) and Lemma \ref{lem_deviation} to obtain that  
	\begin{align*}
		\left\|  \left(  \frac1n \wt \Z^\T \Pi_n \wt \Z \right)^+  - \bI_K\right\|_\op \left\|   \sz^{1/2} \beta
		\right\|_2
		&\le   2  \left\|   \frac1n \wt \Z^\T \Pi_n \wt \Z   - \bI_K\right\|_\op \left\|   \sz^{1/2} \beta
		\right\|_2 &&\text{on the event } \E_z\\  
		&\lesssim \min(1 , \Dt)\sqrt{\frac{K\log n}{n} }
	\end{align*}
	holds with probability $1-\cO(1/n)$.
	Now, we argue by simple algebra, using the notation $\wt Z_i= \sz^{-1/2} Z_i$ and $\ba=\EE[Z]$,
	\begin{align*}
		{1\over n}\wt \Z^\T \Pi_n \Y
		&={1\over n} \sum_{i=1}^n( \wt Z_i -{1\over n}\sum_{j=1}^n \wt Z_j )\1\{ Y_i = 1\}\\
		&=  {1\over n} \sum_{i=1}^n\left( \wt Z_i -\sz^{-1/2}\ba \right)\1\{ Y_i = 1\} - {1\over n}\sum_{i=1}^n\left(\wt Z_i -\sz^{-1/2}\ba \right)  {n_1\over n}.
	\end{align*}   
	and, using the notation
	\begin{equation}\label{def_alpha_hat}
		\wh \a_k := {1\over n_k}\sum_{i=1}^n \1\{ Y_i = k\} Z_i, \qquad  k\in \{0,1\},
	\end{equation}
	we find
	\begin{align*}
		\sz^{-1/2}(\wh\a_1 - \wh\a_0 )&= 
		{1\over n_1}\sum_{i=1}^n \1\{ Y_i = 1\}\wt Z_i -	{1\over n_0}\sum_{i=1}^n \1\{ Y_i = 0\}\wt Z_i\\ 
		& = {n\over n_0n_1} 
		\sum_{i=1}^n \1\{ Y_i = 1\} \wt Z_i   -  {1 \over n_0}  \sum_{i=1}^n\wt Z_i\\
		&=  {n\over n_0n_1} 
		\sum_{i=1}^n \1\{ Y_i = 1\}(\wt Z_i -\sz^{-1/2}\ba)  -  {1 \over n_0}  \sum_{i=1}^n(\wt Z_i - \sz^{-1/2}\ba).
	\end{align*}
	Combining both identities, 
	we obtain 
	\[
	{1\over n}\wt \Z^\T \Pi_n \Y = {n_0n_1 \over n^2}\sz^{-1/2}\left(\wh \a_1 - \wh \a_0\right) + {2n_1  \over n^2}\sum_{i=1}^n (\wt Z_i- \sz^{-1/2}\ba).
	\]
	Hence,
	\begin{align*}
		\left\| n^{-1}\wt\Z^\T \Pi_n \Y - \sz^{1/2}\beta\right\| &\le
		2   \left\| {n_1\over n^2}\sum_{i=1}^n (\wt Z_i-\sz^{-1/2}\bar \a) \right\|\\ &+
		\left\| \frac{n_0n_1}{n^2}\sz^{-1/2}(\wh\a_1-\wh\a_0) -\pi_0\pi_1 \sz^{-1/2} (\a_1-\a_0) \right\|
	\end{align*}
	Finally, we invoke Lemmas \ref{lem_pi_hat} and \ref{lem_deviation}, and displays (\ref{bd_a_hat_a}),
	(\ref{quad_beta_sz}) and (\ref{lb_n_min})
	and we arrive at the desired bound (\ref{ineq:zybeta})
	with probability $1- \cO(1/n)$.
\end{proof}

\subsubsection{Proof of Theorem \ref{thm_PCR_indep}}\label{app_proof_thm_PCR_indep}
We mainly follow the arguments in the proof of Theorem \ref{thm_PCR} above to bound $\wh r_1$, $\wh r_2$ and $\wh r_3$ for $B = \wt \U_K$. For simplicity, we assume $n' = n$.\\

\noindent {\bf Bound for $\wh r_3$:}
To bound 
$$
\wh r_3:= {1\over \sqrt n}\|\W (P_A-P_{\wt\U_K})\|_\op,
$$
by inspecting the proof of Lemma \ref{lem:r3}, we have
\begin{equation}\label{ineq:PUPA_indep}
	\PP\left\{
	\|P_A-P_{\wt\U_K}\|_\op \lesssim \sqrt{\kappa\over \xi}
	\right\} = 1-\cO(n^{-1}).
\end{equation}
Since $\wt \U_K$ is independent of $\X$, and as a result  independent of $\W$, an application of Lemma \ref{lem_op_norm}
yields 
\[
\PP^{\D}\left\{
{1\over n}\left\|\W(P_{\wt\U_K}- P_A)\right\|_\op^2 \lesssim \|H\|_\op + {\tr(H) \over n}
\right\}\ge 1 - \exp(-n),
\]
where the matrix
\begin{align*}
	H = \sw^{1/2}(P_{\wt\U_K}- P_A)^2\sw^{1/2}
\end{align*} 
satisfies 
\begin{align*}
	\| H\|_\op &= \| \sw^{1/2}(P_{\wt\U_K}- P_A)^2\sw^{1/2} \|_\op\\
	&\le  \|\sw\|_\op \|P_{\wt\U_K} - P_A\|_\op^2 
\end{align*}
and 
\begin{align*}
	{\tr(H) \over n} &\le 2 {K\over n}  \|H\|_\op  ~\le 2\|H\|_\op.
\end{align*}
It  follows by using \eqref{ineq:PUPA_indep} that, with probability $1-\cO(1/n)$,
\begin{equation}\label{bd_r3_td}
	\wh r_3  \lesssim  \sqrt{\kappa \|\sw\|_\op \over \xi}.
\end{equation}
We point out  that this bound  differs from (\ref{bound:r3}) in that $\errw$ is replaced by the smaller quantity $\|\sw\|_\op$.\\ 

\noindent
{\bf Bound for $\wh r_2$:} We follow the arguments of proving Lemma \ref{lem:r2}. To this end, we first bound from below 
\begin{align}\label{def_sigma_K_td}
	\wt \sigma_K &:=\sigma_{K}(\Pi_{n} \X \wt \U_K) 
	\nonumber\\
	&\ge  \sigma_K(\Pi_{n}\Z A^\T \wt \U_K) - \sigma_1(\Pi_{n}\W \wt \U_K) &&\text{by Weyl's inequality}\nonumber\\
	&\ge  \sigma_K(\Pi_{n}\wt \Z)\sigma_K(\sz^{1/2}A^\T \wt \U_K) - \sigma_1(\W)\\
	&\ge \sqrt{n\over 2}\sigma_K(\sz^{1/2}A^\T \wt \U_K)   - \sqrt{12\gamma^2 n \errw} &&\text{on  }\E_z\cap \E_w^1.\nonumber
\end{align}
Since, with probability $1-\cO(n^{-1})$, 
\begin{align*}
	\sigma_K(\sz^{1/2}A^\T \wt \U_K) &=  \sigma_K(\sz^{1/2}A^\T) - \sigma_1\left(\sz^{1/2}A^\T(P_A - P_{\wt \U_K})\right) \\
	&\ge \sqrt{\lambda_K(A\sz A^\T)} -\sqrt{\lambda_1(A\sz A^\T)}~ \|P_A - P_{\wt \U_K}\|_\op\\
	&\ge \sqrt{\lambda_K(A\sz A^\T)} -\sqrt{\lambda_1(A\sz A^\T)}\sqrt{\kappa \over \xi}\\
	&\gtrsim \sqrt{\lambda_K(A\sz A^\T)} &&\text{by }\xi \gtrsim \kappa^2\\
	& \ge \sqrt{\lambda_K} &&\text{on }(\ref{ineq_lambdas}),
\end{align*}
we conclude 
\begin{equation}\label{bd_sigma_K_td}
	\PP^{\D}\left\{
	\wt \sigma_K^2 \gtrsim n \lambda_K 
	\right\}  = 1- \cO(n^{-1}).
\end{equation}
We start by writing 
\begin{align*}
	\wh r_2 &= \| \wt\U_K (\Pi_n \X \wt\U_K)^+ \Y\|_2\\ 
	&= \|(\wt\U_K^\T \X^\T \Pi_n \X \wt\U_K)^{-1} \wt \U_K^\T   \X^\T \Pi_n \Y\|_2\\
	&\le  \|(\wt\U_K^\T \X^\T \Pi_n \X \wt\U_K)^{-1}  \wt  \U_K^\T \W^\T\Pi_n \Y\|_2+ \|(\wt\U_K^\T \X^\T \Pi_n \X \wt\U_K)^{-1}  \wt \U_K^\T A \Z ^\T\Pi_n \Y\|_2.
\end{align*}
The first term is bounded from above by   
\begin{align*}
	&\|(\wt\U_K^\T \X^\T \Pi_n \X \wt\U_K)^{-1}\|_\op  \|\wt  \U_K^\T \W^\T\Pi_n \Y\|_2\\
	&\le {1\over \wt \sigma_K^2}\|P_{\wt  \U_K} \W^\T\Pi_n \Y\|_2 &&\text{by \eqref{def_sigma_K_td}}\\
	&\le {1\over \wt \sigma_K^2}\left(\|P_A \W^\T\Pi_n \Y\|_2 + \|(P_{\wt \U_K}-P_A) \W^\T\Pi_n \Y\|_2\right).
\end{align*}
The same proof for the last result of Lemma \ref{lem_W} with $P_A$ replaced by $(P_{\wt \U_K}-  P_A)$ yields that, with probability $1-\cO(n^{-1})$, 
\begin{align*}
	{1\over n} \|(P_{\wt \U_K}-  P_A)\W^\T \Pi_n \Y\|_2 &\lesssim   (P_{\wt \U_K}-  P_A) \sqrt{\|\sw\|_\op  {K\log n \over n}}\\
	&\lesssim \sqrt{{\kappa \|\sw\|_\op \over \xi} {K\log n \over n}} &&\text{by \eqref{ineq:PUPA_indep}}\\
	&\lesssim \sqrt{{ \|\sw\|_\op } {K\log n \over n}} &&\text{by }\xi \ge \kappa.
\end{align*} 
By invoking \eqref{bd_sigma_K_td} and Lemma \ref{lem_W}, we have  
\begin{align*}
	\|(\wt\U_K^\T \X^\T \Pi_n \X \wt\U_K)^{-1}  \wt  \U_K^\T \W^\T\Pi_n \Y\|_2 
	&\lesssim \sqrt{1\over \lambda_K\xi^*} \sqrt{K\log n\over n}
\end{align*}
with probability $1-\cO(n^{-1})$.

Regarding the term second term $\rII := \|(\wt\U_K^\T \X^\T \Pi_n \X \wt\U_K)^{-1}  \wt \U_K^\T A \Z ^\T\Pi_n \Y\|_2$, using similar arguments, we have 
\begin{align*}
	\rII \le 2n\|(\wt\U_K^\T \X^\T \Pi_n \X \wt\U_K)^{-1} \wt\U_K^\T A\sz^{1/2}\|_\op \left(
	\sqrt{K\log n\over n} + \min\{1, \Dt\}
	\right)
\end{align*}
with probability $1-\cO(n^{-1})$. Moreover,
\begin{align*}
	& n^2\|(\wt\U_K^\T \X^\T \Pi_n \X \wt\U_K)^{-1} \wt\U_K A\sz^{1/2}\|_\op^2\\
	&\le {n\over 2}\|(\wt\U_K^\T \X^\T \Pi_n \X \wt\U_K)^{-1} \wt\U_K A \Z^\T \Pi_n \Z A^\T \wt \U_K (\wt\U_K^\T \X^\T \Pi_n \X \wt\U_K)^{-1}\|_\op\\
	&\le {n\over 2\wt \sigma_K^2} + {n\over 2\wt\sigma_K^4}\|\Pi_n \W P_{\wt\U_K}\|_\op^2  + {n\over \wt \sigma_K^2}\|(\wt\U_K^\T \X^\T \Pi_n \X \wt\U_K)^{-1} \wt\U_K A\sz^{1/2}\|_\op  \|\wt \Z^\T \Pi_n  \W P_{\wt \U_K}\|_\op.
\end{align*}
Since $\wt\U_K$ is independent of $\W$ and $\Z$, invoking \eqref{bd_sigma_K_td} and Lemma \ref{lem_W} with $P_{\wt \U_K}$ in place of $P_A$  gives
\[
n \|(\wt\U_K^\T \X^\T \Pi_n \X \wt\U_K)^{-1} \wt\U_K A\sz^{1/2}\|_\op  \lesssim
{1\over \sqrt{\lambda_K}} 
\]
with probability $1-\cO(n^{-1})$, implying that 
\[
\rII \lesssim {1\over \sqrt{\lambda_K}} \left(
\min\{1, \Dt\} + \sqrt{K\log n\over n}
\right).
\]
Thus, with probability $1-\cO(n^{-1})$, we conclude 
\begin{equation}\label{bd_r2_td}
	\wh r_2 \lesssim  {1\over \sqrt{\lambda_K}} \left(
	\min\{1, \Dt\} + \sqrt{K\log n\over n}
	\right).
\end{equation}
We emphasize that  the rate in (\ref{bd_r2_td}) above compared to the earlier bound \eqref{bound:r2} is  faster. \\

\noindent{\bf Bound for $\wh r_1$:} The bound of $\wh r_1$ for $B = \wt \U_K$ can be derived by exactly the same arguments of proving Lemma \ref{lem:r1} with $\wt \U_K$ in lieu of $\U_K$. The only difference is that the bound of the term $\|P_{\U_K}  \W ^\T \Pi_n \Y\|_2$ in this case can be improved to 
\[
\PP\left\{
{1\over n}\|P_{\wt\U_K}  \W ^\T \Pi_n \Y\|_2 \lesssim \sqrt{\|\sw\|_\op {K\log n\over n}}
\right\} = 1-\cO(n^{-1})
\]
by Lemma \ref{lem_W} with $P_A$ replaced by $P_{\wt \U_K}$. Consequently, we find that with probability $1-\cO(n^{-1})$,
\begin{align}\label{bd_r1_td}
	\begin{split}    
		\wh r_1 &\lesssim \sqrt{K\log n\over n} + \wh r_2~ \sqrt{ \|\sw\|_\op ~ K\log n\over n} + \wh r_2\wh r_3 +    \sqrt{1\over \xi^*} \sqrt{K\log n\over n}  \\
		&\quad + \left( \sqrt{1\over \xi^*}\sqrt{K\log n\over n} + {\wh r_3 \over \sqrt{\lambda_K}}\right)   \min\{1,\Dt\} + {\wh r_2 \left(\|\sw\|_\op + \wh r_3^2\right)\over \sqrt{\lambda_K}}\\
		&\lesssim \sqrt{K\log n\over n} +   \sqrt{\kappa   \over \xi^*\xi}  \min\{1,\Dt\}
	\end{split}
\end{align}
We  used \eqref{bd_r3_td}, \eqref{bd_r2_td} and $\xi\ge \kappa^2$ to collect terms  and simplify  the expression in the final bound.\\

Finally, putting \eqref{bd_r3_td}, \eqref{bd_r2_td} and \eqref{bd_r1_td} together concludes that for any $a\ge 1$, with probability $1-\cO(n^{-1})$, 
\begin{align*}
	\wh \omega_n(a) &= C\left\{
	\sqrt{a\log n}\left(
	\wh  r_1 + \|\sw\|_\op^{1/2} \wh r_2
	\right) + \wh r_2 \wh r_3 + \sqrt{\log n\over n}  
	\right\}\\
	&\lesssim   \sqrt{a\log n}\left(
	\sqrt{K\log n\over n} +   \sqrt{1   \over \xi^*}  \min\{1,\Dt\}   
	\right),
\end{align*}
completing the proof. \qed

\subsubsection{Proof of Corollary \ref{cor_PCR}}\label{app_proof_cor_PCR}

Since $\sigma^2(1+p/n) \le c'\lambda$ implies $\xi \ge C$ for some constant $C(c')>0$, the proof follows from Theorem \ref{thm_PCR_indep} by choosing $a = \Dt^2/\log n + 1$ for  $\omega_n(a)$ in (\ref{def_omega_n_PCR_indep}) and by noting that 
\[
\omega_n\left(a\right) \asymp  \left(\sqrt{K\log n\over n} +  \min\{1,\Dt\}\sqrt{1\over \xi^*}\right)\sqrt{\log n + \Dt^2}.
\] 
Note that when  $\Dt\to\infty$, the term $\sqrt{\Delta^2}$ in $\omega_n(a)$ gets absorbed by $\exp(-\Dt^2/8)$, reflected in the term $\exp(-(1/8 + o(1))\Dt^2)$.
\qed

\subsection{Proofs of Section \ref{sec_multi_level}}\label{app_sec_multi_level}

For notational convenience, define 
\begin{align}\label{def_Gz_star_ell_k}
	G_z^{(\ell | k)}(z) &:= \left(z - {\a_\ell + \a_k \over 2}\right)^\T \szy^{-1}(\a_\ell - \a_k) + \log{\pi_\ell \over \pi_k},\quad \forall ~ \ell, k \in \cL.
\end{align}
In particular,  for any $\ell \in \cL$, we have 
\begin{align*}
	G_z^{(\ell | 0)}(z)   & ~ =~  \left(z - {\a_\ell + \a_0 \over 2}\right)^\T \szy^{-1}(\a_\ell - \a_0) + \log{\pi_\ell \over \pi_0}\\
	& \overset{(\ref{def_etas_ell})}{=}z^\T \eta^{(\ell)} + \eta^{(\ell)}_0\\
	& \overset{(\ref{def_betas_ell})}{=} {1\over \bar \pi_0 \bar \pi_\ell[1 - (\a_\ell - \a_0)^\T \beta^{(\ell)}]} \left(z^\T \beta^{(\ell)} +\beta_0^{(\ell)}\right).		
\end{align*}
Further recall that  
\begin{equation*}
	\wh G_x^{(\ell | 0)}(x) ~ := ~ {1\over \wt\pi_0\wt\pi_\ell[1 - (\wh\mu_\ell - \wh \mu_0)^\T \wh \theta^{(\ell)}]}\left(x^\T \wh\theta^{(\ell)} + \wh\beta_0^{(\ell)}\right),\quad \forall ~ \ell \in \cL.
\end{equation*}
For any $t\ge 0$,  define the event 
\begin{equation}\label{def_event_multi}
	\E_t = \bigcap_{\ell \in \cL}\left\{
	\left|
	\wh G_x^{(\ell | 0)}(X) - G_z^{(\ell | 0)}(Z) 
	\right| \le t  ~ \mid \D
	\right\}.
\end{equation}
Finally, we write for simplicity 
\begin{equation}\label{def_Dt_ell}
	\Dt_{(\ell | k)} = \|\a_\ell - \a_k\|_{\szy},\qquad \forall ~ k,\ell \in \cL.
\end{equation}

\subsubsection{Proof of Theorem \ref{thm_risk_multi}}

By definition, 
we start with 
\begin{align*}
	&R_x(\wh g_x^*) - R_z^*\\ &=  \sum_{k \in \cL} \pi_k \Bigl\{
	\EE\left[
	\1\{\wh g_x^*(X) \ne k\} \mid Y = k
	\right] - 
	\EE\left[
	\1\{g_z^*(Z) \ne k\} \mid Y = k
	\right]
	\Bigr\}\\
	&=  \sum_{k \in \cL} \pi_k 
	\EE\left[
	\1\{\wh g_x^*(X) \ne k, g_z^*(Z) = k\} \mid Y = k
	\right]-\sum_{k \in \cL} \pi_k 
	\EE\left[
	\1\{\wh g_x^*(X) = k, g_z^*(Z) \ne k\} \mid Y = k
	\right]\\
	&= \sum_{\substack{k,\ell \in \cL\\ k\ne \ell}} \pi_k 
	\EE\left[
	\1\{\wh g_x^*(X) = \ell, g_z^*(Z) = k\} \mid Y = k
	\right]- \sum_{\substack{k,\ell \in \cL\\ k\ne \ell}}  \pi_k 
	\EE\left[
	\1\{\wh g_x^*(X) = k, g_z^*(Z) = \ell\} \mid Y = k
	\right]\\
	&=  \sum_{\substack{k,\ell \in \cL\\ k\ne \ell}} 
	\Bigl\{\pi_k \EE\left[
	\1\{\wh g_x^*(X) = \ell, g_z^*(Z) = k\} \mid Y = k
	\right]-
	\pi_\ell \EE\left[
	\1\{\wh g_x^*(X) = \ell, g_z^*(Z) = k\} \mid Y = \ell\right]
	\Bigr\}.
\end{align*}
Recall that $f_{Z|k}(z)$ is the p.d.f. of $Z=z \mid Y = k$ for each $k\in \cL$.  Repeating arguments in the proof of Theorem \ref{thm_general_risk_explicit} gives
\begin{align*}
	R_x(\wh g_x^*) - R_z^*
	&= \sum_{\substack{k,\ell \in \cL\\ k\ne \ell}} 
	\EE_W\int_{\wh g_x^* = \ell,  g_z^* = k}\left(
	\pi_k f_{Z|k}(z) - \pi_\ell f_{Z|\ell }(z)\right) d z\\
	&=\sum_{\substack{k,\ell \in \cL\\ k\ne \ell}} 
	\EE_W\int_{\wh g_x^* = \ell,  g_z^* = k} \pi_k f_{Z|k}(z) \left(1 - \exp\left\{G_z^{(\ell | k)}(z)\right\}\right) dz
\end{align*}
with $G_z^{(\ell | k)}(z)$ defined in (\ref{def_Gz_star_ell_k}).  Since 
\begin{equation}\label{eq_G_inter}
	G_z^{(\ell | k)}(z)
	= G_z^{(\ell | 0)}(z)  - G_z^{(k | 0)}(z),
\end{equation}
the event $\{\wh g_x^*(X) = \ell,  g_z^*(Z) = k\} \cap \E_t$ implies
\[
0 > G_z^{(\ell | k)}(z)  \overset{\E_t}{\ge} \wh G_x^{(\ell | 0)}(X) - \wh G_x^{(k | 0)}(X)- 2t \ge  -2t,\quad \forall ~ t>0.
\]
By repeating the arguments of analyzing term $(I)$ in the proof of Theorem \ref{thm_general_risk_explicit}, we obtain that, for any $t>0$,
\begin{align}\label{bd_diff_R}\nonumber
	&R_x(\wh g_x^*) - R_z^*\\ \nonumber
	&\le \sum_{\substack{k,\ell \in \cL\\ k\ne \ell}} 
	\left\{    2t \pi_k \EE_Z\left[ \1 \{-2t \le G_z^{(\ell | k)}(Z) \le 0 \mid Y = k\} \right] + \pi_k  \PP(\E_t^c \mid Y = k)
	\right\}\\
	&\le (L-1)  \sum_{k\in \cL} 2  \pi_k  t   \max_{\ell \in \cL \setminus \{k\}}  \left[
	\Phi\left(
	R^{(\ell | k)} 
	\right)- \Phi\left(
	R^{(\ell | k)}  -{2t\over \Dt_{(\ell | k)}}
	\right)
	\right] +  (L-1)  \PP(\E_t^c )\\\nonumber
	&\le (L-1) \sum_{k\in \cL} 4 \pi_k   t^2  \max_{\ell \in \cL \setminus \{k\}} {1\over \Dt_{(\ell | k)}} \exp\left(	-{m_{(\ell | k)}^2 \over 2}
	\right) + (L-1) \PP(\E_t^c)
\end{align}
where  
\[
R^{(\ell | k)}  =  { \Dt_{(\ell | k)} \over 2} - {\log{\pi_\ell \over \pi_k} \over  \Dt_{(\ell | k)}},\qquad m_{(\ell | k)} \in \left[R^{(\ell | k)} - {2t\over \Dt_{(\ell | k)}}, ~ R^{(\ell | k)}  \right].
\]
The penultimate step uses the fact that 
$$
G_z^{(\ell | k)}(Z) \mid Y = k  ~  \sim ~ N \left(
-\Dt_{(\ell | k)} R^{(\ell | k)}, ~ \Dt_{(\ell | k)}^2
\right)
$$
while the last step applies the mean-value theorem. 
By choosing  
$$
t^* = (1 + \Dt^4) \omega_n
$$
and 
invoking condition (\ref{cond_Dt_multi}) and $(1 +  \Dt^2)\omega_n = o(1)$, we find that:
\begin{enumerate}
	\item[(a)] If $\Dt \asymp 1$, then 
	\[
	R_x(\wh g_x^*) - R_z^*  ~ \lesssim ~  L \omega_n^2+  L\PP(\E_{t^*}^c).
	\]
	\item[(b)] If $\Dt \to \i$, then $\Dt^2\omega_n = o(1)$ ensures that $m_{(\ell|k)} \asymp \Dt$ hence 
	\[
	R_x(\wh g_x^*) - R_z^*  ~  \lesssim ~ L \omega_n^2 e^{-c \Dt^2 + o(\Dt^2)}+  L \PP(\E_{t^*}^c).
	\]
	\item[(c)] If $\Dt \to 0$, then $t^* \asymp \omega_n$ and 
	\[
	R_x(\wh g_x^*) - R_z^*  ~  \lesssim ~ L {\omega_n^2  \over \Dt} +  L \PP(\E_{t^*}^c).
	\]
	For $\Dt \to 0$, by (\ref{bd_diff_R}), we also have 
	\[
	R_x(\wh g_x^*) - R_z^*  ~  \lesssim ~ L \min\left\{{\omega_n^2  \over \Dt}, \omega_n\right\} +  L \PP(\E_{t^*}^c).
	\]
\end{enumerate}
In view of cases (a) -- (c), since the event $\{\wh\omega_n \le \omega_n\}$ implies 
\[
\PP(\E_{t^*}^c) \le  \PP\left\{
\max_{\ell\in \cL}\left|
\wh G_x^{(\ell | 0)}(X) - G_z^{(\ell | 0)}(Z) 
\right|  \ge  (1+\Dt^4)  \wh \omega_n ~\mid \D
\right\},
\]
it remains to prove that, with probability $1-\cO(n^{-1})$,  the right-hand side of the above display is no greater than $n^{-1}e^{-\Dt^2}$. This is  proved  by combining Lemmas \ref{lem_G_star_dev} and \ref{lem_td_omega}. \qed 

\subsubsection{Lemmas used in the proof of Theorem \ref{thm_risk_multi}}

The following lemma establishes the probability tail of the event $\E_t$ defined in (\ref{def_event_multi}) for $t = \wt \omega_n$, a random sequence defined below whose randomness only depends on $\D$. Recall $\wh r_1$ and $\wh r_2$ from (\ref{def_r_hat_multi}).  Set 
\begin{align}\label{def_td_omega_n}
	\wt\omega_n  &=  \max_{\ell\in \cL} C\left\{ {\wh r_1 + \|\sw\|_\op^{1/2} \wh r_2  \over |\wt\pi_0\wt\pi_\ell[1 - (\wh\mu_\ell - \wh \mu_0)^\T \wh \theta^{(\ell)}]|}\left(\sqrt{\log n} + \Dt\right) \right.\\\nonumber
	&\left. \qquad + \left|{\wh\beta_0^{(\ell)} - \beta_0^{(\ell)} + {1 \over 2}\left(\a_1 + \a_0\right)^\T\left(A^\T \wh \theta ^{(\ell)}- \beta^{(\ell)}\right)\over \wt\pi_0\wt\pi_\ell[1 - (\wh\mu_\ell - \wh \mu_0)^\T \wh \theta^{(\ell)}]}\right|\right.\\\nonumber
	&\left. \qquad + 
	\left|{\wt\pi_0\wt\pi_\ell[1 - (\wh\mu_\ell - \wh \mu_0)^\T \wh \theta^{(\ell)}] - \bar\pi_0\bar\pi_\ell[1 - (\a_\ell - \a_0)^\T \beta^{(\ell)}] \over |\wt\pi_0\wt\pi_\ell[1 - (\wh\mu_\ell - \wh \mu_0)^\T \wh \theta^{(\ell)}]}\right| \Dt \left(\sqrt{\log n} + \Dt\right)
	\right\}.
\end{align}

\begin{lemma}\label{lem_G_star_dev}
	Under conditions of Theorem \ref{thm_risk_multi}, we have,
	\[
	\PP\left\{\max_{\ell\in \cL}\left|
	\wh G_x^{(\ell | 0)}(X) - G_z^{(\ell | 0)}(Z) 
	\right|  \ge   \wt\omega_n   ~\mid \D \right\} \le  n^{-1}e^{-\Dt^2}.
	\]
\end{lemma}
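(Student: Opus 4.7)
The plan is to handle each $\ell \in \cL\setminus\{0\}$ individually via the ``numerator/denominator'' split that defines $\wt\omega_n$, and then take a union bound over $\ell$. Set
\[
N_\ell := (X^\T\wh\theta^{(\ell)} + \wh\beta_0^{(\ell)}) - (Z^\T\beta^{(\ell)} + \beta_0^{(\ell)}),
\]
and denote $D_\ell := \bar\pi_0\bar\pi_\ell[1 - (\a_\ell-\a_0)^\T\beta^{(\ell)}]$, $\wh D_\ell := \wt\pi_0\wt\pi_\ell[1 - (\wh\mu_\ell-\wh\mu_0)^\T\wh\theta^{(\ell)}]$. Using $G_z^{(\ell|0)}(z) = (z^\T\beta^{(\ell)}+\beta_0^{(\ell)})/D_\ell$ from (\ref{eq_eta_beta_multi}), the algebraic identity
\[
\wh G_x^{(\ell|0)}(X) - G_z^{(\ell|0)}(Z) ~=~ \frac{N_\ell}{\wh D_\ell} ~+~ G_z^{(\ell|0)}(Z) \cdot \frac{D_\ell - \wh D_\ell}{\wh D_\ell}
\]
reduces the task to (i) a tail bound on $|N_\ell|$ (minus an intercept-correction that appears explicitly inside $\wt\omega_n$), and (ii) a tail bound on $|G_z^{(\ell|0)}(Z)|$.

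For step (i), I will adapt the decomposition used in the proof of Proposition \ref{prop_risk} to the $\ell$-th regression: writing $X = AZ + W$ gives
\[
N_\ell ~=~ \bigl(Z - \tfrac{\a_\ell+\a_0}{2}\bigr)^\T(A^\T\wh\theta^{(\ell)} - \beta^{(\ell)}) + W^\T\wh\theta^{(\ell)} + \Delta_\ell,
\]
with $\Delta_\ell := \wh\beta_0^{(\ell)} - \beta_0^{(\ell)} + \tfrac{1}{2}(\a_\ell+\a_0)^\T(A^\T\wh\theta^{(\ell)}-\beta^{(\ell)})$ being exactly the intercept term that is kept separate inside $\wt\omega_n$. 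Conditioning on $(\D, Y)$, both $(Z - \a_Y)^\T(A^\T\wh\theta^{(\ell)}-\beta^{(\ell)})$ and $W^\T\wh\theta^{(\ell)}$ are mean-zero (sub)Gaussian with parameters bounded by $\wh r_1^{(\ell)}$ (using $\szy \preceq \sz$, as in the proof of Proposition \ref{prop_risk}) and $\gamma\|\sw\|_\op^{1/2}\wh r_2^{(\ell)}$, respectively. The genuinely new ingredient is the deterministic shift $(\a_Y - \tfrac{\a_\ell+\a_0}{2})^\T(A^\T\wh\theta^{(\ell)}-\beta^{(\ell)})$; by Cauchy--Schwarz in the $\szy$/$\szy^{-1}$ duality and (\ref{cond_Dt_multi}), it is uniformly bounded in $Y \in \cL$ by $C\Dt\,\wh r_1^{(\ell)}$. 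Taking the Gaussian-tail level $t^2 \asymp \log n + \log L + \Dt^2$ and using $\log L \lesssim \log n$ (which follows from $LK\log n \le cn$) yields $|N_\ell - \Delta_\ell| \lesssim (\sqrt{\log n}+\Dt)(\wh r_1^{(\ell)}+\|\sw\|_\op^{1/2}\wh r_2^{(\ell)})$ with failure probability at most $(nL)^{-1}e^{-\Dt^2}$.

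For step (ii), conditional on $Y=k$ the random variable $G_z^{(\ell|0)}(Z)$ is Gaussian with variance $\Dt_{(\ell|0)}^2 \le C\Dt^2$ and mean of magnitude at most $C(\Dt^2 + \log L) \lesssim \Dt^2 + \log n$, so a standard Gaussian tail at the same level as above gives $|G_z^{(\ell|0)}(Z)| \lesssim \Dt(\sqrt{\log n}+\Dt)$ with failure probability at most $(nL)^{-1}e^{-\Dt^2}$. Combining (i) and (ii) through the decomposition, and then taking a union bound over $\ell \in \cL\setminus\{0\}$, absorbs the factor $L$ in the probability and produces exactly the three summands that define $\wt\omega_n$ in (\ref{def_td_omega_n}), establishing the claim.

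The main obstacle is the mean-shift analysis in step (i): unlike the binary setup of Proposition \ref{prop_risk}, where conditioning on $Y \in \{0,1\}$ means $Z - (\a_0+\a_1)/2$ has Mahalanobis norm automatically of order $\Dt$, the multi-class setting forces us to handle every conditional law $Z \mid Y = k$ with $k \notin \{0,\ell\}$. This is what produces the extra $\Dt\cdot\wh r_1^{(\ell)}$ summand, and is the reason the multiplier in $\wt\omega_n$ is $\sqrt{\log n}+\Dt$ rather than just $\sqrt{\log n}$. Calibrating the Gaussian-tail level so that, after the union bound over $L$ classes, the combined failure probability is $n^{-1}e^{-\Dt^2}$ (and not merely $n^{-1}$) is the key bookkeeping point, and is what forces us to build the $\Dt^2$ into the tail exponent from the very beginning.
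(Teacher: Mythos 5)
Your proposal is correct and follows essentially the same route as the paper's proof: the same three-term decomposition (numerator deviation over $\wh D_\ell$, intercept term $\Delta_\ell/\wh D_\ell$, and $G_z^{(\ell|0)}(Z)$ times the relative denominator error), the same conditional-Gaussian tail analysis with the mean-shift bounded via (\ref{cond_Dt_multi}) to produce the $\Dt$ multiplier, and the same calibration of the tail level $t^2 \asymp \log n + \Dt^2$. The only difference is that you spell out the union bound over $\ell\in\cL\setminus\{0\}$ and explicitly tie the absorbed factor $L$ to $LK\log n \le c'n$, which the paper leaves implicit by taking the exponent $a$ arbitrary.
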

\begin{proof}
	Pick any $\ell \in \cL$.
	By definition, 
	\[
	\left| \wh G_x^{(\ell | 0)}(X) - G_z^{(\ell | 0)}(Z)\right| \le \rI + \rII + \rIII
	\]
	where 
	\begin{align*}
		\rI &~ =  ~  \left| {X^\T \wh\theta^{(\ell)}  - Z^\T \beta^{(\ell)}  - {1\over 2}(\a_1 + \a_0)^\T(A^\T \wh \theta^{(\ell)} - \beta^{(\ell)})  \over \wt\pi_0\wt\pi_\ell[1 - (\wh\mu_\ell - \wh \mu_0)^\T \wh \theta^{(\ell)}]}\right|,\\
		\rII & ~ = ~ \left| {\wh\beta_0^{(\ell)} - \beta_0^{(\ell)} +  {1\over 2}(\a_1 + \a_0)^\T(A^\T \wh \theta^{(\ell)} - \beta^{(\ell)}) \over \wt\pi_0\wt\pi_\ell[1 - (\wh\mu_\ell - \wh \mu_0)^\T \wh \theta^{(\ell)}]}\right|,\\
		\rIII &~ = ~ \left|{1 \over \wt\pi_0\wt\pi_\ell[1 - (\wh\mu_\ell - \wh \mu_0)^\T \wh \theta^{(\ell)}]} - {1\over \bar\pi_0\bar \pi_\ell[1 - (\a_\ell - \a_0)^\T \beta^{(\ell)}]}\right| 
		\left|Z^\T \beta^{(\ell)} +\beta_0^{(\ell)}\right|\\
		&\overset{(\ref{eq_eta_beta_multi})}{=}  \left|{\wt\pi_0\wt\pi_\ell[1 - (\wh\mu_\ell - \wh \mu_0)^\T \wh \theta^{(\ell)}] - \bar\pi_0\bar\pi_\ell[1 - (\a_\ell - \a_0)^\T \beta^{(\ell)}]\over \wt\pi_0\wt\pi_\ell[1 - (\wh\mu_\ell - \wh \mu_0)^\T \wh \theta^{(\ell)}]}\right| 
		\left|Z^\T \eta^{(\ell)} +\eta_0^{(\ell)}\right|.
	\end{align*}
	First, notice that the numerator of $\rI$ is bounded from above by 
	\[
	\left|W^\T \wh\theta^{(\ell)}\right|+ \left|\left(Z - {1\over 2}(\a_\ell + \a_0 )\right)^\T(A^\T \wh \theta^{(\ell)} - \beta^{(\ell)})\right|,
	\]
	which, by the arguments of proving Proposition \ref{prop_risk} and by conditioning on $Y = k$ for any $k\in \cL$, with probability $1-\cO(n^{-a})$ for any $a>0$, is no greater than 
	\begin{align*}
		&C\left(\sqrt{a \log n} + \left\|
		\a_k - {1\over 2}(\a_\ell + \a_0)
		\right\|_{\sz^{(\ell)}} \right) \left\|\bigl[\sz^{(\ell)}\bigr]^{1/2}\bigl(A^\T \wh \theta^{(\ell)} - \beta^{(\ell)}\bigr)\right\|_2\\
		&\quad  +   C\sqrt{a \log n} \|\wh\theta^{(\ell)}\|_2\|\sw\|_\op^{1/2}\\
		&\lesssim \left(\sqrt{a \log n} + \max_{k\in \cL} \Dt_{(k|0)} +1\right) \left\|\bigl[\sz^{(\ell)}\bigr]^{1/2}\bigl(A^\T \wh \theta^{(\ell)} - \beta^{(\ell)}\bigr)\right\|_2+   \sqrt{a \log n} \|\wh\theta^{(\ell)}\|_2\|\sw\|_\op^{1/2}\\
		&\lesssim \left(\sqrt{a \log n} + \Dt+1 \right) \left(\left\|\bigl[\sz^{(\ell)}\bigr]^{1/2}\bigl(A^\T \wh \theta^{(\ell)} - \beta^{(\ell)}\bigr)\right\|_2+   \|\wh\theta^{(\ell)}\|_2\|\sw\|_\op^{1/2}\right).
	\end{align*}
	In the second step, we also used 
	\[
	\|\a_k- \a_0\|_{\sz^{(\ell)}}^2  \le  \|\a_k - \a_0\|_{\szy}^2 \left\|\szy^{1/2} [\sz^{(\ell)}]^{-1}\szy^{1/2}\right \|_\op \le \Dt_{(k|0)}^2, \quad \forall ~ k\in \cL.
	\]
	Again, by the arguments of proving Proposition \ref{prop_risk}, with probability $1-\cO(n^{-a})$ for any $a>0$, 
	\begin{align*}
		Z^\T \eta^{(\ell)} +\eta_0^{(\ell)}  &~ \lesssim  \|\a_\ell - \a_0\|_{\szy}\sqrt{a\log n}  + \left|
		\left(\a_k - {\a_\ell + \a_0 \over 2}\right) \szy^{-1}(\a_\ell-\a_0)
		\right|\\
		&~ \lesssim \Dt_{(\ell|0)} \left(
		\sqrt{a \log n} +  \Dt_{(\ell|0)} +  \Dt_{(k|0)}
		\right)\\
		&~ \lesssim \Dt \left(\sqrt{a\log n} + \Dt\right).
	\end{align*}
	Taking $a = C + \Dt^2/\log n$ for some positive constant $C$ in these two bounds yields the claim.
\end{proof}

\smallskip

We proceed to bound from above $\wt \omega_n$ defined in (\ref{def_td_omega_n}) by $\wh \omega_n$ in (\ref{def_wh_omega_n}).  Recall that 
\begin{equation*}
	\wh \omega_n  = C\sqrt{\log n}\left(\wh r_1 + \|\sw\|_\op^{1/2}\wh r_2 + \wh r_2 \wh r_3 +  \sqrt{L \over n} \right).
\end{equation*}

\begin{lemma}\label{lem_td_omega}
	Under conditions of Theorem \ref{thm_risk_multi}, we have 
	\[
	\PP^{\D}\left\{
	\wt \omega_n \lesssim (1 + \Dt^4)\wh \omega_n 
	\right\} = 1-\cO(n^{-1}).
	\]
\end{lemma}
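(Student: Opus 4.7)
The plan is to show, uniformly in $\ell\in\cL\setminus\{0\}$, that the denominator $\wt\pi_0\wt\pi_\ell[1-(\wh\mu_\ell-\wh\mu_0)^\T\wh\theta^{(\ell)}]$ is bounded below by a multiple of $1/(1+\Dt^2)$, while each of the three numerators appearing in (\ref{def_td_omega_n}) can be controlled by $\wh\omega_n/\sqrt{\log n}$ up to constants. Combining these two facts with the explicit $\Dt$-factors in (\ref{def_td_omega_n}) then yields $\wt\omega_n \lesssim (1+\Dt^4)\wh\omega_n$.

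First I would concentrate the empirical class proportions: Hoeffding together with a union bound over $\cL$ gives
\[
\max_{k\in\cL}\left|\frac{n_k}{n}-\pi_k\right| + \max_{\ell\in\cL\setminus\{0\}}\left|\wt\pi_\ell-\bar\pi_\ell\right| \lesssim \sqrt{\frac{L\log n}{n}}
\]
with probability $1-\cO(n^{-1})$. Since $\pi_k\asymp1/L$, both $\bar\pi_0$ and $\bar\pi_\ell$ are of order unity, and $LK\log n\le cn$ makes these deviations $o(1)$. Applying (\ref{eq_Deltas_prime}) to the binary sub-model on $\{0,\ell\}$ with priors $(\bar\pi_0,\bar\pi_\ell)$ yields the exact identity
\[
\bar\pi_0\bar\pi_\ell\bigl[1-(\a_\ell-\a_0)^\T\beta^{(\ell)}\bigr] = \frac{\bar\pi_0\bar\pi_\ell}{1+\bar\pi_0\bar\pi_\ell\,\Dt_{(\ell|0)}^2} \gtrsim \frac{1}{1+\Dt^2},
\]
where the lower bound uses (\ref{cond_Dt_multi}). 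The key technical step, which is the multi-class analog of Lemma \ref{lem_beta_0}, will be to prove that with probability $1-\cO(n^{-1})$,
\[
\max_{\ell}\Bigl|\wt\pi_0\wt\pi_\ell[1-(\wh\mu_\ell-\wh\mu_0)^\T\wh\theta^{(\ell)}]-\bar\pi_0\bar\pi_\ell[1-(\a_\ell-\a_0)^\T\beta^{(\ell)}]\Bigr| \lesssim \wh r_1+\|\sw\|_\op^{1/2}\wh r_2+\wh r_2\wh r_3+\sqrt{\tfrac{L\log n}{n}},
\]
which is at most $\wh\omega_n/\sqrt{\log n}$. Using $\wh\mu_k=A\wh\a_k+\bar W_{(k)}$ from (\ref{def_W_bar}), I would expand $(\wh\mu_\ell-\wh\mu_0)^\T\wh\theta^{(\ell)}$ as $(\a_\ell-\a_0)^\T\beta^{(\ell)}$ plus Cauchy--Schwarz remainders that are bounded using Lemma \ref{lem_deviation} (with $\sz$ replaced by the pairwise covariance $\sz^{(\ell)}$) for $\|[\sz^{(\ell)}]^{-1/2}(\wh\a_k-\a_k)\|_2$, Lemmas \ref{lem_ZWPA} and \ref{lem_ZWPA_hat} for $\|P_A\bar W_{(k)}\|_2$ and $\|(P_B-P_A)\bar W_{(k)}\|_2$, and the definitions of $\wh r_1,\wh r_2,\wh r_3$ in (\ref{def_r_hat_multi}) and (\ref{def_r_hat}). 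Since $(1+\Dt^2)\omega_n=o(1)$ by hypothesis and $\wh\omega_n\le\omega_n$ on the event under consideration, this concentration transfers the lower bound from the population denominator to the empirical one: $|\wt\pi_0\wt\pi_\ell[1-(\wh\mu_\ell-\wh\mu_0)^\T\wh\theta^{(\ell)}]|\gtrsim(1+\Dt^2)^{-1}$ uniformly in $\ell$. The second numerator in (\ref{def_td_omega_n}) is a direct multi-class version of the quantity bounded in Lemma \ref{lem_beta_0} (with $\sqrt{\log n/n}$ replaced by $\sqrt{L\log n/n}$ due to the smaller $\pi_k$), giving the same $\wh\omega_n/\sqrt{\log n}$ rate.

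With these bounds in hand, the final assembly is mechanical: the first summand of $\wt\omega_n$ inherits $(1+\Dt^2)$ from the reciprocal denominator and the factor $(\sqrt{\log n}+\Dt)\lesssim\sqrt{\log n}(1+\Dt)$, for a total of $(1+\Dt^3)\wh\omega_n$; the second summand gives $(1+\Dt^2)\cdot\wh\omega_n/\sqrt{\log n}\lesssim(1+\Dt^2)\wh\omega_n$; and the third picks up an additional $\Dt(\sqrt{\log n}+\Dt)\lesssim(1+\Dt^2)\sqrt{\log n}$ to yield $(1+\Dt^4)\wh\omega_n$. Taking the maximum over $\ell$ and over the three summands completes the argument. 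The main obstacle will be the multi-class extension of Lemma \ref{lem_beta_0} described above: one must simultaneously control bilinear expressions indexed by $\ell$ using the pairwise covariances $\sz^{(\ell)}$, and absorb the $\sqrt{L}$ factor from the union bound over classes into the $\sqrt{L/n}$ term of $\wh\omega_n$, while the $\Dt$-dependent scaling of the denominator must be tracked carefully to avoid losing more than the claimed $(1+\Dt^4)$ factor.
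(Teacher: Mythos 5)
Your proposal is correct and follows essentially the same route as the paper's proof: concentrate the empirical class proportions, use the exact identity $\bar\pi_0\bar\pi_\ell[1-(\a_\ell-\a_0)^\T\beta^{(\ell)}]=\bar\pi_0\bar\pi_\ell/(1+\bar\pi_0\bar\pi_\ell\Dt_{(\ell|0)}^2)$ to lower-bound the population denominator, run a multi-class version of Lemma \ref{lem_beta_0} (with $\sqrt{\log n/n}$ replaced by $\sqrt{L\log n/n}$ and $\sz$ by $\sz^{(\ell)}$) to bound all numerators by $\wh\omega_n/\sqrt{\log n}$, transfer the denominator lower bound to the empirical version via $(1+\Dt^2)\omega_n=o(1)$, and assemble the three summands of $\wt\omega_n$, tracking $\Dt$-powers to arrive at $(1+\Dt^4)\wh\omega_n$. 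The only minor discrepancy is that the paper's concentration of proportions relies on the Bernstein-type bound in Lemma \ref{lem_pi_hat} (tighter than Hoeffding when $\pi_k\asymp1/L$ is small), which is what actually delivers $n_\ell\asymp n/L$ under the stated condition $LK\log n\le c'n$; otherwise the two arguments agree step for step.
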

\begin{proof}
	We first bound from above the numerators of the last two terms in $\wt\omega_n$ defined 	in (\ref{def_td_omega_n}).  By Lemma \ref{lem_pi_hat} and  $\pi_k \asymp 1 / L$ for all $k\in \cL$, we have 
	\[
	\PP\left\{
	\max_{\ell\in \cL} |\wh \pi_\ell - \pi_\ell| \lesssim \sqrt{ \log n \over nL}
	\right\} = 1-\cO(Ln^{-C}).
	\]
	for some constant $C>1$. With the same probability, using $L\log n \lesssim n$ further yields that, for any $\ell \in \cL$, 
	$$
	\wh \pi_\ell \asymp   {1 \over L}, \qquad 
	n_\ell  \asymp {n\over L}
	$$
	as well as 
	\begin{align*}
		\left|
		\wt \pi_\ell - \bar \pi_\ell 
		\right| = \left|
		{\wh \pi_\ell  - \pi_\ell \over \wh \pi_\ell+\wh \pi_0}\right| +  	\left| {\pi_\ell( \wh \pi_\ell - \pi_\ell +\wh \pi_0- \pi_0) \over ( \wh \pi_\ell+\wh \pi_0)(\pi_\ell + \pi_0)}
		\right| \lesssim \sqrt{L\log n \over n},\quad \wt \pi_\ell \asymp 1.
	\end{align*}
	Pick any $\ell \in \cL$. 
	By following the same arguments of proving Lemma \ref{lem_beta_0} and using the condition $KL \log n \lesssim n$,  we have, with probability $1-\cO(n^{-C})$,
	\begin{align}\label{bd_last_two}\nonumber
		&\max\left\{
		\left|\wh\beta_0^{(\ell)} - \beta_0^{(\ell)} + {1\over 2}(\a_1 + \a_0)^\T(A^\T \wh \theta^{(\ell)} - \beta^{(\ell)})\right|,\right.\\\nonumber
		&\quad \qquad ~ \left. \left|\wt\pi_0\wt\pi_\ell[1 - (\wh\mu_\ell - \wh \mu_0)^\T \wh \theta^{(\ell)}] -\bar \pi_0\bar \pi_\ell[1 - (\a_\ell - \a_0)^\T \beta^{(\ell)}]\right|
		\right\}\\
		&\quad  ~ \lesssim ~ \wh r_1 +   \|\sw\|_{\op}^{1/2} \wh r_2 +\wh r_2 \wh r_3+  \sqrt{L\log n \over n} ~\le  ~ \omega_n.
	\end{align}
	By taking the union bounds over $\ell\in \cL$, the above bound also holds for all $\ell \in \cL$ with probability $1-\cO(Ln^{-C})$. 
	
	It remains to bound from below $\left|\wt\pi_0\wt\pi_\ell[1 - (\wh\mu_\ell - \wh \mu_0)^\T \wh \theta^{(\ell)}] \right|$. To this end, repeating arguments of proving Lemma \ref{lem_rule} gives 
	\[
	\Cov(Z, \1\{Y =\ell\} \mid Y \in \{0,\ell\}) = \bar \pi_0\bar \pi_\ell(\a_\ell - \a_0),
	\]
	and, by recalling that $\sz^{(\ell)} = \Cov(Z \mid Y \in \{0,\ell\})$,
	\[
	\|\a_\ell - \a_0\|_{\sz^{(\ell)}}^2  =  {\|\a_\ell -\a_0\|_{\szy}^2 \over 1 + \bar \pi_0\bar \pi_\ell \|\a_\ell -\a_0\|_{\szy}^2}\overset{(\ref{def_Dt_ell})}{=} {\Dt_{(\ell|0)}^2 \over 1 + \bar \pi_0\bar \pi_\ell \Dt_{(\ell|0)}^2}.
	\]
	It then follows that
	\begin{align*}
		\bar\pi_0\bar\pi_\ell[1 - (\a_\ell - \a_0)^\T \beta^{(\ell)}] &=  \bar\pi_0 \bar\pi_\ell\left[ 1 -\bar\pi_0\bar \pi_\ell \|\a_\ell - \a_0\|_{\sz^{(\ell)}}^2\right] = {\bar\pi_0\bar \pi_\ell \over 1+ \bar\pi_0 \bar \pi_\ell \Dt_{(\ell|0)}^2}.
	\end{align*}
	Thus,  by (\ref{bd_last_two}), condition (\ref{cond_Dt_multi}) and condition  
	$
	(1 + \Dt^2) \omega_n= o(1),
	$  
	we find that, 
	with probability $1-\cO(L n^{-C})$, 
	\begin{align*}
		\left|\wt\pi_0\wt\pi_\ell[1 - (\wh\mu_\ell - \wh \mu_0)^\T \wh \theta^{(\ell)}] \right| &\gtrsim {\bar\pi_0 \bar\pi_\ell \over 1+\bar \pi_0 \bar\pi_\ell \Dt_{(\ell|0)}^2} - \omega_n\gtrsim {\bar\pi_0 \bar\pi_\ell \over 1+ \bar\pi_0 \bar\pi_\ell \Dt_{(\ell|0)}^2}.
	\end{align*}
	Combining the last display with (\ref{bd_last_two}) gives that, with probability $1-\cO(n^{-1})$,
	\begin{align*}
		\wt\omega_n &\lesssim    \max_{\ell\in \cL} (1 + \Dt^2) \left\{ \left(\sqrt{\log n} +\Dt\right) \left(\wh r_1 + \wh r_2 \|\sw\|_\op^{1/2}\right) \right.\\\nonumber
		&\left. \quad + \left(\wh r_1+   \|\sw\|_{\op}^{1/2} \wh r_2 + \wh r_2 \wh r_3 +  \sqrt{L\log n \over n}\right)\left(1 + \Dt \sqrt{\log n} + \Dt^2\right)
		\right\}\\
		&\lesssim (1 + \Dt^4) \omega_n,
	\end{align*}
	completing the proof. 
\end{proof}

\subsubsection{Proof of Corollary \ref{cor_risk_multi}}\label{app_proof_cor_risk_multi}
In view of Theorem \ref{thm_risk_multi}, we only need to bound from above $\wh r_1$, $\wh r_2$ and $\wh r_3$ for each choice of $B$. Inspecting the proofs of Lemmas \ref{lem:r3}, \ref{lem:r2} and \ref{lem:r1} reveals that the same conclusions 
therein hold with $K$ replaced by $KL$. Consequently, repeating the steps in the proofs of Theorems \ref{thm_PCR} \& \ref{thm_PCR_indep} yields the desired result.  \qed

\section{Proof of the minimax lower bounds of the excess risk}\label{app_proof_thm_lowerbound}

\begin{proof}[Proof of Theorem \ref{thm_lowerbound}]
	
	Recall that $\pi_0 = \pi_1 = 1/2$. It suffices to consider
	$
	\a_1 = -\a_0  = \a.
	$
	Further recall that  $K/(n\vee p) \le c_1$, $\sigma^2/\lambda\le c_2$ and $\sigma^2 p / (\lambda n)\le c_3$ for sufficiently small positive constants $c_1, c_2$ and $c_3$.

	To prove Theorem \ref{thm_lowerbound}, it suffices to consider the Gaussian case. Specifically, for any $\theta = (A,\szy, \sw, \a, -\a, 1/2, 1/2)$, consider 
	\begin{equation}\label{distr_X}
		X \mid Y = 1 \sim N_p(\mut, \Sigt)\quad \text{and} \quad X \mid Y = 0 \sim N_p(-\mut, \Sigt)
	\end{equation}
	with
	\[
	\mut = A\a,\qquad  \Sigt = A\szy A^\T + \sw.
	\]
	In this case,  the Bayes rule of using $X$ is
	\begin{align}\label{def_gx_star}
		g^*_\theta(x) = \1\left\{
		G^*_\theta(x) \ge 0
		\right\} 
		= \1\left\{2 x^\T\Sigt^{-1}\mut\ge 0\right\}.
	\end{align}
	For any classifier $\wh g: \RR^p \to \{0, 1\}$, one has 
	\[
	R_x(\wh g) - R_z^*  = R_x(\wh g) - R_x(g_\theta^*) + R_x(g_\theta^*) - R_z^*.
	\]
	Lemma \ref{lem_risk} together with $\sigma^2/ \lambda \le c_2$ ensures that, for any $\theta \in \Theta(\lambda, \sigma, \Dt) $,
	\begin{equation}\label{lb_0}
		R_x(g_\theta^*) - R_z^* \gtrsim {\sigma^2 \over \lambda} \Dt ~ \exp\left(-{\Dt^2\over 8}\right).
	\end{equation}
	Note that $g^*_\theta$ has the smallest risk over all measurable functions $\wh g: \RR^p \to \{0, 1\}$.  We proceed to bound from below $R_x(\wh g) - R_x(g_\theta^*)$ by splitting into two scenarios depending on the magnitude of $\Dt$.\\

	{\bf Case 1: $\Dt \gtrsim 1$.}  We may assume $\Dt \ge 2$ for simplicity.
	It   suffices to show
	\begin{equation}\label{lb_goal}
		\inf_{\wh g}\sup_{\theta \in \Theta(\lambda, \sigma, \Dt)} \PP^{\D}_{\theta}\left\{
		R_x(\wh g) - R_x(g_\theta^*) \ge {\eta \over \Dt } \exp\left(-{\Dt^2 \over 8} +\delta\right)
		\right\} \ge c_0,
	\end{equation}
	where 
	\begin{equation}\label{def_delta_n}
		\delta =  {\sigma^2\over \sigma^2 + \lambda}{\Dt^2 \over 8}
	\end{equation}
	and
	\begin{equation}\label{def_eta_n}
		\eta = C\left[{K\over n} + {\sigma^4 (p-K)\over \lambda^2 n} 			\right] .
	\end{equation}
	We take the leading constant $C>0$ in $\eta$ small enough such that
	\begin{itemize}
		\item[(a)] $ C <  3( c_1+ c_2 c_3)$, where $c_1,c_2,c_3$ are defined in Theorem \ref{thm_lowerbound}.  
		\item[(b)] $ C < \min(C_1, C_2)/6$, where $C_1$ and $C_2$ are defined in (\ref{lb_1}) and (\ref{lb_2}).
	\end{itemize}
	These two requirements will become apparent soon.
	
	To prove (\ref{lb_goal}), we first introduce another loss function
	\begin{equation}\label{def_L}
		L_{\theta}(\wh g) = \PP_{\theta}\{
		\wh g(X) \ne g^*_\theta(X)
		\}.
	\end{equation}
	We proceed to bound $R_x(\wh g) - R_x(g_\theta^*)$  from below by using $L_{\theta}(\wh g)$.  By following the same arguments in the proof of Theorem \ref{thm_general_risk} with $G_z(Z)$ replaced by $G^*_\theta(X)$, one can deduce that 
	\begin{align*}
		R_x(\wh g) - R_x(g_\theta^*) & = \PP_{\theta}\{\wh g(X) \ne Y\} - \PP_{\theta}\{g_\theta^*(X) \ne Y\}  := \rI + \rII
	\end{align*}
	where 
	\begin{align*}
		&\rI = \pi_0\EE_\theta \left[
		\1\{\wh g(X) = 1,  G_\theta^*(X) < 0 \}\left(1 - \exp(G_\theta^*(X)\right) \mid Y = 0
		\right],\\
		&\rII  = \pi_1\EE_\theta \left[
		\1\{\wh g(X) = 0,  G_\theta^*(X) \ge 0 \}\left(1 - \exp(-G_\theta^*(X)\right) \mid Y = 1
		\right].
	\end{align*}
	For any $t > 0$, 
	\begin{align*}
		\rI  & \ge  \pi_0\EE_\theta \left[
		\1\{\wh g(X) = 1,  G_\theta^*(X) \le -t\}\left(1 - \exp(G_\theta^*(X)\right) \mid Y = 0
		\right]\\
		& \ge \pi_0\left(1-e^{-t} \right)\EE_\theta\left[
		\1\{\wh g(X) = 1,  G_\theta^*(X) \le -t \}  \mid Y = 0
		\right]\\
		& \ge  \pi_0\left(1-e^{-t} \right)\Bigl\{\EE_\theta\left[
		\1\{\wh g(X) = 1,  G_\theta^*(X) < 0 \}  \mid Y = 0
		\right] - \PP_{\theta}\left(
		-t \le G_\theta^*(X) < 0 \mid Y = 0
		\right)\Bigr\}\\
		& =   \pi_0\left(1-e^{-t} \right)\Bigl\{\EE_\theta\left[
		\1\{\wh g(X) = 1,  g_\theta^*(X) = 0 \}  \mid Y = 0
		\right] - \PP_{\theta}\left(
		-t \le G_\theta^*(X) < 0 \mid Y = 0
		\right)\Bigr\}.
	\end{align*}
	Similarly, 
	\[
	\rII \ge  \pi_1\left(1-e^{-t} \right)\Bigl\{\EE_\theta\left[
	\1\{\wh g(X) = 0,  g_\theta^*(X) = 1 \}  \mid Y = 1
	\right] - \PP_{\theta}\left(
	0 \le G_\theta^*(X) \le  t \mid Y = 1
	\right)\Bigr\}.
	\]
	Combine these two lower bounds, the identity $\pi_0=\pi_1 = 1/2$ 
	and the inequality $1-\exp(-t) \ge t/2$ for $0<t<1$
	to obtain,
	\begin{align*}
		R_x(\wh g) - R_x(g_\theta^*)  &\ge 
		{t\over 2}
		\left\{L_\theta (\wh g)  -  {1\over 2} ~ \PP_{\theta}\left(
		0 \le G_\theta^*(X) \le  t \mid Y = 1
		\right)\right.\\
		&\hspace{3.1cm} \left. ~ - {1\over 2}~ \PP_{\theta}\left(
		-t \le G_\theta^*(X) < 0 \mid Y = 0
		\right)\right\},
	\end{align*}
	for any $0<t<1$.
	From  (\ref{def_Dt_x}), we see that $\Dt_x^2 = 4\mut^\T \Sigt^{-1}\mut$, and we easily find
	\[
	\left(	G_\theta^*(X) \mid Y=0 \right) = \left( 2 X^\T\Sigt^{-1}\mut  \mid Y = 0\right) \sim N\left(-{1\over 2}\Dt_x^2, \Dt_x^2\right),
	\]
	and, similarly, 
	\[
	G_\theta^*(X)  \mid Y = 1 \sim N\left({1\over 2}\Dt_x^2, \Dt_x^2\right).
	\]
	An application of the mean value theorem yields
	\begin{equation}\label{lb_key}
		R_x(\wh g) - R_x(g_\theta^*) \ge   
		{t\over 2} \left(L_\theta (\wh g) - {t\over 2\Dt_x}
		\varphi(R_t) -  {t\over 2\Dt_x} \varphi(L_t)
		\right)
	\end{equation}
	for 
	\[
	R_t \in \left[
	{1\over 2}\Dt_x - {t\over \Dt_x}, ~ {1\over 2}\Dt_x
	\right], \qquad L_t \in \left[
	-{1\over 2}\Dt_x, ~ -{1\over 2}\Dt_x +  {t\over \Dt_x}
	\right], \qquad 0<t<1.
	\]
	Then, for $0<t<\min(1,\Dt_x^2)$, we easily find from (\ref{lb_key}) that
	\begin{align*}
		\frac{t}{2\Dt_x} \left\{ 
		\varphi(R_t) + \varphi(L_t) \right\} &\le 
		\frac{t}{\Dt_x}\sqrt{e\over 2\pi}
		\exp\left(  -\frac{\Dt_x^2}{8} 
		\right).
	\end{align*}
	Hence, for any $0<t\le \min(1, \Dt_x^2/2)$, we proved that
	\begin{align}\label{display_risk_t}
		&\inf_{\wh g}\sup_{\theta \in \Theta} \PP^{\D}_{\theta}\left\{
		R_x(\wh g) - R_x(g_\theta^*) \ge {\eta \over \Dt } \exp\left(-{\Dt^2 \over 8} +\delta\right)
		\right\}\\\nonumber
		& \ge \inf_{\wh g}\sup_{\theta \in \Theta} \PP^{\D}_{\theta}\left\{
		{t\over 2}
		\left(L_\theta (\wh g) - {t\over 2\Dt_x}
		\varphi(R_t) -  {t\over 2\Dt_x} \varphi(L_t)
		\right) \ge {\eta \over \Dt } \exp\left(-{\Dt^2 \over 8} +\delta\right)
		\right\}\\\nonumber
		&\ge 
		\inf_{\wh g}\sup_{\theta \in \Theta} \PP^{\D}_{\theta}\left\{
		L_\theta (\wh g) \ge {2\eta \over \Dt t } \exp\left(-{\Dt^2 \over 8} +\delta \right) + \frac{t}{\Dt_x}\sqrt{e\over 2\pi}
		\exp\left(  -\frac{\Dt_x^2}{8} 
		\right)
		\right\}
	\end{align}
	Next, choose
	\begin{align*}
		t^* =  \left(\pi \over e\right)^{1/4} 2\sqrt{\eta} \overset{(i)}{\le } 1
	\end{align*}
	with $\eta$ defined in (\ref{def_eta_n}).
	Inequality $(i)$  holds by using $K  /n \le c_1$, $\sigma^2/\lambda \le c_2$, $\sigma^2 p/(\lambda n)\le c_3$ and requirement (a) of the constant $C$ in the definition (\ref{def_eta_n}) of $\eta$.
	In the proof of the lower bounds (\ref{lb_1}) and (\ref{lb_2}) below, we  consider subsets of $\Theta(\lambda, \sigma, \Dt)$ such that, for any $\theta\in \Theta(\lambda, \sigma, \Dt)$,
	\begin{equation}\label{eq_Dt_Dtx}
		\Dt_x^2 = {\lambda \over \sigma^2 + \lambda} \Dt^2.
	\end{equation}
	This implies 
	\begin{equation}\label{eq_Dt_Dtx2}
		{\Dt^2 \over 2}\le \Dt_x^2 \le \Dt^2,
	\end{equation}
	provided that $\sigma^2 / \lambda \le c_2\le 1$,
	and, using (\ref{def_delta_n}),
	\begin{equation}\label{eq_Dt_Dtx3}
		-    {\Dt^2 \over 8}+\delta^2 = - { \Dt_x^2\over8}.
	\end{equation}
	Note that (\ref{eq_Dt_Dtx}) further implies 
	$t^* \le 1\le \Dt^2/4 \le \Dt_x^2/2$. Then, by plugging $t^*$ into (\ref{display_risk_t}) and using  (\ref{eq_Dt_Dtx2}) and (\ref{eq_Dt_Dtx3}), we find 
	\begin{align}\label{final}
		&\inf_{\wh g}\sup_{\theta \in \Theta} \PP^{\D}_{\theta}\left\{
		R_x(\wh g) - R_x(g_\theta^*) \ge {\eta \over \Dt } \exp\left(-{\Dt^2 \over 8} +\delta\right)
		\right\}\\
		&\ge 
		\inf_{\wh g}\sup_{\theta \in \Theta} \PP^{\D}_{\theta}\left\{
		L_\theta (\wh g) \ge \left(e\over \pi\right)^{1/4}{\sqrt\eta\over \Dt} \exp\left(-{\Dt^2 \over 8} +\delta\right) +\left(e\over \pi\right)^{1/4}{\sqrt{\eta} \over \Dt_x\sqrt 2}
		\exp\left(  -\frac{\Dt_x^2}{8}  
		\right)
		\right\}\nonumber\\
		&= \inf_{\wh g}\sup_{\theta \in \Theta} \PP^{\D}_{\theta}\left\{
		L_\theta (\wh g) \ge 2\left(e\over \pi\right)^{1/4} {\sqrt{\eta} \over \Dt}
		\exp\left(   -\frac{\Dt_x^2}{8}  
		\right)
		\right\}.\nonumber
	\end{align}
	In the next two sections we prove the inequalities 
	\begin{align}\label{lb_1}
		&\inf_{\wh g}\sup_{\theta \in \Theta} \PP^{\D}_{\theta}\left\{
		L_\theta(\wh g) \ge C_1~ 
		\sqrt{K\over n}~  {1\over \Dt} \exp\left(   -\frac{\Dt_x^2}{8}  
		\right)
		\right\}\ge (1+c_0)/2,\\\label{lb_2}
		&\inf_{\wh g}\sup_{\theta \in \Theta} \PP^{\D}_{\theta}\left\{
		L_\theta(\wh g) \ge C_2\sqrt{\sigma^4 (p-K) \over \lambda^2 n} 
		\exp\left(   -\frac{\Dt_x^2}{8}  
		\right)
		\right\}\ge (1+c_0)/2,
	\end{align}
	for some positive constants $C_1$ and $C_2$.
	By using requirement (b) for the leading constant $C$ 
	in the definition (\ref{def_eta_n}) of $\eta$, we can conclude  from the final lower bound (\ref{final})    the proof of Theorem \ref{thm_lowerbound} for $\Dt\gtrsim 1$.\\
	
	{\bf Case 2: $\Dt = o(1)$.}  We further consider two cases and recall that 
	\[
	\omega_n^* = \sqrt{{K\over n}+ {\sigma^2\over \lambda} \Dt^2 + {\sigma^2\over \lambda} {\sigma^2 p \over \lambda n}\Dt^2}.
	\]
	When $\omega_n^* = o (\Dt)$,
	we now prove the lower bound $(\omega_n^*)^2/\Dt$. By choosing 
	\begin{equation}\label{def_t1}
		t_1 = c_t
		\sqrt{{K\over n} +  {\sigma^4 (p-K)\over \lambda^2 n}\Dt^2}  ~ \le ~  1
	\end{equation}
	in (\ref{lb_key}) for some constant $c_t>0$ and by using $\varphi(R_{t_1})\le 1$, $ \varphi(L_{t_1})\le 1$ and $\Dt_x \le \Dt$, we find 
	\[
	R_x(\wh g) - R_x(g_\theta^*) \ge 
	{c_t\over 2} L_\theta (\wh g)  
	\sqrt{{K\over n} +  {\sigma^4 (p-K)\over \lambda^2 n}\Dt^2}- {c_t^2\over 2\Dt}\left[{K\over n} +  {\sigma^4 (p-K)\over \lambda^2 n}\Dt^2\right].
	\]
	From (\ref{lb_1}) and (\ref{lb_2}), it follows that 
	\begin{align*}
		\inf_{\wh g}\sup_{\theta \in \Theta} \PP^{\D}_{\theta}&\left\{
		R_x(\wh g) - R_x(g_\theta^*) \ge 
		{c_t C_3\over 2}\left[{K\over n}{1\over \Dt} +  {\sigma^4 (p-K)\over \lambda^2 n}\Dt\right]\exp\left(
		-{\Dt_x^2 \over 8}
		\right)
		\right.\\
		&\hspace{3.5cm} \left.- {c_t^2\over 2}\left[{K\over n}{1\over \Dt} +  {\sigma^4 (p-K)\over \lambda^2 n}\Dt\right]
		\right\}\ge c_0
	\end{align*}
	for some constant $C_3>0$ depending on $C_1$ and $C_2$. Therefore, by using $\Dt_x \ge \Dt / 2$ and $\Dt = o(1)$ and taking $c_t$ sufficiently small,  we conclude 
	\begin{align*}
		\inf_{\wh g}\sup_{\theta \in \Theta} \PP^{\D}_{\theta}&\left\{
		R_x(\wh g) - R_x(g_\theta^*) \ge 
		{c_1C_3}\left[{K\over n}{1\over \Dt} +  {\sigma^4 (p-K)\over \lambda^2 n}\Dt\right]
		\right\}\ge c_0.
	\end{align*}
	The above display together with (\ref{lb_0}) proves the lower bound $(\omega_n^*)^2/\Dt$.
	
	When $\omega_n^*/\Dt \gtrsim 1$,
	we proceed to prove the lower bound $\omega_n^*$.   Notice that $\omega_n^*\gtrsim \Dt$ implies $\sqrt{K/n}\gtrsim \Dt$, which, in view of  (\ref{lb_1}) and by $-\Dt_x  \le -\Dt/2 = o(1)$, further implies 
	\[
	\inf_{\wh g}\sup_{\theta \in \Theta} \PP^{\D}_{\theta}\left\{
	L_\theta(\wh g) \ge C_L
	\right\}\ge c_0
	\]
	for some $C_L\in (0,1]$. By choosing $t_1$ as (\ref{def_t1}) in (\ref{lb_key}), we have $t_1 \asymp \sqrt{K/n}$, $t_1/\Dt \gtrsim 1$ and 
	\[
	\max\{\varphi(R_{t_1}), \varphi(L_{t_1})\}~  \lesssim ~ 
	\exp\left(
	-{c_t t_1^2 \over \Dt^2}
	\right),
	\]
	hence
	\begin{align*}
		\inf_{\wh g}\sup_{\theta \in \Theta} \PP^{\D}_{\theta}&\left\{
		R_x(\wh g) - R_x(g_\theta^*) \ge 
		{C_L t_1\over 2} - {t_1^2\over 2\Dt}\exp\left(
		-{c_t t_1^2 \over \Dt^2}
		\right)
		\right\}\ge c_0.
	\end{align*}
	By choosing $c_t$ to be sufficiently large and $t_1 /\Dt \gtrsim 1$, we have 
	\[
	{t_1 \over \Dt}\exp\left(
	-{c_t t_1^2 \over \Dt^2}
	\right)  \le  {C_L \over 2},
	\]
	such that 
	\begin{align*}
		\inf_{\wh g}\sup_{\theta \in \Theta} \PP^{\D}_{\theta}&\left\{
		R_x(\wh g) - R_x(g_\theta^*) \ge 
		{C_L t_1\over 4}
		\right\}\ge c_0.
	\end{align*}
	The claim then follows from
	\[
	{t_1\over 4} + {\sigma^2 \over \lambda}\Dt^2 \asymp \sqrt{{K\over n} + {\sigma^2 p \over \lambda^2 n}\Dt^2}  +  {\sigma^2 \over \lambda}\Dt^2\asymp \sqrt{K\over n}  \asymp  \omega_n^*
	\]
	by using $\Dt \lesssim 1$, $\sqrt{K/n}\gtrsim \Dt$, $\sigma^2 \lesssim \lambda$ and $p\sigma^2 \lesssim n\lambda$. 
\end{proof}

\subsection{Proof of (\ref{lb_2})}
\begin{proof}
	We aim to invoke the following lemma to obtain the desired lower bound. The lemma below follows immediately from the proof of Proposition 1 in \cite{Martin_2013} together with Theorem 2.5 in \cite{Intro_non_para}.
	\begin{lemma}\label{lem_tsybakov}
		Let $M\ge 2$ and $\theta_0, \ldots, \theta_M \in \Theta$. For some constant $c_0 \in (0, 1/8]$, $\gamma >0$ and any classifier $\wh g$, if $\KL(\PP^{\D}_{\theta_i}, \PP^{\D}_{\theta_0}) \le c_0 \log M$ for all $1\le i\le M$, and $L_{\theta_i} (\wh g)< \gamma$ implies $L_{\theta_j} (\wh g)\ge \gamma$ for all $0\le i\ne j\le M$, then 
		\[
		\inf_{\wh g}\sup_{i\in \{1,\ldots, M\}} \PP^{\D}_{\theta_i}\{L_{\theta_i}(\wh g) \ge \gamma\} \ge
		{\sqrt M\over \sqrt M + 1}\left[
		1 - 2c_0 - \sqrt{2c_0 \over \log M}\right].
		\]
	\end{lemma}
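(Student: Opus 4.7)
The plan is to reduce the excess-loss statement to a standard multiple-hypothesis testing problem and then invoke Tsybakov's Fano-type lemma (Theorem 2.5 in \cite{Intro_non_para}). The crucial ingredient is the separation hypothesis in the lemma: for any fixed classifier $\wh g$, the set $\{i\in\{0,\ldots,M\}: L_{\theta_i}(\wh g)<\gamma\}$ contains at most one index. This makes it possible to construct a test from $\wh g$ whose error probability is controlled by the deviation event $\{L_\theta(\wh g)\ge \gamma\}$.

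Concretely, given any classifier $\wh g = \wh g(\D)$, define the test statistic $\psi(\D)\in\{0,1,\ldots,M\}$ by $\psi(\D)=i$ whenever $L_{\theta_i}(\wh g(\D))<\gamma$, breaking ties (which cannot occur under the separation hypothesis) and treating the complement by assigning any default value. By the separation property this map is well-defined, and we have the key set inclusion
\[
\{\psi(\D)\ne i\}\ \subseteq\ \{L_{\theta_i}(\wh g(\D))\ge \gamma\},\qquad i\in\{0,1,\ldots,M\}.
\]
Taking $\PP^{\D}_{\theta_i}$-probabilities and then the maximum over $i$ gives
\[
\sup_{i} \PP^{\D}_{\theta_i}\{L_{\theta_i}(\wh g)\ge \gamma\}\ \ge\ \sup_{i} \PP^{\D}_{\theta_i}\{\psi(\D)\ne i\},
\]
and the infimum over $\wh g$ on the left is lower bounded by the infimum of the right hand side over all tests $\psi$ with values in $\{0,1,\ldots,M\}$.

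At this point the problem is a standard $(M+1)$-ary hypothesis test, and the pairwise Kullback--Leibler bound $\KL(\PP^{\D}_{\theta_i},\PP^{\D}_{\theta_0})\le c_0\log M$ with $c_0\in (0,1/8]$ is exactly the assumption needed to apply Tsybakov's Theorem 2.5, which yields
\[
\inf_{\psi}\sup_{i}\PP^{\D}_{\theta_i}\{\psi(\D)\ne i\}\ \ge\ \frac{\sqrt{M}}{\sqrt{M}+1}\left[1-2c_0-\sqrt{\frac{2c_0}{\log M}}\right].
\]
Chaining this with the inclusion above produces the claimed bound. The only subtlety (and the sole place where anything needs to be verified carefully) is the step of turning the classifier into a test via the separation hypothesis; once that reduction is made, the rest is a citation. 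No further computation is required beyond noting that the supremum over $i\in\{1,\ldots,M\}$ in the statement dominates the $\theta_0$-error term in the Tsybakov bound, so restricting the index set does not weaken the conclusion.
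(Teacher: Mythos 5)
Your proof is correct and takes the same route the paper does: the paper's one-line proof cites Proposition 1 of \cite{Martin_2013} for precisely the reduction you spell out (the separation hypothesis makes the map $\psi$ from a classifier to a test well-defined, and the inclusion $\{\psi\ne i\}\subseteq\{L_{\theta_i}(\wh g)\ge\gamma\}$ replaces the usual triangle-inequality step for a semi-metric), and Theorem 2.5 of \cite{Intro_non_para} for the Fano-type numerical bound. Your explicit construction of $\psi$ and the inclusion argument is an accurate unpacking of those citations, so there is no gap.
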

	
	To this end, we start by describing our construction of hypotheses 
	of $\theta \in \Theta(\lambda, \sigma, \Dt)$ defined in (\ref{def_Theta}). 
	Without loss of generality, we assume $\sigma = 1$ and $\szy = \bI_K$. We consider a subspace of $\Theta(\lambda, \sigma, \Dt)$ where  $\lambda_1(A\szy A^\T) = \lambda_K(A\szy A^\T) = \lambda$. By further writing $A\szy A^\T = AA^\T = \lambda BB^\T$ with $B \in \cO_{p\times K}$, we consider
	\begin{equation}\label{def_thetas}
		\theta^{(j)} = \left(\sqrt{\lambda}~ B^{(j)}, \bI_K, \bI_p, \a, -\a, {1\over 2}, {1\over 2}\right), \quad \text{for } j = 1,\ldots, M,
	\end{equation}
	where 
	\begin{equation}\label{def_alpha_B_j}
		\a = \begin{bmatrix}
			\Dt/2 \vspace{1mm}\\ \0_{K-1}
		\end{bmatrix}, \qquad B^{(j)} = \begin{bmatrix}
			\sqrt{1-\eps^2} & 0 \vspace{1mm}\\
			\0_{K-1} & \bI_{K-1} \vspace{1mm}\\
			\eps J^{(j)} &  \0_{p-K}
		\end{bmatrix} := \begin{bmatrix}
			B_1^{(j)} & B_{-1}
		\end{bmatrix},
	\end{equation}
	with 
	\begin{equation}\label{def_epsilon}
		\eps^2 = c_0c_1 {(p-K) \over \lambda ~ n}{1\over {2\lambda\over 1+\lambda} + \Dt^2}
	\end{equation}
	for some constants $c_0\in (0,1/8]$ and $c_1>0$. 
	Here $J^{(1)},\ldots, J^{(M)}\in \cO_{(p-K)\times 1}$ are chosen according to the hypercube construction in Lemma \ref{lem_J} with $m = p-K$. 
	It is easy to see that $\theta^{(j)} \in \Theta(\lambda,\sigma=1, \Dt)$ for all $1\le j\le M$. 		Lemma \ref{lem_theta} below  collects several useful properties of $\theta^{(j)}$.

	Next, to apply Lemma \ref{lem_tsybakov}, it suffices to verify 
	\begin{enumerate}
		\item[(1)] $\KL(\PP^{\D}_{\theta^{(1)}}, \PP^{\D}_{\theta^{(i)}})\le c_0\log (M-1)$ for all $1\le i\le M$;
		\item [(2)] $L_{\theta^{(i)}}(\wh g) + L_{\theta^{(j)}}(\wh g) \ge 2\gamma$, for all $1\le i\ne j\le M$ and any measurable $\wh g$, with 
		\[
		\gamma \asymp e^{- {\Dt_x^2/ 8}}\sqrt{\eps^2 \over \lambda},\qquad \Dt_x^2 = {\lambda \over 1+\lambda}\Dt^2.
		\]
	\end{enumerate}
	The first claim is proved by invoking Lemmas \ref{lem_J} and \ref{lem_KL} together with the choice of $\eps$ in (\ref{def_epsilon}) while the second claim is proved in Lemma \ref{lem_lb_L}.  The result then follows by noting that 
	$$
	\eps^2 \asymp {p-K\over n\lambda (1+\Dt^2)} \asymp {p-K \over n\lambda \Dt^2}.
	$$
\end{proof}

\subsubsection{Lemmas used in the proof of (\ref{lb_2})}

The following lemma is  adapted from \cite[Lemma A.5]{vu2013minimax}. 
\begin{lemma}[Hypercube construction]\label{lem_J}
	Let $m \ge 1$ be an integer. There exist $J^{(1)},\ldots, J^{(M)} \in \cO_{m\times 1}$ with the following properties:
	\begin{enumerate}
		\item  $\|J^{(i)} -J^{(j)}\|_2^2 \ge 1/4$ for all $i\ne j$, and
		\item $\log M \ge \max\{c m, \log m\}$, where $c > 1/30$ is an absolute constant.
	\end{enumerate}
\end{lemma}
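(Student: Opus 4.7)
The statement is a standard spherical packing fact: we need $M$ unit vectors in $\RR^m$ pairwise separated in squared distance by at least $1/4$, and we want to show we can take $\log M$ to grow linearly in $m$ (and at least logarithmically in $m$). The plan is to use the Varshamov--Gilbert bound from coding theory, applied to a hypercube construction normalized to the sphere.

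First, I would translate the geometric condition into a combinatorial one. Since each $J^{(i)}$ is a unit vector, $\|J^{(i)} - J^{(j)}\|_2^2 = 2 - 2\langle J^{(i)}, J^{(j)}\rangle$, so (1) is equivalent to $\langle J^{(i)}, J^{(j)}\rangle \le 7/8$. Next, I would restrict to candidate vectors of the special form $J^{(i)} = v^{(i)}/\sqrt{m}$, where $v^{(i)} \in \{-1,+1\}^m$. Then $J^{(i)}$ is automatically a unit vector, and $\langle J^{(i)}, J^{(j)}\rangle = 1 - 2 d_H(v^{(i)}, v^{(j)})/m$, where $d_H$ is the Hamming distance. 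The requirement $\langle J^{(i)}, J^{(j)}\rangle \le 7/8$ becomes $d_H(v^{(i)},v^{(j)}) \ge m/16$, i.e.\ we need a binary code of length $m$ with minimum Hamming distance $\lceil m/16\rceil$.

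The second step is to apply the Varshamov--Gilbert bound, which guarantees the existence of a binary code $\{v^{(1)},\ldots,v^{(M)}\}\subset\{-1,+1\}^m$ of length $m$ and minimum distance at least $m/16$, with
\[
M ~\ge~ \frac{2^m}{\sum_{k=0}^{\lfloor m/16\rfloor}\binom{m}{k}}.
\]
A standard entropy bound $\sum_{k\le \delta m}\binom{m}{k}\le 2^{m H(\delta)}$, with binary entropy $H(\cdot)$, yields $\log_2 M \ge m(1 - H(1/16))$. Since $H(1/16)<0.338$, we get $\log_2 M \ge 0.66\, m$, so $\log M \ge (0.66\log 2)\, m > m/30$, which delivers the constant $c>1/30$ claimed in item 2.

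The remaining requirement is the $\log M\ge \log m$ bound, which only matters in the regime where $cm < \log m$, i.e.\ for very small $m$. Here it suffices to exhibit $m$ well-separated unit vectors explicitly: for instance the $m$ standard basis vectors $e_1,\ldots,e_m$ satisfy $\|e_i-e_j\|_2^2 = 2 \ge 1/4$, giving $M=m$ for free. Combining this trivial construction with the V--G construction finishes the proof. I do not foresee any real obstacle beyond confirming the numeric entropy estimate $1 - H(1/16) > (30\log 2)^{-1}$, which is the only place where the constant $1/30$ appears.
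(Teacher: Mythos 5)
Your proof is correct and takes a more self-contained route than the paper's. The paper simply cites Vu and Lei (2013), Lemma A.5, for $m\ge 3$ (the range ``$m\ge e$'') and patches $m=1,2$ by exhibiting explicit vectors, whereas you unfold the Varshamov--Gilbert hypercube packing that the cited lemma itself rests on. The two proofs thus share the same underlying combinatorics, but yours spares the reader a trip to the reference and is uniform in $m$ --- the paper's case split exists only because the cited lemma requires $m\ge 3$. Two small remarks. First, you cannot literally ``combine'' the normalized $\pm1$ codewords with the standard basis vectors into one collection, since a basis vector need not be $1/4$-separated from each normalized codeword; what you mean is to take whichever single construction has the larger $M$. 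But in fact the dichotomy is unnecessary: your bound $\log_2 M \ge m\bigl(1-H(1/16)\bigr)\ge 0.66\,m$ already gives $M\ge 2^{0.66m}\ge m$ for every integer $m\ge 1$ (check $m=1,2$ by hand, monotone thereafter), so the $\log m$ clause holds automatically and the basis-vector fallback can be dropped. Second, there is a harmless off-by-one in the Varshamov--Gilbert denominator --- the sum can stop at $\lceil m/16\rceil-1$ rather than $\lfloor m/16\rfloor$ --- but your version only makes the bound marginally weaker, so the entropy estimate and the constant $c>1/30$ go through with ample room.
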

\begin{proof}
	The case for $m\ge e$ is proved in \cite[Lemma A.5]{vu2013minimax} by taking $m = s$. For $m = 2$, one can choose  
	$J^{(i)} = (-1)^i \be_1$, for $i = 1,2$, and $J^{(i)} = (-1)^i \be_2$, for $i = 3,4$, such that $M = 4$ and $\|J^{(i)} - J^{(j)}\|_2^2 = 4$. Here $\{\be_1,\be_2\}$ represents the set of canonical vectors in $\RR^2$. For $m=1$, one can simply take $J^{(i)} = (-1)^{i}$ for $i = 1,2$. 
\end{proof}

\medspace

The following lemma collects some useful identities, under the choices of $\theta^{(j)}$ in (\ref{def_thetas}) -- (\ref{def_alpha_B_j}).

\begin{lemma}\label{lem_theta}
	Fix any $i \in \{1,\ldots,M\}$. Let $B^{(i)}$ and $\a$ defined in (\ref{def_alpha_B_j}). Further let 
	$$
	\Sigma^{(i)} = \lambda  B^{(i)}(B^{(i)})^\T + \bI_p,\qquad \mu^{(i)} = \sqrt{\lambda} B^{(i)} \a.
	$$
	\begin{enumerate}
		\item[(i)]  $|\Sigma^{(i)}| = (\lambda+1)^K$ and 
		\begin{align}\label{eq_Sigma_inv_1}
			( \Sigma^{(i)})^{-1}  &= {1\over \lambda+1} B^{(i)}(B^{(i)})^\T + \bI_p - B^{(i)}(B^{(i)})^\T\\\label{eq_Sigma_inv_2}
			&= \bI_p - {\lambda \over \lambda+1}B^{(i)}(B^{(i)})^\T.
		\end{align} 
		
		\item[(ii)] 
		$$
		(\Sigma^{(i)})^{-1}\mu^{(i)} = {\sqrt{\lambda}\over 1+\lambda}B^{(i)}\alpha = {\sqrt{\lambda}\over 1+\lambda} {\Dt\over 2}B_1^{(i)}.
		$$
		
		\item[(iii)]
		\begin{equation*}
			(\mu^{(i)})^\T(\Sigma^{(i)})^{-1}\mu^{(i)} = {\lambda  \over 1+\lambda}\alpha^\T (B^{(i)})^\T B^{(i)} \alpha  ={\lambda \over 1+\lambda} {\Dt^2\over 4}
		\end{equation*}
	\end{enumerate}
\end{lemma}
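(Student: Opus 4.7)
The plan is to exploit the two facts that (a) $B^{(i)}\in\cO_{p\times K}$, so $(B^{(i)})^\T B^{(i)}=\bI_K$, and (b) $\a$ is supported on its first coordinate with $\|\a\|_2=\Dt/2$, so $B^{(i)}\a = (\Dt/2)\,B_1^{(i)}$. With these in hand, every item of the lemma reduces to a one-line computation.

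For part (i), I would first apply Sylvester's determinant identity to get
\[
|\Sigma^{(i)}| = \bigl|\bI_p + \lambda B^{(i)}(B^{(i)})^\T\bigr| = \bigl|\bI_K + \lambda (B^{(i)})^\T B^{(i)}\bigr| = |(1+\lambda)\bI_K| = (1+\lambda)^K.
\]
For the inverse, I would apply the Woodbury identity to $\bI_p + \lambda B^{(i)}(B^{(i)})^\T$, using $(B^{(i)})^\T B^{(i)}=\bI_K$, which immediately yields
\[
(\Sigma^{(i)})^{-1} = \bI_p - \lambda B^{(i)}\bigl(\bI_K + \lambda (B^{(i)})^\T B^{(i)}\bigr)^{-1}(B^{(i)})^\T = \bI_p - \tfrac{\lambda}{1+\lambda}B^{(i)}(B^{(i)})^\T,
\]
proving (\ref{eq_Sigma_inv_2}). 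The alternative form (\ref{eq_Sigma_inv_1}) then follows from the algebraic identity $-\tfrac{\lambda}{1+\lambda}=\tfrac{1}{1+\lambda}-1$, which gives $-\tfrac{\lambda}{1+\lambda}B^{(i)}(B^{(i)})^\T = \tfrac{1}{1+\lambda}B^{(i)}(B^{(i)})^\T - B^{(i)}(B^{(i)})^\T$.

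For part (ii), I would just multiply (\ref{eq_Sigma_inv_2}) by $\mu^{(i)}=\sqrt{\lambda}\,B^{(i)}\a$ and again use $(B^{(i)})^\T B^{(i)}=\bI_K$ to get
\[
(\Sigma^{(i)})^{-1}\mu^{(i)} = \sqrt{\lambda}\Bigl(B^{(i)}\a - \tfrac{\lambda}{1+\lambda}B^{(i)}(B^{(i)})^\T B^{(i)}\a\Bigr) = \tfrac{\sqrt{\lambda}}{1+\lambda}B^{(i)}\a,
\]
and then substitute $B^{(i)}\a=(\Dt/2)B_1^{(i)}$. For part (iii), I would take the inner product of $\mu^{(i)}$ with the expression from (ii) and use $\a^\T(B^{(i)})^\T B^{(i)}\a = \|\a\|_2^2 = \Dt^2/4$. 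There is no substantive obstacle here; the only thing to be careful about is the orthonormality of $B^{(i)}$, which is immediate from the block form in (\ref{def_alpha_B_j}) since $\|\eps J^{(j)}\|_2^2 + (1-\eps^2) = 1$ and $(J^{(j)})^\T\0 = 0$.
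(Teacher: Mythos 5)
Your proof is correct and amounts to the same argument the paper gives (the paper's proof simply states "easy to verify" and "follow immediately" from $B^{(i)}\in\cO_{p\times K}$, leaving the Sylvester/Woodbury computations implicit). You fill in exactly those computations, including the check that $B^{(i)}$ has orthonormal columns, so there is nothing to flag.
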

\begin{proof}
	Notice that $B^{(i)} \in \cO_{p\times K}$. Then part (i) is easy to verify. Parts (ii) and (iii) follow immediately from (\ref{def_alpha_B_j}) and (\ref{eq_Sigma_inv_1}).
\end{proof}

\medspace

Let $\PP^{\D}_{\theta^{(i)}}$, for $2\le i\le M$, denote the distribution of $(\X,\Y)$ parametrized by $\theta^{(i)}$. The following lemma provides upper bounds of the KL-divergence between $\PP_{\theta^{(1)}}$ and $\PP_{\theta^{(i)}}$.

\begin{lemma}[KL-divergence]\label{lem_KL}
	For any $\theta^{(i)}$, let 
	\[
	(X \mid Y = 1) \sim N_p(\mu^{(i)}, \Sigma^{(i)}),\qquad (X\mid Y = 0) \sim N_p(-\mu^{(i)}, \Sigma^{(i)})
	\]
	with $\mu^{(i)} = \sqrt{\lambda}B^{(i)}\a$, $\Sigma^{(i)} = \lambda B^{(i)}(B^{(i)})^\T + \bI_p$ and $B^{(i)} \in \cO_{p\times K}$. Then 
	\[
	\KL(\PP^{\D}_{\theta^{(1)}}, \PP^{\D}_{\theta^{(i)}}) \le n \left({2\lambda \over 1+\lambda} + {\Dt^2\over 2}\right) \lambda ~ \eps^2
	\]
\end{lemma}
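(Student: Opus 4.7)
The plan is to reduce the $n$-sample KL to a single-sample KL by tensorization, then condition on $Y$ to reduce further to Gaussian KL-divergences, and finally read off the explicit bound from the structure of $B^{(j)}$ given in (\ref{def_alpha_B_j}). Because the observations are i.i.d. under each $\PP^{\D}_{\theta^{(j)}}$, tensorization gives $\KL(\PP^{\D}_{\theta^{(1)}},\PP^{\D}_{\theta^{(i)}}) = n\,\KL(\PP_{\theta^{(1)}},\PP_{\theta^{(i)}})$ for a single pair $(X,Y)$. Since $Y$ is Bernoulli$(1/2)$ under both models, the chain rule yields
$\KL(\PP_{\theta^{(1)}},\PP_{\theta^{(i)}}) = \tfrac12\,\KL(N(\mu^{(1)},\Sigma^{(1)}),N(\mu^{(i)},\Sigma^{(i)})) + \tfrac12\,\KL(N(-\mu^{(1)},\Sigma^{(1)}),N(-\mu^{(i)},\Sigma^{(i)}))$,
and by symmetry the two Gaussian terms are equal, so it suffices to bound the first one.

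Next I would apply the standard Gaussian KL formula and use Lemma~\ref{lem_theta}(i), which states $|\Sigma^{(1)}|=|\Sigma^{(i)}|=(\lambda+1)^K$, to kill the log-determinant term. This leaves the trace term $\tr((\Sigma^{(i)})^{-1}\Sigma^{(1)})-p$ and the quadratic mean-difference term $\|\mu^{(1)}-\mu^{(i)}\|_{\Sigma^{(i)}}^2$. Writing $P_j := B^{(j)}(B^{(j)})^\T$ and using $\Sigma^{(1)} = \bI_p + \lambda P_1$ together with $(\Sigma^{(i)})^{-1} = \bI_p - \tfrac{\lambda}{\lambda+1}P_i$ from Lemma~\ref{lem_theta}(i), a short calculation gives
$\tr((\Sigma^{(i)})^{-1}\Sigma^{(1)}) - p = \tfrac{\lambda^2}{\lambda+1}\bigl(K-\tr(P_i P_1)\bigr) = \tfrac{\lambda^2}{2(\lambda+1)}\|P_1-P_i\|_F^2$,
since $\tr(P_j)=K$ for $j\in\{1,i\}$. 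For the quadratic term, $(\Sigma^{(i)})^{-1}\preceq \bI_p$ together with $\mu^{(j)} = \sqrt{\lambda}(\Dt/2)B_1^{(j)}$ from Lemma~\ref{lem_theta}(ii) gives $\|\mu^{(1)}-\mu^{(i)}\|_{\Sigma^{(i)}}^2 \le \tfrac{\lambda\Dt^2}{4}\|B_1^{(1)} - B_1^{(i)}\|_2^2$.

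Finally, I would exploit the key structural feature of (\ref{def_alpha_B_j}): only the first column of $B^{(j)}$ depends on $j$, so $P_1-P_i = B_1^{(1)}B_1^{(1)\T} - B_1^{(i)}B_1^{(i)\T}$. Setting $\rho := J^{(1)\T}J^{(i)} \in [-1,1]$, a direct computation yields $B_1^{(1)\T}B_1^{(i)} = 1 - \eps^2(1-\rho)$, hence
$\|B_1^{(1)}-B_1^{(i)}\|_2^2 = 2\eps^2(1-\rho), \qquad \|P_1-P_i\|_F^2 = 2\bigl(1-(B_1^{(1)\T}B_1^{(i)})^2\bigr) \le 4\eps^2(1-\rho)$.
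Substituting back and using $1-\rho \le 2$, the single-sample KL is bounded by $\lambda\eps^2\bigl[\tfrac{2\lambda}{\lambda+1} + \tfrac{\Dt^2}{2}\bigr]$, and multiplying by $n$ gives the stated inequality.

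There is no substantive obstacle here; the computation is essentially algebraic. The only care-demanding step is the trace identity $\tr((\Sigma^{(i)})^{-1}\Sigma^{(1)}) - p = \tfrac{\lambda^2}{2(\lambda+1)}\|P_1-P_i\|_F^2$, which requires tracking the cancellation $K \cdot \tfrac{\lambda^2}{\lambda+1}$ correctly, and the use of the rank-one structure of $P_1-P_i$ so that $\|P_1-P_i\|_F^2$ reduces to a function of a single inner product $B_1^{(1)\T}B_1^{(i)}$.
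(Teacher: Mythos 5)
Your proof is correct and follows the paper's route: tensorize, reduce to a single Gaussian KL (you make explicit the chain-rule-plus-symmetry step the paper glosses over), apply the Gaussian KL formula, and bound the trace and mean-difference terms via the structure of $B^{(j)}$. Where the paper cites Lemmas A.2--A.3 of \cite{vu2013minimax} for the identity $\tr\bigl((\Sigma^{(i)})^{-1}(\Sigma^{(1)}-\Sigma^{(i)})\bigr)=\tfrac{\lambda^2}{2(\lambda+1)}\|P_1-P_i\|_F^2$ and the bound of $\|P_1-P_i\|_F^2$ in terms of $\|J^{(1)}-J^{(i)}\|_2^2$, you derive both directly from $\Sigma^{(1)}=\bI_p+\lambda P_1$ and $(\Sigma^{(i)})^{-1}=\bI_p-\tfrac{\lambda}{\lambda+1}P_i$; this self-contained calculation is cleaner and also tracks a pair of mutually cancelling factor-of-two slips in the paper's intermediate displays for $I_1$ and $\|P_1-P_i\|_F^2$, so both arguments land on the same final bound.
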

\begin{proof}
	Since $(\X, \Y)$ contains $n$ i.i.d. copies of $(X, Y)$, it suffices to prove 
	\[
	\KL(\PP_{\theta^{(1)}}, \PP_{\theta^{(i)}}) =  \KL\left(N_p(\mu^{(1)}, \Sigma^{(1)}),N_p(\mu^{(i)}, \Sigma^{(i)}) \right) \le  \left({2\lambda \over 1+\lambda} + {\Dt^2\over 2}\right) \lambda ~ \eps^2.
	\]
	By the formula of KL-divergence between two multivariate normal distributions, 
	\begin{align*}
		\KL(\PP_{\theta^{(1)}}, \PP_{\theta^{(i)}}) \le   & ~  {1\over 2}\left\{
		\tr\left[(\Sigma^{(i)})^{-1}\left(\Sigma^{(1)} -\Sigma^{(i)}\right) \right] + \log{|\Sigma^{(i)}| \over |\Sigma^{(1)}|}
		\right\}\\
		& + {1\over 2}\left(\mu^{(i)} - \mu^{(1)}\right)^\T ( \Sigma^{(i)})^{-1} \left(\mu^{(i)} - \mu^{(1)}\right)\\
		:= & ~  I_1 + I_2.
	\end{align*}
	From \cite[Lemmas A.2 \& A.3]{vu2013minimax}, 
	\[
	I_1 = {\lambda^2 \over 1+\lambda}\cdot {1\over 2}\left\|B^{(i)}(B^{(i)})^\T-B^{(1)}(B^{(1)})^\T\right\|_F^2 \le {\lambda^2\over 1+\lambda}{\eps^2\over 2}\left\|J^{(i)} - J^{(1)}\right\|_2^2.
	\]
	For $I_2$, by using part (i) of Lemma \ref{lem_theta}
	together with 
	\[
	\mu^{(i)} - \mu^{(1)} = \sqrt{\lambda} (B^{(i)} - B^{(1)})\alpha = {\Dt\sqrt{\lambda}\over 2} \eps(J^{(i)} - J^{(1)}),
	\]
	from (\ref{def_alpha_B_j}), we find
	\begin{align*}
		I_2 &=  {\lambda \Dt^2 \over 8}\eps^2 (J^{(i)} - J^{(1)})^\T \left(
		\bI_p - {\lambda \over \lambda+1}B^{(i)}(B^{(i)})^\T\right) (J^{(i)} - J^{(1)})\\
		&\le  {\lambda \Dt^2 \over 8}\eps^2 \left\|J^{(i)} - J^{(1)}\right\|_2^2.
	\end{align*}
	Combining the bounds of $I_1$ and $I_2$ and using $\|J^{(i)} - J^{(1)}\|_2^2 \le 4$ complete the proof. 
\end{proof}

\medspace

Recall that $L_{\theta}(\cdot)$ is defined in (\ref{def_L}). 
The following lemma establishes lower bounds of $L_{\theta^{(i)}}(\wh g) + L_{\theta^{(j)}}(\wh g)$ for any measurable $\wh g$.

\begin{lemma}\label{lem_lb_L}
	Let $\theta^{(i)}$ for $1\le i\le M$ be constructed as (\ref{def_thetas}) -- (\ref{def_alpha_B_j}). Under conditions of Theorem \ref{thm_lowerbound}, for any measurable $\wh g$, one has  
	\[
	L_{\theta^{(i)}}(\wh g) + L_{\theta^{(j)}}(\wh g) ~ \gtrsim ~ e^{-{\Dt_x^2/ 8}}\sqrt{\eps^2 \over \lambda}
	\]
	with $\Dt_x^2 = \lambda \Dt^2/(1+\lambda)$.
\end{lemma}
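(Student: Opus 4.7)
The plan is to apply a two-hypothesis testing argument in two-dimensional coordinates tailored to the construction of $\theta^{(i)},\theta^{(j)}$. By Lemma \ref{lem_theta}(ii), the Bayes classifier under $\theta^{(\ell)}$ satisfies $h_\ell(x) := g^*_{\theta^{(\ell)}}(x) = \1\{(B_1^{(\ell)})^\T x \ge 0\}$. Writing $f_\ell$ for the marginal density of $X$ under $\theta^{(\ell)}$ (a mixture of $N(\pm\mu^{(\ell)},\Sigma^{(\ell)})$) and $D := \{x : h_i(x) \ne h_j(x)\}$ for the disagreement region, the elementary inequality $\1\{\wh g \ne h_i\} + \1\{\wh g \ne h_j\} \ge 1$ on $D$ gives, for any measurable $\wh g$,
\[
L_{\theta^{(i)}}(\wh g) + L_{\theta^{(j)}}(\wh g) \;\ge\; \int_{D} \min(f_i, f_j)\, dx.
\]
The remainder of the proof lower bounds the right-hand side by $\eps\,e^{-\Dt_x^2/8}/\sqrt\lambda$.

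Next I would introduce 2D coordinates that diagonalise the geometry. Let $V \in \RR^p$ be the unit vector with $B_1^{(j)} = \cos\phi\cdot B_1^{(i)} + \sin\phi\cdot V$, where $\phi\in(0,\pi/2)$ is the angle between $B_1^{(i)}$ and $B_1^{(j)}$; using $\|B_1^{(i)} - B_1^{(j)}\|^2 = \eps^2\|J^{(i)} - J^{(j)}\|^2$ together with Lemma \ref{lem_J} gives $\phi\asymp\eps$. Setting $T := (B_1^{(i)})^\T X$ and $S := V^\T X$, the explicit form (\ref{def_alpha_B_j}) of $B^{(i)}$ yields $V^\T B^{(i)} = \0_K^\T$, and a short calculation shows that under $\theta^{(i)}$, $T$ and $S$ are jointly Gaussian with $T\mid Y = k \sim N\bigl((-1)^{k+1}\sqrt\lambda\,\Dt/2,\, 1+\lambda\bigr)$, $S\sim N(0,1)$, and $S$ independent of both $T$ and $Y$. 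The set $D$ depends only on $(T,S)$ and is the wedge $\{\sign(T) \ne \sign(\cos\phi\,T + \sin\phi\,S)\}$ of total angular width $2\phi \asymp \eps$ about the $S$-axis.

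Conditioning on $S = s$ and applying the mean-value theorem to the Gaussian c.d.f.~on intervals of length $\tan\phi\,|s|$ near $T=0$ gives, after integrating against $\varphi(s)\,ds$ and using $\Dt_x^2 = \lambda\Dt^2/(1+\lambda)$,
\[
\PP_{\theta^{(i)}}\bigl(D\,\big|\,Y=1\bigr)\;\asymp\;\frac{\tan\phi}{\sqrt{1+\lambda}}\,\varphi(\Dt_x/2)\int_0^\infty s\,\varphi(s)\,ds\;\asymp\;\frac{\eps}{\sqrt\lambda}\,e^{-\Dt_x^2/8},
\]
and the symmetric $Y=0$ contribution yields $\int_D f_i\, dx \gtrsim (\eps/\sqrt\lambda)\,e^{-\Dt_x^2/8}$. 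To pass from $f_i$ to $\min(f_i,f_j)$, I would use Lemma \ref{lem_theta}(i) (equal determinants) and the Woodbury form $(\Sigma^{(\ell)})^{-1} = \bI_p - \lambda(1+\lambda)^{-1}B^{(\ell)}(B^{(\ell)})^\T$ to derive the clean identity
\[
\log\frac{f_j(x\mid Y = k)}{f_i(x\mid Y = k)} \;=\; \frac{(-1)^{k+1}\sqrt\lambda\,\Dt}{2(1+\lambda)}(T' - T) + \frac{\lambda}{2(1+\lambda)}\bigl(T'^2 - T^2\bigr),\qquad T' := \cos\phi\,T + \sin\phi\,S,
\]
the point being that $(B^{(i)})^\T x$ and $(B^{(j)})^\T x$ differ only in their first coordinate, by $T - T'$. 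On the box $\{|T|\vee|S|\le M\}$ (for fixed constant $M$) one has $|T'-T|,|T'^2-T^2| \lesssim \eps$, so the display is bounded by $C\eps(1+\Dt/\sqrt\lambda)$, which by the constraint $\sigma^2p/(\lambda n)\le c_3$ and the definition (\ref{def_epsilon}) of $\eps$ can be made $\le \log 2$ by choosing the absolute constants $c_0,c_1$ small enough. Hence $\min(f_i,f_j)\ge f_i/2$ on the box, which combined with the previous display yields the claim.

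The main obstacle is reconciling the scales in the third step: the portion of $D$ that dominates $\int_D f_i$ must be contained in a bounded box on which the density-ratio bound applies. This requires truncating $|S|$ at a fixed $M$ and verifying both that the tail contribution from $|S|>M$ is negligible compared to $(\eps/\sqrt\lambda)\,e^{-\Dt_x^2/8}$ (a Gaussian-tail estimate that must remain sharp as $\Dt\to 0$), and that the bound $|\log(f_j/f_i)|\le\log 2$ on $\{|T|\vee|S|\le M\}$ holds uniformly in the large-$\Dt$ regime (where the first term of the log-ratio display can be dangerous) through a careful tracking of the absolute constants $c_0,c_1,c_3$ propagating through the definition of $\eps$.
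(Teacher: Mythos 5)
Your proposal is correct in conception and reaches the claimed bound, but it takes a genuinely different route from the paper's proof. Both start identically with $L_{\theta^{(i)}}(\wh g) + L_{\theta^{(j)}}(\wh g) \ge \int_D \min(f_i, f_j)\,dx$ over the disagreement wedge $D$, and both reduce to a two-dimensional integral along $(T,S)$ (the paper via an orthogonal rotation $H$ mapping $B_1^{(i)},B_1^{(j)}$ to $(a,\pm b,\0_{p-2})$; you via the orthonormal frame $(B_1^{(i)}, V)$). The divergence is in how the $\min$ is handled. The paper first extracts $e^{-\Dt_x^2/8}$ by comparing the shifted Gaussian $e^{-\|x\mp\mu\|_\Sigma^2/2}$ with the centered one on the sign-constrained half-wedge, then treats the remaining $\min\{e^{-x^\T\Sigt^{-1}x/2},e^{-x^\T\Sigtp^{-1}x/2}\}$ \emph{exactly}: it splits the wedge by $y_1 \gtrless 0$ so the $\min$ picks a single Gaussian on each piece, standardizes, and recognizes the integral as an arc-length fraction of the unit circle --- yielding $\|\wt\zeta_1-\wt\zeta_2\|_2\asymp \sqrt{\eps^2/\lambda}$ with no truncation or approximation. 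You instead lower bound $\int_D f_i\,dx$ by conditioning on $S$, applying the mean value theorem to the $T$-marginal, and integrating over $|S|\le M$; you then replace $\min(f_i,f_j)$ by $f_i/2$ on the box $\{|T|\vee|S|\le M\}$ via a componentwise density-ratio bound (correctly decomposed over the mixture components, since $\min(a+b,c+d)\ge\min(a,c)+\min(b,d)$). The two obstacles you flag --- that the $|S|>M$ tail is negligible, and that $|\log(f_j/f_i)|\lesssim \eps(\Dt/\sqrt\lambda + 1)$ is $\le\log 2$ uniformly --- do check out: the tail loses only a factor $e^{-M^2/2}$, and the definition (\ref{def_epsilon}) of $\eps$ together with $1/\lambda\le c_2$ and $p/(\lambda n)\le c_3$ gives $\eps^2\lesssim c_0c_1c_3$ and $\eps^2\Dt_x^2\lesssim c_0c_1c_3$, so $\eps(1+\Dt/\sqrt\lambda)$ is controllable by the free constants. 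Your approach is more modular --- separating ``mass of $f_i$ in $D$'' from ``closeness of $f_i$ and $f_j$'' --- at the cost of truncation and explicit constant tracking that the paper's exact arc-length computation avoids entirely.
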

\begin{proof}
	Pick any $i\ne j\in \{1,\ldots, M\}$ and any $\wh g$. For simplicity, we write $\theta = \theta^{(i)}$ and $\theta' = \theta^{(j)}$ with corresponding $B = B^{(i)}$ and $B' = B^{(j)}$.  We also write $L_\theta = L_\theta(\wh g)$ and $L_{\theta'} = L_{\theta'}(\wh g)$. The proof consists of three steps:
	\begin{enumerate}
		\item[(a)] Bound $L_{\theta} + L_{\theta'}$ from below by a $p$-dimensional integral;
		\item[(b)] Reduce the $p$-dimensional integral to a $2$-dimensional integral; 
		\item[(c)] Bound from below the $2$-dimensional integral.
	\end{enumerate}
	
	\paragraph{Step (a)}
	
	By definition in (\ref{def_L}), 
	\begin{align*}
		L_{\theta} + L_{\theta'} &=  \int_{\wh g \ne g^*_\theta}d \PP_{\theta}(x) + \int_{\wh g \ne g^*_{\theta'}}  d\PP_{\theta'}(x)\\
		&\ge \int_{\{\wh g \ne g^*_\theta\} \cup \{\wh g \ne g^*_{\theta'}\}}  \min\left\{d\PP_{\theta}(x), ~  d\PP_{\theta'}(x)\right\}\\
		&\ge  \int_{g^*_{\theta} \ne g^*_{\theta'}} \min\left\{d\PP_{\theta}(x), ~  d\PP_{\theta'}(x)\right\}.
	\end{align*}
	In the last step we  used 
	\begin{align*}
		\{g^*_{\theta} \ne g^*_{\theta'}\} &= \{\wh g=  g^*_{\theta}, \wh g \ne  g^*_{\theta'}\} \cup \{
		\wh g \ne  g^*_{\theta}, \wh g =  g^*_{\theta'}
		\}\\
		& \subseteq \{\wh g \ne g^*_\theta\} \cup \{\wh g \ne g^*_{\theta'}\}.
	\end{align*}
	Since 
	\[
	\PP_{\theta}  = {1\over 2}N_p(\mu_\theta, \Sigma_\theta)+ {1\over 2}N_p(-\mu_\theta, \Sigma_\theta)
	\]
	and 
	$
	g_\theta^*(x) = \1\{
	x^\T \Sigt^{-1} \mut \ge 0
	\}
	$
	from (\ref{def_gx_star}),  we obtain 
	\begin{align}\label{lb_L_theta_theta_prime}\nonumber
		& 	L_{\theta} + L_{\theta'} \\\nonumber
		& \ge {1\over 2}\int_{\substack{x^\T \Sigt^{-1}\mut \ge 0\\
				x^\T \Sigtp^{-1}\mutp <0}}{1\over (2\pi)^{p/2}}
		\min\left\{
		|\Sigt|^{-1/2} \left[\exp\left(-{1\over 2}\|x-\mut\|^2_{\Sigt}\right)+\exp\left(-{1\over 2}\|x+\mut\|^2_{\Sigt}\right)\right],\right.\\\nonumber
		&\hspace{5cm}\left. |\Sigtp|^{-1/2} \left[\exp\left(-{1\over 2}\|x-\mutp\|^2_{\Sigtp}\right)+\exp\left(-{1\over 2}\|x+\mutp\|^2_{\Sigtp}\right)\right]
		\right\}{\rm d} x\\\nonumber
		&~ + {1\over 2}\int_{\substack{x^\T \Sigt^{-1}\mut < 0\\
				x^\T \Sigtp^{-1}\mutp \ge 0}}{1\over (2\pi)^{p/2}}
		\min\left\{
		|\Sigt|^{-1/2} \left[\exp\left(-{1\over 2}\|x-\mut\|^2_{\Sigt}\right)+\exp\left(-{1\over 2}\|x+\mut\|^2_{\Sigt}\right)\right],\right.\\\nonumber
		&\hspace{5cm}\left. |\Sigtp|^{-1/2} \left[\exp\left(-{1\over 2}\|x-\mutp\|^2_{\Sigtp}\right)+\exp\left(-{1\over 2}\|x+\mutp\|^2_{\Sigtp}\right)\right]
		\right\}{\rm d} x\\\nonumber
		&= \int_{\substack{x^\T \Sigt^{-1}\mut \ge 0\\
				x^\T \Sigtp^{-1}\mutp <0}}{	|\Sigt|^{-1/2}\over (2\pi)^{p/2}}
		\min\left\{
		\exp\left(-{1\over 2}\|x-\mut\|^2_{\Sigt}\right)+\exp\left(-{1\over 2}\|x+\mut\|^2_{\Sigt}\right),\right.\\\nonumber
		&\hspace{5cm}\left.  \exp\left(-{1\over 2}\|x-\mutp\|^2_{\Sigtp}\right)+\exp\left(-{1\over 2}\|x+\mutp\|^2_{\Sigtp}\right)
		\right\}{\rm d} x\\\nonumber
		&\ge \int_{\substack{x^\T \Sigt^{-1}\mut \ge 0\\
				x^\T \Sigtp^{-1}\mutp <0}}{	|\Sigt|^{-1/2}\over (2\pi)^{p/2}}
		\min\left\{
		\exp\left(-{1\over 2}\|x-\mut\|^2_{\Sigt}\right), ~ \exp\left(-{1\over 2}\|x+\mutp\|^2_{\Sigtp}\right)
		\right\}{\rm d} x\\
		&\ge  e^{- {\Dt_x^2\over 8}} \int_{\substack{x^\T \Sigt^{-1}\mut \ge 0\\
				x^\T \Sigtp^{-1}\mutp <0}}{	|\Sigt|^{-1/2}\over (2\pi)^{p/2}}
		\min\left\{
		\exp\left(-{1\over 2}x^\T \Sigt^{-1} x\right), ~ \exp\left(-{1\over 2}x^\T \Sigtp^{-1} x\right)
		\right\}{\rm d} x .
	\end{align}
	The equality uses the fact that $X$ has the same distribution as $-X$ and the identity
	\begin{equation}\label{eq_deter_Sigma}
		|\Sigt| = |\Sigtp| = (\lambda+1)^K
	\end{equation}
	from part (i) of Lemma \ref{lem_theta}. The last step uses 
	the fact that 
	\[
	{\Dt_x^2 \over 4} \overset{(\ref{def_Dt_x})}{=} \mut^\T \Sigt^{-1}\mut  ={\lambda \over 1+\lambda} {\Dt^2\over 4} = \mutp^\T \Sigtp^{-1}\mutp
	\]
	from part (iii) of Lemma \ref{lem_theta}.\\ 
	
	\paragraph{Step (b)}
	In the following, we provide a lower bound for  
	\[
	T := \int_{\substack{x^\T \Sigt^{-1}\mut \ge 0\\
			x^\T \Sigtp^{-1}\mutp <0}}{	|\Sigt|^{-1/2}\over (2\pi)^{p/2}}
	\min\left\{
	\exp\left(-{1\over 2}x^\T \Sigt^{-1} x\right), ~ \exp\left(-{1\over 2}x^\T \Sigtp^{-1} x\right)
	\right\}{\rm d} x.
	\]
	Recall from (\ref{def_alpha_B_j}) and (\ref{eq_Sigma_inv_2}) that 
	\begin{align*}
		\Sigt^{-1} &= \bI_p - {\lambda \over 1+\lambda }B_{-1}B_{-1}^\T  - {\lambda \over 1+\lambda }B_1B_1^\T,\\
		\Sigtp^{-1} &= \bI_p - {\lambda \over 1+\lambda }B_{-1}B_{-1}^\T  - {\lambda \over 1+\lambda }B'_1{B'_1}^\T.
	\end{align*} 
	Further note from part (ii) of Lemma \ref{lem_theta} that
	\begin{align*}
		\Sigt^{-1}\mut = {\sqrt{\lambda}\over 1+\lambda} {\Dt\over 2}B_1,\qquad 	\Sigtp^{-1}\mutp = {\sqrt \lambda\over 1+\lambda} {\Dt\over 2}B_1'.
	\end{align*}
	Plugging these expressions in $T$ yields 
	\begin{align*}
		T &=  \int_{\substack{x^\T B_1 \ge 0\\
				x^\T B_1' <0}}{	|\Sigt|^{-1/2}\over (2\pi)^{p/2}}
		\exp\left(
		-{1\over 2}x^\T \left(\bI_p - {\lambda\over \lambda+1}B_{-1}B_{-1}^\T\right) x\right)\\
		&\hspace{1cm}\min\left\{
		\exp\left({1\over 2}x^\T{\lambda \over 1+\lambda}B_1B_1^\T x\right), ~ \exp\left({1\over 2}x^\T  {\lambda \over 1+\lambda}B'_1{B'_1}^\T x\right)
		\right\}{\rm d} x.
	\end{align*}
	Let $H\in\cO_{p\times p}$ such that 
	\begin{equation}\label{def_H}
		H B_1 = \begin{bmatrix}
			a \\ b \\ \0_{p-2}
		\end{bmatrix} := \begin{bmatrix}
			u \\ \0_{p-2}
		\end{bmatrix},\qquad HB_1' = \begin{bmatrix}
			a \\ -b \\ \0_{p-2}
		\end{bmatrix} := \begin{bmatrix}
			v \\ \0_{p-2}
		\end{bmatrix},\qquad a > 0.
	\end{equation}		
	Such an $H$ exists since $[B_1~  B_1'] \in \RR^{p\times 2}$ has rank $2$ and $\|B_1\|_2 = \|B_1'\|_2 = 1$. 
	By changing variables $y = Hx$ and by writing $y_I^\T = (y_1, y_2)$, we obtain
	\begin{align*}
		T &=  \int_{\substack{y_I^\T u\ge 0\\
				y_I^\T v<0}}{|\Sigt|^{-1/2}\over (2\pi)^{p/2}}
		\exp\left(
		-{1\over 2}y^\T H\left(\bI_p - {\lambda\over \lambda+1}B_{-1}B_{-1}^\T\right)H^\T y\right)\\
		&\hspace{1cm}\min\left\{
		\exp\left({\lambda(y_I^\T u)^2 \over 2(1+\lambda)}\right), ~ \exp\left( {\lambda(y_I^\T v)^2 \over 2(1+\lambda)}\right)
		\right\} {\rm d}y.
	\end{align*}
	Denote 
	\begin{equation}\label{def_Q}
		Q := H\left(\bI_p - {\lambda\over \lambda+1}B_{-1}B_{-1}^\T\right)^{-1}H^\T = H(\lambda B_{-1}B_{-1}^\T + \bI_p)H^\T. 
	\end{equation}
	Notice that $|Q| = (\lambda+1)^{K-1} = |\Sigt| / (\lambda+1)$ by (\ref{eq_deter_Sigma}). We further have 
	\begin{align*}
		T &=  {1\over \sqrt{\lambda+1}}\int_{\substack{y_I^\T u\ge 0\\
				y_I^\T v<0}}{|Q|^{-1/2}\over (2\pi)^{p/2}}
		\exp\left(
		-{1\over 2}y^\T Q^{-1} y\right)\\
		&\hspace{1cm}\min\left\{
		\exp\left({\lambda(y_I^\T u)^2 \over 2(1+\lambda)}\right), ~ \exp\left({\lambda(y_I^\T v)^2 \over 2(1+\lambda)}\right)
		\right\} {\rm d}y\\
		&= {1\over \sqrt{\lambda+1}}\int_{\substack{ay_1 + by_2\ge 0\\
				ay_1 - by_2<0}}{	|Q_{II}|^{-1/2}\over 2\pi}
		\exp\left(
		-{1\over 2}y_I^\T (Q_{II})^{-1} y_I\right)\\
		&\hspace{1cm}\min\left\{
		\exp\left({\lambda(ay_1 + by_2)^2 \over 2(1+\lambda)}\right), ~ \exp\left( {\lambda(ay_1 - by_2)^2 \over 2(1+\lambda)}\right)
		\right\}{\rm d}y_I
	\end{align*}
	where $Q_{II}$ is the first $2\times 2$ submatrix of $Q$. Recall that $a>0$ and on the area of integration
	$\{ay_1 + by_2\ge 0,	ay_1 - by_2<0\}$ we have 
	\[
	\exp\left({\lambda(ay_1 + by_2)^2 \over 2(1+\lambda)}\right) \ge \exp\left({\lambda(ay_1 - by_2)^2 \over 2(1+\lambda)}\right) \quad \iff  \quad y_1 \ge 0. 
	\]
	Splitting $T$ into two parts further gives 
	\begin{align*}
		T &= {1\over \sqrt{\lambda+1}}\int_{\substack{ay_1 + by_2\ge 0\\
				ay_1 - by_2<0\\ y_1 \ge 0}}{	|Q_{II}|^{-1/2}\over 2\pi}
		\exp\left[
		-{1\over 2}y_I^\T \left(Q_{II}^{-1} - {\lambda\over 1+\lambda}vv^\T\right) y_I\right] {\rm d} y_I\\
		& ~ + {1\over \sqrt{\lambda+1}}\int_{\substack{ay_1 + by_2\ge 0\\
				ay_1 - by_2<0\\ y_1 < 0}}{	|Q_{II}|^{-1/2}\over 2\pi}
		\exp\left[
		-{1\over 2}y_I^\T \left(Q_{II}^{-1} - {\lambda\over 1+\lambda}uu^\T\right) y_I\right] {\rm d}y_I\\
		& := T_1 + T_2.
	\end{align*}
	
	\smallskip 
	
	\paragraph{Step (c)}
	We bound from below $T_1$ first. Denote 
	\begin{align}\label{def_G}
		G = \left(Q_{II}^{-1} - {\lambda\over 1+\lambda}vv^\T\right) ^{-1}
		&= Q_{II} +{ {\lambda \over 1+\lambda}Q_{II}vv^\T Q_{II} \over 1 - {\lambda\over 1+\lambda}v^\T Q_{II}v} 
		= Q_{II} + \lambda Q_{II}vv^\T Q_{II}
	\end{align}
	where the second equality uses the Sherman-Morrison formula and the third equality is due to the fact that 
	\begin{align}\label{eq_vQIIv}\nonumber
		v^\T Q_{II} v &= {B'_1}^\T H^\T H (\lambda B_{-1}B_{-1}^\T + \bI_p) H^\T H B'_1  &&\textrm{by (\ref{def_H}) and (\ref{def_Q})}\\\nonumber
		&=  \lambda{B'_1}^\T B_{-1}B_{-1}^\T B_1' + 1 && \textrm{by }H\in \cO_{p\times p}\\
		& = 1 &&\textrm{by (\ref{def_alpha_B_j}).}
	\end{align}
	Further observe that 
	\[
	|G| = |Q_{II}| \left|
	\bI_2 + \lambda Q_{II}^{1/2}vv^\T Q_{II}
	\right| =  |Q_{II}|  ( 1 + \lambda v^\T Q_{II}v) =  |Q_{II}| (1 +\lambda).
	\]
	We obtain
	\begin{align*}
		T_1 &=  \int_{\substack{ay_1 + by_2\ge 0\\
				ay_1 - by_2<0\\ y_1 \ge 0}}{	|G|^{-1/2}\over 2\pi}
		\exp\left[
		-{1\over 2}y_I^\T G^{-1} y_I\right]{\rm d}y_I \\
		& = \int_{\substack{ay_1 - by_2<0\\ ay_1 \ge 0}}{	|G|^{-1/2}\over 2\pi}
		\exp\left[
		-{1\over 2}y_I^\T G^{-1} y_I\right]{\rm d}y_I.
	\end{align*}
	By changing of variables $z = G^{-1/2}y_I$ again and writing
	\[
	\zeta_1 = G^{1/2}v,\qquad \zeta_2 = G^{1/2}\begin{bmatrix}
		a \\ 0
	\end{bmatrix}
	\]
	for simplicity, 
	one has 
	\begin{align*}
		T_1 = \int_{\substack{z^\T \zeta_1<0\\ z^\T \zeta_2 \ge 0}}{1\over 2\pi}
		e^{-{1\over 2}z^\T z}{\rm d}z & =
		{1\over \pi}\int_{\substack{\zeta_{11}\cos\theta + \zeta_{12}\sin\theta<0\\  \zeta_{21}\cos\theta + \zeta_{22}\sin\theta \ge 0}} {\rm d}\theta.
	\end{align*}
	Note that, the integral is simply the area within the half unit circle $\{(x,y): x^2 + y^2 \le 1, y \ge 0\}$ intersected by vectors $\zeta_1$ and $\zeta_2$. We thus conclude 
	\[
	T_1  = {1\over 2\pi}\textrm{arc}(\wt\zeta_1, \wt\zeta_2) \ge  {1\over 2\pi}\left\|\wt \zeta_1 - \wt \zeta_2\right\|_2
	\]
	where $\wt\zeta_1 = \zeta_1 / \|\zeta_1\|_2$, $\wt \zeta_2 = \zeta_2 / \|\zeta_2\|_2$ and $\textrm{arc}(\wt\zeta_1, \wt\zeta_2)$ denotes the length of the arc between $\wt\zeta_1$ and $\wt\zeta_2$.
	
	We proceed to calculate $\|\wt \zeta_1 - \wt \zeta_2\|_2$. First note that 
	\begin{align*}
		\|\zeta_1\|_2^2 = v^\T G v \overset{ (\ref{def_G}) }{=} v^\T \left(Q_{II} + \lambda Q_{II}vv^\T Q_{II}\right)v \overset{(\ref{eq_vQIIv})}{=} 1 + \lambda. 
	\end{align*}
	Since
	\[
	Q_{II}v \overset{(\ref{def_Q})}{=} H_I(\lambda B_{-1}B_{-1}^\T + \bI_p)H_I^\T v \overset{(\ref{def_H})}{=} H_I(\lambda B_{-1}B_{-1}^\T + \bI_p)H^T HB_1' = H_IB_1',
	\]
	we obtain
	\begin{align*}
		\|\zeta_2\|_2^2 &=  {1\over 4}(u+v)^\T G(u+v)\\
		& = {1\over 4}(B_1+B_1')^\T H_I^\T \left(Q_{II} + \lambda Q_{II}vv^\T Q_{II}\right)  H_I(B_1+B_1')\\
		&=  {1\over 4}(B_1+B_1')^\T H_I^\T \left[H_I(\lambda B_{-1}B_{-1}^\T + \bI_p)H_I^\T+ \lambda H_I B_1'{B_1'}^\T H_I^\T\right] H_I(B_1+B_1')\\
		& = {1\over 4}(B_1+B_1')^\T \left(\bI_p+ \lambda  B_1'{B_1'}^\T \right) (B_1+B_1')\\
		& = {1 \over 4}\left[
		\lambda+2 + 2(\lambda+1)B_1^\T B_1' + \lambda (B_1^\T B_1')^2
		\right].
	\end{align*}
	The penultimate step uses the orthogonality between $B_{-1}$ and $B_1 + B_1'$.
	Since
	\[
	1 - B_1^\T B_1' = {1\over 2}\|B_1 - B_1'\|_2^2 = {\eps^2\over 2}\|J^{(i)} - J^{(j)}\|_2^2 \le 2 \eps^2
	\]
	which can be bounded by a sufficiently small constant, 
	we have $B_1^\T B_1' \asymp 1$ hence $\|\zeta_2\|_2^2 \asymp \lambda + 1$. Finally, similar arguments yield 
	\begin{align*}
		\zeta_1^\T \zeta_2  &= {1\over 2}v^\T G (u+v)\\
		& = {1\over 2}(B_1')^\T \left(\bI_p + \lambda B_1'{B_1'}^\T \right)(B_1+B_1')\\
		& = {1\over 2}(1+\lambda)(1+B_1^\T B_1')\\
		&\asymp 1+\lambda.
	\end{align*}
	We thus have, after a bit algebra, 
	\[
	\|\zeta_1\|_2^2\|\zeta_2\|_2^2 - (\zeta_1^\T \zeta_2)^2 = {1\over 4}(1+\lambda)(1+B_1^\T B_1')(1-B_1^\T B_1') \asymp (1+\lambda) \eps^2,
	\]
	hence
	\begin{align*}
		{1\over 2} \left\|\wt \zeta_1 - \wt \zeta_2\right\|_2^2 &=   {\|\zeta_1\|_2\|\zeta_2\|_2-\zeta_1^\T \zeta_2 \over \|\zeta_1\|_2\|\zeta_2\|_2}\\
		&= {\|\zeta_1\|_2^2\|\zeta_2\|_2^2-(\zeta_1^\T \zeta_2)^2 \over \|\zeta_1\|_2\|\zeta_2\|_2+\zeta_1^\T \zeta_2}{1\over \|\zeta_1\|_2\|\zeta_2\|_2}\\
		&\asymp {\eps^2 \over 1+\lambda}
	\end{align*}
	implying that
	\[
	T_1 \gtrsim \sqrt{\eps^2 \over \lambda}.
	\]
	
	Following the same line of reasoning, we can derive the same lower bound for $T_2$.  We conclude that
	\[
	L_{\theta} + L_{\theta'}\gtrsim e^{-{\Dt_x^2/ 8}}\sqrt{\eps^2 \over \lambda},
	\]
	which completes the proof. 
\end{proof}

\subsection{Proof of (\ref{lb_1})}

The proof of (\ref{lb_1}) follows the same lines of reasoning as the proof of (\ref{lb_2}).  To construct hypotheses of $\Theta(\lambda, \sigma = 1, \Dt)$, we consider 
\begin{equation}\label{def_thetas_prime}
	\theta^{(j)} = \left(\sqrt{\lambda}~ B, \bI_K, \bI_p, \a^{(j)}, \a^{(j)}, {1\over 2}, {1\over 2}\right), \quad \text{for } j = 1,\ldots, M',
\end{equation}
with $B\in \cO_{p\times K}$ and  
\begin{equation}\label{def_alphas}
	\a^{(j)} =  {\Dt\over 2}\begin{bmatrix}
		\sqrt{1-(\eps')^2} \vspace{2mm}\\ 
		\eps' J^{(j)}
	\end{bmatrix}.
\end{equation}
Here $J^{(j)}$ for $j = 1,\ldots, M'$ are again chosen according to Lemma \ref{lem_J} with $m = K-1$ and 
\begin{equation}\label{def_epsilon_prime}
	(\eps')^2 =  {c_0c_1 (K-1) \over n\Dt^2}.
\end{equation}
for some constant $c_0\in (0,1/8]$ and $c_1>0$.  Notice that $\|\a^{(j)}\|_2^2  = {\Dt^2 / 4}$
for all $j\in \{0, 1,\ldots, M'\}$, so that  $\theta^{(j)}\in \Theta(\lambda, \sigma =1, \Dt)$. From part (iii) of Lemma \ref{lem_theta}, we also have 
\[
{\Dt_x^2 \over 4} \overset{(\ref{def_Dt_x})}{=}\mu_{\theta^{(j)}}^\T \Sigma_{\theta^{(j)}}^{-1}\mu_{\theta^{(j)}} = {\lambda \over 1+\lambda} \|\a^{(j)}\|_2^2 = {\lambda \over 1+\lambda}{\Dt^2 \over 4},\qquad \forall j\in \{1,\ldots, M'\}.
\]

Next, to invoke Lemma \ref{lem_tsybakov}, it remains to verify 
\begin{enumerate}
	\item[(1)] $\KL(\PP^{(\D)}_{\theta^{(0)}}, \PP^{(\D)}_{\theta^{(i)}})\le c_0 \log M'$ for all $1\le i\le M'$;
	\item [(2)] $L_{\theta^{(i)}}(\wh g) + L_{\theta^{(j)}}(\wh g) \ge 2\gamma$, for all $1\le i\ne j\le M'$ and any $\wh g$, with 
	\[
	\gamma \asymp{1\over \Dt_x} e^{- {\Dt_x^2/ 8}}\sqrt{K\over n},\qquad \Dt_x^2 = {\lambda \over 1+\lambda}\Dt^2.
	\]
\end{enumerate}
To prove (1), note that the distribution of $(Y,X)$ parametrized by $\theta^{(i)}$ is 
\[
\PP_{\theta^{(i)}} = {1\over 2}N_p(\mu_{\theta^{(i)}}, \Sigma_{\theta^{(i)}}) + {1\over 2}N_p(-\mu_{\theta^{(i)}}, \Sigma_{\theta^{(i)}})
\]
with 
$
\mu_{\theta^{(i)}} = \sqrt{\lambda} B \a^{(i)}
$
and $\Sigma_{\theta^{(i)}}  = \lambda BB^\T + \bI_p$. Following the arguments in the proof of Lemma \ref{lem_KL} yields
\begin{align}\label{eqn_KL_alphas}\nonumber
	\KL(\PP_{\theta^{(1)}}, \PP_{\theta^{(i)}}) =  & ~  {1\over 2}\left(\mu_{\theta^{(i)}} - \mu_{\theta^{(1)}}\right)^\T  \left(\lambda BB^\T + \bI_p\right)^{-1} \left(\mu_{\theta^{(i)}} - \mu_{\theta^{(1)}}\right)\\\nonumber
	= & ~ {\lambda\over 2}(\a^{(i)}- \a^{(1)})^\T B^\T {1\over \lambda+1} B B^\T B (\a^{(i)}-\a^{(1)}) &&\textrm{by (\ref{eq_Sigma_inv_1}),}\\
	= & ~ {\lambda\Dt^2 \over 8(1+\lambda)} (\eps')^2 \|J^{(i)} - J^{(1)}\|_2^2\\\nonumber
	\le & ~ ~ {c_0c_1 (K-1)\over 2n} &&\textrm{by }\|J^{(i)} - J^{(1)}\|_2^2\le 4.
\end{align}
Claim (1) then follows from $\log M' \ge c K$  by using Lemma \ref{lem_J} and the additivity of KL divergence among independent distributions.  Since claim  (2) is proved in Lemma \ref{lem_lb_L_prime}, the proof is complete.\qed 

\medskip

\begin{lemma}\label{lem_lb_L_prime}
	Let $\theta^{(i)}$ for $1\le i\le M'$ be constructed as (\ref{def_thetas_prime}) -- (\ref{def_alphas}). Under $K/ n \le c_1$ and $1/\lambda \le c_2$, for any measurable $\wh g$, one has  
	\[
	L_{\theta^{(i)}}(\wh g) + L_{\theta^{(j)}}(\wh g) ~  \gtrsim  ~ {1\over \Dt_x}  e^{-{\Dt_x^2 / 8}} \sqrt{K\over n}.
	\]
	with $\Dt_x^2 = \lambda \Dt^2/(1+\lambda)$.
\end{lemma}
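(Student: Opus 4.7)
The plan is to mirror the three-step structure of the proof of Lemma \ref{lem_lb_L}, but exploit the substantial simplification available here: the hypotheses $\theta^{(1)},\ldots,\theta^{(M')}$ constructed via \eqref{def_thetas_prime}--\eqref{def_alphas} all share the common covariance $\Sigma := \lambda BB^\T + \bI_p$ and differ only in the mean direction $\a^{(j)}$. By the analog of part (ii) of Lemma \ref{lem_theta}, $\Sigma^{-1}\mu_{\theta^{(j)}} = \tfrac{\sqrt{\lambda}}{1+\lambda}B\a^{(j)}$, so the Bayes rule at $\theta^{(j)}$ reduces to $g^*_{\theta^{(j)}}(x) = \1\{x^\T B\a^{(j)} \ge 0\}$.

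First, I would carry out step (a) of the proof of Lemma \ref{lem_lb_L} verbatim, which yields
\[
L_{\theta^{(i)}}(\wh g) + L_{\theta^{(j)}}(\wh g) \;\ge\; \int_{g^*_{\theta^{(i)}} \ne g^*_{\theta^{(j)}}} \min\bigl\{d\PP_{\theta^{(i)}}(x),\, d\PP_{\theta^{(j)}}(x)\bigr\}.
\]
Using the symmetry $X \stackrel{d}{=} -X$ under either hypothesis together with the identity $\|x-\mu_\theta\|_\Sigma^2 \le \|x\|_\Sigma^2 + \Dt_x^2/4$ on the event $\{x^\T\Sigma^{-1}\mu_\theta \ge 0\}$ (this uses part (iii) of Lemma \ref{lem_theta} to identify $\mu_{\theta^{(j)}}^\T\Sigma^{-1}\mu_{\theta^{(j)}} = \Dt_x^2/4$), the right-hand side is bounded below by
\[
e^{-\Dt_x^2/8}\,\PP_{X\sim N(0,\Sigma)}\!\left\{X^\T B\a^{(i)} \ge 0,\; X^\T B\a^{(j)} < 0\right\}.
\]
This is precisely where the shared-covariance structure simplifies things: unlike in Lemma \ref{lem_lb_L}, no explicit Householder reduction to a 2D integral is needed, since the two linear forms in $X$ are directly bivariate Gaussian.

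Second, I would compute the joint distribution of $(U_i, U_j) := (X^\T B\a^{(i)}, X^\T B\a^{(j)})$. Since $B^\T \Sigma B = B^\T(\lambda BB^\T + \bI_p)B = (1+\lambda)\bI_K$, both coordinates are centered Gaussian with variance $(1+\lambda)\|\a^{(j)}\|_2^2 = (1+\lambda)\Dt^2/4$, and their correlation equals
\[
\rho_{ij} \;=\; \frac{4\,\a^{(i)\T}\a^{(j)}}{\Dt^2} \;=\; 1 - \frac{(\eps')^2}{2}\|J^{(i)} - J^{(j)}\|_2^2,
\]
where I used $\a^{(k)} = (\Dt/2)[\sqrt{1-(\eps')^2};\, \eps' J^{(k)}]$ and $J^{(k)}\in\cO_{(K-1)\times 1}$. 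By Lemma \ref{lem_J}, $\|J^{(i)}-J^{(j)}\|_2^2 \ge 1/4$, so $1-\rho_{ij} \ge (\eps')^2/8$.

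Third, I would invoke the standard bivariate-Gaussian orthant formula
\[
\PP(U_i \ge 0,\, U_j < 0) \;=\; \tfrac{1}{4} - \tfrac{\arcsin\rho_{ij}}{2\pi} \;\ge\; c\sqrt{1-\rho_{ij}} \;\gtrsim\; \eps',
\]
where the first inequality follows from $\arcsin(\rho) \le \pi/2 - c\sqrt{1-\rho}$ for $\rho \in [0,1]$ (and $\rho_{ij}\in(0,1)$ for small $\eps'$). Combining with the previous display and substituting $(\eps')^2 \asymp K/(n\Dt^2)$ from \eqref{def_epsilon_prime} yields
\[
L_{\theta^{(i)}}(\wh g) + L_{\theta^{(j)}}(\wh g) \;\gtrsim\; e^{-\Dt_x^2/8}\,\eps' \;\asymp\; e^{-\Dt_x^2/8}\,\frac{1}{\Dt}\sqrt{\frac{K}{n}}.
\]
Since the assumption $1/\lambda \le c_2$ gives $\Dt_x^2 = \lambda\Dt^2/(1+\lambda) \asymp \Dt^2$, hence $\Dt_x \asymp \Dt$, the bound rewrites as the desired $\Dt_x^{-1} e^{-\Dt_x^2/8}\sqrt{K/n}$.

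The one step that requires care --- and which I expect to be the main obstacle --- is making the lower bound $\PP(U_i\ge0,U_j<0)\gtrsim\sqrt{1-\rho_{ij}}$ uniform over the range of $\rho_{ij}$. The Taylor expansion $\arcsin(1-\delta) = \pi/2 - \sqrt{2\delta} + O(\delta^{3/2})$ is only accurate for small $\delta$, so one needs either a crude but uniform inequality $\pi/2 - \arcsin\rho \ge c\sqrt{1-\rho}$ valid for all $\rho\in[0,1]$, or a separate argument when $\eps'$ is not asymptotically small (which is already controlled since $(\eps')^2 \lesssim K/(n\Dt^2)$ can be made small by tuning $c_0,c_1$ in \eqref{def_epsilon_prime}, an admissible choice since only $c_0\in(0,1/8]$ is required by Lemma \ref{lem_tsybakov}).
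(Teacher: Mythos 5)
Your proposal is correct, and it arrives at the same geometric quantity as the paper by a more streamlined route. The paper's proof of this lemma reuses the machinery from Lemma~\ref{lem_lb_L}: it constructs a Householder rotation $H\in\cO_{p\times p}$ aligning the two normal directions $\Sigma^{-1}\mu_{\theta^{(i)}},\Sigma^{-1}\mu_{\theta^{(j)}}$ into the first two coordinates, marginalizes down to a $2$-dimensional integral against $N(0,Q_{II})$, standardizes again, and bounds the resulting angular measure from below by the chord $\|\wt\zeta_1-\wt\zeta_2\|_2$. You observe (correctly) that because both hypotheses share the covariance $\Sigma$ and $B^\T\Sigma B = (1+\lambda)\bI_K$, the pair $(X^\T B\a^{(i)},X^\T B\a^{(j)})$ is already an explicit bivariate Gaussian with correlation $\rho_{ij} = 1 - \tfrac{(\eps')^2}{2}\|J^{(i)}-J^{(j)}\|_2^2$, so the whole reduction can be replaced by Sheppard's orthant formula $\PP(U_i\ge0,U_j<0)=\tfrac{\arccos\rho_{ij}}{2\pi}$. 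This is tidier bookkeeping, but it is the same geometry: the paper's arc-versus-chord bound $\mathrm{arc}(\wt\zeta_1,\wt\zeta_2)\ge\|\wt\zeta_1-\wt\zeta_2\|_2$ is literally the inequality $\arccos\rho\ge\sqrt{2(1-\rho)}$ in disguise. That inequality also resolves the ``one step requiring care'' that you flag at the end: since $\arccos\rho = \theta$ with $\rho=\cos\theta$ and $\theta\ge 2\sin(\theta/2)$ for all $\theta\in[0,\pi]$, the bound $\tfrac{\pi}{2}-\arcsin\rho = \arccos\rho \ge \sqrt{2(1-\rho)}$ holds \emph{uniformly} over $\rho\in[-1,1]$; no Taylor expansion or separate case analysis is needed. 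The rest of your computation (variance $(1+\lambda)\Dt^2/4$, $1-\rho_{ij}\ge(\eps')^2/8$ from $\|J^{(i)}-J^{(j)}\|_2^2\ge 1/4$, and $\Dt_x\asymp\Dt$ from $1/\lambda\le c_2$) matches what the paper establishes via $u^\T Q_{II}u=\Dt_x^2/4$ and $\|Q_{II}^{1/2}(u-v)\|_2^2=(\mut-\mutp)^\T\Sigma^{-1}(\mut-\mutp)$, so the argument is complete.
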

\begin{proof}
	The proof uses the same reasoning for proving Lemma \ref{lem_lb_L}.  Pick any $i\ne j \in \{0,\ldots, M'\}$ and write $L_\theta = L_{\theta^{(i)}}(\wh g)$ and $L_{\theta'}= L_{\theta^{(j)}}(\wh g)$.  From (\ref{lb_L_theta_theta_prime}), one has 
	\begin{align*}
		L_{\theta^{(i)}} + L_{\theta^{(j)}}  \ge e^{- {\Dt_x^2/ 8}} \int_{\substack{x^\T \Sigma^{-1}\mut \ge 0\\
				x^\T \Sigma^{-1}\mutp <0}}{	|\Sigma|^{-1/2}\over (2\pi)^{p/2}}
		\exp\left(-{1\over 2}x^\T \Sigma^{-1} x\right){\rm d} x 
	\end{align*}
	where $\Sigma := \Sigt = \Sigtp =\lambda BB^\T + \bI_p$.  Let $H\in \cO_{p\times p}$ such that 
	\[
	H \Sigma^{-1}\mut  = \begin{bmatrix}
		a \\ b \\ \0_{p-2}
	\end{bmatrix} := \begin{bmatrix}
		u \\ \0_{p-2}
	\end{bmatrix},\qquad H\Sigma^{-1}\mutp = \begin{bmatrix}
		a \\ -b \\ \0_{p-2}
	\end{bmatrix} := \begin{bmatrix}
		v \\ \0_{p-2}
	\end{bmatrix},\qquad a > 0.
	\]
	By changing variable $y = Hx$ and writing $y_I^\T = (y_1, y_2)$,  we find 
	\begin{align*}
		L_{\theta^{(i)}} + L_{\theta^{(j)}}  &\ge e^{- {\Dt_x^2\over 8}} \int_{\substack{y_I^\T u \ge 0\\
				y_I^\T v <0}}{|H\Sigma H^\T| \over (2\pi)^{p/2}}
		\exp\left(-{1\over 2}y^\T H\Sigma^{-1}H^\T  y\right)dy \\
		&= e^{- {\Dt_x^2\over 8}} \int_{\substack{y_I^\T u \ge 0\\
				y_I^\T v <0}}{|Q_{II}| \over 2\pi}
		\exp\left(-{1\over 2}y_I^\T Q_{II}^{-1} y\right)dy_I
	\end{align*}
	where $Q_{II}$ is the first $2\times 2$ matrix of 
	\[
	Q = H\Sigma H^\T. 
	\]
	By another change of variable and the same reasoning in the proof of Lemma \ref{lem_lb_L}, 
	\begin{align*}
		L_{\theta^{(i)}} + L_{\theta^{(j)}}  &\ge e^{- {\Dt_x^2\over 8}} \int_{\substack{z^\T Q_{II}^{1/2}u \ge 0\\
				z^\T Q_{II}^{1/2}v <0}}{1\over 2\pi}
		\exp\left(-{1\over 2}z^\T z\right)dz\\
		&\ge e^{- {\Dt_x^2\over 8}} {1\over 2\pi} \|\wt \zeta_1 - \wt \zeta_2\|_2,
	\end{align*}
	where 
	\[
	\wt\zeta_1 = {Q_{II}^{1/2}u \over 
		\sqrt{u^\T Q_{II}u}},\qquad \wt \zeta_2 = {Q_{II}^{1/2}v \over 
		\sqrt{v^\T Q_{II}v}}.
	\]
	Since 
	\[
	u^\T Q_{II}u = \mu_{\theta}^\T \Sigma^{-1}H^\T H\Sigma H^\T H \Sigma^{-1}\mut =  \mut^\T \Sigma^{-1}\mut = {\Dt_x^2 \over 4} = v^\T Q_{II}v
	\]
	and 
	\begin{align*}
		\|Q_{II}^{1/2}(u-v)\|_2^2 &= (\mut - \mutp)\Sigma^{-1}(\mut - \mutp)\\
		& = {\lambda \Dt^2\over 4(1+\lambda)} (\eps')^2 \|J^{(j)} - J^{(i)}\|_2^2 &&\textrm{by (\ref{eqn_KL_alphas})}\\
		&\asymp {\lambda K \over (1+\lambda) n} = o(1) &&\textrm{by (\ref{def_epsilon_prime})},
	\end{align*}
	we conclude
	\[
	L_{\theta^{(i)}} + L_{\theta^{(j)}} \gtrsim e^{- {\Dt_x^2/ 8}}  {\|Q_{II}^{1/2}(u-v)\|_2 \over \Dt_x}\asymp 
	{1\over \Dt_x}e^{- {\Dt_x^2/ 8}} \sqrt{{\lambda  \over 1+\lambda}}\sqrt{K\over n}.
	\]
	Using $\lambda \ge c$ completes the proof. 
\end{proof}

\section{Technical lemmas}\label{app_tech}

Consider $\pi_0 + \pi_1 = 1$. This section contains some basic relations between $\a_0$ and $\a_1$, collected in Lemma \ref{fact}, as well as some useful technical lemmas.

\begin{lemma}\label{fact}
	Let  $\ba  := \pi_0 \a_0 + \pi_1\a_1$. We have 
	\[
	\pi_0\a_0\a_0^\T + \pi_1\a_1 \a_1^\T - \ba \ba^\T  = \pi_0\pi_1(\a_1-\a_0) (\a_1 - \a_0)^\T.
	\]
	Additionally, for any $M\in \RR^{K\times K}$,  we have
	\begin{align*}
		&\pi_0\a_0^\T M \a_0 + \pi_1\a_1^\T M \a_1 - \ba^\T M \ba = \pi_0\pi_1(\a_1-\a_0)^\T M (\a_1 - \a_0).
	\end{align*}
	As a result, 
	\[
	\a_0^\T M \a_0 + \a_1^\T M \a_1 - \ba^\T M \ba  ~\le ~  \max\{\pi_0, \pi_1\}\cdot (\a_1-\a_0)^\T M (\a_1 - \a_0).
	\]
\end{lemma}

\medskip

The following lemma provides concentration inequalities of $\wh\pi_k - \pi_k$.  

\begin{lemma}\label{lem_pi_hat}
	For any $k\in \{0,1\}$ and all $t>0$, 
	\[
	\PP\left\{
	|\wh\pi_k - \pi_k| > \sqrt{\pi_k(1-\pi_k)t\over n}+ {t\over n}
	\right\}\le 2e^{-t/2}.
	\]
	In particular, if $\pi_0\pi_1 \ge 2\log n/ n$, then for any $k\in \{0,1\}$, 
	\[
	\PP\left\{
	|\wh\pi_k - \pi_k| < \sqrt{8\pi_0\pi_1\log n\over n}
	\right\}\ge 1-2n^{-1}.
	\]
	Furthermore, if $\pi_0\pi_1 \ge C\log n/ n$ for some sufficiently large constant $C$, then 
	\[
	\PP\left\{
	c\pi_k \le \wh \pi_k \le c'\pi_k
	\right\} \ge 1-2n^{-1}.
	\]
\end{lemma}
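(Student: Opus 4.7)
The plan is to view $n_k = \sum_{i=1}^n \1\{Y_i = k\}$ as a sum of i.i.d.\ Bernoulli$(\pi_k)$ random variables, each bounded by $1$ in absolute value and with variance $\pi_k(1-\pi_k) = \pi_0\pi_1$ (using $\pi_0+\pi_1=1$). The classical Bernstein inequality then yields, for every $s>0$,
\[
\PP\{|\wh\pi_k - \pi_k| > s\} ~\le~ 2\exp\left(-\frac{ns^2/2}{\pi_k(1-\pi_k) + s/3}\right).
\]
To obtain the first displayed bound, I would invert this tail by requiring the exponent to be at most $-t/2$, which amounts to the quadratic condition $n s^2 \ge t\pi_k(1-\pi_k) + ts/3$. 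Applying the elementary fact that $s^2 \le a + bs$ implies $s \le \sqrt{a} + b$, this is satisfied whenever $s \ge \sqrt{\pi_k(1-\pi_k)\,t/n} + t/n$, which proves the first claim.

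For the second claim, I would simply substitute $t = 2\log n$ in the first inequality, so that $2e^{-t/2} = 2n^{-1}$. The resulting bound becomes $\sqrt{2\pi_0\pi_1 \log n / n} + 2\log n / n$, and the assumption $\pi_0\pi_1 \ge 2\log n/n$ is exactly what is needed to dominate the linear term by the square root term: squaring $2\log n/n \le \sqrt{2\pi_0\pi_1\log n/n}$ gives the equivalent inequality $2\log n/n \le \pi_0\pi_1$. Adding the two terms then yields at most $2\sqrt{2\pi_0\pi_1\log n/n} = \sqrt{8\pi_0\pi_1\log n/n}$, as required.

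For the third claim, I would apply the second claim together with the trivial inequality $\pi_0\pi_1 \le \pi_k$ (coming from $\pi_{1-k}\le 1$) to get $|\wh\pi_k - \pi_k| \le \sqrt{8\pi_k\log n/n}$ on an event of probability at least $1-2n^{-1}$. Under $\pi_0\pi_1 \ge C\log n/n$ we also have $\pi_k \ge C\log n/n$, so $\sqrt{8\pi_k\log n/n} \le \sqrt{8/C}\,\pi_k$, which can be made smaller than, say, $\pi_k/2$ by choosing $C \ge 32$. This in turn gives the two-sided bound $\pi_k/2 \le \wh\pi_k \le 3\pi_k/2$, establishing the third claim with $c = 1/2$, $c' = 3/2$.

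The entire proof is a routine application of Bernstein's inequality followed by careful bookkeeping; there is no real obstacle, and the main care is only in verifying that the assumption $\pi_0\pi_1 \ge 2\log n/n$ is sharp enough to absorb the Bernstein linear term into the Gaussian-type term. I would present the three statements in order, deriving each subsequent one from the previous by an appropriate choice of $t$ and simple algebra.
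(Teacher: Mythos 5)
Your proposal is correct and takes essentially the same approach as the paper, which simply invokes Bernstein's inequality for bounded variables and then substitutes $t=2\log n$; you have filled in the inversion of the Bernstein tail (via the elementary $s\ge\sqrt a+b\implies s^2\ge a+bs$) and the algebra for the second and third claims, all of which check out.
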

\begin{proof}
	The first result follows from an application of the Bernstein inequality for bounded random variables. The second one follows by choosing $t = 2\log n$ and the last one can be readily seen from the second display.  
\end{proof}

\subsection{Deviation inequalities of quantities related with $Z$}

Recall that $\ba = \EE[Z]$,  $\sz=\Cov(Z)$ and $ \wt \Z = \Z \sz^{-1/2}$. Let the centered $\wt \Z$ be defined as 
\[
\R = (R_1, \ldots, R_n)^\T,\qquad \textrm{with}\quad  R_i = \wt Z_i - \sz^{-1/2}\ba.
\]
The following lemma provides concentration inequalities 
of $\wh\a_k - \a_k$ and some useful bounds related with the random matrices
$\R$ 
and $\wt\Z^\T \Pi_n \wt \Z$.

\begin{lemma}\label{lem_deviation}
	Under assumption ${\rm (iv)}$, the following results hold.
	\begin{enumerate}
		\item[(i)]  For any deterministic vector $u\in \RR^K$,  for all $t>0$,
		\[
		\PP\left\{
		\left|u^\T (\wh \a_k-\a_k)\right| \ge t\sqrt{u^\T  \szy u \over n_k} ~
		\right\} \le 2e^{-{t^2/ 2}}.
		\]
		\item [(ii)]
		\[
		\PP\left\{ \left\|
		\sz^{-1/2}(\wh \a_k - \a_k) 
		\right\|_2 \le 2\sqrt{K\log n \over n_k}
		\right\} \ge 1-2K/n^2.
		\]
		\item [(iii)] With probability $1-4Kn^{-2}-4n^{-1}$, 
		\[
		{1\over n}\left\|
		\sum_{i=1}^n R_i\right\|_2 \le  2(2+\sqrt{2})\sqrt{K\log n \over n}.
		\]
		\item [(iv)] For any deterministic vector $u,v\in \RR^K$, with probability $1- 4n^{-1}-8Kn^{-2}$, 
		\begin{align*}
			\left|u^\T\left(
			{1\over n}\sum_{i=1}^n R_i R_i^\T -\bI_K \right)v^\T \right| &\lesssim  \|u\|_2\|v\|_2\sqrt{\log n \over n}\left(1+ \|\a_1-\a_0\|_{\sz}\right)
		\end{align*}

		\item [(v)] With probability $1- \cO(1/n)$, 
		\begin{align*}
			\left\|
			{1\over n}\R^\T \R -\bI_K \right\|_{\op} &\lesssim \sqrt{K\log n\over n} + {K\log n\over n} 
			+ \|\a_1-\a_0\|_{\sz}\sqrt{\log n\over n} .
		\end{align*}

		\item [(vi)]  Assume $K\log n\le c_0 n$ for some sufficiently small constant $c_0>0$. With probability $1- \cO(1/n)$, the inequalities
		\[
		c \le {1\over n}\lambda_{K}(\R^\T  \R)\le  {1\over n}\lambda_{1}( \R^\T  \R) \le C
		\]
		hold for some constants $0<c\le C<\i$ depending on $c_0$ only.
		
		\item [(vii)] Assume $K\log n\le c_0 n$ for some sufficiently small constant $c_0>0$. There exists some absolute constant $C>0$ such that, with probability $1- \cO(1/n)$,  
		\[
		\left\|
		{1\over n}\wt\Z^\T \Pi_n\wt \Z -\bI_K \right\|_{\op} \le C \sqrt{K\log n\over n}  
		\]
		and 
		\[
		{1\over 2} \le {1\over n}\lambda_{K}(\wt \Z^\T \Pi_n \wt \Z)\le  {1\over n}\lambda_{1}(\wt \Z^\T \Pi_n \wt \Z) \le 2.
		\]
	\end{enumerate}
\end{lemma}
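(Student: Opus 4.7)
The seven parts build on each other; the unifying observation is that, conditionally on the labels $Y_1,\ldots,Y_n$, the vectors $Z_i$ are independent Gaussians with mean $\alpha_{Y_i}$ and common covariance $\szy$, and that by Fact~\ref{fact} and \eqref{eq_sz_C} one has the domination $\szy\preceq \sz$ together with $\|\a_1-\a_0\|_{\sz}^2\le \|\a_1-\a_0\|_\szy^2$ and $\|\sz^{-1/2}(\a_k-\ba)\|_2\lesssim \|\a_1-\a_0\|_{\sz}\lesssim 1$.

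For \emph{(i)} I would simply condition on the event $\{Y_1,\ldots,Y_n\}$: the statistic $u^\T(\wh\a_k-\a_k)$ is then a centered Gaussian with variance $u^\T\szy u/n_k$, so the usual $e^{-t^2/2}$ tail is immediate, and the bound holds unconditionally. Part \emph{(ii)} follows by applying \emph{(i)} coordinate-wise with $u=\sz^{-1/2}e_j$, $j\in[K]$, using $e_j^\T\sz^{-1/2}\szy\sz^{-1/2}e_j\le 1$, and taking a union bound with $t=2\sqrt{\log n}$. For \emph{(iii)} I would split
\[
\sum_{i=1}^n R_i=\sz^{-1/2}\sum_{i=1}^n(Z_i-\ba)=\sz^{-1/2}\sum_{k\in\{0,1\}}\Bigl[n_k(\wh\a_k-\a_k)+(n_k-n\pi_k)(\a_k-\ba)\Bigr],
\]
apply \emph{(ii)} to the first piece and Lemma~\ref{lem_pi_hat} to the second, together with $\|\sz^{-1/2}(\a_k-\ba)\|_2\lesssim 1$.

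For \emph{(iv)}--\emph{(vi)} the key move is the decomposition $R_i=G_i+M_i$ with $G_i\mid Y_i\sim N(0,\sz^{-1/2}\szy\sz^{-1/2})$ and $M_i=\sz^{-1/2}(\a_{Y_i}-\ba)$; then $G_i$ is Gaussian (by the tower property, since its conditional law is independent of $Y_i$ up to covariance which is itself dominated by $I_K$) and $\|M_i\|_2\lesssim \|\a_1-\a_0\|_\sz$. Expanding
\[
\frac1n\sum_{i=1}^n R_iR_i^\T-I_K=\Bigl(\frac1n\sum G_iG_i^\T-\EE[G_1G_1^\T]\Bigr)+\Bigl(\frac1n\sum M_iM_i^\T-\EE[M_1M_1^\T]\Bigr)+\frac1n\sum(G_iM_i^\T+M_iG_i^\T),
\]
the first term is controlled by Hanson--Wright for Gaussian quadratic forms (yielding the $\sqrt{\log n/n}$ rate in \emph{(iv)} and an additional $K\log n/n$ term in the operator norm via an $\eps$-net argument in \emph{(v)}), the second by bounded-variable Bernstein (since $\|M_i\|_2$ is bounded), and the cross term by a conditional Gaussian bound (condition on $Y_1,\ldots,Y_n$, then $\sum G_i M_i^\T$ is a Gaussian matrix with operator-norm standard deviation $\lesssim\|\a_1-\a_0\|_\sz\sqrt{n}$). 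Collecting these gives the bilinear bound of \emph{(iv)} and, via an $\eps$-net discretization of the unit sphere in $\RR^K$, the operator-norm bound of \emph{(v)}. Part \emph{(vi)} is then immediate from \emph{(v)} together with $K\log n\le c_0 n$ and a small-enough choice of $c_0$, via Weyl.

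Finally, for \emph{(vii)} I would use the identity $\wt\Z^\T\Pi_n\wt\Z=\R^\T\Pi_n\R=\R^\T\R-n^{-1}\bigl(\sum_i R_i\bigr)\bigl(\sum_i R_i\bigr)^\T$ since $\Pi_n$ annihilates the constant shift $\sz^{-1/2}\ba$. The first piece is controlled by \emph{(v)} and the rank-one correction by \emph{(iii)}, together yielding the $\sqrt{K\log n/n}$ operator-norm deviation from $I_K$ and hence the eigenvalue bounds after absorbing $\|\a_1-\a_0\|_\sz\sqrt{\log n/n}$ (which is $\lesssim\sqrt{K\log n/n}$ whenever $K\ge 1$) into the dominant term. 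The main technical obstacle is the Hanson--Wright step in \emph{(iv)}--\emph{(v)}: one must carefully keep track of the contribution of the deterministic-mean shifts $M_i$, which are responsible for the $\|\a_1-\a_0\|_\sz\sqrt{\log n/n}$ correction and cannot be dropped because $R_i$ itself is not mean-zero Gaussian but only a conditional Gaussian mixture.
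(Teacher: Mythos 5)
Your outline for (i)--(iii) and (vi)--(vii) tracks the paper's proof essentially verbatim: (i)--(ii) by conditioning on $\Y$ and a union bound with $t=2\sqrt{\log n}$; (iii) by the same label-wise decomposition (your identity $\sum_i R_i=\sz^{-1/2}\sum_k[n_k(\wh\a_k-\a_k)+(n_k-n\pi_k)(\a_k-\ba)]$ is an equivalent rearrangement of the paper's (\ref{eq_alphas})-based split); (vi) by Weyl from (v); and (vii) by writing $\wt\Z^\T\Pi_n\wt\Z=\sum\wt Z_i\wt Z_i^\T-n^{-1}(\sum\wt Z_i)(\sum\wt Z_i)^\T$ and invoking (iii) and (v).

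For (iv)--(v), however, you take a genuinely different route. The paper expands $\sum_i Z_iZ_i^\T$ via (\ref{eq_decomp_quad_Z}) into a within-class sample covariance $\sum_{i:Y_i=k}(Z_i-\a_k)(Z_i-\a_k)^\T$, cross terms $n_k(\wh\a_k-\a_k)\a_k^\T+\text{sym.}$, and a $\wh\pi_k-\pi_k$ piece, controlling them respectively by Lemma \ref{lem_op_diff} conditionally on $\Y$, part (i), and Lemma \ref{lem_pi_hat}. You instead split $R_i=G_i+M_i$ with $G_i=\sz^{-1/2}(Z_i-\a_{Y_i})$ and $M_i=\sz^{-1/2}(\a_{Y_i}-\ba)$, and observe that because $\Cov(Z\mid Y)$ is class-independent, $G_i$ is \emph{unconditionally} $N(0,\sz^{-1/2}\szy\sz^{-1/2})$ and independent of $Y_i$. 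That observation is neat, and it cleanly separates the Gaussian concentration (for $\sum G_iG_i^\T$), the Bernstein concentration of $\wh\pi_k$ (which carries the $\sqrt{\pi_0\pi_1}$ factor that turns the naive $\|\a_1-\a_0\|_\sz^2$ into $\|\a_1-\a_0\|_\sz$ via (\ref{eq_Delta}), exactly as in the paper), and the cross term. Both decompositions deliver the required rate. One caveat: for the cross term, $\frac1n\sum G_iM_i^\T$ is rank-two (since $M_i$ only takes two values), so conditionally on $\Y$ its operator norm is $\lesssim\frac1n\|\sum_{i:Y_i=k}G_i\|_2\max_k\|M_k\|_2$; since $\sum_{i:Y_i=k}G_i$ is a $K$-dimensional Gaussian, its norm is order $\sqrt{n_k K}$ with high probability, so the contribution is $\lesssim\|\a_1-\a_0\|_\sz\sqrt{K/n}$ rather than the $\|\a_1-\a_0\|_\sz/\sqrt{n}$ you state. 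This is immaterial for (vi)--(vii) because $\|\a_1-\a_0\|_\sz\lesssim(\pi_0\pi_1)^{-1/2}\lesssim 1$ under assumption (vi), so this term is dominated by $\sqrt{K\log n/n}$ anyway, but you should be careful not to suppress the $\sqrt K$ if you intend to state (v) exactly as written.
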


\begin{proof}
	Without loss of generality, we assume $\ba = 0_K$ so that $\wt \Z = \R$.
	
	To prove (i), we first condition on $Y_i$ and  use the fact that $Z_i\mid Y_i=k$ are independent $N(\a_k, \szy)$, to conclude that, for all $t>0$ and any deterministic $u\in \RR^K$, 
	\[
	\PP\left\{
	\left|u^\T (\wh \a_k-\a_k)\right| \ge t \sqrt{u^\T  \szy u \over n_k} ~~ \bigg | ~ \Y 
	\right\} \le 2\exp\left(-{t^2 \over 2}\right).
	\]
	After we take the expectation of this bound over $\Y$, we immediately obtain (i).\\

	To show part (ii), 
	we observe that, using part (i),
	\begin{align*}
		\|
		\sz^{-1/2}(\wh \a_k - \a_k) 
		\|_2^2  &=\sum_{j=1}^K  \left( \e_j^\T \sz^{-1/2}(\wh \a_k - \a_k) \right)^2
		\\
		&\le \sum_{j=1}^K t^2 \frac{1}{n_k} \e_j^\T \sz^{-1/2} \szy \sz^{-1/2} \e_j\\
		&\le  {K t^2 \over n_k}
	\end{align*}
	The last inequality  uses $\|\sz^{-1/2}\szy\sz^{-1/2}\|_{\op}\le 1$, which we  deduce in turn from (\ref{eq_sz_C}).
	Next, we take $t = 2\sqrt{\log n}$ and we conclude 
	\begin{align*}
		\PP\left\{	\|
		\sz^{-1/2}(\wh \a_k - \a_k) 
		\|_2 \le  2  \sqrt{K\log n \over n_k}~ \right\} \ge 1-\frac{2K}{n^2}.
	\end{align*}

	To prove part (iii),
	we find, after adding and subtracting terms and using 
	\begin{equation}\label{eq_alphas}
		\EE[Z] = \ba = \0 _K= \pi_1 \a_1 + \pi_0\a_0, 
	\end{equation}
	the identity 
	\begin{align*}
		\sum_{i=1}^n Z_i &= \sum_{i: Y_i = 1} Z_i+\sum_{i: Y_i = 0} Z_i\\
		&= \sum_{i: Y_i = 1}( Z_i - \a_1)+\sum_{i: Y_i = 0}(Z_i-\a_0) + (n_1-n\pi_1)\a_1+ (n_0-n\pi_0)\a_0\\
		&= \sum_{i: Y_i = 1}(Z_i - \a_1)+\sum_{i: Y_i = 0}(Z_i-\a_0) + (n\pi_0-n_0)\a_1+ (n_0-n\pi_0)\a_0\\
		&= \sum_{i: Y_i = 1}(Z_i - \a_1)+\sum_{i: Y_i = 0}(Z_i-\a_0) + (n\pi_0-n_0)(\a_1-\a_0)
	\end{align*}
	In the third equality we used $n_0 + n_1 = n$ and $\pi_0 + \pi_1 = 1$. 
	From this identity, using the definitions (\ref{def_alpha_hat}) of $\a_k$ and (\ref{def_pi_hat}) of $n_k$, we find that
	\begin{align*}
		{1\over n}\left\|
		\sum_{i=1}^n\wt R_i\right\|_2 
		&= {1\over n}\left\|
		\sum_{i=1}^n\wt Z_i\right\|_2 \\
		&\le 	\sqrt{n_1\over n}\left\|\sz^{-1/2}(\wh\a_1 - \a_1)\right\|_2 + \sqrt{n_0\over n}\left\|\sz^{-1/2}(\wh\a_0 - \a_0)\right\|_2 \\
		&\quad + |\wh\pi_0 - \pi_0|\cdot \|\a_1 - \a_0\|_{\sz}.
	\end{align*}
	We invoke part (ii), Lemma \ref{lem_pi_hat} and the inequality
	\begin{equation}\label{eq_Delta}
		\pi_0\pi_1 \|\a_1 - \a_0\|_{\sz}^2 \le 
		{1\over 4} \min(1,\Dt^2)\le 1 \qquad \text{ using (\ref{eq_Deltas})}
	\end{equation}
	to 
	complete the proof of (iii).\\
	
	To prove (iv), observe that 
	\begin{align*}
		\sum_{i=1}^n Z_i Z_i^\T &=  \sum_{i: Y_i = 1}Z_iZ_i^\T + \sum_{i: Y_i = 0}Z_iZ_i^\T\\
		&=  \sum_{k\in \{0,1\}}\left[\sum_{i: Y_i = k}(Z_i-\a_k)(Z_i-\a_k)^\T + n_k (\wh \a_k \a_k^\T + \a_k \wh \a_k^\T) - n_k\a_k\a_k^\T\right]\\
		&= \sum_{k\in \{0,1\}}\left[\sum_{i: Y_i = k}(Z_i-\a_k)(Z_i-\a_k)^\T + n_k (\wh \a_k-\a_k) \a_k^\T + n_k\a_k (\wh \a_k-\a_k)^\T\right]\\
		&\qquad  + \sum_{k\in \{0,1\}}n_k\a_k\a_k^\T.
	\end{align*}
	Since (\ref{eq_sz_C}), (\ref{eq_alphas})  and Lemma \ref{fact} imply
	\[
	\sz = \szy + \sum_{k\in \{0,1\}}\pi_k\a_k \a_k^\T,
	\]
	we obtain, for any $u,v\in \RR^K$,
	\begin{align}\label{eq_decomp_quad_Z}\nonumber
		u^\T \left({1\over n}\sum_{i=1}^n Z_i Z_i^\T  - \sz\right) v
		&=
		\sum_{k\in \{0,1\}}{n_k\over n}u^\T  \left[{1\over n_k}\sum_{i: Y_i = k}(Z_i-\a_k)(Z_i-\a_k)^\T-\szy\right] v^\T\\\nonumber
		&\quad  + \sum_{k\in \{0,1\}}{n_k\over n}
		v^\T (\wh \a_k - \a_k)\a_k^\T u  + \sum_{k\in \{0,1\}}{n_k\over n}
		u^\T (\wh \a_k - \a_k)\a_k^\T v  \\
		&\quad+ \sum_{k\in \{0,1\}}(\wh \pi_k - \pi_k)u^\T \a_k \a_k^\T v.
	\end{align}
	Notice that 
	\[
	u^\T \left({1\over n}\sum_{i=1}^n \wt Z_i \wt Z_i^\T  - \bI_K\right) v = \wt u^\T \left({1\over n}\sum_{i=1}^n Z_i Z_i^\T  - \sz\right) \wt v
	\]
	with $\wt u = \sz^{-1/2}u$ and $\wt v = \sz^{-1/2}v$. By conditioning on $\Y$, standard Gaussian concentration inequalities give
	\begin{align*}
		&\left|\wt u^\T \left({1\over n_k} \sum_{i: Y_i = k}(Z_i-\a_k)(Z_i-\a_k)^\T- \szy \right)\wt v\right|\\
		&\qquad \lesssim \sqrt{\wt u^\T \szy \wt u}\sqrt{\wt v^\T \szy \wt v}\left(\sqrt{\log n\over n_k} + {\log n\over n_k}\right)
	\end{align*}
	with probability $1-\cO(n^{-1})$. By further invoking Lemma \ref{lem_pi_hat} and part (i), we 
	conclude 
	\begin{align*}
		\left|\wt u^\T \left({1\over n}\sum_{i=1}^n Z_i Z_i^\T  - \sz\right) \wt v\right|
		&\lesssim
		\sqrt{\wt u^\T \szy \wt u}\sqrt{\wt v^\T \szy \wt v}\sum_{k\in \{0,1\}}{n_k\over n}\left(\sqrt{\log n\over n_k} + {\log n\over n_k}\right)\\ 
		& \quad + \sqrt{\wt v^\T \szy\wt v}\sum_{k\in \{0,1\}}\sqrt{n_k\log n\over n^2}|\wt u^\T \a_k| \\
		&  \quad + \sqrt{\wt u^\T \szy\wt u} \sum_{k\in \{0,1\}}\sqrt{n_k\log n\over n^2}|\wt v^\T \a_k|\\ 
		& \quad + 
		\sqrt{\pi_0\pi_1\log n\over n}\sum_{k\in \{0,1\}}|\wt u^\T \a_k|^2.
	\end{align*}
	with probability $1- 4n^{-c''} - 4n^{-1}-8Kn^{-2}$. Since 
	\[
	|\wt u^\T \a_k| \le \|u\|_2 \|\a_k\|_{\sz}
	\]
	from the Cauchy-Schwarz inequality, by noting that 
	\[
	\wt u^\T \szy \wt u \le \|u\|_2^2 \|\sz^{-1/2}\szy\sz^{-1/2}\|_{\op}\le \|u\|_2^2
	\] 
	and invoking Lemma \ref{fact} for 
	\[
	\sum_{k\in \{0,1\}}\|\a_k\|_{\sz} \le \sqrt{2}\|\a_1-\a_0\|_{\sz},\quad 	\sum_{k\in \{0,1\}}\|\a_k\|_{\sz}^2 \le \|\a_1-\a_0\|_{\sz}^2,
	\]
	we conclude, with the same probability,
	\begin{align*}
		&\left|\wt u^\T \left({1\over n}\sum_{i=1}^n Z_i Z_i^\T  - \sz\right) \wt v\right|\\ 
		&\lesssim \|u\|_2\|v\|_2\sqrt{\log n \over n}\left(1+ \|\a_1-\a_0\|_{\sz}+
		\sqrt{\pi_0\pi_1}\|\a_1-\a_0\|_{\sz}^2\right)\\
		&\lesssim \|u\|_2\|v\|_2\sqrt{\log n \over n}\left(1+ \|\a_1-\a_0\|_{\sz}\right)
	\end{align*}
	where we used (\ref{eq_Delta}) in the last line.

	Next, we prove (v) by bounding from above 
	\[
	\sup_{u\in \RR^K}u^\T\left({1\over n}\sum_{i=1}^n Z_i Z_i^\T  - \sz\right)u.
	\]
	Recalling that (\ref{eq_decomp_quad_Z}), an application of Lemma  \ref{lem_op_diff} yields 
	\[
	\left\|{1\over n_k}\sum_{i: Y_i = k}\szy^{-1/2}(Z_i-\a_k)(Z_i-\a_k)^\T \szy^{-1/2}-\bI_K\right\|_{\op} \le c'\left(\sqrt{K\log n\over n_k} + {K\log n\over n_k}\right)
	\]
	with probability $1-2n^{-c''K}$. The result follows by the same arguments of proving (iv) and also by noting that the other terms are bounded uniformly over $u\in \RR^K$.
	
	As a result of (v), part (vi) follows from the bound (\ref{quad_beta_sz}) and Weyl's inequality. 
	
	Finally, to prove (vii), observe that 
	\begin{align*}
		{1\over n}\wt\Z^\T \Pi_n \wt \Z &= {1\over n}\sum_{i=1}^n \wt Z_i \wt Z_i^\T - \sz^{-1/2} \bar Z\bar Z^\T \sz^{-1/2}
	\end{align*}
	with $\bar Z = \sum_{i=1}^n Z_i / n$. Consequently, 
	\[
	\left\|{1\over n}\wt\Z^\T \Pi_n \wt \Z - \bI_K\right\|_\op \le 
	\left\|{1\over n}\wt\Z^\T \wt \Z - \bI_K\right\|_\op  + \left\|{1\over n}\sum_{i=1}^n \wt Z_i\right\|_2^2.
	\]
	Invoking (iii) and (v) gives the desired result. The bounds on the  eigenvalues of $\wt \Z^\T \Pi_n \wt \Z$ follow from Weyl's inequality. 
\end{proof}

\subsection{Deviation inequalities of quantities related with $W$}

The following lemma provides deviation inequalities for various quantities related with $\W$. Recall that 
\[
\bar W_{(k)} = {1\over n_k}\sum_{i=1}^n W_i \1\{Y_i = k\},\qquad \forall~  k \in \{0, 1\}.
\]
Further recall that $\E_z$ is defined in (\ref{def_event_z}).

\begin{lemma}\label{lem_W}
	Under assumptions {\rm (i) -- (vi)} and $K \le  n$,
	the following results hold. 
	\begin{align*}
		&  
		\PP\left\{{1\over n} \|\W\|_F^2 \le 
		6\gamma^2 \tr(\sw) \right\} \ge 1-e^{-n}, 
		\\
		&  
		\PP\left\{{1\over \sqrt n}\left\|\W A^{+\T}\sz^{-1/2}\right\|_{\op} \le 
		12\gamma^2 \sqrt{\|\sw\|_{\op} \over \lambda_K} \right\} \ge 1-e^{-n}, 
		\\
		& 
		\PP\left\{{1\over \sqrt n}\left\|\W P_A\right\|_{\op} \le  12\gamma^2 \sqrt{\|\sw\|_\op}\right\} \ge 1-e^{-n},
		\\
		&  
		\PP\left\{\left\|P_A \bar W_{(k)}\right\|_2 \lesssim \sqrt{\|\sw\|_\op}\sqrt{K\log n \over n}\right\} \ge 1-n^{-K}, \quad \text{for }k=0,1\\
		& 
		%%		\left\{
		\PP\left\{{1\over n}\left\|\wt \Z^\T  \Pi_n \W P_A\right\|_{\op} \lesssim \sqrt{\|\sw\|_\op} \sqrt{K\log n\over n}
		\right\} = 1-\cO(n^{-1}),
		\\
		& 
		\PP\left\{	 {1\over n}\left\|
		P_{A} \W^\T\Pi_n \Y
		\right\|_2  \lesssim \sqrt{\|\sw\|_\op} \sqrt{K\log n\over n}		  \right\} \ge 1-2n^{-K}.
	\end{align*}
\end{lemma}
\begin{proof}
	Recall that $\W = \wt\W \sw^{1/2}$. Observe that 
	$\|\W\|_F^2 = \vec(\wt \W)^\T M ~ \vec(\wt \W)$ where $\vec(\wt\W)$ is the vectorized form (by rows) of $\wt\W$ and $M = \bI_n \otimes \sw$. Since 
	$\vec(\wt \W)$ is subGaussian with subGaussian parameter $\gamma$, applying Lemma \ref{lem_quad} with $\xi = \vec(\wt \W)$ and $H = M$ yields, for all $t\ge 0$,
	\[
	\PP\left\{
	\|\W\|_F^2 > 2\gamma^2\left(
	\tr(M) + 2t\|M\|_\op 
	\right)
	\right\}\le e^{-t}.
	\]
	Since $\tr(M) = n\tr(\sw)$ and $\|M\|_\op \le \|\sw\|_\op \le \tr(\sw)$, the first result follows by taking $t = n$. 
	
	Invoke Lemma \ref{lem_op_norm} with $\G = \wt\W$ and $H =\sw^{1/2}A^{+\T}\sz^{-1}A^+\sw^{1/2}$  together with $\tr(H)\le K\|H\|_{\op}$,
	$\|H\|_{\op} \le \|\sw\|_{\op} / \lambda_K$ and $K \le n$  to obtain 
	\[
	\PP\left\{{1\over \sqrt n}\left\|\W A^{+\T}\sz^{-1/2}\right\|_{\op} \le  12\gamma^2\sqrt{\|\sw\|_{\op} \over \lambda_K}\right\} \ge 1-e^{-n}.
	\]
	
	Similarly, by invoking Lemma \ref{lem_op_norm} and using $K\le n$, the second result follows from 
	\begin{align}\label{bd_W_P_diff}
		{1\over n}\left\|\W P_A\right\|_{\op}^2  &\le \gamma^2\left( \sqrt{6\|P_A\sw P_A\|_\op}+ \sqrt{\tr(P_A\sw P_A)\over  n}\right)^2  \le 12\gamma^2 \|\sw\|_\op  
	\end{align}
	with probability at least $1-e^{-n}$.

	Regarding the third result, since  $\sw^{-1/2}\bar W_{(k)}$ given $\Y$ is $\sqrt{\gamma^2/n_k}$-subGaussian, Lemma \ref{lem_quad} gives
	\begin{align}\label{bd_WPA2}\nonumber
		\left\|P_A\bar W_{(k)}\right\|_2   &\lesssim    \sqrt{{1\over n}\Bigl[\tr(P_A\sw P_A) + \|P_A\sw P_A\|_{\op} K\log n\Bigr]} \\
		&\le  \sqrt{{K +K\log n\over n}\|\sw\|_{\op}},
	\end{align}
	with probability $1- n^{-K}$. The last inequality in (\ref{bd_WPA2}) uses $\tr(P_A\sw P_A) \le K \|\sw\|_{\op}$.\\ 
	
	To prove the fourth result, let $P_A = \U_A \U_A^\T$ with $\U_A \in \cO_{p\times K}$. 
	Further let $\cN_K(1/4)$ be the $(1/4)$-net of $\cS^{K}$. By the properties of $\cN_K(1/4)$, we have 
	\begin{align*}
		{1\over n}\|\wt \Z^\T \Pi_n \W P_A\|_{\op} = {1\over n}\|\wt \Z^\T \Pi_n \W \U_A\|_{\op}
		& = \sup_{u \in \cS^K, v\in \cS^K} u^\T\wt \Z^\T \Pi_n \W \U_A v \\
		&\le 2\max_{u \in \cN_K(1/4), v\in \cN_K(1/4)} u^\T\wt \Z^\T \Pi_n \W \U_A v.
	\end{align*}
	Furthermore,  
	\begin{align}\label{eq_quad_ZW}\nonumber
		u^\T\wt \Z^\T \Pi_n \W \U_A v &= {1\over n}\sum_{i=1}^n u^\T \left(
		\wt Z_i - {1\over n}\sum_{i=1}^n \wt Z_i
		\right) (W_i - \bar W)^\T \U_Av\\\nonumber
		&= {1\over n}\sum_{i=1}^n u^\T \left(
		\wt Z_i -\sz^{-1/2}\ba
		\right) (W_i - \bar W)^\T \U_Av\\
		&= {1\over n}\sum_{i=1}^n u^\T \left(
		\wt Z_i -\sz^{-1/2}\ba
		\right) W_i^\T  \U_A  v -  u^\T {1\over n}\sum_{i=1}^n\left(
		\wt Z_i -\sz^{-1/2}\ba
		\right)  \bar W^\T   \U_A v.
	\end{align}
	By (iii) of Lemma \ref{lem_deviation} and (\ref{bd_WPA2}), 
	the second term can be bounded from above, uniformly over $u,v\in \cN_K(1/4)$, as 
	\[
	\left\|{1\over n}\sum_{i=1}^n\left(
	\wt Z_i -\sz^{-1/2}\ba
	\right)\right\|_2 \left\|  \U_A \bar W\right\|_2 \lesssim  \sqrt{\|\sw\|_\op {K\log n\over n}}
	\]
	with probability $1-cn^{-1}$.

	It remains to show that the same bound holds for the first term in (\ref{eq_quad_ZW}). Since $\Z$ and $\W$ are independent, conditioning on $\wt \Z$, we know $u^\T (\wt Z_i - \sz^{-1/2}\ba) W_{i}^\T \U_A v$ is sub-Gaussian with sub-Gaussian constant equal to 
	\[
	\sqrt{v^\T \U_A^\T \sw \U_A v}\sqrt{u^\T (\wt Z_i-\sz^{-1/2}\ba)(\wt Z_i-\sz^{-1/2}\ba)^\T u} \le \sqrt{\|\sw\|_\op}\sqrt{u^\T R_iR_i^\T u},
	\]
	recalling that $R_i = \wt Z_i-\sz^{-1/2}\ba$. Thus,
	$n^{-1}\sum_{i=1}^n u^\T (\wt Z_i -\sz^{-1/2}\ba) W_{i}^\T \U_A v$ is  sub-Gaussian with sub-Gaussian constant equal to
	\[
	{1\over n}\sqrt{\|\sw\|_\op\sum_{i=1}^n u^\T  R_i R_i^\T u} \le \sqrt{{\|\sw\|_\op \over n}\left\|{1\over n}\R^\T\R\right\|_\op}.
	\]
	We conclude that, for each $u,v\in \cN_K(1/4)$, 
	\[
	\PP\left\{
	{1\over n}\sum_{i=1}^n u^\T (\wt Z_i-\sz^{-1/2}\ba) W_{i}^\T \U_A v \ge t\sqrt{\|\sw\|_\op \over n}\left\|{1\over n}\R^\T\R\right\|_\op
	\right\} \le e^{-t^2/2}.
	\]
	The result follows by choosing $t = C\sqrt{K\log n}$ for some sufficiently large constant $C>0$, taking a union bounds over $\cN_K(1/4)$ together with $|\cN_K(1/4)| \le 9^K$, and invoking  (v) of Lemma \ref{lem_deviation}.\\
	
	Finally, to prove the last claim,  recall from (\ref{def_W_bar}) that $$
	\W^\T\Pi_n  \Y = \W^\T \Y - {1\over n}\W^\T \b1_n \b1_n^\T \Y =  n_1 (\bar W_{(1)}  - \bar W),
	$$
	with $\bar W = \sum_{i=1}^n \W/n$. We thus find that, with probability $1-2n^{-K}$, 
	\begin{align}\label{bd_intermediate_ZPXA_comp}
		{1\over n}\left\|
		P_A \W^\T\Pi_n \Y
		\right\|_2 
		&\le \left\|
		P_A\bar W_{(1)}\right\|_2 +\left\|
		P_A\bar W\right\|_2  \lesssim \sqrt{K \log n\over n}\sqrt{\|\sw\|_{\op}}
	\end{align}
	where the last step uses the bound in (\ref{bd_WPA2}).
\end{proof}

\section{Auxiliary lemmas}\label{app_aux}

The following lemma is the tail inequality for a quadratic form of sub-Gaussian random vectors. We refer to \cite[Lemma 16]{bing2020prediction} for its proof.  Also, see Lemma 30 in \cite{Hsu2014}. 
\begin{lemma} \label{lem_quad}
	Let $\xi\in \RR^d$ be a $\gamma_\xi$ sub-Gaussian random vector. Then, for all symmetric positive semi-definite matrices $H$, and all $t\ge 0$, 
	\[
	\PP\left\{
	\xi^\T H\xi > \gamma_\xi^2\left(
	\sqrt{{\rm tr}(H)}+ \sqrt{2 t \|H\|_{\rm op} }
	\right)^2
	\right\} \le e^{-t}.
	\] 
\end{lemma}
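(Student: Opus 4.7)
The plan is to bound the moment generating function (MGF) of $\xi^{\T} H \xi$ and then apply Chernoff's inequality, optimizing over the Chernoff parameter to obtain the stated Bernstein-type bound. Writing $\xi^{\T} H \xi = \|H^{1/2}\xi\|_2^2$, the first step will be to linearize the quadratic form using an independent auxiliary standard Gaussian $g \sim N(\0_d, \bI_d)$ and the identity $\exp(\lambda\|v\|_2^2) = \EE_g\exp(\sqrt{2\lambda}\, g^{\T} v)$ valid for any fixed $v$. Applying Fubini and then the subGaussian assumption $\EE\exp(u^{\T}\xi) \le \exp(\gamma_\xi^2\|u\|_2^2/2)$ with $u = \sqrt{2\lambda}H^{1/2}g$ gives, for $0<\lambda<1/(2\gamma_\xi^2\|H\|_{\op})$,
\[
\EE\exp(\lambda \xi^{\T} H\xi) \le \EE_g\exp(\gamma_\xi^2 \lambda \, g^{\T} H g) = \det(\bI_d - 2\gamma_\xi^2 \lambda H)^{-1/2}.
\]

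The next step is to control the determinant by the elementary inequality $-\log(1-x)\le x/(1-x)$ for $x\in[0,1)$, applied eigenvalue-by-eigenvalue, yielding
\[
\EE\exp(\lambda \xi^{\T} H\xi) \le \exp\!\left(\frac{\gamma_\xi^2 \lambda \tr(H)}{1 - 2\gamma_\xi^2 \lambda \|H\|_{\op}}\right).
\]
Then Chernoff's bound gives, for any $s>0$,
\[
\PP\{\xi^{\T} H\xi > s\} \le \exp\!\left(\frac{\gamma_\xi^2 \lambda \tr(H)}{1 - 2\gamma_\xi^2 \lambda \|H\|_{\op}} - \lambda s\right).
\]

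It remains to choose $\lambda$ optimally. Parametrize $u = 2\gamma_\xi^2 \lambda \|H\|_{\op}\in(0,1)$ and minimize the resulting expression in $u$; a direct calculation yields the optimizer $u^* = 1 - \gamma_\xi\sqrt{\tr(H)/s}$ (valid when $s \ge \gamma_\xi^2\tr(H)$) and the optimal exponent
\[
-\frac{(\sqrt{s} - \gamma_\xi\sqrt{\tr(H)})^2}{2\gamma_\xi^2 \|H\|_{\op}}.
\]
Setting this equal to $-t$ and solving for $s$ recovers the threshold $s = \gamma_\xi^2(\sqrt{\tr(H)} + \sqrt{2t\|H\|_{\op}})^2$ in the statement. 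The case $s \le \gamma_\xi^2 \tr(H)$ is trivial since then the claimed probability bound is vacuous as $\sqrt{s}\le \gamma_\xi\sqrt{\tr(H)}$.

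The main obstacle is getting a clean MGF estimate for $\xi^{\T} H\xi$ when $\xi$ is only assumed to be subGaussian (not Gaussian), since we cannot directly invoke a closed-form MGF. The auxiliary Gaussian decoupling trick in the first step is what resolves this, converting control of a quadratic form under a subGaussian law into control of a Gaussian quadratic form, which is explicit. Once this transfer is done, the remaining steps are standard convex optimization of the Chernoff exponent.
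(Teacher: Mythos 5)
Your proof is correct, and it is essentially the same argument as in the references the paper cites for this lemma (the paper defers to Lemma~16 of Bing et al.\ (2020) and Lemma~30 of Hsu's notes, both of which trace back to the Gaussian-decoupling technique of Hsu, Kakade and Zhang). The three steps you take---(i) linearizing $\exp(\lambda\|H^{1/2}\xi\|_2^2)$ via an independent Gaussian $g$ and invoking sub-Gaussianity to transfer to the Gaussian quadratic form MGF $\det(\bI_d-2\gamma_\xi^2\lambda H)^{-1/2}$, (ii) bounding the log-determinant eigenvalue-by-eigenvalue with $-\log(1-x)\le x/(1-x)$ to obtain the Bernstein-type exponent $\gamma_\xi^2\lambda\tr(H)/(1-2\gamma_\xi^2\lambda\|H\|_{\op})$, and (iii) minimizing the Chernoff exponent over the valid range of $\lambda$---are the standard route, and your algebra checks out: the optimizer $u^*=1-\gamma_\xi\sqrt{\tr(H)/s}$ indeed gives the exponent $-(\sqrt{s}-\gamma_\xi\sqrt{\tr(H)})^2/(2\gamma_\xi^2\|H\|_{\op})$, which equals $-t$ precisely at $s=\gamma_\xi^2(\sqrt{\tr(H)}+\sqrt{2t\|H\|_{\op}})^2$. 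The only cosmetic nit is that the interchange of expectations is Tonelli (nonnegative integrand) rather than Fubini, but this does not affect the argument.
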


The following lemma provides an upper bound on the operator norm of $\G H \G^\T$ where  $\G\in \RR^{n\times d}$ is a random matrix and its rows are independent sub-Gaussian random vectors. It is proved in Lemma 22 of \cite{bing2020prediction}.
\begin{lemma}\label{lem_op_norm}
	Let $\G$ be a $n\times d$ matrix with rows that are independent $\gamma$ sub-Gaussian  random vectors with identity covariance matrix. Then, for all symmetric positive semi-definite matrices $H$, 
	\[
	\PP\left\{{1\over n}\| \G H \G^\T \|_{{\rm op}} \le \gamma^2\left( \sqrt{{\rm tr}(H) \over n} + \sqrt{6\|H\|_{\op}}
	\right)^2\right\} \ge  1 -  e^{-n}
	\]
\end{lemma}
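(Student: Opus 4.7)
The plan is to reduce the operator-norm bound to the scalar tail bound of Lemma \ref{lem_quad} through a covering argument. Since $\G H \G^\T$ is symmetric and positive semi-definite, I would start from the variational identity
\[
\frac{1}{n}\|\G H \G^\T\|_{\op} \;=\; \sup_{v\in \cS^{n-1}} \frac{1}{n} v^\T \G H \G^\T v \;=\; \sup_{v\in \cS^{n-1}} \frac{1}{n} (\G^\T v)^\T H (\G^\T v).
\]

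The key observation is that for \emph{any} fixed unit vector $v\in\cS^{n-1}$, the vector $\G^\T v = \sum_{i=1}^n v_i G_i$ is a sum of independent $\gamma$-sub-Gaussian random vectors with identity covariance, so it is itself a $\gamma$-sub-Gaussian vector with identity covariance (the sub-Gaussian parameter is $\gamma\|v\|_2 = \gamma$ and the covariance is $\sum_i v_i^2 \bI = \bI$). Applying Lemma \ref{lem_quad} to the quadratic form $(\G^\T v)^\T H (\G^\T v)$ with this $\gamma$-sub-Gaussian vector then gives, for every fixed $v$ and every $t\ge 0$,
\[
\PP\!\left\{ v^\T \G H \G^\T v \;>\; \gamma^2\bigl(\sqrt{\tr(H)} + \sqrt{2t\|H\|_{\op}}\bigr)^2\right\} \;\le\; e^{-t}.
\]

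To upgrade this pointwise bound to a uniform one, I would fix a $(1/4)$-net $\cN$ of $\cS^{n-1}$ with $|\cN|\le 9^n$ and use the standard fact that, for symmetric matrices $A$, $\|A\|_{\op} \le 2\max_{v\in \cN} |v^\T A v|$. A union bound over $\cN$ yields
\[
\PP\!\left\{ \|\G H \G^\T\|_{\op} \;>\; 2\gamma^2\bigl(\sqrt{\tr(H)} + \sqrt{2t\|H\|_{\op}}\bigr)^2\right\} \;\le\; 9^n e^{-t}.
\]
Choosing $t = (1+\log 9)n$ makes the failure probability at most $e^{-n}$; dividing by $n$ and rearranging so that the leading constant is absorbed into the $\sqrt{\|H\|_{\op}}$ summand (using $(a+b)^2 \le (\sqrt{2}a + \sqrt{2}b)^2$ if needed) delivers a bound of the form
\[
\frac{1}{n}\|\G H \G^\T\|_{\op} \;\le\; \gamma^2\Bigl(\sqrt{\tfrac{\tr(H)}{n}} + \sqrt{c\,\|H\|_{\op}}\Bigr)^2
\]
for some absolute constant $c$, with the stated probability $1-e^{-n}$.

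There is essentially no obstacle in this proof: the two ingredients (Hanson-Wright style tail for sub-Gaussian quadratic forms, and the symmetric net-to-operator-norm reduction) are standard. The only mildly delicate point is constant tracking to land on the precise numerical constant $6$ in the statement. I would handle this by either shrinking the net parameter (using a $\eps$-net with $\eps$ small enough) or by splitting $(\sqrt{a}+\sqrt{b})^2$ via the inequality $(x+y)^2 \le (1+\eta)x^2 + (1+\eta^{-1})y^2$ with $\eta$ chosen to push $2\cdot 2(1+\log 9)$ below $6$ on the dominant term; I would not dwell on fine-tuning these constants since any finite absolute constant suffices for all applications of the lemma elsewhere in the paper.
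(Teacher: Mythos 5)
Your strategy is the natural one, and the pivotal observation is correct: for a fixed unit vector $v$, the $d$-dimensional vector $\G^\T v = \sum_{i} v_i G_i$ is $\gamma$-sub-Gaussian with identity covariance, since the moment generating function factorizes over the independent rows and $\sum_i v_i^2 = 1$; hence Lemma \ref{lem_quad} applies to $(\G^\T v)^\T H (\G^\T v)$ with the same $\gamma$. The $(1/4)$-net of $\cS^{n-1}$ with $|\cN|\le 9^n$ centers, a union bound, and the choice $t \asymp n\log 9 + n$ are all standard. (The paper itself does not reproduce a proof and cites \cite{bing2020prediction}, so I am evaluating your argument on its own terms.)

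There is, however, a genuine gap at the last step, and your two proposed patches do not close it. The PSD net reduction $\|A\|_{\op}\le \frac{1}{1-2\eps}\max_{v\in\cN}v^\T A v$ places the factor $\frac{1}{1-2\eps}$ \emph{outside} the square. With $\eps=1/4$ this is $2$, and after dividing by $n$ you obtain
\[
\frac1n\|\G H\G^\T\|_{\op}\;\le\;2\gamma^2\Bigl(\sqrt{\tfrac{\tr(H)}{n}}+\sqrt{2(1+\log 9)\|H\|_{\op}}\Bigr)^2
\;=\;\gamma^2\Bigl(\sqrt{\tfrac{2\tr(H)}{n}}+\sqrt{4(1+\log 9)\|H\|_{\op}}\Bigr)^2 .
\]
That leading factor multiplies $\sqrt{\tr(H)/n}$ too, whereas the statement has coefficient exactly one there. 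Setting $\|H\|_{\op}\to 0$ with $\tr(H)/n$ held fixed shows the discrepancy cannot be absorbed into the $\|H\|_{\op}$ summand: $2a \le a$ fails. Shrinking $\eps$ never brings $\frac{1}{1-2\eps}$ down to $1$ and simultaneously inflates $\log(1+2/\eps)$; and the splitting inequality $(x+y)^2\le(1+\eta)x^2+(1+\eta^{-1})y^2$ is an \emph{upper} bound on a square of a sum, the wrong direction for eliminating a leading multiplicative constant. What your argument does prove is a bound of the form $\gamma^2\bigl(C_1\sqrt{\tr(H)/n}+C_2\sqrt{\|H\|_{\op}}\bigr)^2$ with absolute $C_1>1$, $C_2$; since the paper only ever invokes the lemma through $\lesssim$, that would suffice for every downstream use — but it is not a proof of the statement as written with unit coefficient on the trace term. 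Recovering that sharp coefficient for general sub-Gaussian rows would require a finer tool than a single-scale net (e.g., a chaining argument or a Gordon-type comparison in the Gaussian specialization), not constant juggling.
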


Another useful concentration inequality of the operator norm of the random matrices with i.i.d. sub-Gaussian rows is stated in the following lemma \cite[Lemma 16]{bing2020prediction}. This is an immediate result of \cite[Remark 5.40]{vershynin_2012}.

\begin{lemma}\label{lem_op_diff} 
	Let $\G$ be $n$ by $d$ matrix whose rows are i.i.d. $\gamma$ sub-Gaussian  random vectors with covariance matrix $\Sigma_Y$. Then, for every $t\ge 0$, with probability at least  $1-2e^{-ct^2}$,
	\[
	\left\|	{1\over n}\G^\T \G - \Sigma_Y\right\|_{{\rm op}}\le \max\left\{\delta, \delta^2\right\} \left\|\Sigma_Y\right\|_{{\rm op}},
	\]
	with $\delta = C\sqrt{d/n}+ t/\sqrt n$ where $c = c(\gamma)$ and $C=C(\gamma)$ are positive constants depending on $\gamma$.
\end{lemma}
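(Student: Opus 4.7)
The plan is to reduce the problem to the isotropic case and then apply a standard $\varepsilon$-net argument combined with Bernstein's inequality for sub-exponential random variables.

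First, I would assume $\Sigma_Y \succ 0$ (the semi-definite case follows by a routine continuity argument or by working on the range of $\Sigma_Y$). Setting $\wt \G = \G \Sigma_Y^{-1/2}$, the rows of $\wt\G$ are i.i.d.\ $\gamma$-sub-Gaussian with identity covariance matrix. Since
\[
\left\|\frac{1}{n}\G^\T \G - \Sigma_Y\right\|_{\op} \le \|\Sigma_Y\|_{\op}\left\|\frac{1}{n}\wt\G^\T \wt\G - \bI_d\right\|_{\op},
\]
it suffices to prove the bound $\|n^{-1}\wt\G^\T\wt\G - \bI_d\|_{\op} \le \max\{\delta, \delta^2\}$ in the isotropic case.

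Second, I would discretize the unit sphere. Let $\cN$ be a $(1/4)$-net of $\cS^{d-1}$ with $|\cN| \le 9^d$. A standard covering argument gives
\[
\left\|\frac{1}{n}\wt\G^\T\wt\G - \bI_d\right\|_{\op} \le 2 \max_{v \in \cN} \left|\frac{1}{n}\|\wt\G v\|_2^2 - 1\right|.
\]
For a fixed unit vector $v$, the variables $\xi_i := \langle \wt G_i, v\rangle^2 - 1$ are i.i.d., mean-zero, and sub-exponential with sub-exponential norm $\lesssim \gamma^2$ (since $\langle \wt G_i, v\rangle$ is sub-Gaussian with parameter $\gamma$, its square is sub-exponential by standard Orlicz-norm inequalities). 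Bernstein's inequality then yields, for every $s \ge 0$,
\[
\PP\left\{\left|\frac{1}{n}\sum_{i=1}^n \xi_i\right| \ge s\right\} \le 2\exp\!\left(-c' n \min\{s^2, s\}\right),
\]
for some $c' = c'(\gamma) > 0$.

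Third, I would take a union bound. With $\delta = C\sqrt{d/n} + t/\sqrt n$ and $C = C(\gamma)$ chosen large enough that $c' n \delta^2 \ge (\log 9)d + t^2 + \log 2$ whenever $\delta \le 1$, the probability that some $v \in \cN$ violates the bound is at most $2\cdot 9^d \exp(-c' n\min\{\delta^2, \delta\}) \le 2 e^{-c t^2}$ after relabeling constants. Combining with the discretization inequality above completes the proof.

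The main obstacle is just keeping the dependence on $\gamma$ and the bifurcation into the $\delta$ vs.\ $\delta^2$ regimes clean; in particular, when $\delta > 1$ the sub-exponential Bernstein bound controls the deviation by $s$ rather than $s^2$, which is exactly what produces the $\max\{\delta,\delta^2\}$ behavior in the conclusion. Otherwise the argument is entirely routine and, as the statement indicates, is essentially the content of Remark~5.40 in \cite{vershynin_2012}.
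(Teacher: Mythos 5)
The paper does not supply a proof of Lemma~\ref{lem_op_diff}: it is stated by citation to Lemma~16 of \cite{bing2020prediction}, which in turn attributes the result to Remark~5.40 of \cite{vershynin_2012}. Your proposal is therefore being compared against the standard Vershynin argument rather than against anything written in this paper, and you have reproduced that argument faithfully: whiten by $\Sigma_Y^{-1/2}$ to reduce to the isotropic case, pass to a $(1/4)$-net of $\cS^{d-1}$ (using $\|A\|_\op\le 2\max_{v\in\cN}|\langle Av,v\rangle|$ for symmetric $A$ and $|\cN|\le 9^d$), observe that $\langle\wt G_i,v\rangle^2-1$ is sub-exponential with parameter controlled by $\gamma^2$, apply Bernstein, and close with a union bound. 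The bifurcation $\max\{\delta,\delta^2\}$ arises, exactly as you say, because Bernstein gives the rate $\min\{s^2,s\}$ and the two regimes of $\delta$ must be tracked separately. Your sufficient condition for the constants (``$c'n\delta^2\ge(\log 9)d+t^2+\log 2$'') is written only for the $\delta\le 1$ branch; one must also verify the $\delta>1$ branch, which follows because $n\min\{\delta^2,\delta\}\gtrsim C^2 d+t^2$ there too — you flag this informally, and it does work out with a constant relabeling.

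One point worth being precise about: the lemma as stated refers to ``covariance matrix $\Sigma_Y$,'' but the whitening-plus-Bernstein argument requires $\EE[\wt G_i\wt G_i^\T]=\bI_d$, i.e.\ $\Sigma_Y$ must be the \emph{second-moment} matrix of the rows (equivalently, the rows must be mean-zero). Otherwise $\EE[n^{-1}\G^\T\G]=\Sigma_Y+\mu\mu^\T\ne\Sigma_Y$ and the deviation cannot concentrate at zero. This is consistent with how the paper actually invokes the lemma — always on centered vectors such as $Z_i-\a_k$ conditioned on $Y_i=k$ — so the proof goes through, but you should state the mean-zero hypothesis explicitly rather than let it ride silently on the word ``covariance.''
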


\section{Additional simulation results}\label{app_sim}

\subsection{Performance of PCLDA when $K$ cannot be estimated consistently}\label{app_sim_K}
In this section, we report our findings of a simulation study on the performance of the PCLDA classifier in situations when $K$ cannot be estimated consistently. We used the same generating mechanism as Section \ref{sec_sim}, except for the way of generating the matrix $A$. Here, for $k = 1,\ldots, K$, the entries of the column $A_{\cdot k}$ are generated   independently from a normal  $N(0, \sigma_{A,k}^2)$ distribution with variance parameter
\[
\sigma_{A,k}^2 = {2\over p}p^{K-k \over K-1}.
\] 
For $K = o(p)$, standard concentration inequalities on the singular values of $A$ give
\[
\lambda_k(A^\T A) \asymp p^{K-k \over K-1},\quad \text{for }k = 1,\ldots, K,
\]
with high probability. 
Since the matrix $\szy$ has bounded eigenvalues, the first $K$ eigenvalues of $A\szy A^\T$ follow  the same rates as above. In particular, we have 
$\lambda_K:= \lambda_K(A\szy A^\T) \asymp 1$, whence the condition $\xi \ge C$ on the signal-to-noise ratio in Theorem \ref{thm_PCR_K_hat} fails to hold for $p > n$.
In this case, we should not expect that $\wh K$ consistently estimates $K$. 

We fix $K = 10$, $p=500$ and vary $n\in \{50, 100, 200, 300, 500\}$. Each setting is repeated $100$ times and the number of data points in the test set is increased to $300$. 

Figure \ref{fig_K_1} depicts the performance of  PCLDA-$\wh K$ and PCLDA-$K$ as well as other methods mentioned in Section \ref{sec_sim}. 
We see that (i)  PCLDA-$\wh K$ performs as well as PCLDA-$K$ even though the selected $\wh K$ is $\{8, 9, 12, 17, 27\}$ (the true $K$ is $10$), corresponding to each choice of $n$; (ii)  As $n$ increases, $\wh K$ tends to overestimate $K$, which, however, does not lead to higher misclassification rates.

\begin{figure}[ht]
	\centering
	\includegraphics[width=.5\textwidth]{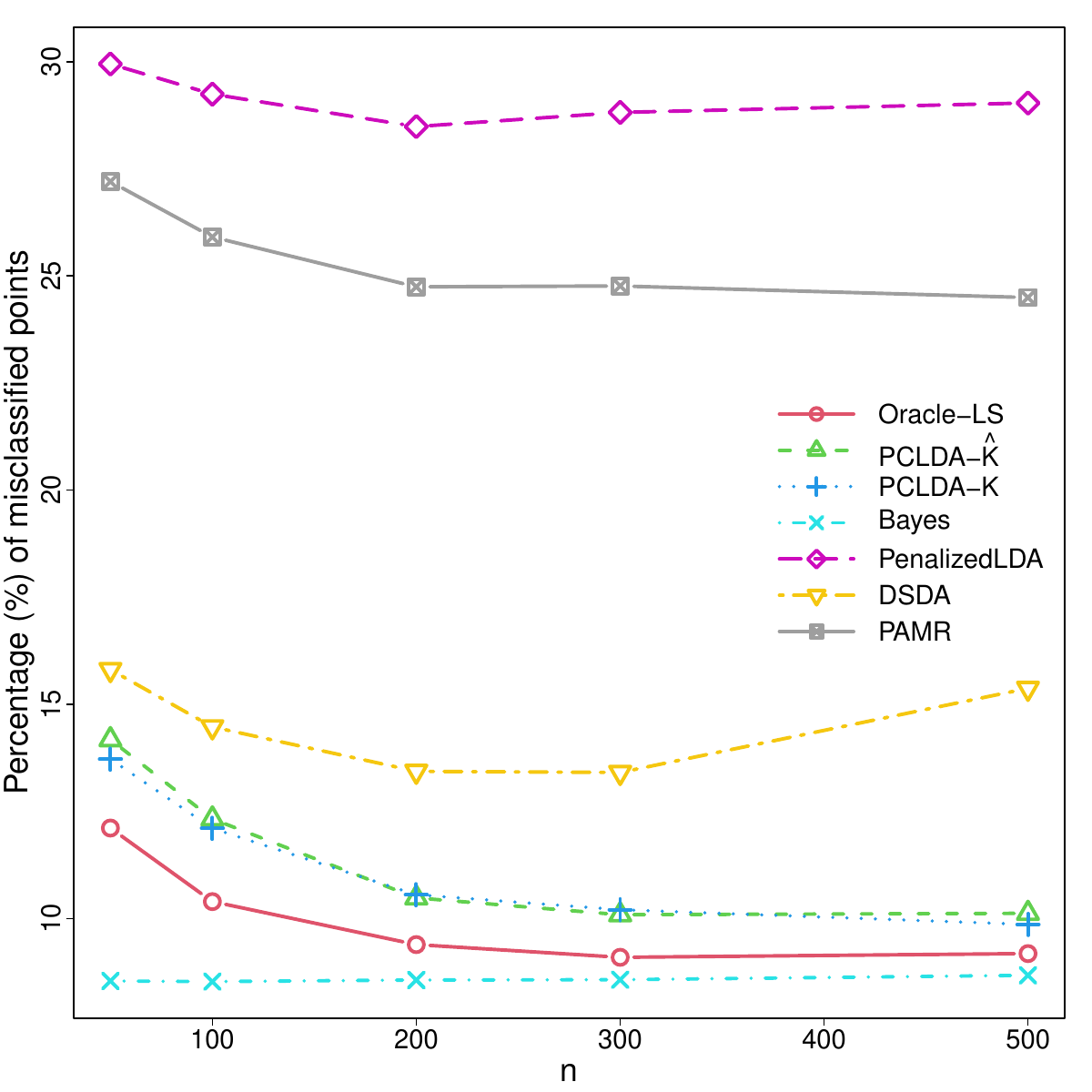}
	\caption{The averaged misclassification errors of each algorithm for various choices of $n$}
	\label{fig_K_1}
\end{figure}

To further examine the robustness of PCLDA-$s$ by using different $s$, we chose $s$ within $\{6, 8, 10, 15, 20, 30\}$ and compared the corresponding PCLDA-$s$ with the Bayes error and the Oracle-LS. Recall that the true $K$ is $10$.  Figure \ref{fig_K_s} shows that PCLDA has  robust performance across a wide range of $s$, and this range gets wider as the sample size increases. One extreme choice is $s = p$ in which case $\wh\theta$ reduces to the minimum-norm interpolator $(\Pi_n\X)^+\Y$, which, as analyzed in \cite{BW22}, has promising performance when $p\gg n$.

\begin{figure}[ht]
	\centering
	\includegraphics[width=\textwidth]{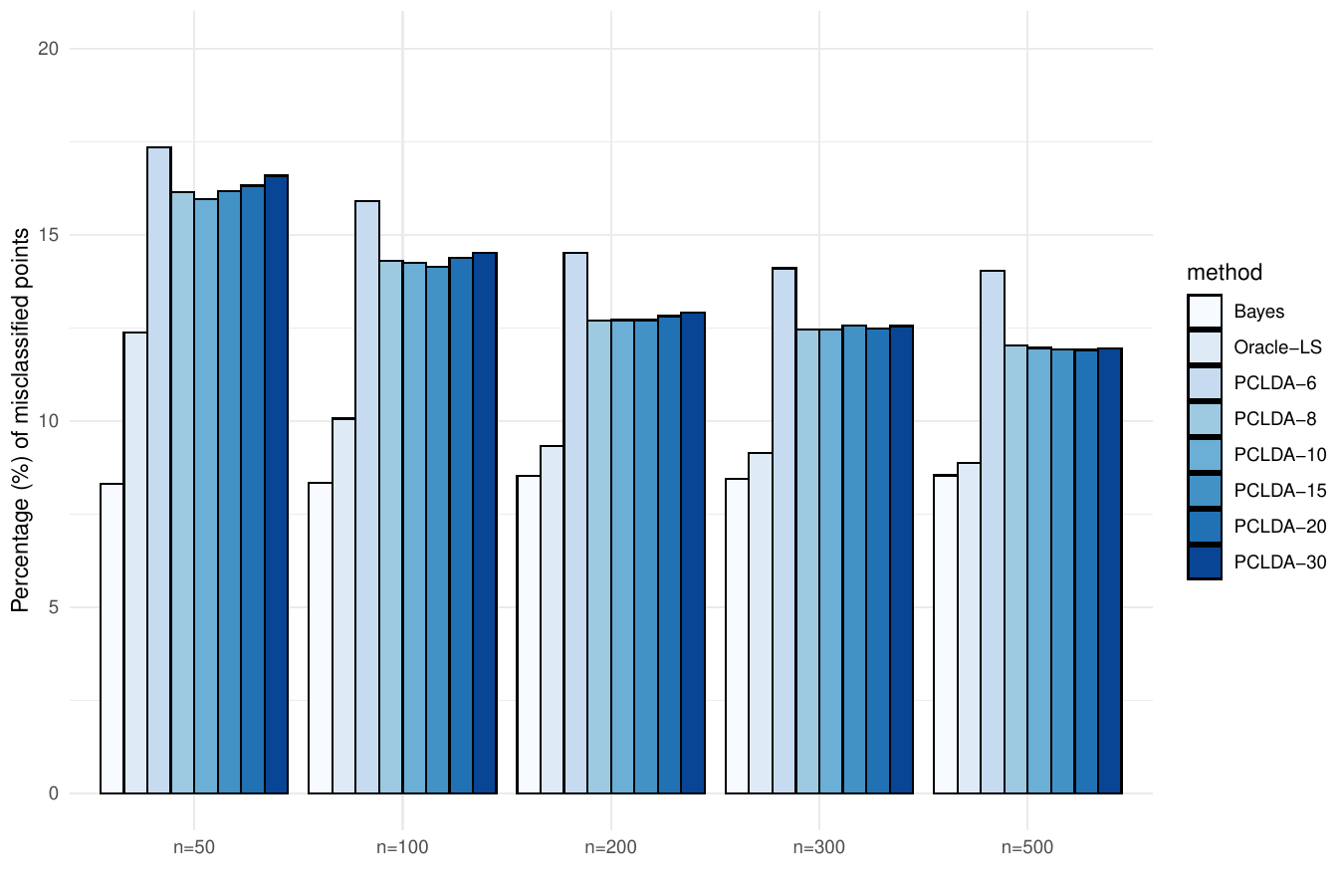}
	\caption{The averaged misclassification errors of each PCLDA-$s$ for various choices of $s$}
	\label{fig_K_s}
\end{figure}

\subsection{Benefit of using an auxiliary feature data set}\label{app_sim_aux}

In this section we conduct a simulation study to examine the benefit of using an auxiliary data set to construct $\wt\U_K$, and to investigate how many  auxiliary data points are required to estimate $P_A$ accurately enough to yield an improvement over the classifier entirely based on the training data $\D$.

We consider $K = 10$, $p = 300$ and $n \in \{50, 100, 200, 300\}$.
We adopt the same data generating mechanism used in our simulation study of Section \ref{sec_sim} and increase the number of repetitions in each setting to $300$ and the number of data points in the test data to $500$. We denote by PCLDA-split-$n'$ the proposed method that uses an independent copy of $\X$ with $n'$ data points to compute $\wt \U_K$. We consider $n'\in \{20, 30, 50, 100, 300, 500, 700\}$. In addition to Oracle-LS, Bayes and PCLDA-$K$ (the procedure only using the training data), we choose the method of using the true $A$, denoted by PCLDA-split-inf, as another benchmark. 

Figure \ref{fig_aux} depicts the performance of various methods in the strong signal-to-noise ratio (SNR) setting where $\lambda_1\asymp \lambda_K \asymp p$. From Figure \ref{fig_aux} we can see that one needs $n'\ge 100$ for PCLDA-split-$n'$ to have nearly the same performance as PCLDA-split-inf, though $n'=50$ already yields similar performance. Since improvement over $n\ge 300$ is small, we exclude the results for $n \in \{500,700\}$. Comparing to PCLDA-$K$, PCLDA-split-$n'$ starts showing small advantage for $n'\ge 100$. Since we have strong SNR in this setting, the advantage of using auxiliary data set is not considerable, in line with our discussion in Remark \ref{rem_cond_PCR}. 

We further consider in Figure \ref{fig_aux_weak} the weak SNR setting where entries of $A$ are generated as described in Appendix \ref{app_sim_K}. 
As we can see, the advantage of using auxiliary data becomes more visible in the weak SNR setting.  PCLDA-split-$n'$ seems to start outperforming PCLDA-$K$ when $n'\ge n$, suggesting that the same amount of auxiliary data points is needed for PCLDA-split-$n'$ to show improvement  over PCLDA-$K$.

\begin{figure}[ht]
	\centering
	\includegraphics[width=\textwidth]{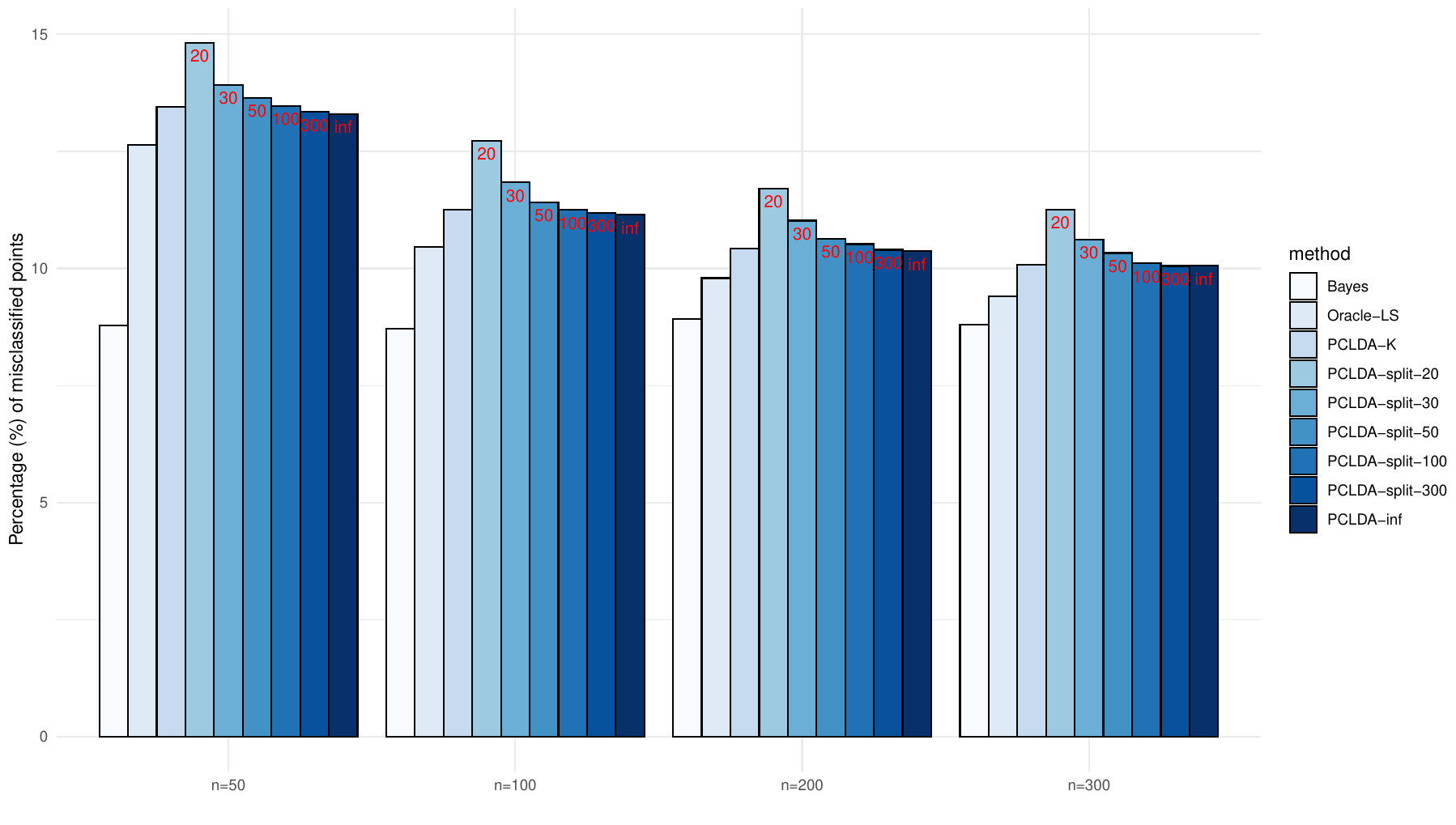}
	\caption{The averaged misclassification errors of PCLDA-split-$n'$ for various choices of $n'$ in the strong SNR setting}
	\label{fig_aux}
\end{figure}

\begin{figure}[ht]
	\centering
	\includegraphics[width=\textwidth]{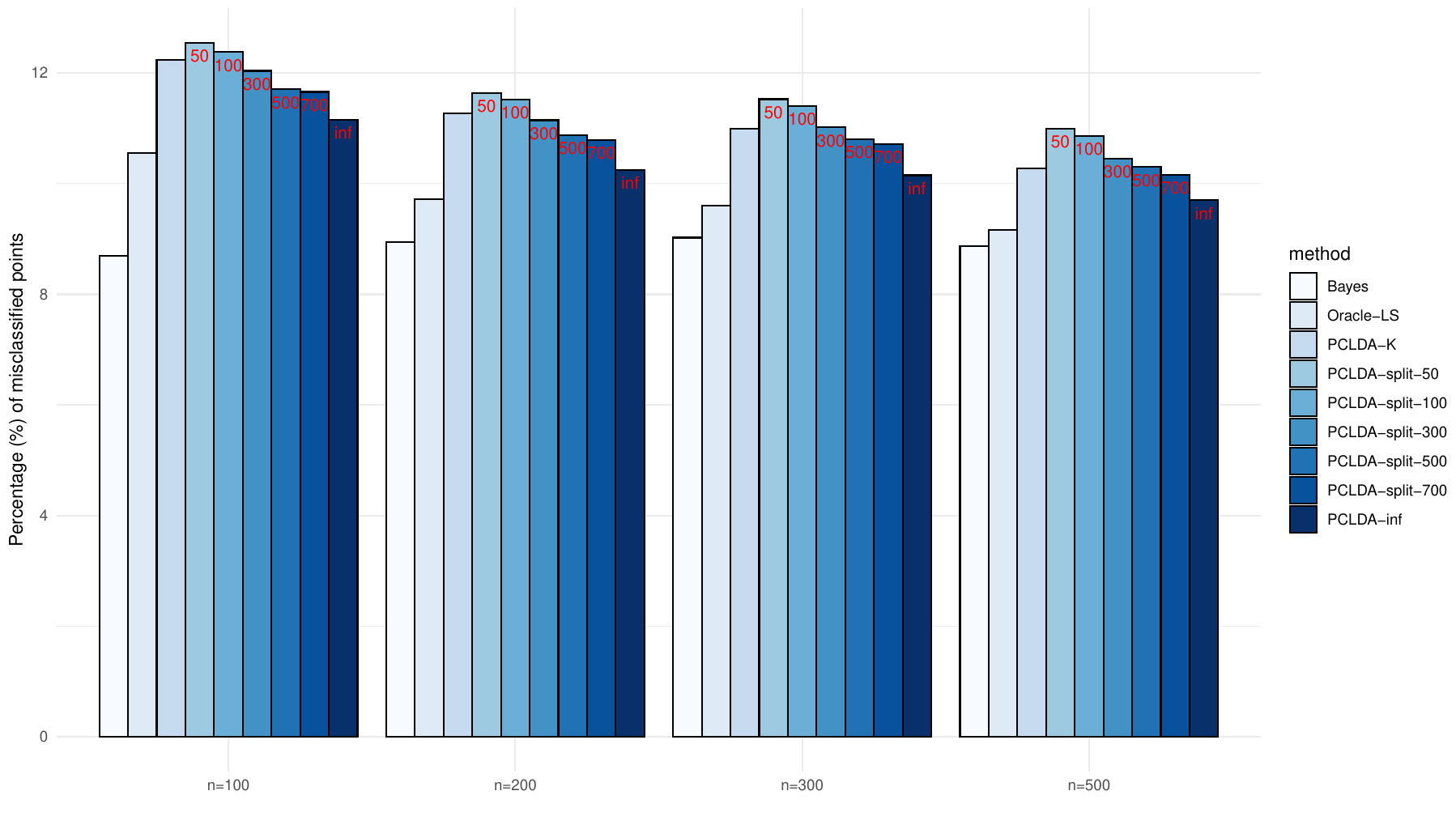}
	\caption{The averaged misclassification errors of PCLDA-split-$n'$ for various choices of $n'$ in the weak SNR setting}
	\label{fig_aux_weak}
\end{figure}

\subsection{Performance of the proposed procedure for multi-class classification}\label{app_sim_multi}

In this section we evaluate the proposed approach for multi-class classification. We take the same data generating mechanism with the exception that the centers $\a_\ell$ for $\ell \in \cL$ are generated as i.i.d. realizations of $N(0, 2/K)$ and the priors are set to $\pi_\ell = 1/L$. For ease of presentation, we only consider PCLDA-$K$ and its averaged version, PCLDA-$K$-avg, given by Remark \ref{rem_multi}. We also consider PCLDA-$K$-plugin, the classical LDA rule by using the projections $\wh\Z := \X \U_K$ in place of the unobserved $\Z$.  
For comparison,  we include the PenalizedLDA and PAMR classifiers as well.  

We first examine the effect of the number of total classes, $L$, on the proposed approach. Fix $K= 10$, $p = 300$ and $n = 500$ with $L$ varying within $\{2, 3, 4, 5, 6\}$. Each setting is repeated $100$ times with $300$ test data points.  Figure \ref{fig_multi_L} reveals that PCLDA-$K$, PCLDA-$K$-avg and PCLDA-$K$-plugin have similar performance. As $L$ increases, the misclassification errors of all three methods   increase, in line with Theorem \ref{thm_risk_multi} and Corollary \ref{cor_risk_multi}, meanwhile
PCLDA-$K$-plugin and PCLDA-$K$-avg tend  to have an advantage over PCLDA-$K$. 

We further vary $n\in \{100, 200, 400, 600, 800\}$ with fixed $K = 10$, $p = 500$ and $L = 4$. As shown in Figure \ref{fig_multi_n}, all methods have smaller misclassification errors as $n$ increases while the advantages of PCLDA-$K$-plugin and PCLDA-$K$-avg over PCLDA-$K$ become more visible for smaller sample sizes. 
We also see that PCLDA-$K$-plugin has slightly better performance than PCLDA-$K$-avg for small $n$. On the other hand, the proposed multi-class classification, such as PCLDA-$K$-avg, is based on the regression formulation, hence more amenable to structural estimation of the discriminant direction $\beta$. For instance, in the high-dimensional LDA setting, the regression based approach \citep{mai2012} has  net computational advantage over the procedure based on the plug-in rule \citep{caizhang2019}. The regression formulation also transfers related notions of regression methods to discriminant analysis, such as the degrees of freedom, which can be used for selecting tuning parameters in penalized  discriminant analysis (see \cite{hastie1995penalized} for details). 
A regression-based approach for multi-class classification that performs as well as PCLDA-$K$-plugin deserves a full separate investigation. We leave this for future research.

\begin{figure}[ht]
	\centering
	\begin{subfigure}{.48\textwidth}
		\includegraphics[width=\textwidth]{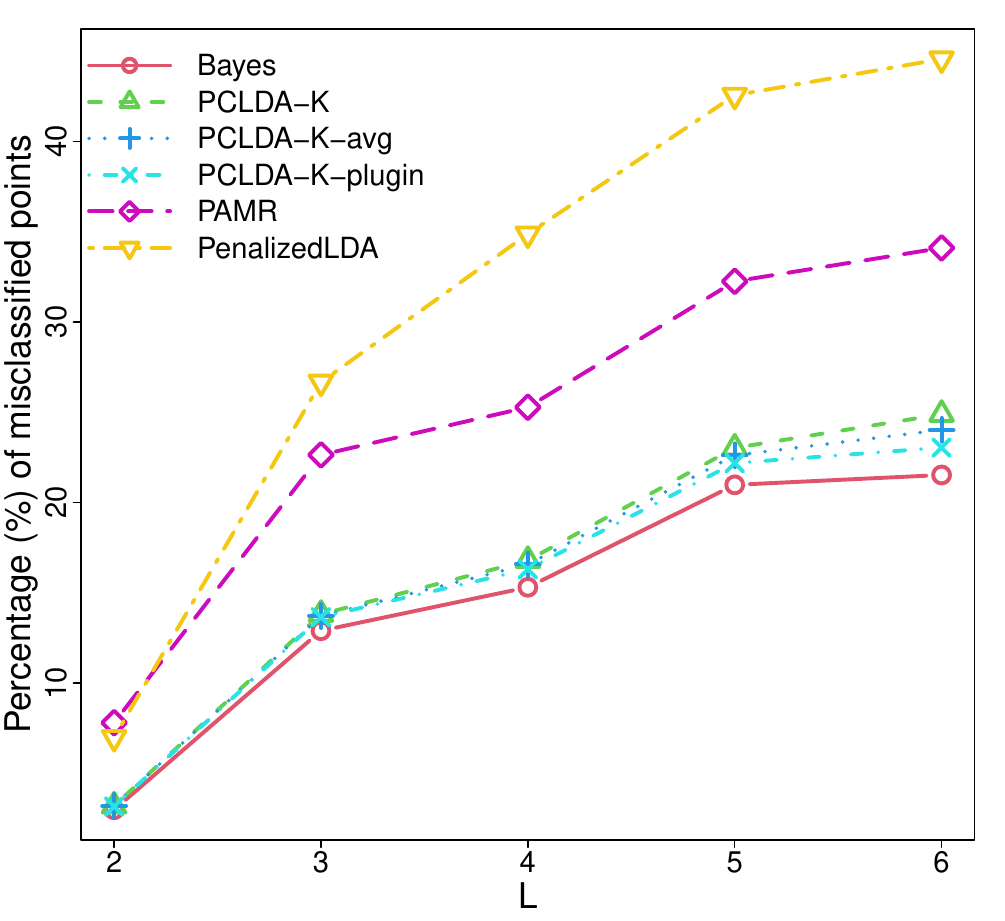}
		\vspace{-3mm}
		\caption{$K= 10$, $p = 300$, $n = 500$}
		\label{fig_multi_L} 
	\end{subfigure}
	\hfill 
	\begin{subfigure}{.48\textwidth}
		\includegraphics[width=\textwidth]{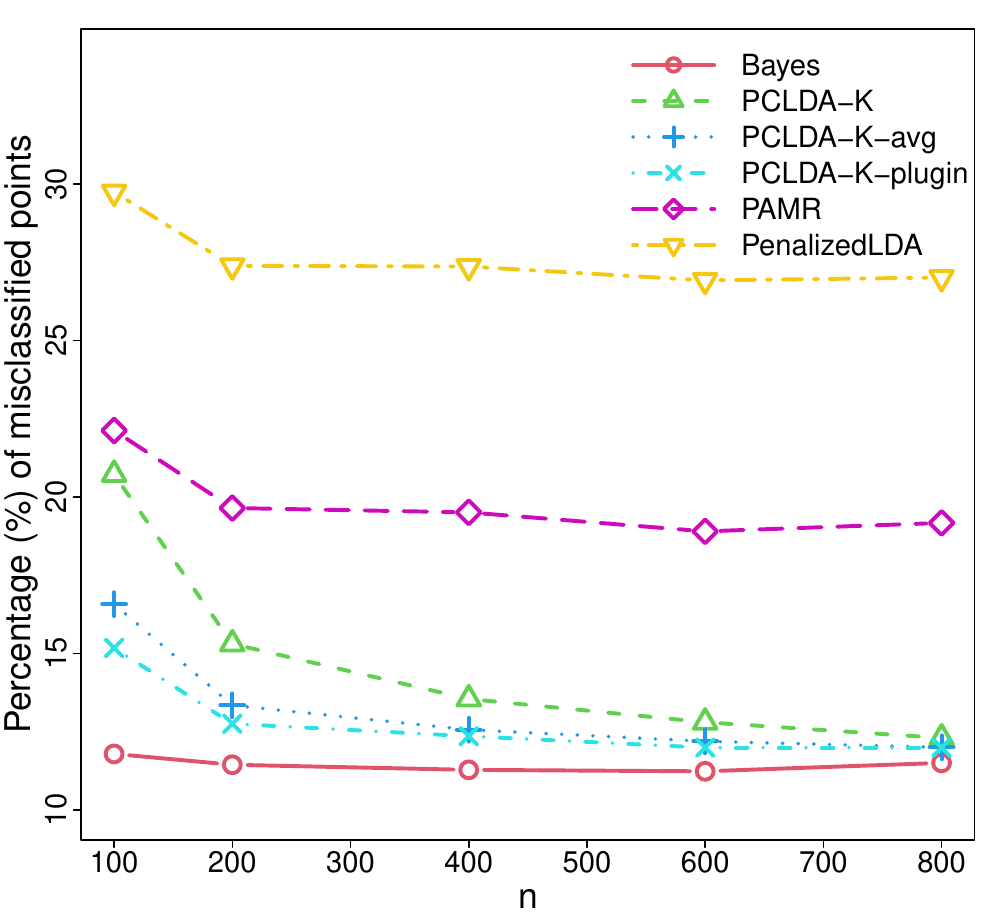}
		\vspace{-3mm}
		\caption{$K= 10$, $p = 300$, $L = 4$}    
		\label{fig_multi_n}
	\end{subfigure}
	\caption{The averaged misclassification errors of multi-class classification procedures}
	\label{fig_multi}
\end{figure}

\end{document}